\def\dis{\displaystyle}
\newtheorem{thm}{Theorem}[section]
\newtheorem{cor}[thm]{Corollary}
\newtheorem{prop}[thm]{Proposition}
\newtheorem{lem}[thm]{Lemma}
\theoremstyle{definition}
\newtheorem{defn}[thm]{Definition}
\theoremstyle{remark}
\newtheorem{rem}[thm]{Remark}
\numberwithin{equation}{section}
\begin{document}

\date{}
\date{}
\title{On curvature flow with driving force under Neumann boundary conditon in the plane}

\author{Longjie ZHANG}

\date{December, 2015\\Corresponding author University:Graduate School of Mathematical Sciences, The University of Tokyo. Address:3-8-1 Komaba Meguro-ku Tokyo 153-8914, Japan. Email:zhanglj@ms.u-tokyo.ac.jp, zhanglj919@gmail.com}

\maketitle

\begin{minipage}{140mm}

{{\bf Abstract:} We consider a family of axisymmetric curves evolving by its mean curvature with driving force in the half space. We impose a boundary condition that the curves are perpendicular to the boundary for $t>0$, however, the initial curve intersects the boundary tangentially. In other words, the initial curve is oriented singularly. We investigate this problem by level set method and give some criteria to judge whether the interface evolution is fattening or not. In the end, we can classify the solutions into three categories and provide the asymptotic behavior in each category. Our main tools in this paper are level set method and intersection number principle.

{\bf Keywords and phrases:} mean curvature flow, driving force, Neumann boundary, level set method, singularity, fattening. }

{\bf 2010MSC:} 35A01, 35A02, 35K55, 53C44.

\end{minipage}

$$$$

\section{Introduction}\large

 This paper studies the planar curvature flow with driving force and Neumann boundary condition of the form
\begin{equation}\label{eq:cur}
V=-\kappa+A\, \ \textrm{on}\ \Gamma(t)\subset \Omega ,
\end{equation}
\begin{equation}\label{eq:Neum1}
\Gamma(t)\perp\partial \Omega,
\end{equation}
\begin{equation}\label{eq:initial1}
\Gamma(0)=\Lambda_0,
\end{equation}
where $\Omega=\{(x,y)\in \mathbb{R}^2\mid x\geq 0\}$, $V$ is the outer normal velocity of $\Gamma(t)$, $\kappa$ is the curvature of $\Gamma(t)$ and the sign is chosen such that the problem is parabolic. $A$ called driving force is a positive constant.

In this paper, we consider the initial curve $\Lambda_0$ is closed, smooth and given by 
$$
\Lambda_0=\{(x,y)\in \mathbb{R}^2\mid |y|=u_0(x), 0\leq x\leq b_0\},
$$
for $u_0(x)\in C[0,b_0]\cap C^{\infty}(0,b_0)$. By the assumption of $\Lambda_0$, there hold 
$$
u_0(x)>0,\ 0<x<b_0
$$
and 
$$
u_0(0)=u_0(b_0)=0,\ u_{0}^{\prime}(0)=-u_{0}^{\prime}(b_0)=\infty.
$$
\begin{figure}[htbp]
	\begin{center}
            \includegraphics[height=6.0cm]{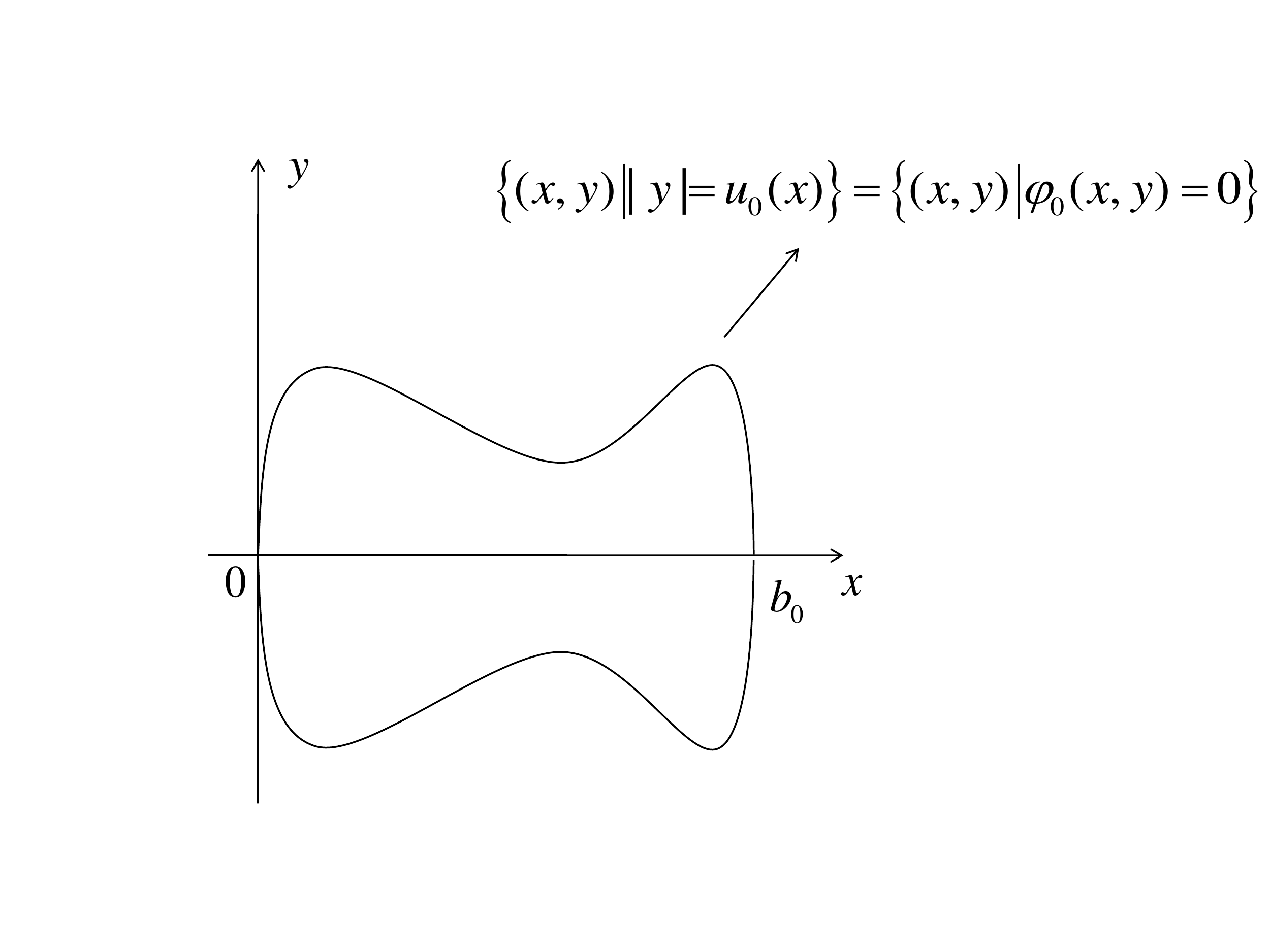}
		\vskip 0pt
		\caption{Initial curve}
        \label{fig:u0}
	\end{center}
\end{figure}
Before giving our main results, we first consider another problem.
\begin{equation}
V=-\kappa+A\, \ \textrm{on}\ \Lambda(t)\subset \mathbb{R}^2,\tag{\ref{eq:cur}*}
\end{equation}
\begin{equation}
\Lambda(0)=\Lambda_0,\tag{\ref{eq:initial1}*}
\end{equation}
We consider this problem by level set method. Seeing the theory in \cite{G}, there exists unique viscosity solution $\phi$ of the following level set equation
$$
\left\{
\begin{array}{lcl}
\dis{\phi_t=|\nabla \phi|\textmd{div}(\frac{\nabla \phi}{|\nabla \phi|})+A|\nabla \phi|}\ \textrm{in}\  \mathbb{R}^2\times(0,T),\\
\phi(x,y,0)=a_1(x,y),
\end{array}
\right.
$$
where $a_1(x,y)$ satisfies $\Lambda_0=\{(x,y)\mid a_1(x,y)=0\}$. The results in appendix show that the zero set of $\phi$ is not fattening. Indeed, thanks to Theorem \ref{thm:gu}, the zero set of $\phi$ can be written into
$$
\Lambda(t)=\{(x,y)\mid \phi(x,y,t)=0\}=\{(x,y)\in\mathbb{R}^2\mid |y|=v(x,t), a_*(t)\leq x\leq b_*(t)\},\ 0<t<T.
$$ 
Moreover, $(v,a_*,b_*)$ is the solution of the following free boundary problem
\begin{equation}
\left\{
\begin{array}{lcl}
\dis{u_t=\frac{u_{xx}}{1+u_x^2}+A\sqrt{1+u_x^2}},\ x\in(a_*(t),b_*(t)),\ 0<t< \delta,\\
u(a_*(t),t)=0,\ u(b_*(t),t)=0,\ 0\leq t< \delta,\\
u_x(a_*(t),t)=\infty,\ u_x(b_*(t),t)=-\infty,\ 0\leq t<\delta,\\
u(x,0)=u_0(x),\ 0\leq x\leq b_0.
\end{array}
\right.\tag{*}
\end{equation}
And $a_*$ and $b_*$ are called the end points of $\Lambda(t)$.

Here we give our main results.

\begin{thm}\label{thm:exist}
If there exists $\delta$ such that $a_*(t)<0$, for $0<t<\delta$, then there exist $T_1>0$ and a unique smooth family of smooth curves $\Gamma(t)$ satisfying (\ref{eq:cur}), (\ref{eq:Neum1}), $0\leq t<T_1$, and satisfying (\ref{eq:initial1}) in the sense that $\lim\limits_{t\rightarrow0^+}d_H(\Gamma(t),\Lambda_0)=0$. Moreover, $\Gamma(t)$ can be written into $\Gamma(t)=\{(x,y)\in\Omega\mid |y|=u(x,t),\ 0\leq x\leq b(t)\}$. And $(u,b)$ is the unique solution of the following free boundary problem
\begin{equation}\label{eq:1graph}
u_t=\frac{u_{xx}}{1+u_x^2}+A\sqrt{1+u_x^2},,\ 0<t<T_1,\ 0<x<b(t),
\end{equation}
\begin{equation}\label{eq:1bounday}
u(b(t),t)=0,\ u_x(b(t),t)=-\infty,\ u_x(0,t)=0,\ 0<t<T_1,
\end{equation}
\begin{equation}\label{eq:1initial}
u(x,0)=u_0(x),\ 0\leq x\leq b_0.
\end{equation}
\end{thm}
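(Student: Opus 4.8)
\emph{Plan of proof.} The plan is to reduce the Neumann problem (\ref{eq:cur})--(\ref{eq:initial1}) to the whole--plane problem (\ref{eq:cur}*), (\ref{eq:initial1}*), which Theorem~\ref{thm:gu} already resolves, by reflecting in the straight boundary $\partial\Omega=\{x=0\}$. Since $\partial\Omega$ is a line, the perpendicularity condition (\ref{eq:Neum1}) is equivalent, after the even reflection $\sigma(x,y)=(-x,y)$, to smooth matching across $\{x=0\}$. So I would start from the reflected initial curve $\widehat\Lambda_0:=\Lambda_0\cup\sigma(\Lambda_0)=\{|y|=u_0(|x|),\ -b_0\le x\le b_0\}$ --- a closed curve, symmetric in both coordinate axes, with a single tangential self--contact point at the origin --- and run the level set flow of (\ref{eq:cur}*) from a continuous initial function that is even in both $x$ and $y$ and vanishes exactly on $\widehat\Lambda_0$. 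By uniqueness of viscosity solutions the resulting $\widehat\phi$ inherits both symmetries, so its zero set $\widehat\Lambda(t)$ is symmetric in both axes and $\widehat\phi_x=0$ on $\{x=0\}$.

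Next I would verify that the appendix non--fattening result and Theorem~\ref{thm:gu} apply to $\widehat\phi$ in the present regime, yielding $\widehat\Lambda(t)=\{|y|=\widehat v(x,t),\ \widehat a_*(t)\le x\le\widehat b_*(t)\}$ with $(\widehat v,\widehat a_*,\widehat b_*)$ solving the free boundary problem $(*)$; the $\sigma$--symmetry then forces $\widehat a_*(t)=-\widehat b_*(t)$, $\widehat v(-x,t)=\widehat v(x,t)$, and hence $\widehat v_x(0,t)=0$ whenever $\widehat v(0,t)>0$. This is where the hypothesis $a_*(t)<0$ enters, and it is the crux: since the region enclosed by $\Lambda_0$ sits inside the region enclosed by $\widehat\Lambda_0$, the comparison (monotonicity) principle for the generalized flow --- which is itself an intersection--number statement --- gives that $\Lambda(t)$'s enclosed region lies inside $\widehat\Lambda(t)$'s, so $\widehat v(0,t)\ge v(0,t)$, where $v$ is the solution of $(*)$ with datum $\Lambda_0$. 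As $a_*(t)<0$ and $b_*(t)>0$ for $0<t<\delta$ we have $0\in(a_*(t),b_*(t))$, so $v(0,t)>0$ there, whence $\widehat v(0,t)>0$ on $(0,\delta)$: the two lobes of $\widehat\Lambda_0$ merge \emph{immediately} into a single smooth curve, which by the previous line meets $\{x=0\}$ orthogonally. Trapping $\widehat\Lambda(t)$ between expanding and contracting circular barriers also gives that $\widehat b_*(t)$ is finite and bounded away from $0$ on compact subintervals of $(0,\delta)$.

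With this in hand the theorem follows. Set $T_1:=\sup\{\tau\le\min(\delta,T):\widehat v(0,\cdot)>0\text{ on }(0,\tau)\}>0$ and, for $0\le t<T_1$, put $\Gamma(t):=\widehat\Lambda(t)\cap\overline\Omega$, $b(t):=\widehat b_*(t)$, $u(x,t):=\widehat v(x,t)$ on $[0,b(t)]$. Then $\Gamma(t)=\{(x,y)\in\Omega:|y|=u(x,t),\ 0\le x\le b(t)\}$; equation (\ref{eq:1graph}) and the conditions at $x=b(t)$ are inherited from $(*)$; the remaining boundary relation $u_x(0,t)=0$ is the orthogonality just established; (\ref{eq:1initial}) holds by construction; smoothness of $\Gamma(t)$ for $t>0$ up to $\{x=0\}$ comes from parabolic regularity for $\widehat v$ combined with the even reflection across $x=0$; and $d_H(\Gamma(t),\Lambda_0)\to0$ as $t\to0^+$ follows from $d_H(\widehat\Lambda(t),\widehat\Lambda_0)\to0$, which is part of the (non--fattening) level set construction. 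For uniqueness, any competing smooth family solving (\ref{eq:cur}), (\ref{eq:Neum1}) with Hausdorff limit $\Lambda_0$ reflects to a smooth whole--plane flow converging to $\widehat\Lambda_0$, hence is sandwiched by --- so equals --- the generalized flow $\widehat\Lambda(t)$; equivalently, uniqueness of $(u,b)$ follows from the parabolic comparison principle for (\ref{eq:1graph})--(\ref{eq:1initial}) once $u(0,\cdot)>0$ is known, with an intersection--number argument handling the free endpoint, where $u_x=-\infty$ works in our favour.

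The hard part is everything concentrated at $t=0$. One must (i) establish non--fattening of the singular curve $\widehat\Lambda_0$ and the \emph{immediate} merging of its two lobes precisely in the regime $a_*(t)<0$ --- this requires approximating $u_0(|x|)$ by smooth curves from inside and from outside, flowing them, and passing to the limit with the gap between the sub-- and super--flows squeezed to zero by the single--lobe solution and its mirror (in regimes where $a_*(t)\ge0$ this merging can fail and fattening occur, which is why $a_*(t)<0$ is exactly the right hypothesis); and (ii) control the free boundary $b(t)$ as $t\to0^+$, where $u_x(b(t),t)=-\infty$, together with the boundary regularity of $u$ near $\{x=0\}$ for small $t>0$, where the contact with $\partial\Omega$ is only tangential at $t=0$ and becomes orthogonal in the limit --- these are handled by barrier constructions and boundary parabolic estimates for the degenerate graph equation (\ref{eq:1graph}).
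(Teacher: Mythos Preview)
Your overall architecture matches the paper's: reflect across $\{x=0\}$, run the level set flow from the pinched curve $\widehat\Lambda_0$, approximate from inside and outside by smooth $\alpha$-domains, and use the single-lobe flow $\Lambda(t)$ (with $a_*(t)<0$) as a sub-barrier to force both the inner and outer evolutions to contain a neighborhood of the origin for $t>0$. This is exactly the paper's Lemmas~\ref{lem:closeas}--\ref{lem:openbou}, and your diagnosis that ``the hard part is everything concentrated at $t=0$'' is correct.

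There is, however, a genuine gap in how you close the argument. You write the middle of the proof as though the appendix non-fattening result and Theorem~\ref{thm:gu} can be ``verified to apply'' to $\widehat\phi$, thereby producing a graph $\widehat v$ solving $(*)$; but Theorem~\ref{thm:partialUmeancurvature} needs an $\alpha$-domain with smooth boundary, and the region enclosed by $\widehat\Lambda_0$ is not one --- it pinches at the origin. So one cannot simply read off a single $\widehat v$; one only gets, after approximation, two a~priori different smooth curves $\partial D(t)$ and $\partial E(t)$. Your comparison with the single-lobe flow shows both stay away from the $x$-axis near $x=0$, but that alone does not identify them. Your proposed uniqueness step (``any competitor is sandwiched by --- so equals --- the generalized flow'') is circular: the sandwich gives equality only once $\partial D(t)=\partial E(t)$ is already known, i.e.\ once non-fattening is established. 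Likewise ``parabolic comparison for (\ref{eq:1graph})--(\ref{eq:1initial})'' is not directly available, because at $t=0$ the datum is singular at $x=0$ and the equation degenerates there.

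What the paper supplies here, and what your sketch is missing, is a bespoke uniqueness lemma (Proposition~\ref{pro:uniq2}) that does \emph{not} presuppose non-fattening. The trick is to parametrize curves near $\widehat\Lambda_0$ over a fixed smooth reference curve $\Sigma$ via a transverse vector field $X$ (Proposition~\ref{pro:sigma2}); in these coordinates both $\partial D(t)$ and $\partial E(t)$ become scalar functions on $\Sigma$ solving a parabolic equation, smooth on $\Sigma\times(0,T_1)$ and smooth up to $t=0$ \emph{away from} the discs/pipe covering the singular region. A Gronwall-type maximum-principle argument on the difference (case-splitting according to where the maximum lies) then forces equality. This is the step that converts ``both evolutions are smooth and stay off the $x$-axis'' into ``they coincide,'' and it is the piece you should add in place of the sandwich/comparison hand-wave.
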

Here $d_H(A,B)$ denotes the Hausdorff distance defined as
$$
d_H(A,B)=\max\{\sup\limits_{x\in A}\inf\limits_{y\in B}d(x,y),\sup\limits_{y\in B}\inf\limits_{x\in A}d(x,y)\}.
$$

Let $T$ be the maximal smooth time given by
$$
T=\sup\{t\mid \Gamma(s)\ \text{is}\ \text{smooth}, 0<s<t\}.
$$

\begin{thm}\label{thm:threecondition}
 (Classification) Under the same assumptions of Theorem \ref{thm:exist}, denote

$h(t)=\max\limits_{0\leq x\leq b(t)}u(x,t)$.Then $\Gamma(t)$ must fulfill one of the following situations.

(1). (Expanding) The existence time $T=\infty$ and both $h(t)$ and $b(t)$ tend to $\infty$, as $t\rightarrow\infty$.

(2). (Bounded) The existence time $T=\infty$ and both $h(t)$ and $b(t)$ are bounded from above and below by two positive constants, as $t\rightarrow\infty$.

(3). (Shrinking) The existence time $T<\infty$ and both $h(t)$ and $b(t)$ tend to 0, as $t\rightarrow T$.

\end{thm}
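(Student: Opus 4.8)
The plan is to analyze the evolution of the two geometric quantities $h(t)=\max_{0\le x\le b(t)}u(x,t)$ and $b(t)$, using the parabolic PDE (\ref{eq:1graph}) together with the boundary conditions (\ref{eq:1bounday}) and, crucially, the intersection number principle against explicit subsolutions and supersolutions built from the travelling/expanding solutions of the driven curvature flow. The trichotomy will come from comparing $\Gamma(t)$ against two natural barrier families: (i) expanding circular arcs (half-circles meeting $\partial\Omega$ perpendicularly) of radius $r(t)$, for which the driven flow gives $\dot r = -1/r + A$, so arcs of radius $r>1/A$ expand to infinity while arcs of radius $r<1/A$ shrink to a point in finite time; and (ii) the stationary half-circle of radius $1/A$, which is the unique equilibrium. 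The monotonicity needed to make the trichotomy clean should follow from an intersection-number (Sturmian) argument: the number of intersections of $\Gamma(t)$ with any fixed barrier circle is nonincreasing in $t$, which prevents $\Gamma(t)$ from crossing back and forth and hence forces each of $h,b$ into one of the three asymptotic regimes.

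The key steps, in order, are as follows. First, I would establish a priori that as long as $\Gamma(t)$ is smooth it stays a graph of the stated form and that $h(t),b(t)$ are comparable (e.g. $c\,b(t)\le h(t)\le C\,b(t)$ while the solution stays away from the degenerate regime), using the Neumann condition $u_x(0,t)=0$ at the axis and $u_x(b(t),t)=-\infty$ at the moving endpoint together with convexity-type or gradient estimates. Second, I would derive the evolution inequality for $h(t)$: at a point where the maximum is attained one has $u_x=0$, so $\dot h = u_t = u_{xx}+A$, and since $u_{xx}\le 0$ there, $\dot h\le A$; a matching lower bound comes from comparison with an inscribed shrinking circle. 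Third — the heart of the argument — I would run the intersection-number comparison with the equilibrium half-circle $C_{1/A}$ of radius $1/A$: if at some time $\Gamma(t)$ strictly encloses $C_{1/A}$ (in the sense of the region bounded in $\Omega$), then by the comparison principle it continues to do so and, being a driven flow ``outside'' the equilibrium, $h(t)$ and $b(t)$ are nondecreasing and must diverge (case (1), with $T=\infty$ since the solution cannot develop a singularity while expanding past a fixed smooth barrier); if it is strictly enclosed by $C_{1/A}$, then it is trapped inside a subsolution that shrinks to a point in finite time, forcing $T<\infty$ and $h(t),b(t)\to 0$ (case (3)); and the borderline, where $\Gamma(t)$ neither strictly encloses nor is strictly enclosed by $C_{1/A}$ for all $t$, is pinched between slightly larger and slightly smaller circles and yields the bounded case (2), with $T=\infty$. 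Fourth, I would upgrade ``no singularity'' in cases (1) and (2) to $T=\infty$ by a standard argument: a singularity of a smooth graph-like driven flow in the plane can only occur via curvature blow-up, which is ruled out while the curve is sandwiched between fixed smooth barriers on any finite time interval (here one also needs to control the moving endpoint $b(t)$, e.g. showing $b(t)$ stays in a compact subinterval of $(0,\infty)$ in case (2) and grows at most linearly in case (1)).

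The main obstacle I anticipate is the dichotomy/rigidity at the borderline: showing that ``not strictly outside and not strictly inside $C_{1/A}$ for all time'' really forces the \emph{bounded} behavior (2) rather than some oscillatory or slowly-drifting behavior, and in particular that $h(t)$ and $b(t)$ are bounded \emph{below} by a positive constant. This is where the intersection number principle must be used most carefully: one wants to show that once the number of intersections of $\Gamma(t)$ with $C_{1/A}$ drops to its eventual value, $\Gamma(t)$ is confined to a thin annular neighborhood of $C_{1/A}$ whose width one can control, and then a linearization / energy argument near the equilibrium (or a further comparison with circles of radius $1/A\pm\varepsilon$ chosen depending on the intersection pattern) pins down the asymptotics. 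A secondary technical difficulty is handling the moving free boundary $b(t)$ and the vertical-slope condition $u_x(b(t),t)=-\infty$ when applying comparison: barriers must be chosen so that the comparison is valid up to and including the degenerate endpoint, which is exactly why half-circles meeting $\partial\Omega$ perpendicularly (and matching the $u_x=0$ Neumann condition on the axis) are the right test family rather than graphs over a fixed interval.
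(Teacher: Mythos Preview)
Your overall plan—compare with circular barriers and use the intersection number principle—matches the paper's toolkit, but the way you organize the trichotomy leaves two real gaps.

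\textbf{Singularity exclusion.} You assert that in the expanding and bounded cases $T=\infty$ because ``the solution cannot develop a singularity while expanding past a fixed smooth barrier'' or ``while sandwiched between fixed smooth barriers.'' This is not a valid argument: enclosing a growing circle, or being trapped in an annulus, gives \emph{no} control on the curvature of $\Gamma(t)$ elsewhere on the curve, and in particular does not rule out a neck pinch. The paper handles this in the opposite logical order: it first proves (Section~6, culminating in Corollary~\ref{cor:2dsingular}) that if $T<\infty$ then $\Gamma(t)$ must shrink to a point, i.e.\ $h(T)=b(T)=0$. That is a substantial argument using the limits of the locations of local extrema (Lemma~\ref{lem:convergeendmimax}), a Sturmian lower bound on $u_x$ on intervals between extrema (Lemma~\ref{lem:singlepoint pinch}), and the $\alpha$-domain structure to show the curve eventually loses all local minima. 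Only after this is established does the paper split into $T<\infty$ (automatically case~(3)) versus $T=\infty$. Your plan needs this ingredient, or an equivalent one, before it can conclude $T=\infty$ in the non-shrinking cases.

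\textbf{The borderline/bounded case.} Saying ``$\Gamma(t)$ neither strictly encloses nor is strictly enclosed by $C_{1/A}$'' does not pin $\Gamma(t)$ in a thin annulus: it only says part of the curve is inside and part outside, so you get no upper bound on $h$ or $b$ individually and no positive lower bound on either. A long thin curve, or a tall narrow one, could cross $C_{1/A}$ forever without being bounded. The paper instead proves two coupling lemmas: Lemma~\ref{lem:expanding} (large width forces large height, via intersection with a \emph{growing} arc $W(x,t)=\sqrt{R(t)^2-x^2}-R_0$ started from the $x$-axis) and Lemma~\ref{lem:1expanding} (unbounded height forces unbounded width, via convergence of the vertical-graph representation $w(y,t)$ near the cap and comparison with a \emph{translated} circle $\sqrt{1/A^2-(x+1/A-x_0)^2}$ to show $\lim_{t\to\infty}b(t)$ exists). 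These give: $h$ unbounded $\Leftrightarrow$ $b$ unbounded $\Rightarrow$ both tend to $\infty$. In the remaining case ($T=\infty$, both bounded above), the lower bounds come from a separate argument: if $h(s_m)\to 0$ along a subsequence then $l(s_m)\to 0$ by Lemma~\ref{lem:expanding}, hence $\Gamma$ is eventually enclosed in a small circle and $T<\infty$, a contradiction; the lower bound on $b$ then follows from another limit-identification argument for $w(y,t)$. None of this is captured by comparison with the single stationary circle $C_{1/A}$, and the ``linearization/energy near the equilibrium'' you propose would not get off the ground without first knowing that $\Gamma(t)$ is close to $C_{1/A}$, which is exactly what is at stake.
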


\begin{thm}\label{thm:asym}
 (Asymptotic behavior) Under the same assumptions of Theorem \ref{thm:exist}. Then $\Gamma(t)$ must fulfill one of following three conditions.

(1). (Expanding) Assume that $T=\infty$ and that both $h(t)$ and $b(t)$ tend to $\infty$ as $t\rightarrow\infty$, there exist $t_0>0$, $R_1(t)$, $R_2(t)$ such that
$$B_{R_1(t)}((0,0))\cap\{x>0\}\subset U(t) \subset B_{R_2(t)}((0,0))\cap\{x>0\},\ t>t_0$$
where $U(t)=\{(x,y)\in\mathbb{R}^2\mid |y|<u(x,t),\ x>0\}$. Moreover $\lim\limits_{t\rightarrow\infty}R_1(t)/t=\lim\limits_{t\rightarrow\infty}R_2(t)/t=A$.

(2). (Bounded) Assume that $T=\infty$ and that both $h(t)$ and $b(t)$ are bounded from above and below by two positive constants for $t>0$. Then $\lim\limits_{t\rightarrow\infty}d_H(\Gamma(t),\partial B_{1/A}((0,0))\cap\{x\geq0\})=0$.

(3). (Shrinking) Assume that $T<\infty$ and that both $h(t)$ and $b(t)$ tend to 0 as $t\rightarrow T$. Then the flow $\Gamma(t)$ shrinks to a point at $t=T$.
\end{thm}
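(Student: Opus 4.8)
The plan is to treat the three regimes by comparison with explicit evolving circles, supplemented in the bounded case by a monotonicity (energy) argument, with the intersection number principle entering at the one delicate point. The common device is reflection across $\partial\Omega=\{x=0\}$: since $\Gamma(t)$ is the graph $|y|=u(x,t)$ meeting $\{x=0\}$ orthogonally (equivalently $u_x(0,t)=0$), its reflection $\widetilde\Gamma(t):=\Gamma(t)\cup(-\Gamma(t))$ is a smooth embedded closed curve, symmetric about both coordinate axes, evolving by $V=-\kappa+A$ in all of $\mathbb R^2$; the Neumann condition (\ref{eq:Neum1}) is precisely the smoothness of $\widetilde\Gamma(t)$ across $\{x=0\}$, and a solution of the free boundary problem (\ref{eq:1graph})--(\ref{eq:1initial}) is recovered from $\widetilde\Gamma(t)$ by the uniqueness in Theorem~\ref{thm:exist}. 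In this closed-curve picture the avoidance principle holds, the round circles $C_\rho=\partial B_\rho(0)$ evolve through circles with $\dot\rho=A-\rho^{-1}$, and hence each half-circle $C_\rho\cap\{x\ge0\}$ is an admissible barrier for $\Gamma(t)$, with $\rho\equiv1/A$ the unique stationary radius. I record two elementary facts about the scalar ODE $\dot\rho=A-\rho^{-1}$: if $\rho(t_0)>1/A$ then $\rho$ is increasing, $\rho(t)\le\rho(t_0)+A(t-t_0)$, and $\rho(t)/t\to A$ as $t\to\infty$; if $\rho(t_0)<1/A$ then $\rho$ reaches $0$ in finite time $T'$ with $\rho(t)\sim\sqrt{2(T'-t)}$.

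\emph{Case (1), Expanding.} I would trap $U(t)$ between two evolving half-discs centered at the origin. For the outer barrier, fix any $t_0>0$ and $R>1/A$ with $U(t_0)\subset B_R(0)\cap\{x>0\}$ ($U(t_0)$ is bounded), and let $R_2$ solve the ODE with $R_2(t_0)=R$; since $T=\infty$, the avoidance principle gives $U(t)\subset B_{R_2(t)}(0)\cap\{x>0\}$ for all $t\ge t_0$, and $R_2(t)/t\to A$. For the inner barrier I need a time $t_0$ and a radius $\rho_0>1/A$ with $B_{\rho_0}(0)\cap\{x>0\}\subset U(t_0)$; then the evolved half-disc with $R_1(t_0)=\rho_0$ stays inside $U(t)$, $R_1(t)\to\infty$, $R_1(t)/t\to A$, which closes the case. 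Producing such a $(t_0,\rho_0)$ — i.e.\ showing that the expanding region becomes round near the origin, in particular $u(0,t)\to\infty$ — is the genuinely nontrivial point; I would derive it from the shape information available along the proof of Theorem~\ref{thm:threecondition} (for instance, eventual convexity of $\widetilde\Gamma(t)$, which for a region symmetric about $\{x=0\}$ forces $h(t)=u(0,t)$, combined with $h(t),b(t)\to\infty$), the avoidance principle then propagating the inclusion to all later times.

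\emph{Case (2), Bounded.} First note that $S^{*}:=\partial B_{1/A}(0)\cap\{x\ge0\}$ is the unique stationary solution of (\ref{eq:1graph})--(\ref{eq:1initial}): stationarity forces $\kappa\equiv A$, so $\Gamma$ is a circular arc of radius $1/A$; orthogonality at the endpoints on $\{x=0\}$ puts its center on $\{x=0\}$, the symmetry $|y|=u(x)$ puts it at the origin, and $u(b)=0$ closes it up to $S^{*}$. Next, under the standing bounds $0<c_1\le h(t),b(t)\le c_2$, interior parabolic estimates for the quasilinear equation (\ref{eq:1graph}) together with boundary estimates near the vertical-tangent point $x=b(t)$ — obtained after rewriting the curve there as a graph $x=w(y,t)$ over the $y$-axis, where it is non-degenerate — give uniform $C^\infty$ bounds and precompactness of $\{\Gamma(t):t\ge1\}$. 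Along the flow one has the identity $\frac{d}{dt}\big(\mathrm{Length}(\Gamma(t))+A\,\mathrm{Area}(U(t))\big)=\int_{\Gamma(t)}V^2\,ds\ge0$; in the bounded regime the left side is bounded above (the length is finite since near $x=b(t)$ the integrand $\sqrt{1+u_x^2}$ has an integrable square-root singularity with uniformly controlled profile, and the area is bounded), hence $\int_0^\infty\!\int_{\Gamma(t)}V^2\,ds\,dt<\infty$, so every $C^\infty$-subsequential limit of $\Gamma(t_n)$, $t_n\to\infty$, satisfies $V\equiv0$, i.e.\ is stationary, hence equals $S^{*}$. As the $\omega$-limit set is then the single curve $S^{*}$, $d_H(\Gamma(t),S^{*})\to0$. (Alternatively, monotone convergence to $S^{*}$ follows directly by tracking the intersection number of $\Gamma(t)$ with the one-parameter family of radius-$1/A$ arcs centered on $\{x=0\}$.)

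\emph{Case (3), Shrinking.} Here $T<\infty$ and $h(t),b(t)\to0$ as $t\to T$. Since $U(t)\subset[0,b(t)]\times[-h(t),h(t)]$ and $(0,0)\in\overline{U(t)}$ (because $(x,0)\in U(t)$ for every small $x>0$), we get $d_H\big(\overline{U(t)},\{(0,0)\}\big)\le\max\{b(t),h(t)\}\to0$, so $\Gamma(t)=\partial U(t)$ collapses to the single point $(0,0)\in\partial\Omega$ as $t\to T$; comparing with the shrinking arc $\partial B_{\rho(t)}(0)\cap\{x\ge0\}$, $\rho(t)\sim\sqrt{2(T-t)}$, shows moreover that the collapse is asymptotically round. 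The main obstacle in the whole argument is the inner barrier of Case~(1) — excluding a persistent thin ``neck'' at the origin while $h(t),b(t)\to\infty$, i.e.\ securing the roundness near the origin; once it is in hand, Cases~(1)--(3) are the comparison and compactness arguments sketched above.
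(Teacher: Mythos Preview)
Your outline is sound in Cases~(2) and~(3), but Case~(1) contains a real gap, and your Case~(2) is a genuinely different route from the paper's.

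\medskip
\textbf{Case (1), the inner barrier.} You correctly isolate the only hard point: producing some $t_0$ and $\rho_0>1/A$ with $B_{\rho_0}(0)\cap\{x>0\}\subset U(t_0)$. You propose to extract this from ``eventual convexity of $\widetilde\Gamma(t)$'', but no such result is available here: Grayson's convexification is for $A=0$, and for $V=-\kappa+A$ there is no analogue proved (or used) in the paper. The paper closes this gap with the intersection-number barrier of Lemma~\ref{lem:expanding} and Corollary~\ref{cor:expanding2}: one compares the extended graph of $u(\cdot,t)$ with the family of shifted circular caps $W(x,s)=\sqrt{R(s)^2-x^2}-R_0$ (solutions of (Q) with oblique contact angles), and Proposition~\ref{pro:intersection} forces $\mathcal Z\le 2$, so once $l(t_0)$ is large one gets $u(x,t_0)>W(x,\tau_1)\ge\sqrt{C^2-x^2}$ on $[-C,C]$ for some $C>1/A$. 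This places an origin-centred disk of radius $C>1/A$ inside $U(t_0)$ directly, with no convexity needed; your outer barrier and ODE asymptotics then finish exactly as you wrote.

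\medskip
\textbf{Case (2), bounded.} Your monotonicity identity $\frac{d}{dt}\big(L(\widetilde\Gamma(t))+A\,\mathrm{Area}(\widetilde U(t))\big)=\int_{\widetilde\Gamma(t)}V^2\,ds$ is correct, and together with the uniform $C^\infty$ compactness you sketch (interior estimates on $u$, plus estimates on the vertical graph $x=w(y,t)$ near the cap) it yields $\omega$-limit $=\{S^*\}$ by the standard LaSalle-type argument. This is \emph{not} what the paper does. The paper avoids any Lyapunov functional and instead runs the intersection-number method: it first shows $\lim_{t\to\infty}b(t)=\nu$ exists by comparing with a fixed circle of radius $1/A$ tangent to the $x$-axis (Step~1 of Lemma~\ref{lem:1expanding}), then shows the cap $w(\cdot,t)$ converges in $C^{2,1}$ to an arc $\nu-1/A+\sqrt{1/A^2-y^2}$; the same trick at $x=0$ gives $\lim u(0,t)=\mu$ and convergence of the middle part to $\mu-1/A+\sqrt{1/A^2-x^2}$; matching the two pieces forces $\mu=\nu=1/A$. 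Your energy approach is shorter and more robust once compactness is in hand; the paper's approach stays entirely within the intersection machinery already built and yields the limit profile piece by piece without invoking a global functional.

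\medskip
\textbf{Case (3).} Your argument coincides with the paper's: the conclusion is immediate from $h(t),b(t)\to 0$ (Corollary~\ref{cor:2dsingular}), and the asymptotically-round comment is a harmless add-on.
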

We extend $\Gamma(t)$ by even and still denote the extended curve by $\Gamma(t)$. Then problem (\ref{eq:cur}), (\ref{eq:Neum1}), (\ref{eq:initial1}) is equivalent to the following problem in whole space 
\begin{equation}\label{eq:cureven}
V=-\kappa+A,\ \text{on}\ \Gamma(t)\subset \mathbb{R}^2,
\end{equation}
\begin{equation}\label{eq:initialeven}
\Gamma(0)=\Gamma_0=\Lambda_0\cup\{(-x,y)\in\mathbb{R}^2\mid (x,y)\in\Lambda_0\}.
\end{equation}
If we extends $u_0$ by even(still denoted by $u_0$), then obviously
\begin{equation}\label{eq:ineven}
\Gamma_0=\{(x,y)\in\mathbb{R}^2\mid |y|=u_0(x)\}.
\end{equation}
In this paper we consider the problem (\ref{eq:cureven}), (\ref{eq:initialeven}) instead of the problem (\ref{eq:cur}), (\ref{eq:Neum1}), (\ref{eq:initial1}).

We next give a sufficient result to have a fattening phenomenon. The definition of interface evolution and fattening are given in section 2.

\begin{thm}\label{thm:fattening1} (Fattening)

 If there exists $\delta$ such that $a_*(t)\geq0$, for $0<t<\delta$, the interface evolution $\Gamma(t)$ for (\ref{eq:cureven}) with initial data $\Gamma_0$ is fattening.
\end{thm}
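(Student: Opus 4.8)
The plan is to show that when the end point $a_*(t)\ge 0$ for $0<t<\delta$, the zero level set of the (whole–space, even) level set solution $\phi$ for \eqref{eq:cureven} develops a nonempty interior, i.e. the set $\{\phi(\cdot,t)=0\}$ cannot coincide with a curve of zero area. First I would set up two natural barriers that pinch the interface from inside and outside. Since $\Lambda(t)$ (the solution of the auxiliary problem $(*)$ without the boundary wall) stays strictly in $\{x>a_*(t)\}$ and touches the line $\{x=a_*(t)\}$ only at the single point where $v_x=+\infty$, its even reflection $\Gamma_{\mathrm{aux}}(t)=\Lambda(t)\cup\{(-x,y):(x,y)\in\Lambda(t)\}$ is, for each small $t>0$, a curve that is \emph{not} smooth across $\{x=a_*(t)\}$ when $a_*(t)>0$ (a corner pointing in the $-x$ direction), and it degenerates to the tangential point when $a_*(t)=0$. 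The key observation is that $\Gamma_{\mathrm{aux}}(t)$ is an \emph{inner} barrier: the region it bounds lies inside any genuine evolution of $\Gamma_0$, because away from the reflection line $\Lambda(t)$ solves the same equation and near the line the reflected curve can only move outward. Thus the ``inner'' interface evolution contains the (nonempty, open) region enclosed by $\Gamma_{\mathrm{aux}}(t)$ for $t\in(0,\delta)$.

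Next I would produce a competing \emph{outer} evolution that also emanates from $\Gamma_0$ but keeps the two sheets $\{x>0\}$ and $\{x<0\}$ pinned together along $\{x=0\}$ — concretely, the reflection of the solution $(u,b)$ of the Neumann problem \eqref{eq:1graph}--\eqref{eq:1initial}, extended by even, which by Theorem~\ref{thm:exist} exists and is a \emph{different} family of curves from $\Gamma_{\mathrm{aux}}(t)$ precisely because $u(0,t)>0$ for $t>0$ (the Neumann solution pulls away from the $y$–axis while keeping contact there), whereas $\Gamma_{\mathrm{aux}}(t)$ has its left end at $x=a_*(t)\ge 0$. Comparing the two: for $t>0$ the curve $\Gamma_{\mathrm{aux}}(t)$ sits strictly to the right of (or tangent to) $\{x=a_*(t)\}$ and the Neumann curve touches the axis, so the open region $U_{\mathrm{aux}}(t)$ enclosed by $\Gamma_{\mathrm{aux}}(t)$ is \emph{strictly contained} in the open region enclosed by the even extension of the Neumann curve, with both contained in the zero set of $\phi$. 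A point in the nonempty gap between them lies in $\{\phi(\cdot,t)=0\}$, so $\mathrm{int}\{\phi(\cdot,t)=0\}\ne\emptyset$, which is exactly fattening by the definition in Section~2.

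To make the comparison rigorous I would phrase it through the level set PDE and the comparison principle for viscosity solutions (as in \cite{G}): let $\phi_{\mathrm{in}}$ and $\phi_{\mathrm{out}}$ be level set solutions whose initial zero sets both equal $\Gamma_0$ but with initial sign chosen so that $\phi_{\mathrm{in}}$ tracks $\Gamma_{\mathrm{aux}}$ and $\phi_{\mathrm{out}}$ tracks the even Neumann curve; the general theory gives that $\overline{\{\phi_{\mathrm{in}}<0\}}\subset\{\phi=0\}$ and $\{\phi_{\mathrm{out}}\le 0\}\subset\{\phi=0\}$ where $\phi$ is \emph{the} solution with initial data $a_1$ vanishing exactly on $\Gamma_0$, because the union of all evolutions starting from $\Gamma_0$ is sandwiched between the maximal and minimal ones. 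The arithmetic reduces to checking $U_{\mathrm{aux}}(t)\subsetneq U_{\mathrm{Neu,even}}(t)$ and that the inclusion is strict with nonempty interior difference, which follows from $a_*(t)\ge 0>-\,\text{(left reach of the Neumann curve)}$ — here one uses that the Neumann solution has $u_x(0,t)=0$ so its even extension is a genuine smooth curve crossing the axis, enclosing a neighborhood of the segment of the $y$–axis between $-u(0,t)$ and $u(0,t)$, which $\Gamma_{\mathrm{aux}}(t)$ does not enclose when $a_*(t)>0$, and when $a_*(t)=0$ one argues by a separate limiting/strict-parabolicity argument that the tangential contact still forces a nontrivial interior.

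The main obstacle I anticipate is the borderline case $a_*(t)=0$: there $\Gamma_{\mathrm{aux}}(t)$ and the Neumann curve both touch the $y$–axis, and showing that they are nonetheless distinct — equivalently that the interface genuinely fattens rather than the two candidate evolutions collapsing onto one another — requires a quantitative strong-maximum-principle estimate near the contact point (e.g. that $a_*'(0^+)$ or the rate at which the auxiliary curve leaves the axis differs from the corresponding rate for the Neumann solution, the latter being forced by $u_x(0,t)\equiv 0$). A second, more technical point is justifying that the even reflection of $\Lambda(t)$ is admissible as a barrier for the whole-space flow despite its corner along $\{x=a_*(t)\}$; this is handled by the usual observation that a corner pointing into the enclosed region is favorable for the comparison (the level set solution instantaneously smooths an inward corner, so it only helps the inclusion), and can be made precise by approximating $\Gamma_{\mathrm{aux}}(t_0)$ from inside by smooth curves and passing to the limit.
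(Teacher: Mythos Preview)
Your overall picture is the same as the paper's: the open (inner) evolution starting from $\Gamma_0$ splits into two pieces $U^+(t)\cup U^-(t)$ bounded by $\Lambda(t)$ and its reflection, while the closed (outer) evolution $E(t)$ stays connected across the $y$-axis, and any point in the gap $E(t)\setminus D(t)$ near the axis witnesses fattening. However, there is a genuine gap in your argument for the outer barrier.

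You invoke Theorem~\ref{thm:exist} to obtain the Neumann solution $(u,b)$ of \eqref{eq:1graph}--\eqref{eq:1initial}, but Theorem~\ref{thm:exist} has the \emph{opposite} hypothesis $a_*(t)<0$. Under the present assumption $a_*(t)\ge 0$ you cannot cite it, and you have not given any independent existence argument for the Neumann free boundary problem. The paper avoids this entirely: instead of identifying $\partial E(t)$ with the even Neumann curve, it approximates $\overline{U}$ from outside by a decreasing sequence of genuine $\alpha/2^j$-domains $E_j$ (Lemma~\ref{lem:closeas}), applies Lemma~\ref{lem:alphad2} to each $E_j$ to conclude that $E_j(t)^\circ$ is an $(\alpha/2^j+At)$-domain, and passes to the monotone limit (Theorem~\ref{thm:mon}) to get that $E(t)^\circ$ is an $At$-domain. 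This immediately places a ball $B_{\delta_1}((0,At_1/4))\subset E(t)$ on the $y$-axis, with no need for the Neumann solution.

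For the inner evolution, your corner-barrier argument is also more delicate than necessary. The paper uses Lemma~\ref{lem:sep}: since $a_*(t)\ge 0$ forces $U^+(t)\cap U^-(t)=\emptyset$ for $0\le t<\delta$, the open evolution of the union equals the union of the open evolutions, $D(t)=U^+(t)\cup U^-(t)$. The ball $B_{\delta_1}((0,At_1/4))$ is then automatically disjoint from $D(t)$ because $v(a_*(t),t)=0$ keeps both pieces away from the $y$-axis above the origin. In particular the borderline case $a_*(t)=0$, which you flag as the main obstacle, requires no separate strict-parabolicity argument: $U^+(t)$ and $U^-(t)$ are disjoint \emph{open} sets even when their closures touch at the origin, and the ball sits at positive height where neither piece reaches.
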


Theorem \ref{thm:exist} and Theorem \ref{thm:fattening1} can be explained by Figure \ref{fig:exist} and \ref{fig:fattening1}. $\varphi$ in Figure \ref{fig:exist} and \ref{fig:fattening1} is given by the unique viscosity solution of 
$$
\left\{
\begin{array}{lcl}
\dis{\varphi_t=|\nabla \varphi|\textmd{div}(\frac{\nabla \varphi}{|\nabla \varphi|})+A|\nabla \varphi|}\ \textrm{in}\  \mathbb{R}^2\times(0,T),\\
\varphi(x,y,0)=a_2(x,y),
\end{array}
\right.
$$
where $a_2(x,y)$ satisfies $\Gamma_0=\{(x,y)\mid a_2(x,y)=0\}$. Let $\Gamma(t)=\{(x,y)\mid \varphi(x,y,t)=0\}$.

\begin{figure}[htbp]
	\begin{center}
            \includegraphics[height=7.0cm]{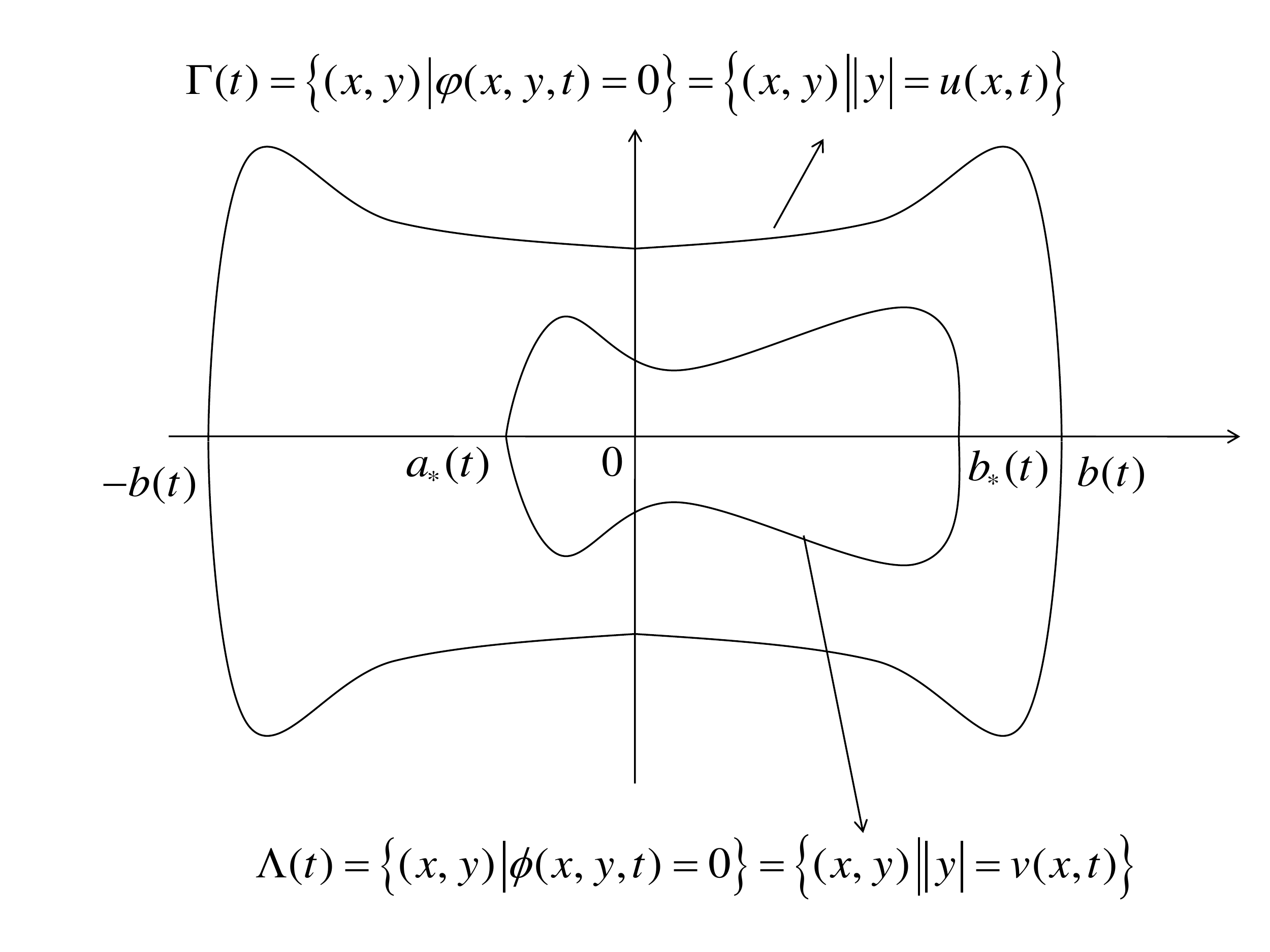}
		\vskip 0pt
		\caption{$a_*(t)<0$ in Theorem \ref{thm:exist}}
        \label{fig:exist}
	\end{center}
\end{figure}

\begin{figure}[htbp]
	\begin{center}
            \includegraphics[height=7.0cm]{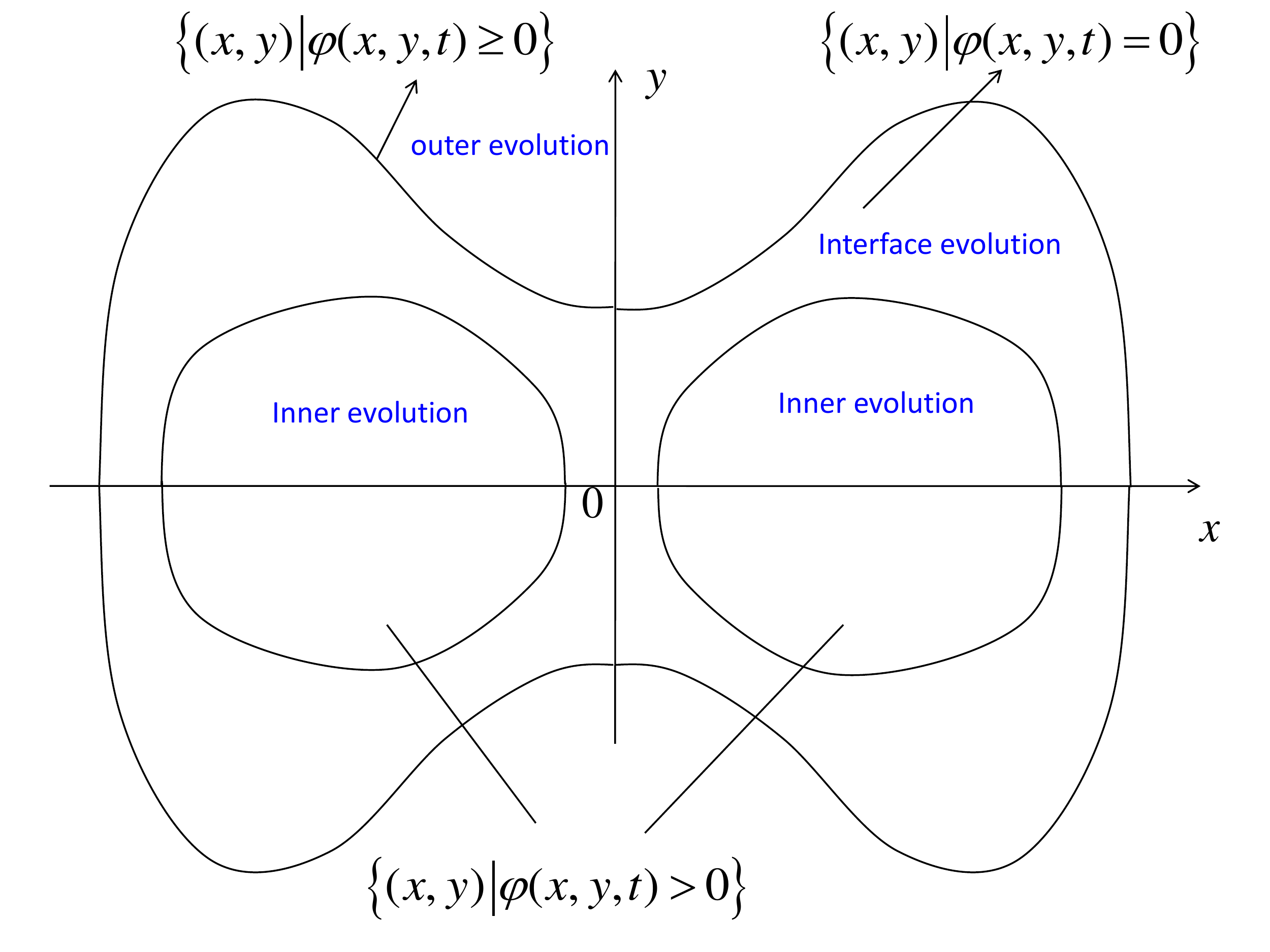}
		\vskip 0pt
		\caption{$a_*(t)\geq0$ in Theorem \ref{thm:fattening1}}
        \label{fig:fattening1}
	\end{center}
\end{figure}

\begin{rem}
The assumptions for $a_*(t)$ in Theorem \ref{thm:exist} and \ref{thm:fattening1} seem not to be understood easily. Here we explain the assumptions by giving some sufficient conditions. 

Denoting $\kappa(O)$ as the curvature of $\Lambda_0$ at origin, it is easy to see that
$$
\kappa(O)=-\lim\limits_{x\rightarrow0^{+}}u_0^{\prime\prime}/(1+(u_0^{\prime})^2)^{3/2}.
$$
Then we can prove that if 

(a). $\kappa(O)<A$, there holds $a_*(t)<0$, for $t$ small;

(b). $\kappa(O)>A$, there holds $a_*(t)>0$, for $t$ small.
\\Since
$$
a_*^{\prime}(0)=\kappa(O)-A.
$$
\end{rem}
{\bf The role of $a_*(t)$ in main theorem.} Let $(u,b)$ is the unique solution of the free boundary problem (\ref{eq:1graph}), (\ref{eq:1bounday}), (\ref{eq:1initial}). Obviously, the flow $\Gamma^*(t)=\{(x,y)\mid |y|=u(x,t),\ 0\leq x\leq b(t)\}$ satisfies (\ref{eq:cur}), (\ref{eq:Neum1}), (\ref{eq:initial1}) naturally.

Let $(v,a_*,b_*)$ be the solution of the problem (*). If $a_*(t)\geq0$, $0<t<\delta$, the curves 
$$
\Lambda(t)=\{(x,y)\mid|y|=v(x,t),\ a_*(t)\leq x\leq b_*(t)\}
$$ 
are located in $\{x\geq0\}$.  

Note that $\Gamma^*(0)=\Lambda(0)=\Lambda_0$. However, $\Gamma^*(t)$ are perpendicular to $y$-axis and the family $\{\Lambda(t)\}$ evolves freely. This means that there exist two types of flows $\Gamma^*(t)$ and $\Lambda(t)$ evolving by $V=-\kappa+A$ with the same initial curve $\Lambda_0$. This can be considered as non-uniqueness. Indeed, seeing the proof of Theorem \ref{thm:fattening1}, the flow given by extending $\Gamma^*(t)$ evenly is the boundary of closed evolution and  the flow given by extending $\Lambda(t)$ evenly is the boundary of open evolution.

If $a_*(t)<0$, $0<t<\delta$, the problem (*) will not make sense in the half space $\{x\ge0\}$. But the solution given by (*) plays the role of a sub-solution (in the proof of Lemma \ref{lem:closebou}). Using this sub-solution, the boundaries of the open evolution and closed evolution are away from the $x$-axis. By the uniqueness result(Proposition 5.4), we can prove they are the same. 

 In the curve shortening flow------$A=0$, since $a_*(t)\geq0$ always holds, the interface evolution is fattening.  

{\bf Motivation.} This research is motivated by \cite{MNL}, the mean curvature flow with driving force under the Neumann boundary condition in a two-dimensional cylinder with periodically undulating boundary. 
\begin{figure}[htbp]
 \begin{minipage}{0.49\hsize}
  \begin{center}
   \includegraphics[width=10cm]{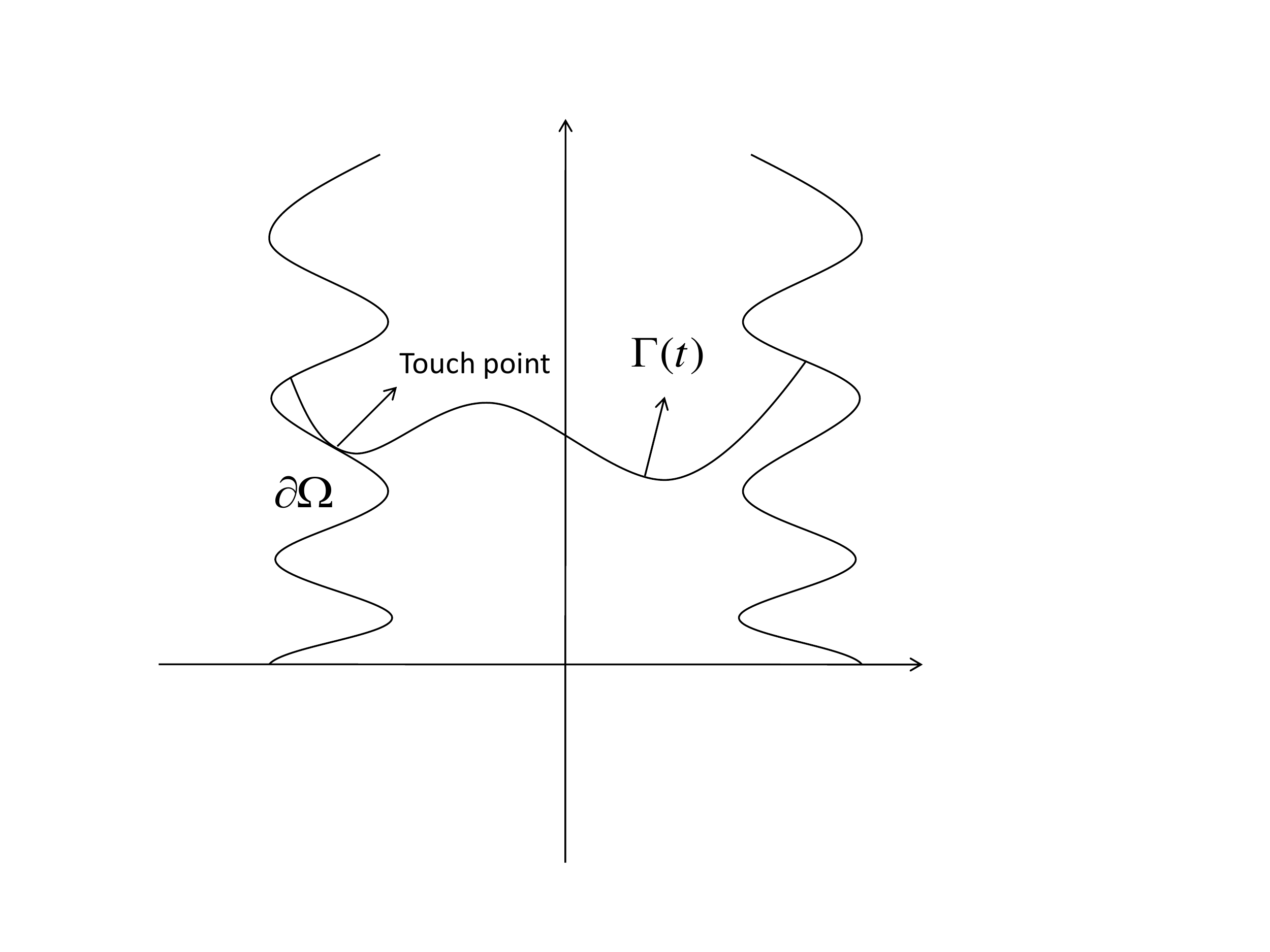}
  \end{center}
  \caption{Curve touching }
  \label{fig:intro1}
 \end{minipage}
 \begin{minipage}{0.49\hsize}
  \begin{center}
   \includegraphics[width=10cm]{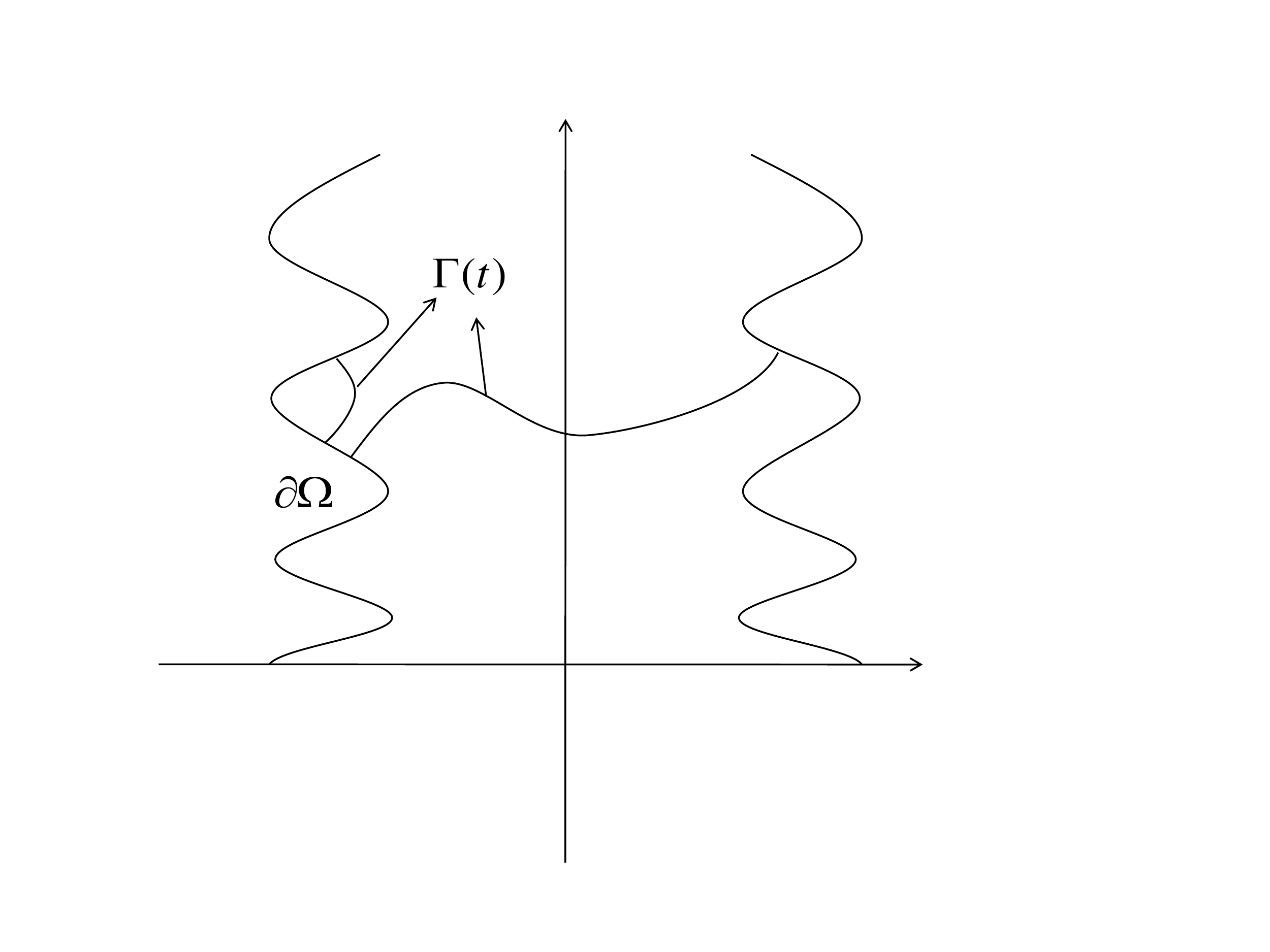}
  \end{center}
  \caption{After touching}
  \label{fig:intro2}
 \end{minipage}
\end{figure}
 In \cite{MNL}, they only consider the condition that for initial curve $\{(x,y)\in\mathbb{R}^2\mid y=u_0(x)\}$ with $|u_0^{\prime}(x)|<M$ for some $M$. They show that the interior point of $\Gamma(t)=\{(x,y)\in\mathbb{R}^2\mid y=u(x,t)\}$ never touches the boundary and $\Gamma(t)$ remains graph. Therefore, the problem can be studied by the classical parabolic theory. If removing the assumption $|u_0^{\prime}(x)|<M$, when $u(x,t)$ touches the boundary, the singularity will develop(Figure \ref{fig:intro1}). Noting Figure \ref{fig:intro2}, after touching, $\Gamma(t)$ will possibly separate into two parts and become non-graph($\Gamma(t)$ can't be represented by $y=u(x,t)$). This makes us analyze what will happen after touching boundary. Since $\Gamma(t)$ may become non-graph, we want to use the level set method established by \cite{CGG}; see also Evans and Spruck \cite{ES2} for the mean curvature flow, where fattening phenomenon is first observed. Therefore the main task in this paper is to study whether the interface evolution is fattening or not. The notion of the level set solution is introduced in Section 2.

{\bf A short review for mean curvature flow.} For the classical mean curvature flow: $A=0$ in (\ref{eq:cur}), there are many results. Concerning this problem, Huisken \cite{H} shows that any solution that starts out as a convex, smooth, compact surface remains so until it shrinks to a "round point" and its asymptotic shape is a sphere just before it disappears. He proves this result for hypersurfaces of $\mathbb{R}^{n+1}$ with $n\geq2$, but Gage and Hamilton \cite{GH} show that it still holds when $n=1$, the curves in the plane. Gage and Hamilton also show that embedded curve remains embedded, i.e. the curve will not intersect itself. Grayson \cite{Gr} proves the remarkable fact that such family must become convex eventually. Thus, any embedded curve in the plane will shrink to "round point" under curve shortening flow. But in higher dimensions it is not true. Grayson \cite{Gr2} also shows that there exists a smooth flow that becomes singular before shrinking to a point. His example consisted of a barbell: two spherical surfaces connected by a sufficiently thin "neck". In this example, the inward curvature of the neck is so large that it will force the neck to pinch before shrinking. In \cite{AAG}, A. Altschuler, S. B. Angenent and Y. Giga study the flow whose initial hypersurface is a compact, rotationally symmetric hypersurface but pinching on $x$-axis by level set method. They proved the hypersurface will separate into two smooth hypersurfaces after pinching.

{\bf Main method.} In this paper, one of the most important tools is the intersection number principle. It was also used in \cite{AAG} that the intersection number between two families evolving by mean curvature flow is non-increasing. But for the problem with driving force, their intersection number may increase. In \cite{GMSW}, they give the extended intersection number principle for the following free boundary problem called (Q)
\begin{equation}
\left\{
\begin{array}{lcl}
\dis{u_t=\frac{u_{xx}}{1+u_x^2}+A\sqrt{1+u_x^2}},\ x\in(a(t),b(t)),\ 0<t< T,\\
u(a(t),t)=0,\ u(b(t),t)=0,\ 0\leq t< T,\\
u_x(a(t),t)=\tan\theta_-(t),\ u_x(b(t),t)=-\tan\theta_+(t),\ 0\leq t< T,\\
u(x,0)=u_0(x),\ a(0)\leq x\leq b(0),
\end{array}
\right.\tag{Q}
\end{equation}
where $0<\theta_{\pm}<\pi/2$.

If $(u_1,a_1,b_1)$, $(u_2,a_2,b_2)$ are the solutions of (Q) with $\theta_{\pm}^1$, $\theta_{\pm}^2$, $u_0^1$ and $u_0^2$, the intersection number between
$$
\{(x,y)\mid y=u_1(x,t),a_1(t)\leq x\leq b_1(t)\}
$$
and
$$
\{(x,y)\mid y=u_2(x,t),a_2(t)\leq x\leq b_2(t)\}
$$
will increase possibly, by some simple examples(here we omit it). In \cite{GMSW}, they find a non-increasing quantity. If $u_1$ and $u_2$ are extended by straight line, such that the extended functions $u_1^{*}$, $u_2^{*}$ are in $C^1(\mathbb{R})$, then the intersection number between $u_1^{*}$ and $u_2^{*}$ is non-increasing provided that $\theta_{\pm}^1 \neq \theta_{\pm}^2$. If $\theta_{+}^1= \theta_{+}^2$, the intersection number will not increase provided that $b_1(t)\neq b_2(t)$ and decrease at $t_0$ satisfying $b_1(t_0)=b_2(t_0)$. Similarly for $a(t)$. These results are called ``extended intersection number principle''. 

As we observing before, in this paper, the curve symmetric around $x$-axis is considered. Under this condition, $\theta_{\pm}$ in problem (Q) satisfy $\theta_{\pm}=\pi/2$. If we extend $u_1$ and $u_2$ by vertical straight line, it will be seen that the extended $C^1$ curve $\gamma_1(t)$ and $\gamma_2(t)$(Since $\theta_{\pm}=\pi/2$, the extended curve will not be graph) will intersect with each other even if the intersection number between $\gamma_1(0)$ and $\gamma_2(0)$ is zero. We will investigate the intersection number in Section 4.

The rest of this paper is organized as follows. In Section 2, we introduce the level set method established by \cite{CGG}. The definition of open evolution, closed evolution, fattening and the basic knowledge including comparison principle, monotone convergence theorem and so on are given in this section. In Section 3, we prove the Evans-Spruck estimate(also called gradient interior estimate). In Section 4, the results of intersection number are investigated and their application are given. In Section 5, we give the proof of Theorem \ref{thm:exist} and Theorem \ref{thm:fattening1}. In Section 6, we study the possible formations of singularity. In Section 7, we classify the solution given by Theorem \ref{thm:exist} and prove the asymptotic behavior in each category(Theorem \ref{thm:threecondition} and \ref{thm:asym}). In Section 8, we give another non-fattening result in $(n+1)$-dimension with a type of initial hypersurfaces.


\section{Level set method}

Since the initial curve $\Gamma_0$ given in (\ref{eq:initialeven}) has singularity at $(0,0)$, the equation $V=-\kappa+A$ does not make sense at $t=0$. Therefore, we want to apply the level set method to our problem. In this section, we introduce the level set method in $\mathbb{R}^N$. For $\Gamma(t)$ being a smooth family of smooth, closed, compact hypersurfaces in $\mathbb{R}^{N}$. Assume there exists $\psi(x,t)$ such that $\Gamma(t)=\{x|\psi(x,t)=0,x\in\mathbb{R}^N\}$. If $\Gamma(t)$ evolves by (\ref{eq:cureven}), we can derive $\psi(x,t)$ satisfying 
\begin{equation}\label{eq:level}
\dis{\psi_t=|\nabla \psi |\textmd{div}(\frac{\nabla \psi}{|\nabla \psi|})+A|\nabla \psi|\ \textrm{in}\  \mathbb{R}^N\times(0,T)}.
\end{equation}
Equation (\ref{eq:level}) is called the level set equation of (\ref{eq:cureven}). Theorem 4.3.1 in \cite{G} gives the existence and uniqueness of the viscosity solution for (\ref{eq:level}) with $\psi(x,0)=\psi_0(x)$. Where $\psi_0(x)$ is a bounded and uniform continuous function.
\begin{figure}[htbp]
	\begin{center}
            \includegraphics[height=5cm]{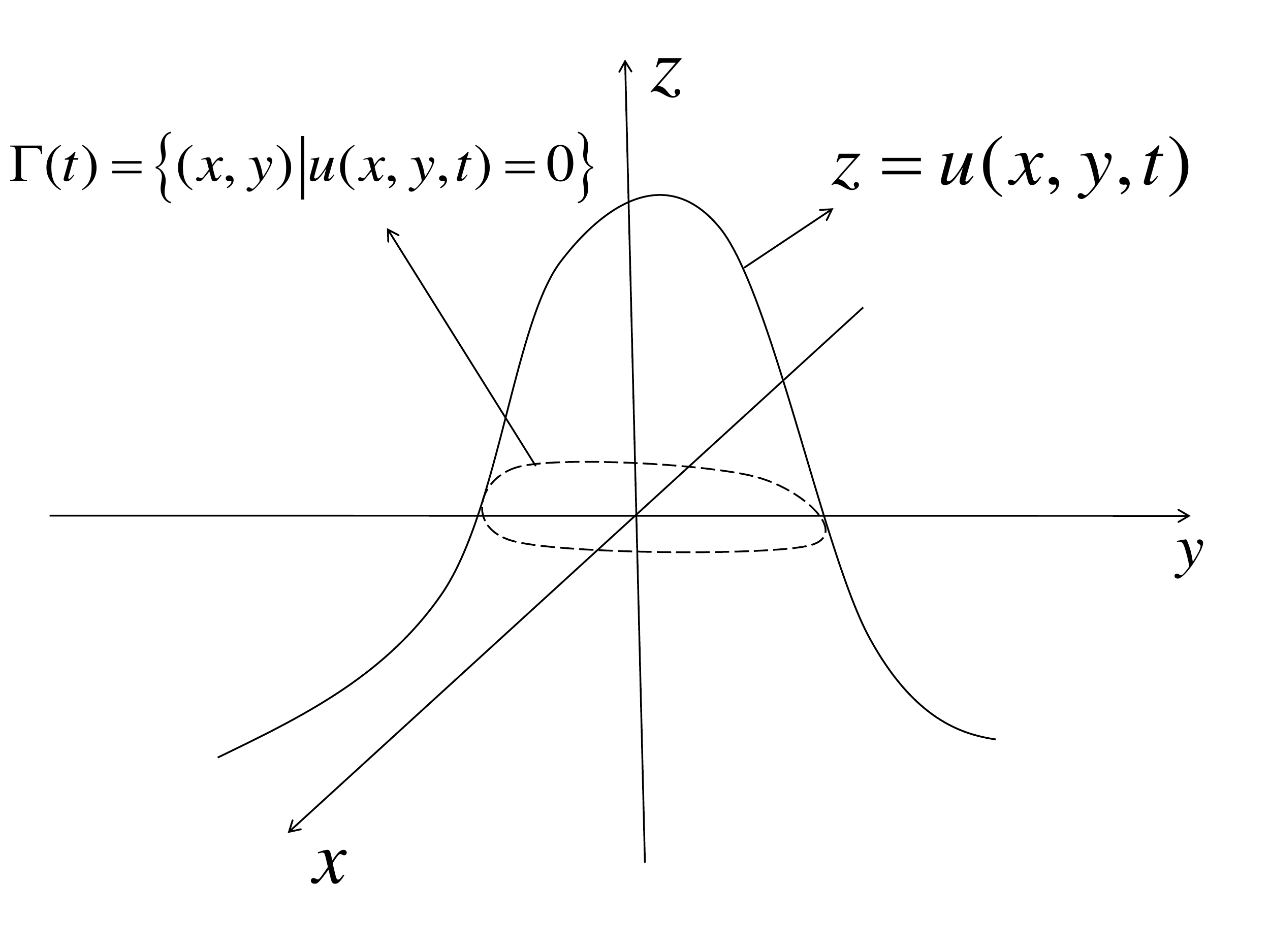}
		\vskip 0pt
		\caption{Level set method in $\mathbb{R}^2$}
        \label{fig:Levelsetmethod}
	\end{center}
\end{figure}

{\bf Level set method} Using the solution of level set equation, we introduce the level set method. 
\begin{defn}\label{def:evo} (1) Let $D_0$ be a bounded open set in $\mathbb{R}^N$. A family of  open sets $\{D(t)\mid D(t)\subset \mathbb{R}^N\}_{0<t<T}$ is called an $(generlized)$ $open$ $evolution$ of (\ref{eq:cureven}) with initial data $D_0$ if there exists a viscosity solution $\psi$ of (\ref{eq:level}) that satisfies
$$
D(t)=\{x\in\mathbb{R}^N \mid \psi(x,t)>0\},\ D_0=\{x\in\mathbb{R}^N\mid \psi(x,0)>0\}.
$$

(2) Let $E_0$ be a bounded closed set in $\mathbb{R}^N$. A family of closed sets $\{E(t)\mid E(t)\subset \mathbb{R}^N\}_{0<t<T}$ is called a $(generlized)$ $closed$ $evolution$ of (\ref{eq:cureven}) with initial data $E_0$ if there exists a viscosity solution $\psi$ of  (\ref{eq:level}) that satisfies
$$
E(t)=\{x\in\mathbb{R}^N \mid \psi(x,t)\geq0\},\ E_0=\{x\in\mathbb{R}^N\mid \psi(x,0)\geq0\}.
$$

The set $\Gamma(t)=E(t)\setminus D(t)$ is called an \textit{(generalized) interface evolution} of  (\ref{eq:cureven}) with initial data $\Gamma_0=E_0\setminus D_0$.
\end{defn}

\begin{rem}\label{rem:lev} (1) For open set $D_0$, we often choose
$$
\psi(x,0)=\max\{\textrm{sd}(x,\partial D_0),-1\}
$$
where
$$
\textrm{sd}(x,\partial D_0)=\left\{
\begin{array}{lcl}
\textrm{dist}(x,\partial D_0), \ x\in D_0,\\
-\textrm{dist}(x,\partial D_0), \ x\notin D_0.
\end{array}\right.
$$

(2) Seeing  that the choice of $\psi(x,0)$ isn't unique, but by the Theorem 4.2.8 in \cite{G}, the open evolution $D(t)$ and closed evolution $E(t)$ are both independent on the choice of $\psi(x,0)$.

(3) In generally, even if $E_0=\overline{D_0}$, we can not guarantee $E(t)=\overline{D(t)}$. If $E(t)\setminus D(t)$ has interior points for some $t$, we call the interface evolution is fattening. Respectively, if $E(t)=\overline{D(t)}$, for all $0<t<T$, we say the interface evolution is regular. Therefore, in the proof of Theorem \ref{thm:exist}, it is sufficient to prove $\partial D(t)=\partial E(t)$. In the proof of Theorem \ref{thm:fattening1}, it is sufficient to prove that there exists a ball $B$ such that $B\subset E(t)\setminus D(t)$.
\end{rem}

We now list some fundamental properties of open evolution and closed evolution of  (\ref{eq:cureven})(Chapter 4 in \cite{G}).

\begin{thm}\label{thm:semi}
(Semigroups)\cite{G}. Denote $U(t)$ and $M(t)$ being the operators such that
$U(t)D_0=D(t)$ and $M(t)E_0=E(t)$, for $t>0$. Then we have $U(t)D(s)=D(t+s)$ and $M(t)E(s)=E(t+s)$, for any $t>0$, $s>0$. 
\end{thm}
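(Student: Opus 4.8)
The plan is to deduce the two semigroup identities directly from facts already recorded in the excerpt: the existence and uniqueness of a bounded, uniformly continuous (BUC) viscosity solution of the level set equation (\ref{eq:level}) for each BUC initial datum (Theorem 4.3.1 in \cite{G}), and the fact that the open evolution $D(t)$ and the closed evolution $E(t)$ depend only on the sets $D_0$, $E_0$ and not on the particular defining function $\psi(\cdot,0)$ (Remark \ref{rem:lev}(2), i.e.\ Theorem 4.2.8 in \cite{G}). The extra structural observation needed is that the spatial operator $\psi\mapsto|\nabla\psi|\,\mathrm{div}(\nabla\psi/|\nabla\psi|)+A|\nabla\psi|$ carries no explicit dependence on $t$, so (\ref{eq:level}) is autonomous and its solutions are stable under time translation.

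First I would fix $s>0$ and choose a single BUC function $\psi_0$ with $\{\psi_0>0\}=D_0$ and $\{\psi_0\geq 0\}=E_0$ (for instance a truncation of the signed distance to $\partial D_0$, as in Remark \ref{rem:lev}(1), arranged so that both level conditions hold). Let $\psi$ be the viscosity solution of (\ref{eq:level}) with $\psi(\cdot,0)=\psi_0$, so that by Definition \ref{def:evo} one has $D(t)=\{\psi(\cdot,t)>0\}$ and $E(t)=\{\psi(\cdot,t)\geq 0\}$ for every $t$. Next I would set $\tilde\psi(x,t):=\psi(x,t+s)$ on $\mathbb{R}^N\times(0,T-s)$. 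Because (\ref{eq:level}) is autonomous, $\tilde\psi$ is again a viscosity solution of (\ref{eq:level}), and its initial datum $\tilde\psi(\cdot,0)=\psi(\cdot,s)$ is BUC; by the uniqueness statement, $\tilde\psi$ is \emph{the} viscosity solution associated with the initial datum $\psi(\cdot,s)$.

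It then remains to read off the conclusion. Since $\{\tilde\psi(\cdot,0)>0\}=\{\psi(\cdot,s)>0\}=D(s)$, the function $\tilde\psi$ is an admissible defining function for computing the open evolution of the open set $D(s)$; invoking the independence of the evolution from the choice of defining function, that evolution equals $\{\tilde\psi(\cdot,t)>0\}=\{\psi(\cdot,t+s)>0\}=D(t+s)$, which is exactly $U(t)D(s)=D(t+s)$. Repeating the same argument with $\geq$ in place of $>$, and using $\{\tilde\psi(\cdot,0)\geq 0\}=\{\psi(\cdot,s)\geq0\}=E(s)$, yields $M(t)E(s)=E(t+s)$. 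When $T<\infty$ the identities are understood for $t,s>0$ with $t+s<T$.

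The step needing the most care — rather than any genuine difficulty — is the passage in the last paragraph: one must know that $U(t)$ and $M(t)$ are genuinely well defined as maps on open, respectively closed, sets, so that $U(t)D(s)$ may legitimately be evaluated using the particular representative $\tilde\psi$ rather than some a priori different defining function; this is precisely the content of Remark \ref{rem:lev}(2). The only other thing to verify is that a time translate of a viscosity solution of an autonomous equation is again a viscosity solution with the translated profile as its (BUC) initial datum, which is immediate from the test-function definition of viscosity solution. No estimates or approximation arguments are required beyond what \cite{G} already supplies.
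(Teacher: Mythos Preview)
The paper does not supply its own proof of this theorem; it simply records the statement and cites \cite{G}. Your argument is correct and is precisely the standard one: exploit that the level set equation (\ref{eq:level}) is autonomous so that time translates of viscosity solutions are again viscosity solutions, then invoke uniqueness (Theorem 4.3.1 in \cite{G}) together with the independence of $D(t)$, $E(t)$ from the choice of defining function (Theorem 4.2.8 in \cite{G}, recorded here as Remark \ref{rem:lev}(2)) to conclude.
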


\begin{thm}\label{thm:order}(Order preserving property)\cite{G}. Let $D_0$, $D_0^{\prime}$ be two open sets in $\mathbb{R}^N$ and let $E_0$, $E_0^{\prime}$ be two closed sets in $\mathbb{R}^N$. Then

(1) $U(t)D_0\subset U(t)D_0^{\prime}$, if $D_0\subset D_0^{\prime}$;

(2) $M(t)E_0\subset M(t)E_0^{\prime}$, if $E_0\subset E_0^{\prime}$;

(3) $U(t)D_0\subset M(t)E_0^{\prime}$, if $D_0\subset E_0^{\prime}$;

(4) $E_0\subset D_0$ and $\textrm{dist}(E_0,\partial D_0)>0$, then $M(t)E_0\subset U(t)D_0$.
\end{thm}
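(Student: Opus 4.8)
The plan is to deduce all four statements from a single analytic input available from \cite{G}: the comparison principle for bounded, uniformly continuous viscosity sub- and supersolutions of the level set equation (\ref{eq:level}). Two elementary observations turn the comparison principle into the four set-theoretic inclusions. First, the right-hand side of (\ref{eq:level}) depends only on $\nabla\psi$ and $\nabla^2\psi$, not on $\psi$ itself; hence if $\psi$ solves (\ref{eq:level}) then so does $\psi+c$ for every constant $c\in\mathbb{R}$, which upgrades the comparison principle to a quantitative, ``strict'' version. Second, by Theorem 4.2.8 of \cite{G} (Remark \ref{rem:lev}(2)) the open evolution $U(t)D_0$ and the closed evolution $M(t)E_0$ are independent of the choice of defining function, so we are free to use the signed-distance functions of Remark \ref{rem:lev}(1), namely $\psi_0=\max\{\mathrm{sd}(\cdot,\partial D_0),-L\}$ with a truncation level $L>0$ to be chosen; such $\psi_0$ lies in $BUC(\mathbb{R}^N)$ (it is $1$-Lipschitz and bounded, since $D_0$ is bounded) and satisfies $\{\psi_0>0\}=D_0$, $\{\psi_0\ge0\}=\overline{D_0}$, with the analogous statements for a closed set.

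For parts (1)--(3) I would argue as follows. The key elementary fact is that signed distance is monotone under inclusion: if $X\subset Y$ then $\mathrm{sd}(\cdot,\partial X)\le\mathrm{sd}(\cdot,\partial Y)$ pointwise on $\mathbb{R}^N$ (check the three cases according to whether the point lies in $X$, in $Y\setminus X$, or outside $Y$, using $X^c\supset Y^c$). Truncating both sides at the same level $-L$ preserves this ordering, so in case (1) we obtain defining functions $\psi_0\le\psi_0'$ for $D_0\subset D_0'$; the comparison principle gives $\psi(\cdot,t)\le\psi'(\cdot,t)$, hence $\{\psi(\cdot,t)>0\}\subset\{\psi'(\cdot,t)>0\}$, i.e. $U(t)D_0\subset U(t)D_0'$. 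Case (2) is identical with $\{\cdot\ge0\}$ in place of $\{\cdot>0\}$. For case (3), $D_0\subset E_0'$ again gives $\psi_0\le\psi_0'$ after truncation, comparison gives $\psi\le\psi'$, and then $\{\psi(\cdot,t)>0\}\subset\{\psi'(\cdot,t)>0\}\subset\{\psi'(\cdot,t)\ge0\}$ yields $U(t)D_0\subset M(t)E_0'$.

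Part (4) is where the hypothesis $d:=\mathrm{dist}(E_0,\partial D_0)>0$ is used, and it is the only genuinely delicate point: a non-strict inequality $\psi\le\psi'$ never by itself forces $\{\psi(\cdot,t)\ge0\}\subset\{\psi'(\cdot,t)>0\}$, so we need a gap. I would first establish the strengthened pointwise bound $\mathrm{sd}(x,\partial E_0)\le\mathrm{sd}(x,\partial D_0)-d$ for all $x$: for $x\in E_0$ one runs the segment from $x$ to the nearest point of $D_0^c$ (which lies on $\partial D_0$), notes that it exits $E_0$ at a point of $\partial E_0\subset E_0$, and adds the two lengths, picking up the extra $d$ from $\mathrm{dist}(E_0,\partial D_0)$; for $x\notin E_0$ one argues symmetrically with the nearest point of $E_0$. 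Choosing the truncation levels so that this inequality survives truncation (e.g. $L'=L+d$ for $E_0$ and $L$ for $D_0$), one gets defining functions with $\psi_{E_0,0}\le\psi_{D_0,0}-d$ on all of $\mathbb{R}^N$. Since $\psi_{D_0,0}-d$ solves (\ref{eq:level}), the comparison principle yields $\psi_{E_0}(\cdot,t)\le\psi_{D_0}(\cdot,t)-d$ for all $t$, whence $\{\psi_{E_0}(\cdot,t)\ge0\}\subset\{\psi_{D_0}(\cdot,t)\ge d\}\subset\{\psi_{D_0}(\cdot,t)>0\}$, i.e. $M(t)E_0\subset U(t)D_0$.

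The main obstacle, as indicated, is the strict inclusion in (4): one must track the zero level set carefully and ensure that the quantitative gap $d$ is not destroyed by the truncation needed to stay in $BUC(\mathbb{R}^N)$; the constant-shift invariance of (\ref{eq:level}) is precisely the mechanism that converts the gap present at $t=0$ into a gap for all $t$. Parts (1)--(3) are routine once the monotonicity of signed distance and the comparison principle are in hand, and the whole argument is essentially the standard one underlying Chapter 4 of \cite{G}.
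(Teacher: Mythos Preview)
The paper does not give its own proof of this theorem: it is one of several ``fundamental properties'' quoted verbatim from Chapter~4 of \cite{G} and used as black boxes throughout. So there is nothing to compare your argument against in the present paper. That said, your sketch is exactly the standard route taken in \cite{G}: choose signed-distance defining functions (truncated to lie in $BUC$), use monotonicity of signed distance under set inclusion to order the initial data, and apply the comparison principle for viscosity solutions of (\ref{eq:level}); for (4), exploit the translation invariance $\psi\mapsto\psi+c$ of (\ref{eq:level}) to propagate the initial gap $d=\mathrm{dist}(E_0,\partial D_0)$ and turn $\psi_{E_0}\le\psi_{D_0}-d$ into the strict inclusion $\{\psi_{E_0}\ge0\}\subset\{\psi_{D_0}>0\}$. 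Your verification of the pointwise inequality $\mathrm{sd}(\cdot,\partial E_0)\le\mathrm{sd}(\cdot,\partial D_0)-d$ via segment-crossing is correct in all cases, and your remark about matching truncation levels so the gap survives is the right technical point to flag.
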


\begin{thm}\label{thm:mon}(Monotone convergence)\cite{G}.

(1) Let $D(t)$ and $\{D_j(t)\}$ be open evolutions with initial data $D_0$ and $D_{j0}$ respectively. If $D_{j0}\uparrow D_0$, then $D_j(t)\uparrow D(t)$, $t>0$, i.e., $\bigcup\limits_{j\geq1}D_j(t)=D(t)$;

(2) Let $E(t)$ and $\{E_j(t)\}$ be closed evolutions with initial data $E_0$ and $E_{j0}$ respectively. If $E_{j0}\downarrow E_0$, then $E_j(t)\downarrow E(t)$, $t>0$, i.e., $\bigcap\limits_{j\geq1}E_j(t)=E(t)$.
\end{thm}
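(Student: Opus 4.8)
\textbf{Proof proposal for Theorem \ref{thm:mon} (Monotone convergence).}

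The plan is to prove part (1) in detail; part (2) then follows by the same argument applied to $-\psi$ (equivalently, by the symmetry between open and closed evolutions built into Definition \ref{def:evo}). First I would fix level set functions: let $\psi_j(x,0)$ be the standard truncated signed distance function to $\partial D_{j0}$ as in Remark \ref{rem:lev}(1), and similarly $\psi(x,0)$ for $D_0$; let $\psi_j$, $\psi$ be the corresponding unique viscosity solutions of \eqref{eq:level} furnished by Theorem 4.3.1 in \cite{G}. The hypothesis $D_{j0}\uparrow D_0$ translates into a monotonicity at the level of the initial data on the positivity sets, but not pointwise for the distance functions, so I cannot simply invoke the comparison principle directly on the $\psi_j$'s; this is the main technical point, addressed below.

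The inclusion $\bigcup_{j\ge1}D_j(t)\subset D(t)$ is the easy half. For each fixed $j$, since $D_{j0}\subset D_0$, Theorem \ref{thm:order}(1) gives $U(t)D_{j0}\subset U(t)D_0$, i.e. $D_j(t)\subset D(t)$ for all $t>0$; taking the union over $j$ yields one containment. For the reverse inclusion $D(t)\subset\bigcup_{j}D_j(t)$, I would fix a point $x_0\in D(t_0)$ and show $x_0\in D_j(t_0)$ for $j$ large. The idea is an approximation-from-inside argument: pick a small closed ball $\overline{B}=\overline{B_r(x_0)}\subset D(t_0)$ using openness of $D(t_0)$. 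Run the closed evolution backwards is not available, so instead I run it forward: by the semigroup property (Theorem \ref{thm:semi}) and order preservation (Theorem \ref{thm:order}(4)), it suffices to trap $\overline{B}$ inside $D_j(t_0)$ for $j$ large. Concretely, one chooses an auxiliary smooth bounded open set $G$ with $\overline{G}\subset D_0$ such that the open evolution $U(t)G$ (equivalently closed evolution $M(t)\overline{G}$, which agree here since $\partial G$ is smooth and compact, so no fattening occurs for short-lived smooth data) still contains $\overline{B}$ at time $t_0$; such a $G$ exists by continuity of the smooth flow and the fact that $x_0\in D(t_0)$ forces the level $\{\psi(\cdot,t_0)>0\}$ to contain a neighborhood of $x_0$, which by continuity of $\psi$ is reached from a compactly-contained-in-$D_0$ initial region. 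Since $\overline{G}$ is compact and $\overline{G}\subset D_0=\bigcup_j D_{j0}$ with $D_{j0}$ increasing open sets, compactness gives $\overline{G}\subset D_{j0}$ for some (hence all larger) $j$. Then Theorem \ref{thm:order}(1),(4) give $\overline{B}\subset M(t_0)\overline{G}\subset U(t_0)D_{j0}=D_j(t_0)$, so $x_0\in D_j(t_0)$, completing the proof.

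The step I expect to be the main obstacle is the construction of the intermediate smooth open set $G$ with $\overline{G}\subset D_0$ whose evolution still captures the prescribed ball $\overline{B}\subset D(t_0)$. The subtlety is that $D(t_0)$ is only a level set (generalized) evolution, possibly fat or irregular, so one cannot appeal to smooth flow estimates on $D(t)$ itself; instead I would argue purely at the PDE level, using that $\psi(x_0,t_0)>0$ and the continuity of the viscosity solution $\psi$ to find $\varepsilon>0$ and $s<t_0$ with $\psi(\cdot,s)\ge\varepsilon$ on a ball, then use finite speed of propagation / the explicit subsolution given by a shrinking sphere with driving force $A$ to identify a compactly-contained-in-$D_0$ region that flows into that ball; alternatively, one can bypass $G$ entirely by exploiting that the sup of the increasing sequence $\psi_j(\cdot,0)$ (with a uniform choice of truncation) converges to $\psi(\cdot,0)$ locally uniformly on $D_0$ and is a monotone sequence of uniformly continuous subsolutions, so that $\tilde\psi:=\sup_j\psi_j$ is, after upper-semicontinuous regularization, a viscosity subsolution with $\tilde\psi\ge\psi$ on the relevant set, forcing $\{\tilde\psi(\cdot,t)>0\}\supset D(t)$ while $\{\tilde\psi(\cdot,t)>0\}=\bigcup_j\{\psi_j(\cdot,t)>0\}$ by monotone convergence of the $\psi_j$. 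I would present whichever of these two routes (geometric barrier vs. stability of viscosity subsolutions under monotone limits) is cleanest given the conventions already fixed in \cite{G}; in either case the containment $D(t)\subset\bigcup_j D_j(t)$ is the crux, and everything else is bookkeeping with Theorems \ref{thm:semi}--\ref{thm:order}.
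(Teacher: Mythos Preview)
The paper does not actually prove this theorem; it is simply quoted from \cite{G} (see the sentence ``We now list some fundamental properties\ldots (Chapter 4 in \cite{G})'' preceding Theorems \ref{thm:semi}--\ref{thm:conti}). So there is no ``paper's own proof'' to compare against; what follows is an assessment of your argument on its merits.

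Your treatment of part (1) is correct in outline, but the geometric-barrier route (finding a smooth $G$ with $\overline{G}\subset D_0$ whose classical flow captures $\overline{B}$) is more delicate than necessary and you rightly flag it as the obstacle. The cleaner way---essentially your second route, and the one implicit in how the paper itself applies this theorem in the proof of Lemma \ref{lem:sep}---is to exploit the invariance of level sets (Theorem 4.2.8 in \cite{G}, also cited in Remark \ref{rem:lev}(2)): for the canonical $\psi$ with $\psi(\cdot,0)$ the truncated signed distance to $\partial D_0$, the open set $G_k:=\{\psi(\cdot,0)>1/k\}$ has compact closure in $D_0$, its open evolution is $\{\psi(\cdot,t)>1/k\}$, and $\bigcup_k\{\psi(\cdot,t)>1/k\}=D(t)$. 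For each $k$ compactness gives $\overline{G_k}\subset D_{j0}$ for large $j$, and then Theorem \ref{thm:order}(1) finishes the inclusion $D(t)\subset\bigcup_j D_j(t)$ without ever needing smoothness of the barrier or short-time regularity of a classical flow.

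There is, however, a genuine gap in your reduction of part (2) to part (1). You write that (2) ``follows by the same argument applied to $-\psi$'', invoking a symmetry between open and closed evolutions. But the level set equation \eqref{eq:level} is \emph{not} orientation-free when $A>0$: if $\psi$ solves \eqref{eq:level} then $-\psi$ solves the equation with $A$ replaced by $-A$, which is a different flow. The paper itself stresses exactly this point in Remark \ref{rem:ori}. Consequently the complement of a closed evolution is not an open evolution for the same equation, and the symmetry you invoke fails. Part (2) must be argued directly: take $x_0\notin E(t_0)$, so $\psi(x_0,t_0)<-1/k$ for some $k$; the closed set $F_k:=\{\psi(\cdot,0)\ge -1/k\}$ has closed evolution $\{\psi(\cdot,t)\ge -1/k\}$ by level-set invariance, and $E_0\subset\{\psi(\cdot,0)>-1/k\}=F_k^\circ$; since the $E_{j0}$ are compact and decrease to $E_0$, eventually $E_{j0}\subset F_k$, whence by Theorem \ref{thm:order}(2) $E_j(t_0)\subset\{\psi(\cdot,t_0)\ge -1/k\}\not\ni x_0$. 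This is the analogous argument, but it is not obtained by negating $\psi$.
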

\begin{thm}\label{thm:conti}(Continuity in time)\cite{G}. Let $D(t)$ and $E(t)$ be open and closed evolutions, respectively.

(1a) $D(t)$ is a lower semicontinuous function of $t\in[0,T)$, in the sense that for any $t_0\geq 0$, and sequence $x_n\in (D(t_n))^c$ with $x_n\rightarrow x_0$, $t_n\rightarrow t_0$, the limit $x_0\in (D(t_0))^c$. If $D(0)$ is bounded so that $\mathcal{C}_{\epsilon}(D(t_0))$ is compact, this implies that for any $t_0\geq0$, $\epsilon>0$ there is a $\delta>0$ such that $|t-t_0|<\delta$ implies $D(t)\supset\mathcal{C}_{\epsilon}(D(t_0))$.

(1b) $E(t)$ is an upper semicontinuous function of $t\in[0,T)$, in the sense that for any $t_0\geq 0$, and sequence $x_n\in E(t_n)$ with $x_n\rightarrow x_0$, $t_n\rightarrow t_0$, the limit $x_0\in E(t_0)$. If $E(0)$ is bounded so that $\mathcal{N}_{\epsilon}(E(t_0))$ is compact, this implies that for any $t_0\geq0$, $\epsilon>0$ there is a $\delta>0$ such that $|t-t_0|<\delta$ implies $E(t)\subset\mathcal{N}_{\epsilon}(E(t_0))$.

(2a) $D(t)$ is a left upper semicontinuous in $t$ in the sense that for any $t_0\in(0,T)$, $x_0\in (D(t_0))^c$ there is a sequence $x_n\rightarrow x_0$ and $t_n\uparrow t_0$ with $x_n\in (D(t_n))^c$. Moreover, for any $t_0\in(0,T)$, $\epsilon>0$ there exists a $\delta>0$ such that $t_0-\delta<t<t_0$ implies $\mathcal{C}_{\epsilon}(D(t))\subset D(t_0)$.

(2b) $E(t)$ is a left lower semicontinuous in $t$ in the sense that for any $t_0\in(0,T)$, $x_0\in E(t_0)$ there is a sequence $x_n\rightarrow x_0$ and $t_n\uparrow t_0$ with $x_n\in E(t_0)$. Moreover, for any $t_0\in(0,T)$, $\epsilon>0$ there exists a $\delta>0$ such that $t_0-\delta<t<t_0$ implies $\mathcal{N}_{\epsilon}(E(t))\supset E(t_0)$.
\end{thm}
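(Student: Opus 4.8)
The plan is to reduce the entire statement to two facts about the level‑set equation (\ref{eq:level}) coming from \cite{G}: that its unique viscosity solution $\psi$ with the chosen bounded, uniformly continuous datum is jointly continuous on $\mathbb{R}^{N}\times[0,T)$, and that the evolution does not escape to infinity in finite time, so that $\bigcup_{0\le s\le\tau}E(s)$ is bounded for every $\tau<T$; the second fact I would obtain by comparing the initial datum with a large sphere and using that, since $A$ is constant, a sphere of radius $R$ satisfies $\dot{R}=-(N-1)/R+A$ and therefore grows at most linearly. Granting these, (1a) and (1b) are immediate from continuity of $\psi$: if $x_{n}\in(D(t_{n}))^{c}$, i.e. $\psi(x_{n},t_{n})\le0$, and $(x_{n},t_{n})\to(x_{0},t_{0})$, then $\psi(x_{0},t_{0})\le0$, so $x_{0}\in(D(t_{0}))^{c}$, and the statement for $E$ is the same with the reversed inequality. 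For the compactness (``moreover'') clauses of (1a) and (1b) I would argue by contradiction: if, say, $E(t_{n})\not\subset\mathcal{N}_{\epsilon}(E(t_{0}))$ along some $t_{n}\to t_{0}$, choose $x_{n}\in E(t_{n})$ with $\mathrm{dist}(x_{n},E(t_{0}))>\epsilon$; boundedness of the evolution gives a convergent subsequence $x_{n}\to x_{\infty}$, the upper semicontinuity just proved forces $x_{\infty}\in E(t_{0})$, and yet $\mathrm{dist}(x_{\infty},E(t_{0}))\ge\epsilon$, a contradiction; the clause $D(t)\supset\mathcal{C}_{\epsilon}(D(t_{0}))$ is dual, using that $\mathcal{C}_{\epsilon}(D(t_{0}))$ is compact.

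The genuinely one‑sided parts (2a) and (2b) I would treat by first establishing their ``moreover'' clauses and then deducing the sequential statements. For (2a), suppose $t_{n}\uparrow t_{0}$ and $x_{n}\in\mathcal{C}_{\epsilon}(D(t_{n}))$, so $B_{\epsilon}(x_{n})\subset D(t_{n})$; by the semigroup property of Theorem~\ref{thm:semi}, $D(t_{0})=U(t_{0}-t_{n})D(t_{n})$, and by the order‑preserving property of Theorem~\ref{thm:order}, $U(t_{0}-t_{n})B_{\epsilon}(x_{n})\subset D(t_{0})$. Now the open evolution of a ball is, for as long as it persists, the concentric ball of radius $\rho(s)$ with $\rho(0)=\epsilon$ and $\dot{\rho}=-(N-1)/\rho+A$, and $\rho(s)\ge\epsilon/2$ for all $s\le\tau_{0}$ with $\tau_{0}=\tau_{0}(\epsilon,N,A)$ independent of the centre; hence once $t_{0}-t_{n}\le\tau_{0}$ we get $B_{\epsilon/2}(x_{n})\subset D(t_{0})$, in particular $x_{n}\in D(t_{0})$. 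This gives $\mathcal{C}_{\epsilon}(D(t))\subset D(t_{0})$ whenever $t_{0}-t<\tau_{0}$. The sequential statement then follows: if $x_{0}\in(D(t_{0}))^{c}$ and there were no sequence $x_{n}\to x_{0}$, $t_{n}\uparrow t_{0}$, $x_{n}\notin D(t_{n})$, then some fixed ball $B_{\epsilon}(x_{0})$ would lie in $D(t)$ for all $t$ in a left‑neighbourhood of $t_{0}$, so $x_{0}\in\mathcal{C}_{\epsilon}(D(t))$ there, whence $x_{0}\in D(t_{0})$ by the ``moreover'' clause, a contradiction. Part (2b) is the mirror image: given $y$ with $\mathrm{dist}(y,E(t))>\epsilon$, I would enclose the bounded set $E(t)$ in a bounded closed set $E_{0}'$ coinciding with $\mathbb{R}^{N}\setminus B_{\epsilon}(y)$ near $B_{\epsilon}(y)$ (a truncated ``ball with a hole''); for a short time its closed evolution is the smooth flow of the relevant spheres, the one around $y$ shrinking toward $y$ but staying of radius $\ge\epsilon/2$ up to some $\tau_{1}(\epsilon,N,A)$, so $M(s)E(t)\subset M(s)E_{0}'\subset\mathbb{R}^{N}\setminus B_{\epsilon/2}(y)$ for $s\le\tau_{1}$; taking $s=t_{0}-t$ yields $E(t_{0})=M(t_{0}-t)E(t)\subset\mathcal{N}_{\epsilon}(E(t))$, and the sequential statement follows exactly as for (2a) (the case of $y$ far from $E(t)$ being handled by the crude finite‑speed bound).

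The hard part will be the left‑semicontinuity pair (2a)--(2b): one must convert the purely algebraic semigroup and order‑preservation facts into a quantitative estimate that is uniform in the base point, and the natural barriers — complements of small balls collapsing onto a point at bounded speed — are unbounded, so they have to be truncated to bounded configurations without perturbing the spherical piece that carries the estimate, and one must verify that over the short time in question the level‑set evolution of such a smooth truncated configuration is exactly the union of the corresponding smooth spherical flows. Everything else is a routine continuity or precompactness argument resting on the joint continuity of $\psi$ and the boundedness of the evolution on compact time intervals.
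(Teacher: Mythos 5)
Theorem~\ref{thm:conti} is stated in the paper without proof; it is imported verbatim from Chapter~4 of Giga's monograph \cite{G}, so there is no in-paper argument for me to compare yours against. Judged on its own, your reconstruction is organized in the natural way and is essentially correct. Parts (1a)--(1b) do reduce to the joint continuity of the viscosity solution $\psi$ of the level-set equation (open and closed evolutions are a strict and a weak superlevel set of a continuous function, so their space-time graphs are respectively open and closed), and the ``moreover'' clauses follow from the precompactness argument you describe, which in turn rests on the sphere barrier $\dot R=-(N-1)/R+A\le A$ keeping $E(t)$ (hence $D(t)$) bounded on compact time intervals. For (2a)--(2b), the reduction to the semigroup identity of Theorem~\ref{thm:semi}, order preservation (Theorem~\ref{thm:order}), and short-time quantitative barriers is the right idea: an $\epsilon$-ball inside $D(t)$ persists with radius $\ge\epsilon/2$ for a time $\tau_0(\epsilon,N,A)>0$ independent of its centre, and an $\epsilon$-hole in a large closed annulus $\overline{B_L(y)}\setminus B_\epsilon(y)$ (with $L$ chosen so this annulus contains $E(t)$) shrinks at rate $\dot R=-(N-1)/R-A$ and so survives with radius $\ge\epsilon/2$ until a time $\tau_1(\epsilon,N,A)>0$; since $\tau_0,\tau_1$ do not depend on the base point or on $t$, the ``moreover'' clauses hold with $\delta=\tau_0$ resp.\ $\tau_1$, and the sequential statements follow exactly as you say. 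The one step you flag as outstanding --- that the closed evolution of the truncated annulus really does carry the shrinking inner sphere and does not fatten into it --- is a genuine dependency, but it is closed by the standard consistency result that as long as the classical flow of a smooth compact embedded initial hypersurface exists, the generalized interface coincides with it (this is in the same Chapter~4 of \cite{G}, and the paper already relies on it implicitly, e.g.\ in Theorem~\ref{thm:partialUmeancurvature}); for short time the two boundary spheres of the annulus evolve independently and stay disjoint, so the claim holds. One cosmetic remark: in (2b) as stated, ``$x_n\in E(t_0)$'' should read ``$x_n\in E(t_n)$''; that is what your argument produces and is clearly what is meant, since otherwise taking $x_n\equiv x_0$ makes the clause vacuous.
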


Where $N_{\epsilon}(A)=\{x\in \mathbb{R}^N\mid d(x,A)<\epsilon\}$, for $A$ is a closed subset in $\mathbb{R}^N$ and $C_{\epsilon}(A)=N_{\epsilon}(A^c)^c$, for $A$ is an open subset in $\mathbb{R}^N$.

\begin{rem}\label{rem:ori} For $A>0$, even if $D_1(0)$ and $D_2(0)$ are disjoint, $D_1(t)$ and $D_2(t)$ may intersect. The basic reason is that the level set equation (\ref{eq:level}) is not orientation free(If $u$ is a solution, there does not hold that $-u$ is also a solution for  (\ref{eq:level})). 
\end{rem}
 In order to prove Theorem \ref{thm:fattening1}, we need the following lemma. This lemma gives the construction of an open evolution containing two disjoint components. 

\begin{lem}\label{lem:sep} Assume $D_1(t)$ and $D_2(t)$ being the open evolution of (\ref{eq:level}) with $D_1(0)=U_1$ and $D_2(0)=U_2$. And $D(t)$ is denoted as the open evolution of (\ref{eq:level}) with $D(0)=U_1\cup U_2$. If $D_1(t)\cap D_2(t)=\emptyset$ for $0\leq t\leq T$, then  $D(t)=D_1(t)\cup D_2(t)$, $0\leq t\leq T$.
\end{lem}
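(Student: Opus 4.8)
The plan is to prove the two inclusions $D(t)\subset D_1(t)\cup D_2(t)$ and $D_1(t)\cup D_2(t)\subset D(t)$ separately, using the order-preserving property (Theorem \ref{thm:order}) together with the monotone convergence theorem (Theorem \ref{thm:mon}). The second inclusion is immediate: since $U_1\subset U_1\cup U_2$ and $U_2\subset U_1\cup U_2$, Theorem \ref{thm:order}(1) gives $D_1(t)=U(t)U_1\subset U(t)(U_1\cup U_2)=D(t)$ and likewise $D_2(t)\subset D(t)$, hence $D_1(t)\cup D_2(t)\subset D(t)$ for all $t$. So the real content is the reverse inclusion.

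For the inclusion $D(t)\subset D_1(t)\cup D_2(t)$, the obstacle is that we cannot directly apply Theorem \ref{thm:order}(4) with $E_0=\overline{U_1}\cup\overline{U_2}$ and a slightly enlarged open set, because $\overline{U_1}$ and $\overline{U_2}$ may touch (and in the relevant application they do meet at the origin), so $\mathrm{dist}(E_0,\partial D_0)>0$ fails. The idea is to exhaust $U_1\cup U_2$ from inside by disjoint open sets. Choose, for each $j$, open sets $U_1^j\uparrow U_1$ and $U_2^j\uparrow U_2$ with $\overline{U_1^j}\cap\overline{U_2^j}=\emptyset$ — for instance shrink each $U_i$ by a distance $1/j$ from its boundary — so that $U_1^j\cup U_2^j\uparrow U_1\cup U_2$. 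Let $\widetilde D_i^j(t)$ be the open evolution of $U_i^j$ and $\widetilde D^j(t)$ the open evolution of $U_1^j\cup U_2^j$. By Theorem \ref{thm:order}(1), $\widetilde D_i^j(t)\subset D_i(t)$, so $\widetilde D_1^j(t)$ and $\widetilde D_2^j(t)$ remain disjoint for $0\le t\le T$ because $D_1(t)\cap D_2(t)=\emptyset$ there.

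It then suffices to show $\widetilde D^j(t)=\widetilde D_1^j(t)\cup\widetilde D_2^j(t)$: granting this, Theorem \ref{thm:mon}(1) yields $\widetilde D^j(t)\uparrow D(t)$ and also $\widetilde D_i^j(t)\uparrow D_i(t)$, whence $D(t)=\bigcup_j\widetilde D^j(t)=\bigcup_j\bigl(\widetilde D_1^j(t)\cup\widetilde D_2^j(t)\bigr)=D_1(t)\cup D_2(t)$, finishing the proof. To establish $\widetilde D^j(t)=\widetilde D_1^j(t)\cup\widetilde D_2^j(t)$, I would argue through the associated closed evolutions and the interface. One clean route: because $\overline{U_1^j}$ and $\overline{U_2^j}$ are disjoint compact sets, surround each $U_i^j$ by an open set $W_i^j\supset\overline{U_i^j}$ with $\overline{W_1^j}\cap\overline{W_2^j}=\emptyset$ and $\mathrm{dist}(\overline{U_i^j},\partial W_i^j)>0$; then Theorem \ref{thm:order}(4) gives $M(t)\overline{U_i^j}\subset U(t)W_i^j$, and since the closed evolutions $M(t)\overline{W_1^j}$, $M(t)\overline{W_2^j}$ start disjoint they separate the space so that $M(t)(\overline{U_1^j}\cup\overline{U_2^j})=M(t)\overline{U_1^j}\cup M(t)\overline{U_2^j}$ as long as this separation persists (which it does up to time $T$ after possibly shrinking $T$, or for all small $t$ and then bootstrapping via the semigroup property, Theorem \ref{thm:semi}). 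Taking complements and interiors, and using that on each separated region the level set solution for the union agrees with the one for the individual piece (by uniqueness of viscosity solutions, Theorem 4.3.1 in \cite{G}, applied on a neighborhood that sees only one component), one gets the desired splitting of $\widetilde D^j(t)$. The main obstacle is precisely this bookkeeping — ensuring the two evolving components stay in disjoint neighborhoods so that the uniqueness of the level set solution can be localized to each — and the hypothesis $D_1(t)\cap D_2(t)=\emptyset$ for $0\le t\le T$ is exactly what makes it run.
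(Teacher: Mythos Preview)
Your overall strategy---approximate from inside, prove the splitting for the approximations, pass to the limit via Theorem \ref{thm:mon}---matches the paper's. The easy inclusion $D_1(t)\cup D_2(t)\subset D(t)$ is fine. The gap is in your treatment of the key step $\widetilde D^j(t)=\widetilde D_1^j(t)\cup\widetilde D_2^j(t)$. You know $\widetilde D_1^j(t)$ and $\widetilde D_2^j(t)$ are disjoint (being contained in $D_1(t)$, $D_2(t)$), but you have not established that they are at \emph{positive distance} for all $t\in[0,T]$, and that is what is needed to glue viscosity solutions. Your proposed fix via closed evolutions of enlarged neighborhoods $W_i^j$ does not work: with $A>0$ the closed evolutions $M(t)\overline{W_i^j}$ may collide even if they start apart (this is precisely the content of Remark \ref{rem:ori}), so ``they separate the space'' is unjustified, and the suggested bootstrap via Theorem \ref{thm:semi} has no uniform lower bound on the step size.

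The paper avoids this by approximating at the level of the \emph{solutions} rather than the initial data. Take nonnegative viscosity solutions $\varphi_i$ with $D_i(t)=\{\varphi_i(\cdot,t)>0\}$ and set $D_i^j(t)=\{\varphi_i(\cdot,t)>1/j\}$. A short compactness argument shows $\min_{0\le t\le T}\mathrm{dist}(D_1^j(t),D_2^j(t))>0$: if not, a limit point would lie in $D_1(t_0)\cap D_2(t_0)$. Once the supports of $\varphi_1$ and $\varphi_2$ are separated by a fixed $\delta>0$, the function $\max\{\varphi_1,\varphi_2\}$ is itself a viscosity solution of (\ref{eq:level}), so its positive set is exactly the open evolution of the union; this gives the splitting directly, with no appeal to closed evolutions or localization. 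Then monotone convergence finishes as you said. The missing idea in your write-up is this construction of an explicit viscosity solution realizing the union, which is what makes the ``positive distance'' step go through cleanly.
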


Under the condition $A=0$, $D_1(t)\cap D_2(t)=\emptyset$ holds automatically provided that $D_1(0)\cap D_2(0)=\emptyset$. But for $A>0$, it is not true. Therefore, we give the assumption $D_1(t)\cap D_2(t)=\emptyset$ for $0\leq t\leq T$.

\begin{proof} First, we assume $D_1(t)\cap D_2(t)=\emptyset$ for $0\leq t\leq T$ and $\delta=:\min\limits_{0\leq t\leq T}\textrm{dist}(D_1(t),D_2(t))>0$. We define
$$
a_i(x)=\max\{\textrm{sd}(x,\partial D_i(0)),0\}, \ x\in\mathbb{R}^n,\ i=1,2.
$$

\begin{figure}[htbp]
	\begin{center}
            \includegraphics[height=5cm]{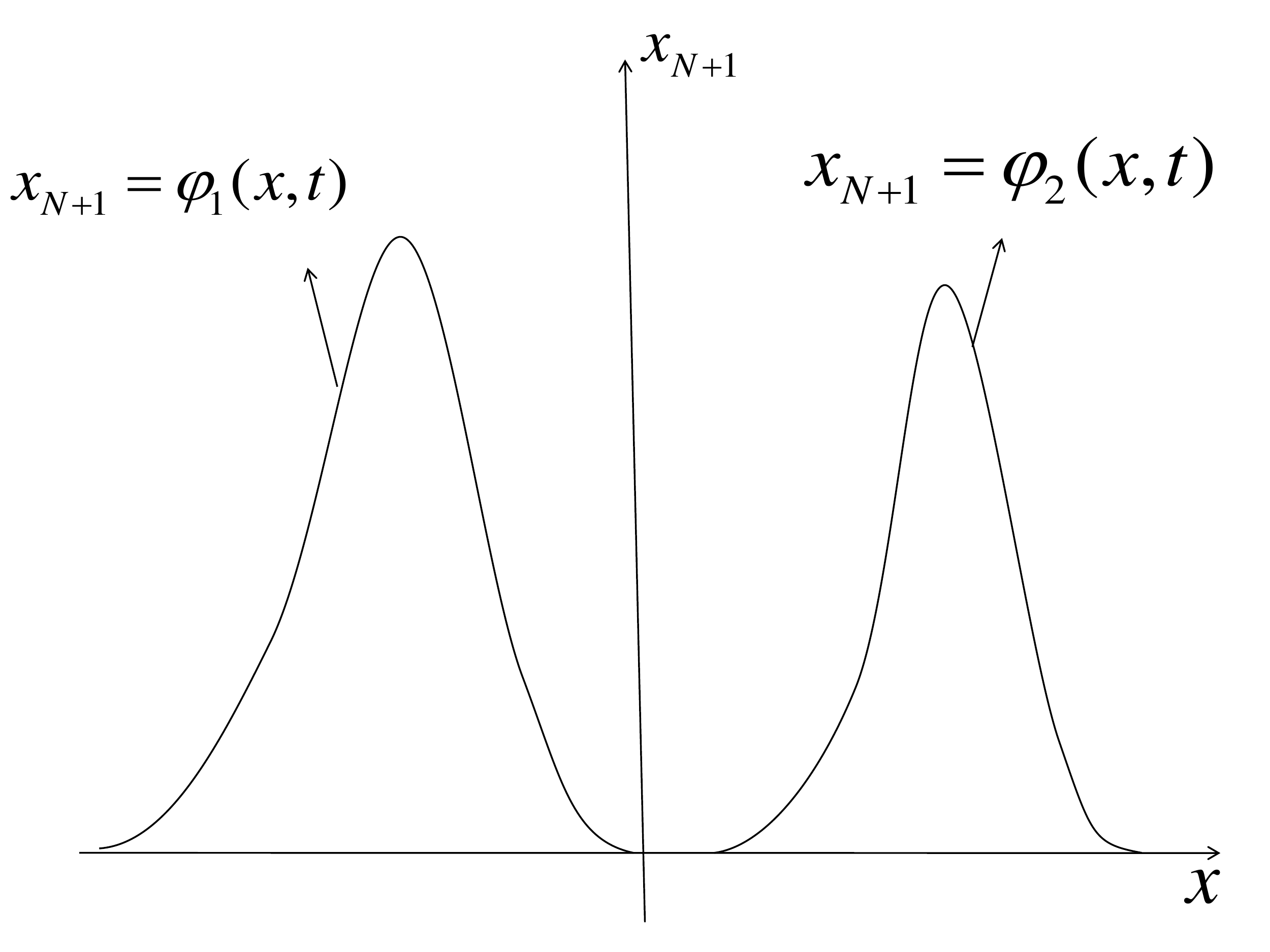}
		\vskip 0pt
		\caption{Proof of Lemma \ref{lem:sep}}
        \label{fig:lemsep}
	\end{center}
\end{figure}

In the theory in \cite{G}, there exist non-negative $\varphi_i(x,t)\in C_{c}(\mathbb{R}^N\times[0,T])$ being the solution of (\ref{eq:level}) with $\varphi_i(x,0)=a_i(x)$. Then $D_i(t)=\{x\in\mathbb{R}^N\mid \varphi_i(x,t)>0\}$ and $\varphi_i=0$ hold outside of $D_i(t)$, $i=1,2$. Since $\textrm{supp}\varphi_1$ and $\textrm{supp}\varphi_2$ are seperated by $\delta$, it is easy to show that $\varphi(x,t)=:\max\{\varphi_1,\varphi_2\}(x,t)$ is also a viscosity solution of (\ref{eq:level}). Then $D(t)=\{x\mid\varphi(x,t)>0\}=D_1(t)\cup D_2(t)$ with initial data $D(0)=U_1\cup U_2$, for $0\leq t\leq T$.

Next we prove the result only under the assumption $D_1(t)\cap D_2(t)=\emptyset$, $0\leq t\leq T$. Consider $\dis{D_i^j(t)=\{x\mid \varphi_i(x,t)>\frac{1}{j}\}}$. 

We claim that $\min\limits_{0\leq t\leq T}\textrm{dist}(D_1^j(t),D_2^j(t))>0$, for all $j$. If $\min\limits_{0\leq t\leq T}\textrm{dist}(D_1^j(t),D_2^j(t))=0$, for some $j$, then there exist $t_0\in[0,T]$ and sequences $\{x_m\}\subset D_1^j(t_0)$, $\{y_m\}\subset D_2^j(t_0)$ such that
$$
|x_m-y_m|\rightarrow0,\ \ \varphi_1(x_m,t_0)>\frac{1}{j},\ \varphi_2(y_m,t_0)>\frac{1}{j}.
$$
Then there exists $x$, such that $\lim\limits_{m\rightarrow\infty}x_m=\lim\limits_{m\rightarrow\infty}y_m=x$. Then
$$
\varphi_1(x,t_0)\geq\frac{1}{j}>0,\ \varphi_2(x,t_0)\geq\frac{1}{j}>0.
$$
Consequently, $x\in D_1(t_0)\cap D_2(t_0)\neq\emptyset$, contradiction. Then we have $\min\limits_{0\leq t\leq T}\textrm{dist}(D_1^j(t),D_2^j(t))>0$, for all $j$. By the argument in the first step, there holds $D^j(t)=D_1^j(t)\cup D_2^j(t)$ is the open evolution with initial openset $\dis{\{x\mid \varphi_1(x,0)>\frac{1}{j}\}\cup\{x\mid \varphi_2(x,0)>\frac{1}{j}\}}$, for $0\leq t\leq T$.

Noting $\bigcup\limits_{j=1}^{\infty}D_1^j(0)\cup D_2^j(0)=U_1\cup U_2$ and using Theorem \ref{thm:mon}, $D(t)=\bigcup\limits_{j=1}^{\infty}D^j(t)=\bigcup\limits_{j=1}^{\infty}D_1^j(t)\cup D_2^j(t)=D_1(t)\cup D_2(t)$, for $0\leq t\leq T$.
\end{proof}

\begin{thm}\label{thm:openevolutionmeancurvature}(Relation between evolution and mean curvature flow) Let $D(t)$, $E(t)$ be the open evolution and closed evolution, respectively. Assume that in an open region $U\times(t_1,t_2)\subset\mathbb{R}^N\times(0,T)$, $\partial D(t)$ and $\partial E(t)$ are the graph of continuous functions $v_1$, $v_2$. Precisely,
$$
\partial D(t)\cap U=\{x\in \mathbb{R}^N\mid x_N=v_1(x^{\prime},t), x^{\prime}\in U^{\prime}\}
$$
and 
$$
\partial E(t)\cap U=\{x\in \mathbb{R}^N\mid x_N=v_2(x^{\prime},t), x^{\prime}\in U^{\prime}\},
$$
where $x^{\prime}=(x_1,\cdots,x_{N-1})$, $U^{\prime}=U\cap\{x_N=0\}$ and $v_1$, $v_2$ are continuous in $U^{\prime}\times(t_1,t_2)$. Then the function $v_1$($v_2$) is a viscosity supersolution(subsolution) of 
$$
v_t=\left(\delta_{ij}-\frac{v_{x_i}v_{x_j}}{1+|\nabla v|^2}\right)v_{x_ix_j}+ A\sqrt{1+|\nabla v|^2}
$$
or is a viscosity subsolution(supersolution) of
$$
v_t=\left(\delta_{ij}-\frac{v_{x_i}v_{x_j}}{1+|\nabla v|^2}\right)v_{x_ix_j}-A\sqrt{1+|\nabla v|^2}
$$
where the signs of the last terms are determined by the direction of the normal velocity of $\partial D(t)\cap U$ and $\partial E(t)\cap U$.
\end{thm}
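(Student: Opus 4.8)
The plan is to read off the viscosity inequalities for $v_1$ and $v_2$ from the comparison principle for generalized evolutions (Theorems~\ref{thm:semi} and \ref{thm:order}), using smooth classical flows of $V=-\kappa+A$ as barriers, rather than by manipulating the level set solution $\psi$ directly. First I would localize and fix the orientation. Fix $(x_0',t_0)\in U'\times(t_1,t_2)$ and put $p_0=(x_0',v_1(x_0',t_0))$. After shrinking $U$ to a small cylinder $\mathcal{B}'\times I$, the open set $D(t)$ lies within it on one definite side of its boundary graph; say $D(t)\cap(\mathcal{B}'\times I)=\{x_N>v_1(x',t)\}$. The mirror case $\{x_N<v_1\}$, and the analysis of $v_2$ with $\partial E(t)\cap U$, are handled the same way and produce the second option in the statement. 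A direct computation — plug $\Phi(x,t)=x_N-\varphi(x',t)$ into the level set operator and divide by $|\nabla\Phi|$, which is nonzero, so the singular set is irrelevant — shows that a smooth graph $x_N=\varphi(x',t)$, viewed as the boundary of $\{x_N>\varphi\}$ with its downward outer normal, evolves by $V=-\kappa+A$ exactly when $\varphi_t=(\delta_{ij}-\tfrac{\varphi_{x_i}\varphi_{x_j}}{1+|\nabla\varphi|^2})\varphi_{x_ix_j}-A\sqrt{1+|\nabla\varphi|^2}$. This identifies which of the two equations (and which of super/sub) is relevant for this orientation: I will show $v_1$ is a viscosity subsolution of the ``$-A$'' equation; for the opposite orientation the moving region is $\{x_N<\varphi\}$, governed by the ``$+A$'' equation, and one gets the supersolution property.

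I would then argue by contradiction. If $v_1$ fails to be a subsolution of the ``$-A$'' equation there is $\varphi\in C^\infty$ with $v_1-\varphi$ having a strict local maximum equal to $0$ at $(x_0',t_0)$ and $\varphi_t>(\delta_{ij}-\tfrac{\varphi_{x_i}\varphi_{x_j}}{1+|\nabla\varphi|^2})\varphi_{x_ix_j}-A\sqrt{1+|\nabla\varphi|^2}$ at $(x_0',t_0)$. Replacing $\varphi$ by $\varphi+\varepsilon(|x'-x_0'|^2+(t_0-t))$ and shrinking, I may assume $\varphi$ is a \emph{classical strict supersolution} of the ``$-A$'' equation on $\overline{\mathcal{B}'_r(x_0')}\times[t_0-\tau,t_0]$ and that $v_1<\varphi$ on that set except at $(x_0',t_0)$, so $v_1\le\varphi-c$ on its parabolic boundary and, at time $t_0-\tau$, $v_1(\cdot,t_0-\tau)\le\varphi(\cdot,t_0-\tau)-c'$ on $\overline{\mathcal{B}'_r(x_0')}$ for some $c,c'>0$. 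Build a smooth bounded closed set $\mathcal{R}_0\subset D(t_0-\tau)$ with $\textrm{dist}(\mathcal{R}_0,\partial D(t_0-\tau))>0$ whose boundary coincides with the graph of $\varphi(\cdot,t_0-\tau)$ over $\mathcal{B}'_{r/2}(x_0')$ and which is capped off, outside $\mathcal{B}'_r(x_0')$, inside the slab between the graphs of $v_1(\cdot,t_0-\tau)$ and $\varphi(\cdot,t_0-\tau)$ (possible by the gap $c'$). By Theorem~\ref{thm:semi} and Theorem~\ref{thm:order}(4) the closed evolution $M(s)\mathcal{R}_0$ stays inside $D(t_0-\tau+s)$; and since $\partial\mathcal{R}_0$ is a smooth graph near $p_0$, on a short time interval $M(s)\mathcal{R}_0$ agrees there with the classical smooth solution of $V=-\kappa+A$ issuing from the graph of $\varphi(\cdot,t_0-\tau)$ — this uses short-time existence for the smooth flow together with the fact (from \cite{G}) that a generalized evolution coincides with the classical one as long as the latter is smooth. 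Writing that classical flow as the graph of $\tilde\varphi(\cdot,t)$ on $\mathcal{B}'_r(x_0')$ with the boundary and initial data inherited from $\varphi$, parabolic comparison gives $\tilde\varphi(\cdot,t)<\varphi(\cdot,t)$ for $t_0-\tau<t\le t_0$, precisely because $\varphi$ is a strict supersolution. Combining, near $p_0$ and for $t$ close to $t_0$ we get $D(t)\supseteq M(t-(t_0-\tau))\mathcal{R}_0\supseteq\{x_N>\tilde\varphi(x',t)\}$, hence $v_1(x',t)\le\tilde\varphi(x',t)<\varphi(x',t)$; at $t=t_0$ this forces $v_1(x_0',t_0)<\varphi(x_0',t_0)$, contradicting $v_1(x_0',t_0)=\varphi(x_0',t_0)$.

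The statement for $v_2$ (boundary of the closed evolution $E$) and for the reversed orientation follow by the same scheme, using parts (2)–(3) of Theorem~\ref{thm:order} in place of part (4) and interchanging the roles of sub- and supersolution. Note that because $A>0$ the level set flow is not orientation free (Remark~\ref{rem:ori}), so it is genuinely important here that the open evolution $D$ and the closed evolution $E$ be compared, respectively, with the barrier \emph{region} and with its \emph{complement}; the two orientations of the graph and the two choices of $D$ versus $E$ together exhaust, and match, the four sub/super--$(\pm A)$ combinations in the statement.

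The step I expect to be the main obstacle is the localization: producing the barrier set $\mathcal{R}_0$ and verifying that over the short interval $[0,\tau]$ its generalized evolution near $p_0$ really does coincide with the smooth flow $\tilde\varphi$ and is not spoiled by the capped-off part outside $\mathcal{B}'_r(x_0')$. This is where one needs the continuity-in-time properties of $D(t)$ and $E(t)$ (Theorem~\ref{thm:conti}) together with an elementary estimate showing the influence of the caps on the center remains small for small time; everything else is bookkeeping of signs and orientations. (An alternative is the more classical route of lifting smooth test functions for $v_1,v_2$ to test functions for $\psi$ on $\mathbb{R}^N$; there the obstacle is instead that $\psi$ is only known to be $\le 0$, not $=0$, off the $D$-side and may be degenerate transverse to $\partial D(t)$, which one must circumvent using that the level set equation is geometric, i.e.\ invariant under $\psi\mapsto\theta\circ\psi$ for continuous increasing $\theta$.)
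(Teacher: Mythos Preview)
The paper omits this proof, writing only ``We can use the similar method in \cite{ES} to prove this theorem. Here we omit it.'' That Evans--Spruck method is exactly the alternative you sketch in your closing parenthetical: lift a smooth test function $\varphi$ for $v_1$ to a test function of the shape $x_N-\varphi(x',t)$ for the level set solution $\psi$, using the geometric invariance $\psi\mapsto\theta\circ\psi$ of \eqref{eq:level} to neutralize the possible transverse degeneracy of $\psi$, and read the graph inequality off the viscosity inequality for $\psi$. Your primary route via smooth compact barriers, Theorem~\ref{thm:order}(4), and consistency of classical and generalized flows is a genuinely different argument; it trades the algebraic issue of $\psi$'s degeneracy for the geometric issue of localizing a global comparison of evolutions. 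Your orientation and sign bookkeeping is correct and the contradiction scheme is sound; the localization you flag is indeed the only real work. As written, the inequality $\tilde\varphi<\varphi$ on $(t_0-\tau,t_0]$ needs lateral control of $\tilde\varphi$ on $\partial\mathcal{B}'_r$, and that is governed by the caps of $\mathcal{R}_0$, not by $\varphi$. The standard cure is to shrink $\tau$ relative to both the strict-supersolution gap at $(x_0',t_0)$ and the distance $r/2$ to the caps: short-time $C^0$ stability of the smooth flow from $\partial\mathcal{R}_0$ keeps $\tilde\varphi$ within that gap of $\varphi(\cdot,t_0-\tau)$ on $\partial\mathcal{B}'_{r/2}$, so the parabolic comparison can be run on the smaller cylinder $\mathcal{B}'_{r/2}\times[t_0-\tau,t_0]$. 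This closes your argument; the Evans--Spruck approach simply sidesteps the localization at the cost of the $\theta$-rescaling trick.
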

We can use the similar method in \cite{ES} to prove this theorem. Here we omit it.

\section{A Priori estimates }

In this section, we give an interior gradient estimate.  

{\bf Graph equation} Let $u(x,t)$ be some function on an open subset of $\mathbb{R}^n\times \mathbb{R}$, then the graph of $u(x,t)$ is a family of hypersurfaces in $\mathbb{R}^{n+1}$. If the family of hypersurfaces moves by $V=-\kappa+A$ if and only if 
$$
u_t=\left(\delta_{ij}-\frac{u_{x_i}u_{x_j}}{1+|\nabla u|^2}\right)u_{x_ix_j}\pm A\sqrt{1+|\nabla u|^2},
$$
where the signs of the last terms are determined by direction of the normal velocity $V$.

Under the case $A=0$,
$$
u_t=\dis{\left(\delta_{ij}-\frac{u_{x_i}u_{x_j}}{1+|\nabla u|^2}\right)u_{x_ix_j}},
$$
The estimate for $|\nabla u|$ in entire space $\mathbb{R}^n$ is given by \cite{EH}. The interior estimate of gradient is first given by \cite{ES}. Here we give the estimate under the condition $A>0$. Since the proof for $n=1$ is not easier than $n\geq 2$, we prove it in $n$-dimensional setting.

\begin{thm}\label{thm:es} For $u\in C^3(\Omega_{T})\cap C^0(\overline{\Omega}_T)$, $u$ satisfies
\begin{equation}\label{eq:graph}
u_t=\dis{\left(\delta_{ij}-\frac{u_{x_i}u_{x_j}}{1+|\nabla u|^2}\right)u_{x_ix_j}\pm A\sqrt{1+|\nabla u|^2}},
\end{equation}
For the condition ``$+$''(``$-$''), we assume $u<0$($u>0$) in $\Omega_T$, $u(0,T)=-v_0$($u(0,T)=v_0$). Then
$$
|\nabla u(0,T)|\leq (3+16v_0)e^{2K},
$$
where $\dis{K=20v_0^2(4n+\frac{1}{T}+4A+\frac{A}{2v_0})}+2$, $\Omega_T=B_1(0)\times (0, 2T)$ and
$$
\delta_{ij}=\left\{
\begin{array}{lcl}
1,\ i=j\\
0,\ i\neq j
\end{array}
\right..
$$
\end{thm}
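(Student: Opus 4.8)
The plan is to follow the Evans--Spruck interior gradient estimate strategy, adapted to accommodate the driving term $\pm A\sqrt{1+|\nabla u|^2}$. I will treat the ``$+$'' case with $u<0$; the ``$-$'' case is symmetric under $u\mapsto -u$. The quantity to control is $w=\sqrt{1+|\nabla u|^2}$, and the classical device is to bound $\eta w$ at the relevant point, where $\eta$ is a cutoff adapted both to the spatial ball $B_1(0)$ and, because $u$ is negative and vanishes nowhere inside but we only know $u(0,T)=-v_0$, to the value of $u$ itself. Concretely I would set something like $\eta=\bigl(1-|x|^2/1 - \alpha(t)\bigr)$ combined with a factor involving $u$, or more in the spirit of Evans--Spruck, work with the test function $\zeta = \log w + \log\eta + \beta u$ (or $\eta = g(|x|^2,t,u)$ with $g$ chosen below), so that the maximum of $\zeta$ over $\overline{\Omega}_T$ is attained at an interior point and at that point the differential inequality yields a bound.

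The key steps, in order, are: (1) derive the evolution equation for $w$ under \eqref{eq:graph}; this is the standard computation giving $w_t = a^{ij}w_{x_ix_j} - \frac{2}{w}a^{ij}w_{x_i}w_{x_j} + (\text{lower order}) + A(\text{terms from the driving force})$, where $a^{ij}=\delta_{ij}-u_{x_i}u_{x_j}/w^2$; the new feature is carefully tracking the extra terms produced by differentiating $A\sqrt{1+|\nabla u|^2}=Aw$, which contribute something like $\frac{A}{w}a^{ij}w_{x_i}(\text{stuff})$ and must be absorbed. (2) Compute the evolution of $u$ itself: $u_t - a^{ij}u_{x_ix_j} = Aw$, which is where the sign of $u$ and the constant $v_0$ enter. (3) Choose the cutoff/auxiliary function; I anticipate $\varphi = \eta(x,t)\,\psi(u)\,w$ with $\eta$ vanishing on $\partial B_1(0)$ and at $t=0$, and $\psi(u)=e^{-\lambda u}$ or $\psi(u)=(2v_0+u)^{-1}$-type, tuned so that the bad gradient terms from $\nabla w$ have a definite sign via the $-\frac{2}{w}a^{ij}w_{x_i}w_{x_j}$ term, exactly as in Evans--Spruck. (4) At the interior maximum of $\varphi$ apply the parabolic maximum principle: $\varphi_t\ge 0$, $\nabla\varphi=0$, $a^{ij}\varphi_{x_ix_j}\le 0$, substitute the evolution equations, use $\nabla w = -w(\nabla\eta/\eta + \psi'\nabla u/\psi)$ to eliminate $\nabla w$, and extract an inequality of the form $c\,w \le C(n,v_0,A,T)$ at that point. (5) Unwind the cutoff to get the stated bound $|\nabla u(0,T)|\le (3+16v_0)e^{2K}$ with the explicit $K$; the explicit constants come from making definite choices for $\lambda$, the shape of $\eta$ in $t$ (presumably $\eta\sim$ something like $\min\{1,t\}$ or an exponential in $t$ to kill the $1/T$ term), and collecting coefficients.

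I expect the main obstacle to be step (3)–(4): choosing the auxiliary function so that \emph{all} the first-order terms in $\nabla w$ — including the genuinely new ones coming from $Aw$ — are dominated by the good negative term $-\frac{2}{w}a^{ij}w_{x_i}w_{x_j}$, while simultaneously keeping the zeroth-order remainder linear (not quadratic) in $w$ so that the maximum principle closes. The driving force also couples $w$ to $u$ through $u_t-a^{ij}u_{x_ix_j}=Aw$, so the factor $\psi(u)$ has to be chosen to turn this into a favorable sign; getting the \emph{explicit} constant $K=20v_0^2(4n+\tfrac1T+4A+\tfrac{A}{2v_0})+2$ rather than just ``some constant'' requires bookkeeping discipline but no new idea. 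A secondary point to be careful about is that $u$ is only $C^3$ and continuous up to the closure, so strictly one should first prove the estimate for the smooth flow and pass to the limit, or localize away from $t=0$ and use that $\eta$ vanishes there; I would handle this by noting the maximum of $\varphi$ cannot occur on $\{t=0\}\cup(\partial B_1(0)\times[0,2T])$ by the cutoff, hence occurs at an interior point where the computation is legitimate. Finally, one checks the ``$-$'' case by the substitution $u\mapsto -u$, under which \eqref{eq:graph} with ``$-$'' becomes \eqref{eq:graph} with ``$+$'' and $u<0$, so no separate argument is needed.
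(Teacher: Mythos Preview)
Your plan is correct and is essentially the paper's own argument: define $w=\sqrt{1+|\nabla u|^2}$, absorb the driving force into the linearized operator $Lh=g^{ij}h_{x_ix_j}-h_t+A\nu^k h_{x_k}$ (with $g^{ij}=\delta_{ij}-\nu^i\nu^j$, $\nu^i=u_{x_i}/w$), verify the Bochner-type inequality $Lw-\tfrac{2}{w}g^{ij}w_{x_i}w_{x_j}\ge 0$, test against $h=\eta\,w$, and apply the maximum principle on the set where $\eta>0$ and $|\nabla u|$ is large. The only point where the paper is more specific than your outline is the cutoff: rather than a product $\eta(x,t)\psi(u)$, it takes $\eta=f\circ\phi$ with $\phi=\bigl(\tfrac{u}{2v_0}+\tfrac{t}{T}(1-|x|^2)\bigr)^+$ and $f(\phi)=e^{K\phi}-1$, so the dependence on $x$, $t$, and $u$ sits additively inside a single exponential; this makes the bookkeeping for the explicit constant $K=20v_0^2(4n+\tfrac1T+4A+\tfrac{A}{2v_0})+2$ and the factor $3+16v_0$ straightforward, and uses $Lu=-A/w$ (equivalent to your $u_t-g^{ij}u_{x_ix_j}=Aw$) exactly as you anticipated.
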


\begin{proof} We only prove the condition "$+$". For the condition, we can consider ``$-u$'' to get the result. Denote $w=\sqrt{1+|\nabla u|^2}$, $\nu^i=u_{x_i}/\sqrt{1+|\nabla u|^2}$, $g^{ij}=\delta_{ij}-\nu^i\nu^j$. We define the operator $L$ as
$$
Lh=g^{ij}h_{x_ix_j}-h_t+A\nu^kh_{x_k}.
$$

We let $h=\eta(x,t,u(x,t))w$, where $\eta$ is a non-negative function and will be identified in future. By calculation,
\begin{eqnarray*}
Lh&=&g^{ij}(w_{x_ix_j}\eta+w_{x_i}(\eta)_{x_j}+(\eta)_{x_i}w_{x_j}+w(\eta)_{x_ix_j})\\
&-&(\eta)_tw-\eta w_t+A\nu^k(w_{x_k}\eta+(\eta)_{x_k}w)\\
&=&\eta Lw+wL\eta+2g^{ij}w_{x_i}(\eta)_{x_j}\\
&=&\eta Lw+wL\eta+2g^{ij}w_{x_i}\left(\frac{h_{x_j}-w_{x_j}\eta}{w}\right).
\end{eqnarray*}
Then  
$$
Lh-2g^{ij}\frac{w_{x_i}}{w}h_{x_j}=\eta\left(Lw-2g^{ij}\frac{w_{x_i}w_{x_j}}{w}\right)+wL\eta.
$$

We claim that 
$$
Lw-2g^{ij}\frac{w_{x_i}w_{x_j}}{w}\geq 0.
$$
Therefore, there holds 
\begin{equation}\label{eq:grainter}
Lh-2g^{ij}\frac{w_{x_i}}{w}h_{x_j}\geq wL\eta.
\end{equation}

We begin to prove the claim. Seeing
$$
w_{x_ix_j}=\nu^ku_{x_kx_ix_j}+\frac{1}{w}(u_{x_kx_i}u_{x_kx_j}-\nu^k\nu^lu_{x_kx_i}u_{x_lx_j}),
$$
we have
$$
g^{ij}w_{x_ix_j}\geq\nu^kg^{ij}u_{x_kx_ix_j}=\nu^k((g^{ij}u_{x_ix_j})_{x_k}-g^{ij}_{x_k}u_{x_ix_j})
$$
$$
=\nu^k\left(u_{tx_k}-A\frac{u_{x_l}u_{x_lx_k}}{\sqrt{1+|\nabla u|^2}}\right)-\nu^kg_{x_k}^{ij}u_{x_ix_j}.
$$
Combining 
$$
g_{x_k}^{ij}=-\frac{1}{w}(\nu^ju_{x_ix_k}+\nu^iu_{x_jx_k})+\frac{2u_{x_i}u_{x_j}}{w^3}w_{x_k},
$$
\begin{eqnarray*}
\nu^kg^{ij}u_{x_kx_ix_j}
&=&w_t-A\nu^kw_{x_k}+\frac{\nu^ku_{x_ix_j}}{w}\left(\nu^ju_{x_ix_k}+\nu^iu_{x_jx_k}-\frac{2u_{x_i}u_{x_j}}{w^2}w_{x_k}\right)\\
&=&w_t-A\nu^kw_{x_k}+\frac{2}{w}g^{ij}w_{x_i}w_{x_j}.
\end{eqnarray*}
Therefore
$$
g^{ij}w_{x_ix_j}\geq w_t-A\nu^kw_{x_k}+\frac{2}{w}g^{ij}w_{x_i}w_{x_j}.
$$
Then 
$$
Lw\geq\frac{2}{w}g^{ij}w_{x_i}w_{x_j}.
$$
We complete the proof of the claim.

Next we choose $\eta=f\circ\phi(x,t,u(x,t))$,
$$\dis{\phi(x,t,z)=\left(\frac{z}{2v_0}+\frac{t}{T}(1-|x|^2)\right)^+}$$
and
$$f(\phi)=e^{K\phi}-1.$$

When $\phi>0$, there holds
$$
\phi_z=\frac{1}{2v_0},\ \phi_t=\frac{1-|x|^2}{T},\ \phi_{x_i}=-\frac{2t}{T}x_i,\ \phi_{x_ix_j}=-\frac{2t}{T}\delta_{ij}.
$$
Consequently, when $\phi>0$, $z<0$, $0<t<2T$,
$$
0\leq\phi\leq2,\ \sum\phi_{x_i}^2\leq\frac{4t^2}{T^2}\leq16.
$$

By calculation,
\begin{eqnarray*}
L\eta&=&g^{ij}f^{\prime\prime}(\phi_{x_i}+\phi_{z}u_{x_i})(\phi_{x_j}+
\phi_{z}u_{x_j})+g^{ij}f^{\prime}(\phi_{x_ix_j}+\phi_zu_{x_ix_j})\\
&-&f^{\prime}(\phi_t+\phi_zu_t)+A\nu^kf^{\prime}(\phi_{x_k}+\phi_zu_{x_k})\\
&\geq&\frac{f^{\prime\prime}}{w^2}(\phi_{x_i}+\phi_zu_{x_i})^2+f^{\prime}(g^{ij}
\phi_{x_ix_j}-\phi_t+A\nu^k\phi_{x_k})+f^{\prime}\phi_zLu
\end{eqnarray*}
\begin{eqnarray*}
&=&\frac{f^{\prime\prime}}{w^2}\left(-\frac{2t}{T}x_i+\frac{1}{2v_0}u_{x_i}\right)^2+f^{\prime}\left(-\frac{2t}{T}(n-\frac{|\nabla u|^2}{1+|\nabla u|^2})-\frac{1-|x|^2}{T}\right.\\
&-&\left. Ax_k\frac{2t}{T}\frac{u_{x_k}}{\sqrt{1+|\nabla u|^2}}\right)+f^{\prime}\phi_zLu.
\end{eqnarray*}
Combining
$$
Lu=g^{ij}u_{x_ix_j}-u_t+A\nu^ku_{x_k}=-\frac{A}{\sqrt{1+|\nabla u|^2}},
$$
there holds
\begin{eqnarray*}
L\eta&\geq& \frac{f^{\prime\prime}}{w^2}\left(\frac{|\nabla u|^2}{8v_0^2}-8\right)+f^{\prime}\left(-4n-\frac{1}{T}-4A\right)-f^{\prime}\phi_z\frac{A}{\sqrt{1+|\nabla u|^2}}\\
&\geq& \frac{f^{\prime\prime}}{w^2}\left(\frac{|\nabla u|^2}{8v_0^2}-8\right)+
f^{\prime}\left(-4n-\frac{1}{T}-4A-\frac{A}{2v_0}\right)\\
&=&\frac{K^2e^{K\phi}}{w^2}\left(\frac{|\nabla u|^2}{8v_0^2}-8\right)+
Ke^{K\phi}\left(-4n-\frac{1}{T}-4A-\frac{A}{2v_0}\right).
\end{eqnarray*}

When $|\nabla u|\geq \max\{16v_0,2\}$, we have 
$$
\frac{|\nabla u|^2}{16v^2_0}\geq8,\ \frac{|\nabla u|^2}{16}\geq\frac{1+|\nabla u|^2}{20}.
$$

Then
\begin{eqnarray*}
L\eta&\geq&\frac{K^2e^{K\phi}}{w^2}\frac{|\nabla u|^2}{16v_0^2}+
Ke^{K\phi}\left(-4n-\frac{1}{T}-4A-\frac{A}{2v_0}\right)\\
&\geq&\frac{K^2e^{K\phi}}{20v_0^2}+Ke^{K\phi}(-4n-\frac{1}{T}-4A-\frac{A}{2v_0})\\
&=&Ke^{K\phi}\left(\frac{K}{20v_0^2}-4n-\frac{1}{T}-4A-\frac{A}{2v_0}\right)>0,
\end{eqnarray*}
when we choose $\dis{K=20v_0^2(4n+\frac{1}{T}+4A+\frac{A}{2v_0})+2}$, $\Omega_T=B_1(0)\times (0, 2T)$.

Therefore by (\ref{eq:grainter}), there holds 
$$
Lh-2g^{ij}\frac{w_{x_i}}{w}h_{x_j}\geq0\ \text{on}\ \{h>0\ \textrm{or}\ |\nabla u|>\max\{16v_0,2\}\}.
$$
 By maximum principle,
\begin{eqnarray*}
(e^{\frac{K}{2}}-1)w(0,T)&=&h(0,T)\leq\max\limits_{h=0\ \textrm{and} \ |\nabla u|=\max\{16v_0,2\}}h\\
&\leq&(e^{2K}-1)\max\{\sqrt{1+(16v_0)^2},\sqrt{5}\}.
\end{eqnarray*}
Consequently, $w(0,T)\leq e^{2K}(3+16v_0)$.
\end{proof}

\begin{rem}\label{rem:es} (1) In Theorem \ref{thm:es}, $\Omega_T$ can be replaced by $\Omega_T=B_R(x_0)\times(0,2T)$ and $v_0=u(x_0,T)$. Then the conclusion becomes
$$
|\nabla u(x_0,T)|\leq e^{2K}(3+16\frac{v_0}{R}),
$$
where $\dis{K=20\frac{v_0^2}{R^2}\left(4n+\frac{R^2}{T}+\frac{4A}{R}+\frac{A}{2v_0}\right)}+2$. We can set $v(x,t)=\dis{\frac{u(Rx+x_0,R^2t)}{R}}$, then we can use Theorem \ref{thm:es} for $v(x,t)$.

(2) When $u$ is the solution of (\ref{eq:graph}) for ``+'' without $u<0$, then we can set
$$v=u-M-\epsilon$$
where $M=\sup\limits_{\overline{\Omega}_T} |u|$ and $\epsilon>0$. Using (1) in Remark \ref{rem:es} to $v$, we can deduce
$$|\nabla u(0,T)|\leq\left(3+16\frac{M-u(0,T)+\epsilon}{R}\right)e^{2\widetilde{K}_{\epsilon}},$$
where $\dis{\widetilde{K}_{\epsilon}=\frac{20(M-u(0,T)+\epsilon)^2}{R^2}\left(4n+\frac{R^2}{T}+\frac{4A}{R}
+\frac{A}{2(M+\epsilon-u(0,T))}\right)}$+2.

Tending $\epsilon\rightarrow0$, we have
$$
|\nabla u(0,T)|\leq \left(3+32\frac{M}{R}\right)e^{2\widetilde{K}},
$$
where $\dis{\widetilde{K}=\frac{80M^2}{R^2}\left(4n+\frac{R^2}{T}+\frac{4A}{R}\right)
+\frac{20AM}{R^2}}+2$.
\end{rem}

Then we use the (2) in Remark \ref{rem:es} and the same method as \cite{AAG} to prove the next corollary.

\begin{cor}\label{cor:es} For $s_1<s_2$, $\rho>0$ and $q\in\mathbb{R}^n$ we set
$$
\Omega=B_{\rho}(x_0)\times(s_1,s_2).
$$
Suppose that $u\in C^3(\Omega)$ solves the equation (\ref{eq:graph}) in $\Omega$ with $M=\sup\limits_{\overline{\Omega}}|u|<\infty$. For any $\epsilon>0$ there is a constant $C=C(M,\epsilon,n)$ such that
$$|\nabla u|\leq C \ \textrm{on}\ \Omega_{\epsilon}=B_{\rho-\epsilon}(x_0)\times(s_1+\epsilon^2,s_2).$$
\end{cor}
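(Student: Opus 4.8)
The plan is to deduce Corollary~\ref{cor:es} from part~(2) of Remark~\ref{rem:es} by a covering argument, exactly in the spirit of \cite{AAG}. First I would fix a point $(x_0', t_0) \in \overline{\Omega}_\epsilon$, i.e.\ $x_0' \in \overline{B_{\rho-\epsilon}(x_0)}$ and $t_0 \in [s_1+\epsilon^2, s_2]$. Because $|x_0' - x_0| \le \rho - \epsilon$ and $t_0 - \epsilon^2 \ge s_1$, the parabolic cylinder $B_{\epsilon/2}(x_0') \times (t_0 - \epsilon^2/4,\, t_0]$ is contained in $\Omega = B_\rho(x_0) \times (s_1, s_2)$; more precisely, after a harmless time-translation the cylinder $B_R(x_0') \times (0, 2T)$ with $R = \epsilon/2$ and $2T = \epsilon^2/4$ sits inside $\Omega$ with $(x_0', t_0)$ playing the role of the ``top center'' point $(0,T)$ (shifted). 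On this cylinder $u$ is a $C^3$ solution of \eqref{eq:graph} with $\sup|u| \le M$, so Remark~\ref{rem:es}(2) applies verbatim.

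The second step is just bookkeeping of constants. Remark~\ref{rem:es}(2) gives
$$
|\nabla u(x_0', t_0)| \le \left(3 + 32\frac{M}{R}\right) e^{2\widetilde K},
\qquad
\widetilde K = \frac{80 M^2}{R^2}\left(4n + \frac{R^2}{T} + \frac{4A}{R}\right) + \frac{20 A M}{R^2} + 2,
$$
and with $R = \epsilon/2$, $T = \epsilon^2/8$ fixed these become a single constant $C = C(M, \epsilon, n)$ (note $A$ is a fixed constant of the problem, so it does not need to be displayed among the parameters). Since $(x_0', t_0) \in \overline{\Omega}_\epsilon$ was arbitrary, this yields $|\nabla u| \le C$ on $\Omega_\epsilon$ as claimed. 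One should double-check that Remark~\ref{rem:es}(2) really only requires $u \in C^3$ on the cylinder and $\sup|u|<\infty$ (no sign condition), which is exactly what its derivation from Remark~\ref{rem:es}(1) provides after the substitution $v = u - M - \epsilon$ and $\epsilon \to 0$; this is the point where the sign restriction $u<0$ in Theorem~\ref{thm:es} gets removed.

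The only mild subtlety — and the place I would be most careful — is making sure the shifted/rescaled cylinder genuinely fits inside $\Omega$ for \emph{every} point of $\overline{\Omega}_\epsilon$ simultaneously, including the spatial boundary points $x_0'$ with $|x_0'-x_0| = \rho - \epsilon$ and the initial-time slice $t_0 = s_1 + \epsilon^2$: there one needs $\epsilon/2 \le \epsilon$ and $\epsilon^2/4 \le \epsilon^2$ in space and time respectively, which hold, so there is genuinely room to spare. (If one wants an estimate up to $t = s_2$ as stated, note the cylinder in Remark~\ref{rem:es} is half-open in time at the top, so $t_0 = s_2$ is allowed.) Everything else is the routine parabolic rescaling $v(x,t) = u(Rx + x_0', R^2 t)/R$ already recorded in Remark~\ref{rem:es}(1), so no new estimate is needed — the corollary is purely a localization of Theorem~\ref{thm:es}.
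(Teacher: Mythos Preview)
Your proposal is correct and follows exactly the approach the paper indicates (apply Remark~\ref{rem:es}(2) at each point of $\Omega_\epsilon$ on a small cylinder, as in \cite{AAG}); the paper itself gives no further detail beyond that sentence. One small bookkeeping slip: in Theorem~\ref{thm:es}/Remark~\ref{rem:es} the estimate is at the \emph{temporal midpoint} $(x_0,T)$ of the cylinder $B_R(x_0)\times(0,2T)$, not at the top $2T$, so with your identification $2T=\epsilon^2/4$ the point $(x_0',t_0)$ corresponds to $2T$, not $T$. This is harmless because the proof of Theorem~\ref{thm:es} in fact uses only the slab $B_1(0)\times(0,T]$ (the parabolic boundary does not include the top, and the bounds $0\le\phi\le2$, $\sum\phi_{x_i}^2\le16$ only improve for $t\le T$), so the estimate at $(x_0',t_0)$ follows from the backward half-cylinder $B_{\epsilon/2}(x_0')\times(t_0-\epsilon^2/4,t_0]\subset\Omega$ exactly as you intended---which is also why the constant is independent of $s_2$ (Remark~\ref{rem:hes}(2)).
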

\begin{rem}\label{rem:hes}
(1) From Corollary \ref{cor:es} and \cite{LSU}, there exist $C_k(M,\epsilon,n)$ such that 
$$
|\nabla^k u|\leq C_k,\ (x,t)\in B_(\rho-2\epsilon)\times(s_1+2\epsilon^2,s_2).
$$
(2) Noting $C$ and $C_k$ are all independent on $s_2$, if the solution $u$ exists for all $t>s_1$, $s_2$ can be chosen as $\infty$.
\end{rem}

\section{Intersections and the Sturmian theorem}

 In this section, we want to introduce the intersection number argument. There are many applications for the intersection number principle. For example, in this paper:

(1). A type of derivative estimate in Theorem \ref{thm:grad}.

(2). The flow $\Gamma(t)$ evolving by $V=-\kappa+A$ with some initial curve does not intersect itself.(Proposition \ref{lem:alphad2}) 

(3). The asymptotic behavior in Section 7. 

Since the proof in $\mathbb{R}^2$ is not difficult than in higher dimension,  some theorems and lemmas are proved in $\mathbb{R}^{n+1}$, $n\geq1$. 

\textbf{Sturm's classical result} The Sturmian theorem states that the number of zeros(counted with multiplicity) of a solution of linear parabolic equation of the type
$$
u_t=a(x,t)u_{xx}+b(x,t)u_x+c(x,t)u
$$
doesn't increase with time, provided that $u$ is defined on a rectangle $x_0\leq x\leq x_1$, $0<t<T$ and $u(x_j,t)\neq0$ for $j=0,1$, for all $t\in(0,T)$. This result also holds for the number of sign changing rather than the number of zeros of $u(\cdot,t).$ 

It is well known that the intersection number between two families of rotationally symmetric hypersurfaces $\Gamma_1(t)$ and $\Gamma_2(t)$ evolving by $V=-\kappa$ is non-increasing(\cite{A2}). The definition of intersection number between two families of rotationally symmetric hypersurfaces is given following. But this result is not true under the condition $V=-\kappa+A$. Indeed seeing future, the intersection number between two families of rotationally symmetric hypersurfaces evolving by $V=-\kappa+A$ may increase. In this section we give some results about this.

{\bf Horizontal and vertical graph equation } If $\Gamma(t)$ is a family of rotationally symmetric hypersurfaces in $\mathbb{R}^{n+1}$, then parts of $\Gamma(t)$ may be represented either as horizontal graph, $r=u(x,t)$, or vertical graph, $x=v(r,t)$, where $(x,y_1,\cdots,y_n)\in\mathbb{R}^{n+1}$ and $r=\sqrt{y_1^2+y_2^2+\cdots+y_n^2}$.

If $\Gamma(t)$ is given as a horizontal graph, then $\Gamma(t)$ evolves by $V=-\kappa+A$ in $\mathbb{R}^{n+1}$ and the direction of the normal velocity $V$ is chosen outward iff $u$ satisfies the horizontal graph equation 
\begin{equation}\label{eq:1horizontal}
\frac{\partial u}{\partial t}=\frac{u_{xx}}{1+u_x^2}-\frac{n-1}{u}+A\sqrt{1+u_x^2}.
\end{equation}
If $\Gamma(t)$ is given as a vertical graph, then $\Gamma(t)$ evolves by $V=-\kappa+A$ in $\mathbb{R}^{n+1}$ iff $v$ satisfies the vertical graph equation
\begin{equation}\label{eq:1vertical+}
\frac{\partial v}{\partial t}=\frac{v_{rr}}{1+v_r^2}+\frac{n-1}{r}v_r+ A\sqrt{1+v_r^2},
\end{equation}
or
\begin{equation}\label{eq:1vertical-}
\frac{\partial v}{\partial t}=\frac{v_{rr}}{1+v_r^2}+\frac{n-1}{r}v_r-A\sqrt{1+v_r^2},
\end{equation}
where the signs of the last terms are determined by the direction of the normal velocity $V$(We choose ``$+$($-$)'' when the direction of $V$ is rightward(leftward)).

{\bf Intersection number for rotationally symmetric hypersurfaces} For two rotationally symmetric hypersurfaces $\Gamma_1(t)$ and $\Gamma_2(t)$ are given by $\Gamma_1(t)=\{(x,y)\in \mathbb{R}\times\mathbb{R}^{n}\mid r=u_1(x,t)\}$ and $\Gamma_2(t)=\{(x,y)\in \mathbb{R}\times\mathbb{R}^{n}\mid r=u_2(x,t)\}$. The intersection number between $\Gamma_1(t)$ and $\Gamma_2(t)$ denoted by $\mathcal{Z}[\Gamma_1(t),\Gamma_2(t)]$ is defined by the number of intersections between $u_1(\cdot,t)$ and $u_2(\cdot,t)$.

\begin{thm}\label{thm:sl}Two smooth families of smooth, closed, hypersurfaces given by $\Gamma_1(t)=\{(x,y)\in \mathbb{R}\times\mathbb{R}^n\mid r=u_1(x,t),a_1(t)\leq x\leq b_1(t)\}$, $\Gamma_2(t)=\{(x,y)\in \mathbb{R}\times\mathbb{R}^n\mid r=u_2(x,t),a_2(t)\leq x\leq b_2(t)\}$ evolve by $V=-\kappa+A$ in $\mathbb{R}^{n+1}$, $0<t<T$. Then either $\Gamma_1\equiv\Gamma_2$ for all $t\in(0,T)$, or the number of intersections of $\Gamma_1(t)$ and $\Gamma_2(t)$ is finite for all $t\in(0,T)$. In the second case, if $a_1(t)$, $b_1(t)$, $a_2(t)$ and $b_2(t)$ are all different and their order remains unchanged for all $t\in(0,T)$, this number is nonincreasing in time, and decreases whenever $\Gamma_1(t)$ and $\Gamma_2(t)$ have a tangential intersection.
\end{thm}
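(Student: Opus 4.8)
The plan is to reduce the statement to the classical Sturmian theorem recalled above, applied in local graph coordinates, and then to globalize the count using compactness of $\Gamma_1(t)\cap\Gamma_2(t)$ together with the hypothesis that the four endpoint functions stay distinct in a fixed order. First I would linearize: wherever both profiles are horizontal graphs $r=u_i(x,t)$, subtracting the horizontal graph equations (\ref{eq:1horizontal}) for $u_1$ and $u_2$ and applying the mean value theorem shows that $w=u_1-u_2$ solves a scalar linear parabolic equation $w_t=a(x,t)w_{xx}+b(x,t)w_x+c(x,t)w$ with smooth coefficients and $a>0$; the curvature contribution $-\frac{n-1}{u_1}+\frac{n-1}{u_2}=(n-1)\frac{w}{u_1u_2}$ is absorbed into the term $cw$ and remains bounded as long as $u_1,u_2$ stay bounded below by a positive constant. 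The same computation applies to $\tilde w=v_1-v_2$ in regions where both profiles are vertical graphs, using (\ref{eq:1vertical+}) and (\ref{eq:1vertical-}).

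Next I would establish the dichotomy. If $\mathcal Z[\Gamma_1(t_0),\Gamma_2(t_0)]$ were infinite for some $t_0\in(0,T)$, then on some chart the zero set of $w(\cdot,t_0)$ would have an accumulation point; since $t_0>0$, the Sturmian theorem forces $w(\cdot,t_0)\equiv 0$ there, so $\Gamma_1(t_0)$ and $\Gamma_2(t_0)$ would share an open piece, hence coincide by spatial real-analyticity of solutions for $t>0$, and then $\Gamma_1\equiv\Gamma_2$ on all of $(0,T)$ by forward and backward uniqueness of the smooth flow. So assume this does not happen; then $\mathcal Z[\Gamma_1(t),\Gamma_2(t)]<\infty$ for every $t$, obtained by covering the compact intersection set by finitely many graph charts in each of which the relevant difference has only isolated zeros. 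Under the hypothesis that $a_1,b_1,a_2,b_2$ are pairwise distinct, there are moreover no intersections on the rotation axis $\{r=0\}$, since such a point is an endpoint of both profile arcs and would force a common endpoint; so every intersection is an off-axis $(n-1)$-sphere.

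For the monotonicity I would run the classical Sturm result chart by chart --- in each horizontal (vertical) chart the number of sign changes of $w$ ($\tilde w$) is nonincreasing in $t$ and drops strictly at a zero of even multiplicity, that is, at a tangential intersection --- and then check that no intersection enters or leaves through (i) the axis, (ii) a turning point of one of the profile arcs, or (iii) the lateral boundary of the overlap interval $[\max(a_1,a_2),\min(b_1,b_2)]$. Case (i) is excluded by distinct endpoints. For case (iii), near e.g.\ $x=a_2(t)$ with $a_1<a_2<b_1$ one has $u_2(x,t)\to 0$ while $u_1(\cdot,t)$ is positive and continuous at $a_2$, so $u_1>u_2$ there and this persists since the order of the endpoints is unchanged (the remaining boundary cases are identical by symmetry). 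For case (ii), at a turning point of $\Gamma_1$ lying on $\Gamma_2$ either $\Gamma_2$ is also vertically tangent there --- a tangential intersection already accounted for by the strict drop --- or the crossing is transversal, hence locally stable, and passes continuously between the horizontal and vertical chart without changing the count. Summing the chart counts then gives that $\mathcal Z[\Gamma_1(t),\Gamma_2(t)]$ is nonincreasing and strictly decreases whenever $\Gamma_1(t)$ and $\Gamma_2(t)$ are tangent.

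The step I expect to be the main obstacle is (ii) together with the consistency of the local-to-global count: one must verify that the ``number of sign changes'' glues correctly across the horizontal/vertical chart overlaps and that a crossing is neither created nor destroyed as a turning point sweeps past it. This is precisely the analysis carried out in \cite{AAG} for $V=-\kappa$; because the extra term $A\sqrt{1+u_x^2}$ is smooth and of lower order it does not affect that argument --- its only effect is that the zeroth order coefficient $c$ in the linearized equation is no longer sign-definite, which is irrelevant for the Sturmian conclusion --- so I would carry out this step by following \cite{AAG}, with the closed-hypersurface, fixed-endpoint-order bookkeeping handled as in the extended intersection-number discussion of \cite{GMSW}.
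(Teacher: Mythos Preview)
Your proposal is correct in spirit but substantially more elaborate than what the statement requires, and it overlooks the simplification built into the hypotheses. By assumption each $\Gamma_i(t)$ is \emph{globally} a horizontal graph $r=u_i(x,t)$ over its own interval $[a_i(t),b_i(t)]$. Consequently every intersection lies over the overlap interval $I(t)=[\max(a_1,a_2),\min(b_1,b_2)]$, and the intersection count is exactly the number of zeros of $w=u_1-u_2$ on $I(t)$. At each endpoint of $I(t)$ one of the $u_i$ vanishes (it is that curve's endpoint) while the other is strictly positive (it is an interior point of the other curve, since the four endpoints are assumed distinct); hence $w\neq 0$ at both ends of $I(t)$ for all $t$, and the fixed-order assumption keeps these signs constant. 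The paper's proof is then one line: apply Angenent's Theorem~D from \cite{A1} to $w$ on $I(t)$.

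What this buys over your approach is that there is no need for vertical charts, no turning-point analysis, and no local-to-global gluing. Your case~(ii) is vacuous here: a ``turning point'' of $\Gamma_i$ in the sense you mean --- a point where the horizontal graph representation breaks down --- can only occur where $u_{i,x}=\pm\infty$, i.e.\ at the endpoints $a_i(t),b_i(t)$, which lie on the axis and are already handled by your case~(i)/(iii). The chart-by-chart Sturm argument you outline is the right machinery for the harder situations treated later in the section (Lemma~\ref{lem:inters}, Lemma~\ref{lem:in}, Proposition~\ref{pro:intersection}), where one of the two curves is not a global horizontal graph or where endpoints may collide; but for Theorem~\ref{thm:sl} itself the global graph hypothesis makes all of that unnecessary.
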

We only give the sketch of the proof. For example, if the order of $a_1$, $b_1$, $a_2$, $b_2$ is given by $a_1(t)<a_2(t)<b_1(t)<b_2(t)$, $0<t<T$, the intersections are only in the interval $[a_2(t),b_1(t)]$. Since $u_1(a_2(t),t)-u_2(a_2(t),t)\neq0$ and $u_1(b_1(t),t)-u_2(b_1(t),t)\neq0$, $0<t<T$, using Theorem D in \cite{A1}, the intersection number between $u_1$ and $u_2$ is not increasing and decreases when tangentially intersecting in $[a_2(t),b_1(t)]$. Consequently, the intersection number between $\Gamma_1(t)$ and $\Gamma_2(t)$ is not increasing. We can prove the result for the other conditions with the same method.

\begin{thm}\label{thm:grad} $\Gamma(t)=\{(x,y)\in\mathbb{R}^{n+1}\mid r=u(x,t),a_2(t)\leq x\leq b_2(t)\}$ is a smooth family of closed, smooth hypersurfaces in $\mathbb{R}^{n+1}$, $0<t<T$. If $\Gamma(t)$ evolves by $V=-\kappa+A$ in $\mathbb{R}^{n+1}$,  there is a function $\sigma$: $\mathbb{R}_+\times\mathbb{R}_+\rightarrow\mathbb{R}$ such that
$$
|u_x(x,t)|\leq \sigma(t,u(x,t))
$$
holds for $0<t<T$, $a_2(t)<x<b_2(t)$. The function $\sigma$ only depends on $M=\max\limits_{a_2(0)<x<b_2(0)} u(x,0)$ and $T$.
\end{thm}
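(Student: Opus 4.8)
The plan is to bound the slope by comparison with spheres centered on the $x$-axis — for such a sphere the flow $V=-\kappa+A$ reduces to the ODE $\rho\dot\rho=A\rho-n$ for its radius $\rho(t)$ — and to use the monotonicity of intersection numbers supplied by Theorem \ref{thm:sl}. Since the reflection $x\mapsto -x$ leaves the horizontal graph equation (\ref{eq:1horizontal}) invariant, it is enough to bound $u_x$ from above. Moreover it suffices to prove: for every $\varepsilon\in(0,T)$ and $\beta>0$ there is a constant $C=C(\varepsilon,\beta,M,n,A)$ such that $t_0\ge\varepsilon$, $u(x_0,t_0)=v_0\ge\beta$ and $u_x(x_0,t_0)=s_0>0$ force $s_0\le C$; one then sets $\sigma(t,v):=C(t,v,M,n,A)$ (made monotone in the obvious way). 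Allowing $\sigma\to\infty$ as $v\to0$ or $t\to0$ is exactly what the statement permits, and is forced — near the endpoints the curve really is vertical, and at $t=0$ the slope is only controlled by $\Lambda_0$ itself.

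\textbf{Step 1 (height bound).} First I would record an a priori bound $\sup_x u(x,t)\le h(t)$ with $h$ depending only on $M$, $n$, $A$ (and $t$). The bounded open region $D(t)$ enclosed by $\Gamma(t)$ satisfies $D(0)\subset\{r<\rho_1\}$ with $\rho_1:=\max\{M,(n-1)/A\}+1$; the solid cylinder $\{r<\rho_1\}$ is a generalized open evolution of itself with $\rho_1(t)$ nondecreasing, so the order preserving property (Theorem \ref{thm:order}(1)) gives $D(t)\subset\{r<\rho_1(t)\}$, whence $\sup_x u(\cdot,t)\le\rho_1(t)=:h(t)$.

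\textbf{Step 2 (comparison sphere and the contradiction).} Fix a point $(x_0,t_0)$ as above and put $P_0=(x_0,v_0e)\in\Gamma(t_0)$. Let $\Sigma(t)$ be the family of spheres centered at the fixed axis point $(x_0+s_0v_0,0)$ with radius $\rho(t)$ solving $\rho\dot\rho=A\rho-n$ and $\rho(t_0)=\rho_0:=v_0\sqrt{1+s_0^2}$; then $\Sigma(t_0)$ passes through $P_0$ and is tangent to $\Gamma(t_0)$ there. If $s_0$ is large then $\rho_0>n/A$ (here $v_0\ge\beta$ is used), so $\rho(t)$ is increasing and stays large on all of $[\varepsilon,t_0]$: its body sits high above the slab $\{r\le h(t)\}$ of Step 1 except in two thin ``windows'' near its axis-endpoints $x_0+s_0v_0\pm\rho(t)$, one of which collapses toward $x_0$ and the other of which is pushed far to the right as $s_0\to\infty$. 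I would then track $\mathcal Z[\Gamma(t),\Sigma(t)]$ backwards from $t_0$ by Theorem \ref{thm:sl}: the tangency forces $\mathcal Z\ge1$ at $t_0$, while the window analysis together with the height bound forces $\mathcal Z[\Gamma(t),\Sigma(t)]$ to drop to $0$ well before time $\varepsilon$ once $s_0$ exceeds a threshold depending only on $\varepsilon,\beta,M,n,A$; since $\mathcal Z$ is nonincreasing in $t$ this is impossible, giving $s_0\le C$. (An alternative route for the interior: near $P_0$ write $\Gamma(t)$, for $t$ close to $t_0$, as an $(n+1)$-dimensional graph $y_1=\sqrt{u(x,t)^2-y_2^2-\cdots-y_n^2}$ over a small cylinder on which, by a sphere comparison and Step 1, $u$ stays between two positive constants, and apply the Evans--Spruck estimate in the form of Remark \ref{rem:es}(2)/Corollary \ref{cor:es}, noting $|\nabla F(P_0)|=s_0$.)

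\textbf{Main obstacle.} The real work is Step 2. One must keep the four endpoints $a_2(t),b_2(t)$ and $x_0+s_0v_0\pm\rho(t)$ distinct and of fixed order on the relevant time interval so that Theorem \ref{thm:sl} (or its endpoint-coincidence refinement) applies — shrinking the interval and using the ``decrease at coincidence'' clause when a crossing is unavoidable, which only helps the argument — and one must carry out the window bookkeeping so that the contradiction uses only $M$ (via Step 1), $\varepsilon$ and $\beta$, and \emph{not} the $x$-extent of $\Gamma$. This last point is why the proof must be run through intersection numbers rather than the naive avoidance principle: by Remark \ref{rem:ori} disjoint regions need not stay disjoint under $V=-\kappa+A$, so ``the osculating sphere would have shrunk below $\Gamma$'' must be converted into a statement about $\mathcal Z$, not about set inclusion. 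Steps 1 and the reduction in the first paragraph are routine.
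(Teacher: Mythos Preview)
Your sphere comparison in Step 2 has a real gap: the claim that $\mathcal{Z}[\Gamma(t),\Sigma(t)]$ drops to $0$ for small $t$ is false in general. Track the sphere's left endpoint. At $t_0$ it sits at $x_0+s_0v_0-v_0\sqrt{1+s_0^2}=x_0-v_0/(s_0+\sqrt{1+s_0^2})$, just to the left of $x_0$; running the radius ODE backward shifts it by at most $At_0\le AT$, so at time $0$ the left endpoint lies somewhere in $[x_0-v_0,\,x_0+AT]$, \emph{not} out past $b_2(0)$. The left ``window'' (where the sphere is below height $M$) therefore typically lands inside $(a_2(0),b_2(0))$; since the sphere rises from $0$ to $M$ there while $u_0$ takes a positive value, you get $\mathcal{Z}(0)\ge 1$. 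You could try to aim instead for $\mathcal{Z}(0)\le 1$ and derive a contradiction from $\mathcal{Z}(t_0^-)\ge 2$ at the tangency via Remark~\ref{rem:intersection1}, but even this needs the sphere's slope in the window to dominate $|u_0'|$ there --- and that fails if the window happens to meet a neighbourhood of $a_2(0)$ or $b_2(0)$, where $|u_0'|=\infty$. You have no control over where the window lands relative to those endpoints, because $x_0$ lives in $(a_2(t_0),b_2(t_0))$, not $(a_2(0),b_2(0))$.

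The paper sidesteps all of this by \emph{not} using a closed comparison surface. It evolves the vertical graph $x=w_0(r)$ with $w_0\equiv 0$ on $[0,M+1]$ and $w_0\equiv 1$ on $[M+2,\infty)$ by (\ref{eq:1vertical-}), obtaining (after a translation by $\xi$) a half-infinite barrier $\Sigma_\xi(t)$ whose initial profile meets the slab $\{r\le M\}$ in a \emph{single vertical segment} $\{x=\xi\}$; thus $\mathcal{Z}[\Gamma(0),\Sigma_\xi(0)]\le 1$ for free. A dedicated endpoint analysis (Lemma~\ref{lem:inters}) then shows $\mathcal{Z}\le 1$ persists for all $t$. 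Choosing $\xi$ so that $\Sigma_\xi(t_0)$ passes through $(x_0,v_0)$, the single-intersection property forces $u_x(x_0,t_0)\le v_x(x_0-\xi,t_0)=1/w_r(v_0,t_0)$, and a Green's-function lower bound on $w_r$ gives the explicit $\sigma(t,v)$. The structural point you are missing is that the barrier must be half-infinite (one axis endpoint, one end at $r=\infty$) so that its initial trace in $\{r\le M\}$ is a single monotone arc; a sphere has two axis endpoints and cannot achieve this.

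Your Evans--Spruck alternative has the same circularity: to apply Corollary~\ref{cor:es} to $y_1=\sqrt{u(x,t)^2-|y'|^2}$ on a ball of radius $R$ about $(x_0,0)$ you need $u(x,t)>R$ for $|x-x_0|<R$, which is a lower bound on $u$ \emph{away from} $x_0$ --- and obtaining that from $u(x_0,t_0)\ge\beta$ alone already requires a gradient bound.
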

\begin{proof} Let
$w_{0}(r)\in C^{\infty}((0,+\infty))$, $w^{\prime}_0(r)\geq0$ and
$$
x=w_0(r)=\left\{
\begin{array}{lcl}
0, \ 0\leq r<M+1\\
1, \ r>M+2
\end{array}\right..
$$

\begin{figure}[htbp]
	\begin{center}
            \includegraphics[height=7.0cm]{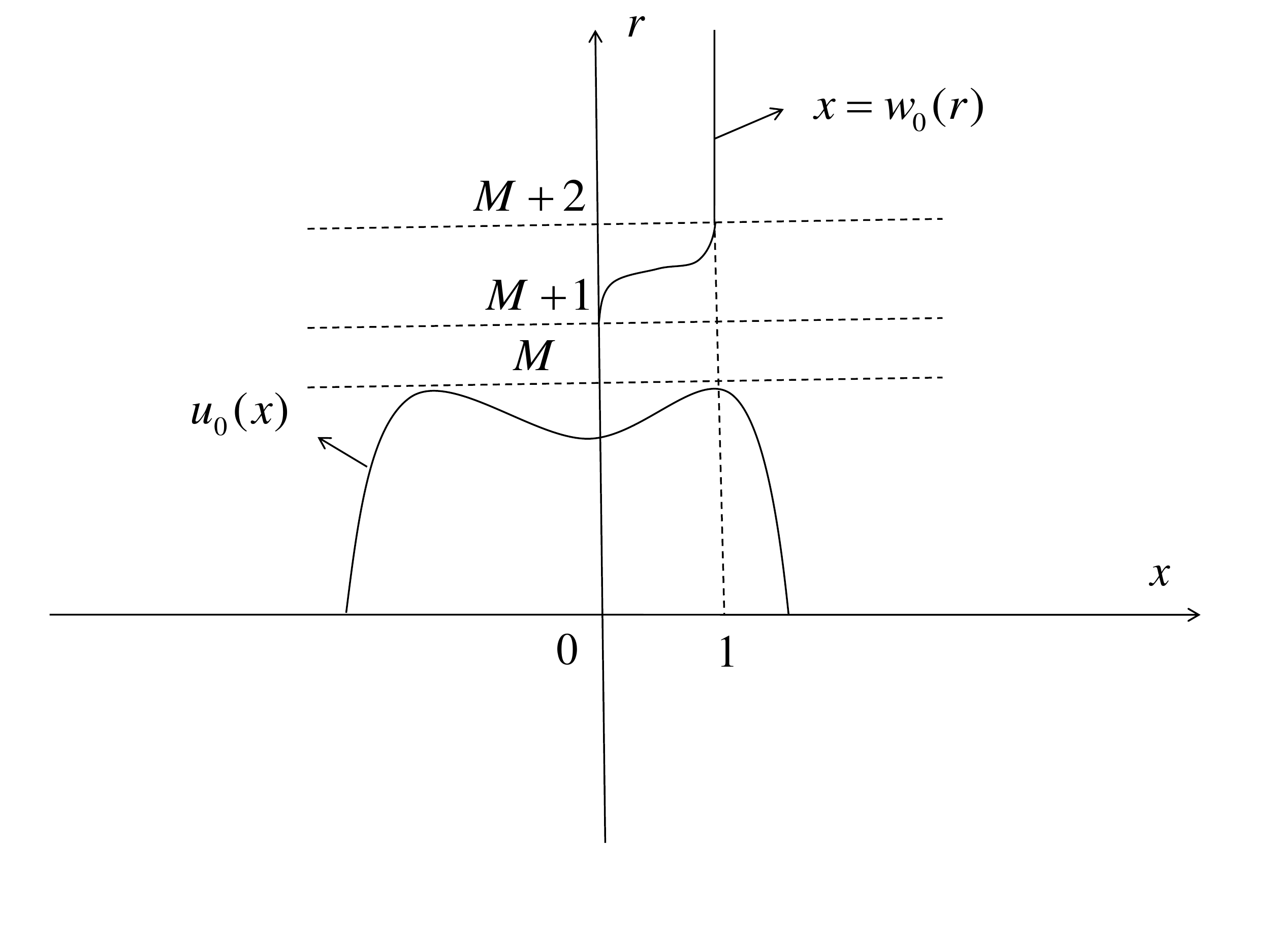}
		\vskip 0pt
		\caption{Proof of Theorem \ref{thm:grad}}
        \label{fig:grad}
	\end{center}
\end{figure}

We let $w$ be the unique solution of the vertical equation (\ref{eq:1vertical-}) with the boundary condition
$$
w_r(0,t)=0,\  t\geq0
$$
and initial condition
$$
w(r,0)=w_0(r),\ r\geq0.
$$
Differentiating (\ref{eq:1vertical-}) in $r$, 
\begin{equation}\label{eq:deriveeq}
p_t=a(r,t)p_{rr}+b(r,t)p_r+c(r,t)p,
\end{equation}
where $p=w_r$, $a(r,t)=1/(1+w_r^2)$, $b(r,t)=-2w_rw_{rr}/(1+w_r^2)^2+(n-1)/r-Aw_r/\sqrt{1+w_r^2}$, $c(r,t)=-(n-1)/r^2$.

By the maximum principle, we have for all $r,t>0$, $w_r\geq 0$ and $\sup\limits_{r\geq0}w(r,t)$ is nonincreasing in time. It follows from classical estimate for parabolic equation that all derivative of $w$ are uniformly bounded for $r,t\geq0$.

We note that $w_r(r,0)>0$ for $M+1<r<M+2$. Using the property of Green's function, for any $\delta$ satisfying $0<\delta<M+AT$, there exists $A_{\delta,T}>0$ such that
$A_{\delta,T}$ decreases with respect to $\delta$ and
\begin{equation}\label{eq:derivativeblew}
p(r,t)\geq e^{-\frac{A_{\delta,T}}{t}},
\end{equation}
for $\delta\leq r\leq M+AT$, $0<t<T$.

Since the strong maximum principle implies that $p(r,t)>0$ for $r>0$, the inverse of $x=w(r,t)$ exists, denoted by $r=v(x,t)$. Seeing the normal velocity of $x=w(r,t)$ is leftward, the normal velocity of $r=v(x,t)$ is upward. Then $v(x,t)$ satisfies the horizontal graph equation (\ref{eq:1horizontal}) with the free boundary condition
$$
v(a(t),t)=0,\ v_x(a(t),t)=\infty,\ \lim\limits_{x\rightarrow b(t)}v(x,t)=\infty,\ \lim\limits_{x\rightarrow b(t)}v_x(x,t)=\infty,\ t>0.
$$
Let $\Sigma(t)=\{(x,y)\in \mathbb{R}^{n+1}\mid r=v(x,t),a(t)\leq x< b(t)\}$ and $\Sigma_{\xi}(t)$ denote the translation of $\Sigma(t)$ given by
$$
x=w(r,t)+\xi.
$$
$\Sigma_{\xi}(t)$ can be also represented by $r=v(x-\xi,t)$, $a(t)+\xi\leq x<b(t)+\xi$. Let $a_1(t)$ and $b_1(t)$ be the end point of $\Sigma_{\xi}(t)$, then $a_1(t)=\xi+a(t)$, $b_1(t)=\xi+b(t)$. Obviously, for $(x_0,t_0)\in (a_2(t_0),b_2(t_0))\times(0,T)$, there exists $\xi\in \mathbb{R}$ such that
$$
v(x_0-\xi,t_0)=u(x_0,t_0).
$$
By the following Lemma \ref{lem:inters}, we can deduce that the graph of $u(x,t_0)$ intersects $v(x-\xi,t_0)$ only once.
\begin{figure}[htbp]
	\begin{center}
            \includegraphics[height=7.0cm]{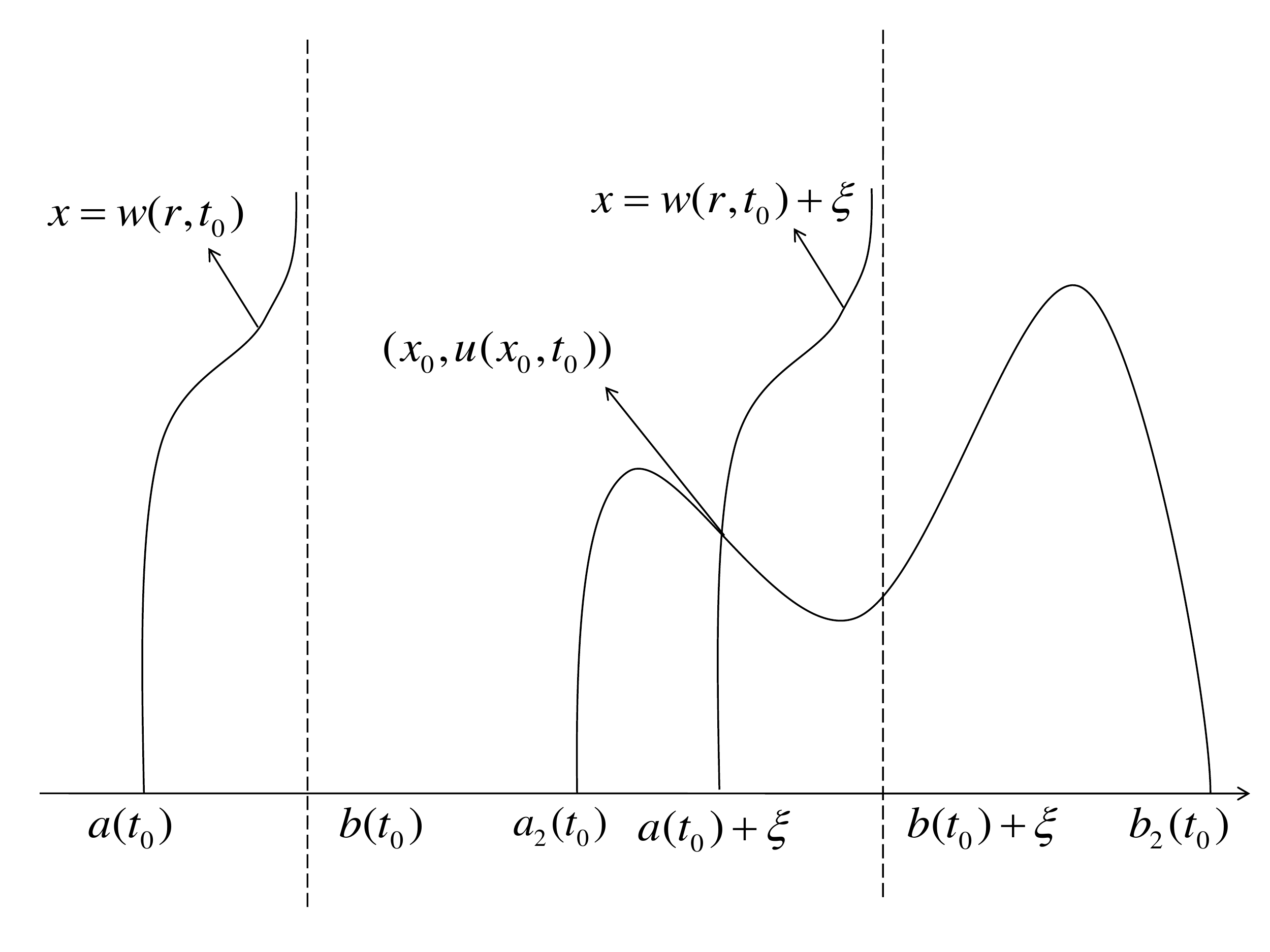}
		\vskip 0pt
		\caption{Proof of Theorem \ref{thm:grad}}
        \label{fig:grad2}
	\end{center}
\end{figure}
Next we claim 
$$v_x(x_0-\xi,t_0)\geq u_x(x_0,t_0).$$ 
If not, $v_x(x_0-\xi,t_0)< u_x(x_0,t_0)$, then there exists $\delta>0$, such that
$$
u(x,t_0)>v(x-\xi,t_0),
$$
for all $x\in(x_0,x_0+\delta)$. Since $\lim\limits_{x\rightarrow b_1(t)}v(x,t_0)=+\infty$, $\Sigma_{\xi}(t_0)$ intersects $\Gamma(t_0)$ at least twice. This yields a contradiction. 

By maximum principle, it is easy to see $r=u(x,t)<M+At<M+AT$, $a_2(t)\leq x\leq b_2(t)$, $0<t<T$. Combining (\ref{eq:derivativeblew}), there holds
$$u_x(x_0,t_0)\leq\frac{1}{w_r(v(x_0-\xi,t_0),t_0)}\leq e^{\frac{A_{v(x_0-\xi,t_0),T}}{t_0}}=e^{\frac{A_{u(x_0,t_0),T}}{t_0}}:=\sigma(t_0,u(x_0,t_0)).$$

By considering the reflection $\widetilde{\Sigma}(0)=\{(x,y)\mid x=-w_0(r)\}$ and the equation (\ref{eq:1vertical+}) with $w_r(0,t)=0,\  t\geq0$ and $w(r,0)=w_0(r),\ r\geq0$, the bound for $-u_x(x_0,t_0)$ can be got similarly.
\end{proof}

\begin{lem}\label{lem:inters} $\Sigma_{\xi}(t)$ and $\Gamma(t)$ is given by Theorem \ref{thm:grad}, then $\Sigma_{\xi}(t)$ intersects $\Gamma(t)$ at most once.
\end{lem}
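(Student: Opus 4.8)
The plan is to run an intersection--number argument, in the spirit of Theorem~\ref{thm:sl}. Put $v_\xi(x,t):=v(x-\xi,t)$. Since the horizontal graph equation~(\ref{eq:1horizontal}) is translation invariant in $x$, both $u$ and $v_\xi$ solve~(\ref{eq:1horizontal}) wherever they are honest graphs over the $x$--axis, i.e.\ away from the poles $r=0$ where the slope becomes infinite. On the common part of their domains the difference $d:=u-v_\xi$ then solves a linear, uniformly parabolic equation $d_t=\tilde a\,d_{xx}+\tilde b\,d_x+\tilde c\,d$, obtained by subtracting the two copies of~(\ref{eq:1horizontal}): the leading second--order term gives $\tilde a=1/(1+u_x^2)>0$, the term $-(n-1)/u+(n-1)/v_\xi=(n-1)d/(uv_\xi)$ (finite since $u,v_\xi>0$ there) goes into $\tilde c$, and the remaining first--order differences into $\tilde b$. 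The number of intersections of $\Sigma_\xi(t)$ and $\Gamma(t)$ equals the number of zeros of $d(\cdot,t)$: no intersection can sit at a pole of $\Gamma(t)$, where $\Gamma(t)$ is vertical while $\Sigma_\xi(t)$ is either not present at that abscissa or meets $\{r=0\}$ only at its own pole $x=a(t)+\xi$.

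Next I would set up the comparison. Outside the moving interval $I(t)=\bigl[\max\{a_2(t),a(t)+\xi\},\ \min\{b_2(t),b(t)+\xi\}\bigr)$ only one of the two curves is present, so all intersections lie in $I(t)$. On the lateral boundary of $I(t)$ exactly one of $u,v_\xi$ vanishes --- it is $u(a_2(t),t)=0<v_\xi$ or $v_\xi(a(t)+\xi,t)=0<u$ at the left end, and likewise $d<0$ at the right end, unless $b(t)+\xi\le b_2(t)$, in which case $v_\xi\to+\infty$ and $d\to-\infty$ there --- so $d\neq0$ on $\partial I(t)$ except at the isolated times when a pole of $\Sigma_\xi(t)$ meets a pole of $\Gamma(t)$. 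Hence Theorem~D of~\cite{A1} (the moving--boundary Sturmian theorem already invoked for Theorem~\ref{thm:sl}) applies on every time subinterval avoiding those exceptional times, so $t\mapsto\mathcal Z[\Sigma_\xi(t),\Gamma(t)]$ is finite and nonincreasing there; at an exceptional time the two curves have a tangential contact at $r=0$, which by the drop--at--tangency part of the theorem only makes $\mathcal Z$ decrease further.

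It then remains to bound $\mathcal Z$ near $t=0$, and this is the crux. By the maximum principle $\Gamma(t)\subset\{0\le r<M+At\}$, so for $t$ small $\Gamma(t)$ lies in $\{r<M+1\}$; there $\Sigma_\xi(t)$ is the graph $x=w(r,t)+\xi$, and since $w(\cdot,0)\equiv0$ on $[0,M+1]$ with $w_0\in C^\infty$ (and compatible with the Neumann condition at $r=0$), parabolic regularity gives $w(\cdot,t)\to w_0$ in $C^1([0,M+1])$ as $t\to0^+$; thus $\Sigma_\xi(t)\cap\{r<M+1\}$ tends in $C^1$ to the vertical hyperplane $\{x=\xi\}$. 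The compact graph $\Gamma(0)=\{r=u_0(x)\}$ meets $\{x=\xi\}$ in at most one point, transversally when $\xi\in(a_2(0),b_2(0))$; away from that point $\Gamma(0)$ and $\{x=\xi\}$ stay a positive distance apart, so for $t$ small $\Gamma(t)$ and $\Sigma_\xi(t)$ meet only near it, and by stability of transversal intersections exactly once (and not at all if $\xi\notin[a_2(0),b_2(0)]$). Hence $\mathcal Z[\Sigma_\xi(t),\Gamma(t)]\le1$ for $t$ small, and with the monotonicity above, $\mathcal Z[\Sigma_\xi(t),\Gamma(t)]\le1$ for all $t\in(0,T)$, which is the claim. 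The genuinely delicate points are (i) this initial count for the degenerate values of $\xi$ for which a pole of $\Sigma_\xi(0)$ already coincides with a pole of $\Gamma(0)$, where the limiting contact is tangential --- these I would dispatch by perturbing $\xi$ and using continuity of both evolutions in $\xi$ together with the bound just obtained for nearby $\xi$ --- and (ii) the bookkeeping of the moving endpoints and pole--collisions in the Sturmian step, carried out exactly as in the sketch of Theorem~\ref{thm:sl} by restricting to the interval between the relevant inner endpoints. The rest is routine.
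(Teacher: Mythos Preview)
Your overall strategy---reduce to a Sturmian zero-count for $d=u-v_\xi$ on the moving overlap interval, then bound the count at $t=0^+$ by noting that $\Sigma_\xi(0)\cap\{r<M+1\}$ is the hyperplane $\{x=\xi\}$---is the right one and matches the paper. The initial-count paragraph is fine. The gap is in your monotonicity step.

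You assert that $t\mapsto\mathcal Z[\Sigma_\xi(t),\Gamma(t)]$ is nonincreasing, and that at the ``exceptional times'' when a pole of $\Sigma_\xi$ meets a pole of $\Gamma$ the tangential-drop part of the Sturmian theorem makes $\mathcal Z$ go down. This is false when $A>0$. Consider the configuration $a_2(t)<b_2(t)<a_1(t)<b_1(t)$, i.e.\ $\Gamma(t)$ entirely to the left of $\Sigma_\xi(t)$; then $\mathcal Z=0$. Because the normal velocity on the left cap of $\Sigma_\xi$ is leftward (this is how $w$ was built, via equation~(\ref{eq:1vertical-})), $a_1(t)$ can decrease through $b_2(t)$. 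At the crossing time $t_6$ the two poles meet at $r=0$, but with \emph{opposite} slope signs: $u_x(b_2(t),t)=-\infty$ while $(v_\xi)_x(a_1(t),t)=+\infty$. This is not an interior tangency of $d$; the comparison interval $I(t)$ has just become nonempty, and on $[a_1(t),b_2(t)]$ for $t$ slightly larger than $t_6$ one has $d(a_1(t),t)>0$, $d(b_2(t),t)<0$, and $d_x<0$ throughout, so $d$ acquires exactly one zero. Thus $\mathcal Z$ jumps from $0$ to $1$. Your appeal to ``drop at tangency'' does not apply: that clause of Angenent's theorem concerns interior degenerate zeros of $d$ on a domain where $d\neq0$ at the lateral boundary, whereas here the lateral boundary itself has collapsed. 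Deferring this to ``the sketch of Theorem~\ref{thm:sl}'' does not help either, since that theorem explicitly assumes the endpoint order is fixed; the whole content of the present lemma is what happens when the order changes.

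The paper's proof therefore does not claim monotonicity. It carries out a case analysis of every possible transition in the ordering of $a_1,b_1,a_2,b_2$: it shows that $a_1$ cannot cross $a_2$ from the left (by a vertical-graph maximum-principle argument near $r=0$), that the asymptote $b_1$ crossing $b_2$ preserves $\mathcal Z=0$, and that the transition $b_2<a_1\rightsquigarrow a_1<b_2$ just described raises $\mathcal Z$ from $0$ to at most $1$; between transitions Theorem~\ref{thm:sl} gives nonincrease. The upshot is a trichotomy on $a_1(0)$ relative to $[a_2(0),b_2(0)]$, with $\mathcal Z\le1$ in every case---but obtained by tracking the one possible increase, not by ruling it out. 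To fix your argument you must replace the blanket monotonicity claim by this endpoint-transition analysis.
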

\begin{proof} By the same argument as Theorem \ref{thm:sl}, the intersection number between $\Sigma_{\xi}(t)$ and $\Gamma(t)$ is not increasing provided that $a_1(t)$, $b_1(t)$, $a_2(t)$ and $b_2(t)$ are all different and that their order remains unchanged. So we only prove this result when the order of $a_1(t)$, $b_1(t)$, $a_2(t)$ and $b_2(t)$ changes.

{\bf Case 1.} Assume $a_1(t)<a_2(t)<b_1(t)< b_2(t)$, $t<t_2$ and $a_1(t)<a_2(t)<b_2(t)< b_1(t)$, $t>t_2$. And for $t<t_2$, $\Sigma_{\xi}(t)$ does not intersect $\Gamma(t)$. Then $\Sigma_{\xi}(t)$ does not intersect $\Gamma(t)$, for $t>t_2$.
\begin{figure}[htbp]
 \begin{minipage}{0.49\hsize}
  \begin{center}
   \includegraphics[width=5cm]{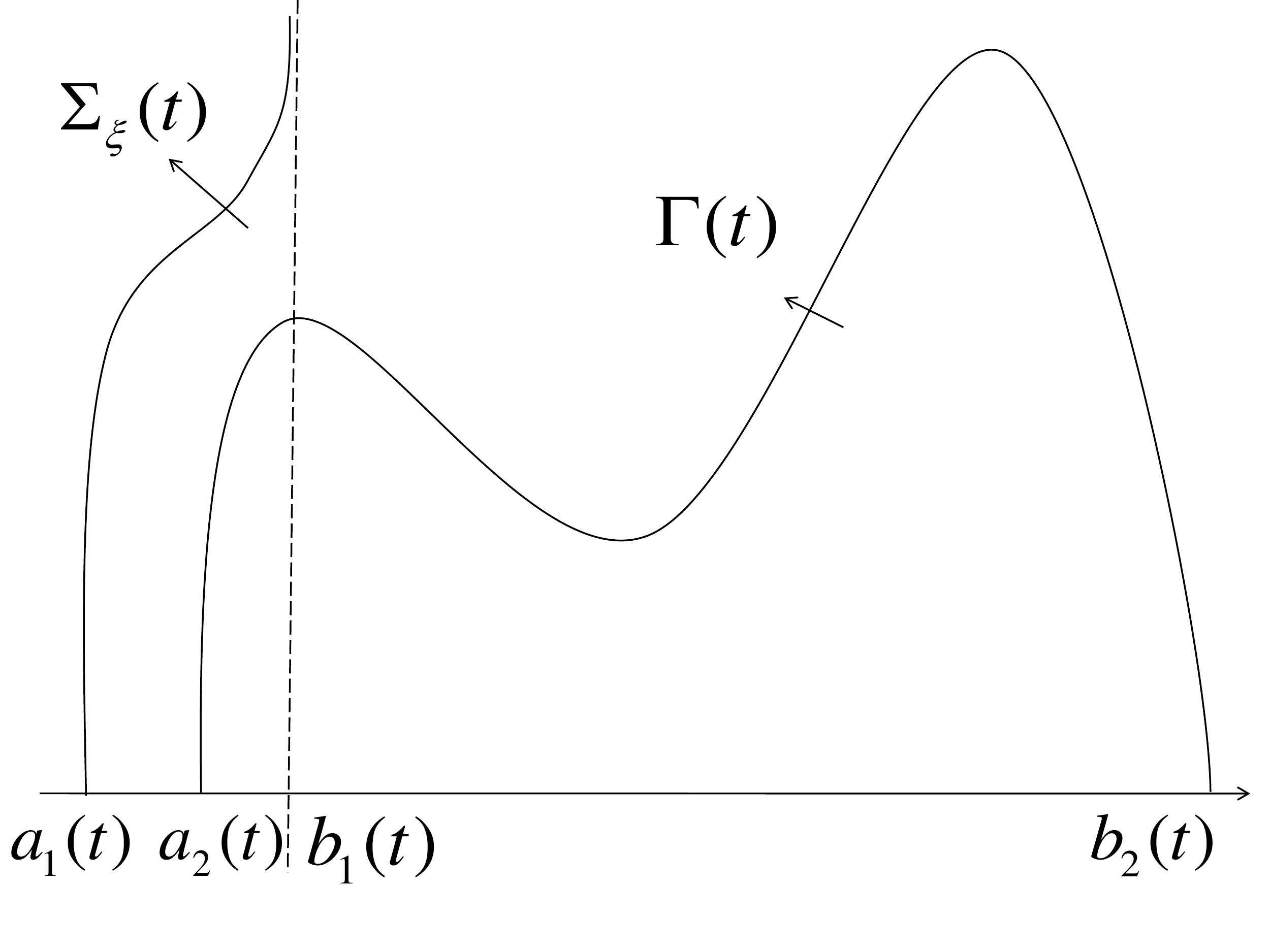}
  \end{center}
  \caption{Case 2 }
  \label{fig:21}
 \end{minipage}
 \begin{minipage}{0.49\hsize}
  \begin{center}
   \includegraphics[width=5cm]{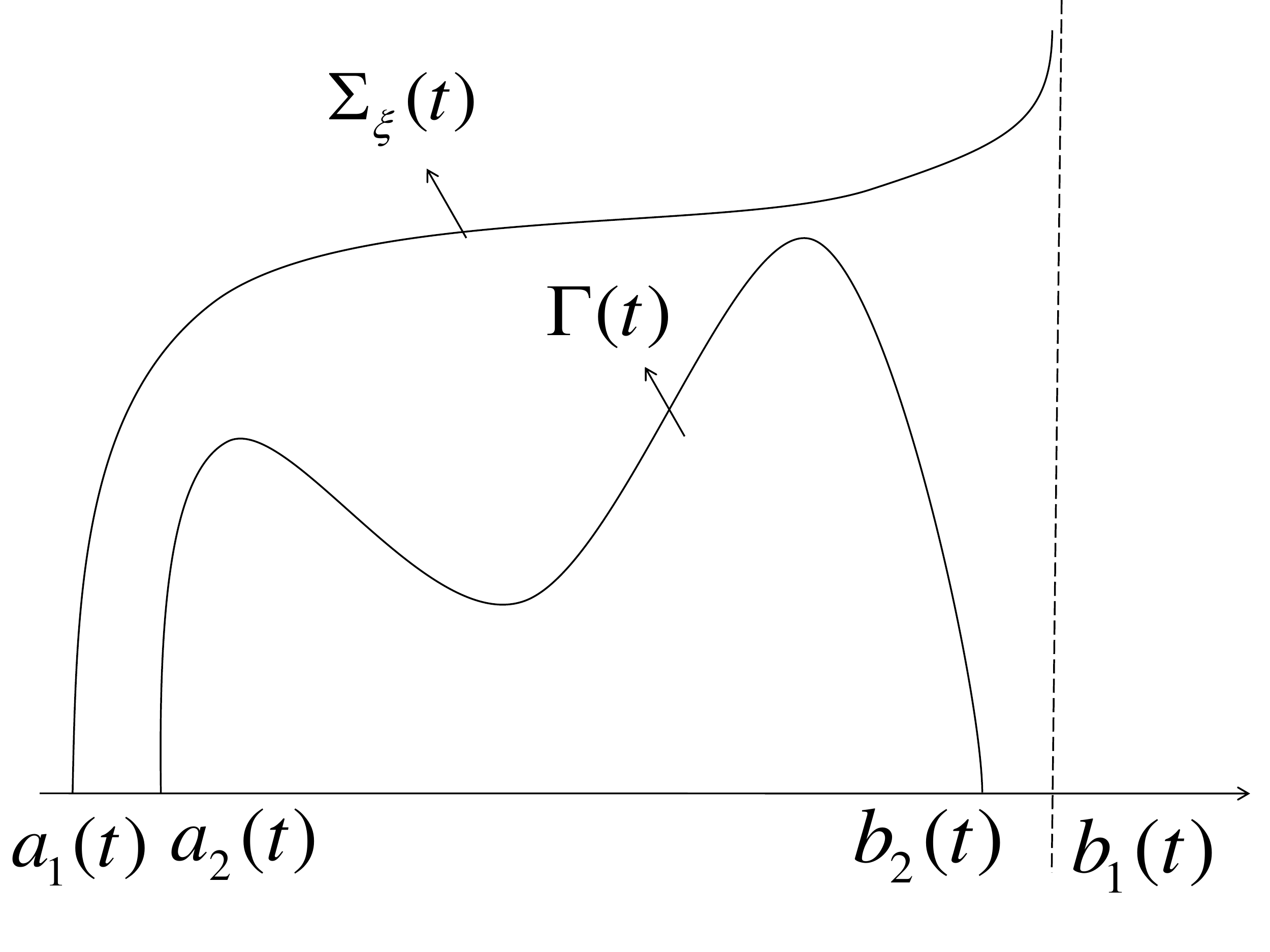}
  \end{center}
  \caption{Case 2 }
  \label{fig:22}
 \end{minipage}
\end{figure}

Since$\lim\limits_{x\rightarrow b_1(t_2)}v(x-\xi,t_2)=+\infty$ and $u(b_2(t_2),t_2)=0$, there exists a positive $\delta$ independent on $t$, such that $v(x-\xi,t_2)>u(x,t_2)$, $b_1(t_2)-\delta<x<b_1(t_2)$. By continuity, there exists $\epsilon$ such that 
\begin{equation}\label{eq:1interlemma}
v(b_1(t_2)-\delta-\xi,t)>u(b_1(t_2)-\delta,t),\ t_2-\epsilon\leq t<t_2+\epsilon
\end{equation}
and
\begin{equation}\label{eq:2interlemma}
v(x-\xi,t)>u(x,t),\ b_1(t_2)-\delta<x\leq b_2(t),\ t_2\leq t<t_2+\epsilon.
\end{equation}
The assumptions in this case imply boundary condition
$$
u(a_2(t),t)-v(a_2(t)-\xi,t)< 0,\ t_2-\epsilon\leq t<t_2+\epsilon
$$ 
and initial condition 
$$
u(x,t_2-\epsilon)<v(x-\xi,t_2-\epsilon),\ a_2(t_2-\epsilon)\leq x\leq b_1(t_2)-\delta.
$$ 
Combining the other boundary condition (\ref{eq:1interlemma}), using maximum principle in domain 
$$
\cup_ {t_2-\epsilon\leq t<t_2+\epsilon}\left(\left[a_2(t),b_1(t_2)-\delta\right]\times\{t\}\right),
$$ 
there holds
$$
u(x,t)<v(x-\xi,t),\ a_2(t)\leq x\leq b_1(t_2)-\delta,\ t_2-\epsilon\leq t<t_2+\epsilon.
$$
Seeing (\ref{eq:2interlemma}), $u(x,t)<v(x-\xi,t)$, $a_2(t)\leq x\leq b_2(t)$, $t_2\leq t<t_2+\epsilon$. It means that $\Sigma_{\xi}(t)$ does not intersect $\Gamma(t)$, for $t_2\leq t<t_2+\epsilon$. So by Theorem \ref{thm:sl}, $\Sigma_{\xi}(t)$ does not intersect $\Gamma(t)$, for $t>t_2$.

{\bf Case 2.} Assume $a_1(t)<a_2(t)$, $\Sigma_{\xi}(t)$ does not intersect $\Gamma(t)$, $t<t_3$ and $a_1(t_3)=a_2(t_3)$.

\begin{figure}[htbp]
  \begin{center}
   \includegraphics[width=9cm]{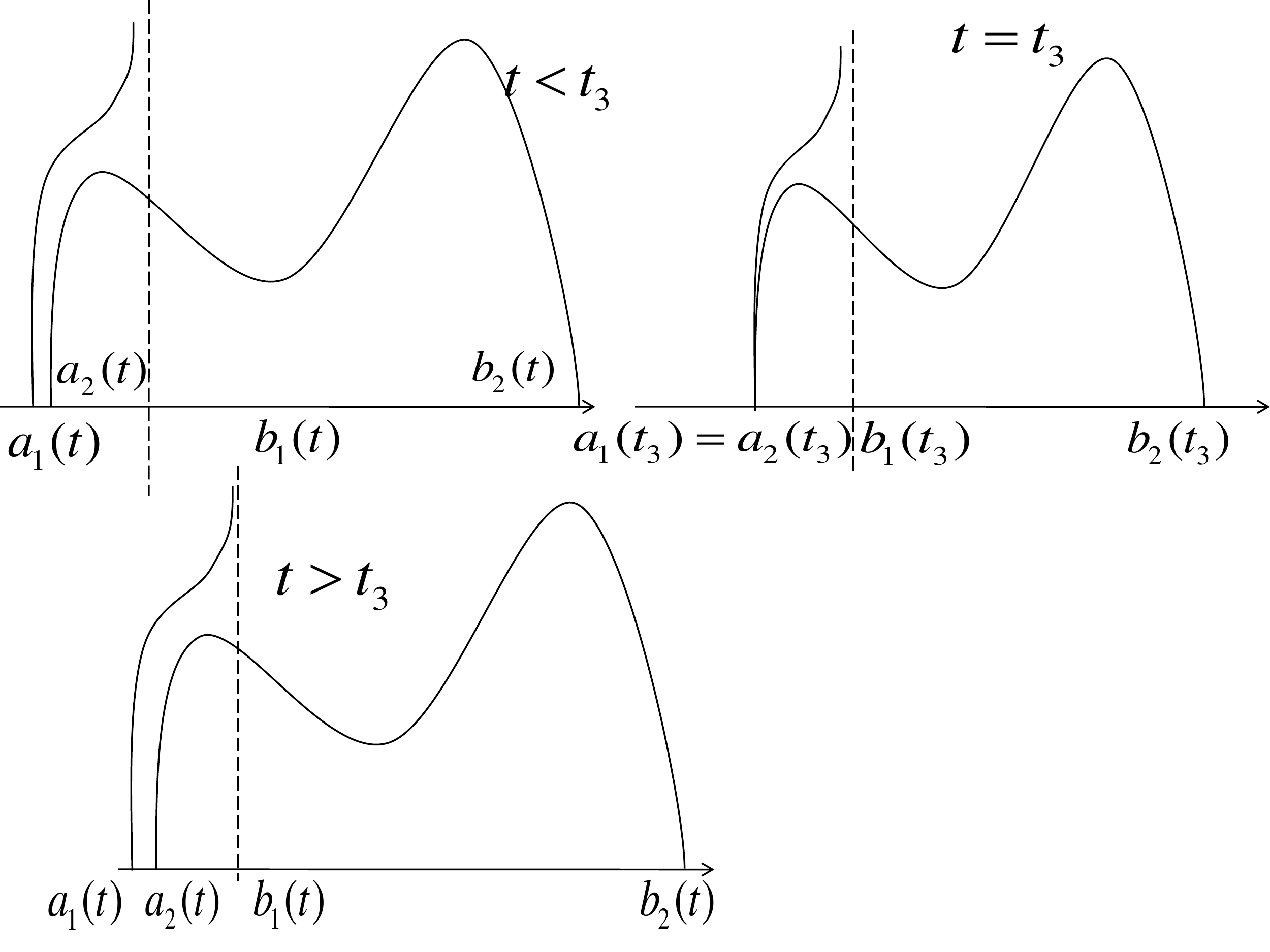}
  \end{center}
  \caption{Case 3}
  \label{fig:9}
 \end{figure}

Since $\lim\limits_{x\rightarrow a_2(t)}u_x(x,t)=\infty$, there exist $\delta_1$ and $\epsilon$ such that $r=u(x,t)$ can be expressed as $x=h(r,t)$, $0\leq r\leq \delta_1$, $t_3-\epsilon<t<t_3+\epsilon$. The assumptions in this case imply that
 $$
w(\delta_1,t)+\xi<h(\delta_1,t),\ t_3-\epsilon<t<t_3+\epsilon.
$$
 It is easy to see $w(r,t)+\xi$ and $h(r,t)$ satisfy the vertical graph equation
$$
\left\{
\begin{array}{lcl}
\dis{w_t=\frac{w_{rr}}{1+w_r^2}+\frac{n-1}{r}w_r-A\sqrt{1+w_r^2}, \ 0\leq r\leq\delta_1,\ t_3-\epsilon<t<t_3+\epsilon},\\
w_r(0,t)=0,\  t\geq0,\\
\end{array}
\right.
$$
and $w(r,t_3-\epsilon)+\xi< h(r,t_3-\epsilon)$. By strong maximum principle, $w(r,t)+\xi<h(r,t)$, for $0\leq r<\delta_1,\ t_3-\epsilon<t<t_3+\epsilon$. Contradiction to $a_1(t_3)=a_2(t_3)$. It means that this case does not happen.

{\bf Case 3.} Assume $a_2(t)<b_2(t)<a_1(t)<b_1(t)$, $t<t_6$ and $a_2(t)<a_1(t)<b_2(t)<b_1(t)$, $t>t_6$.

\begin{figure}[h]
 \begin{minipage}{0.49\hsize}
  \begin{center}
   \includegraphics[width=5cm]{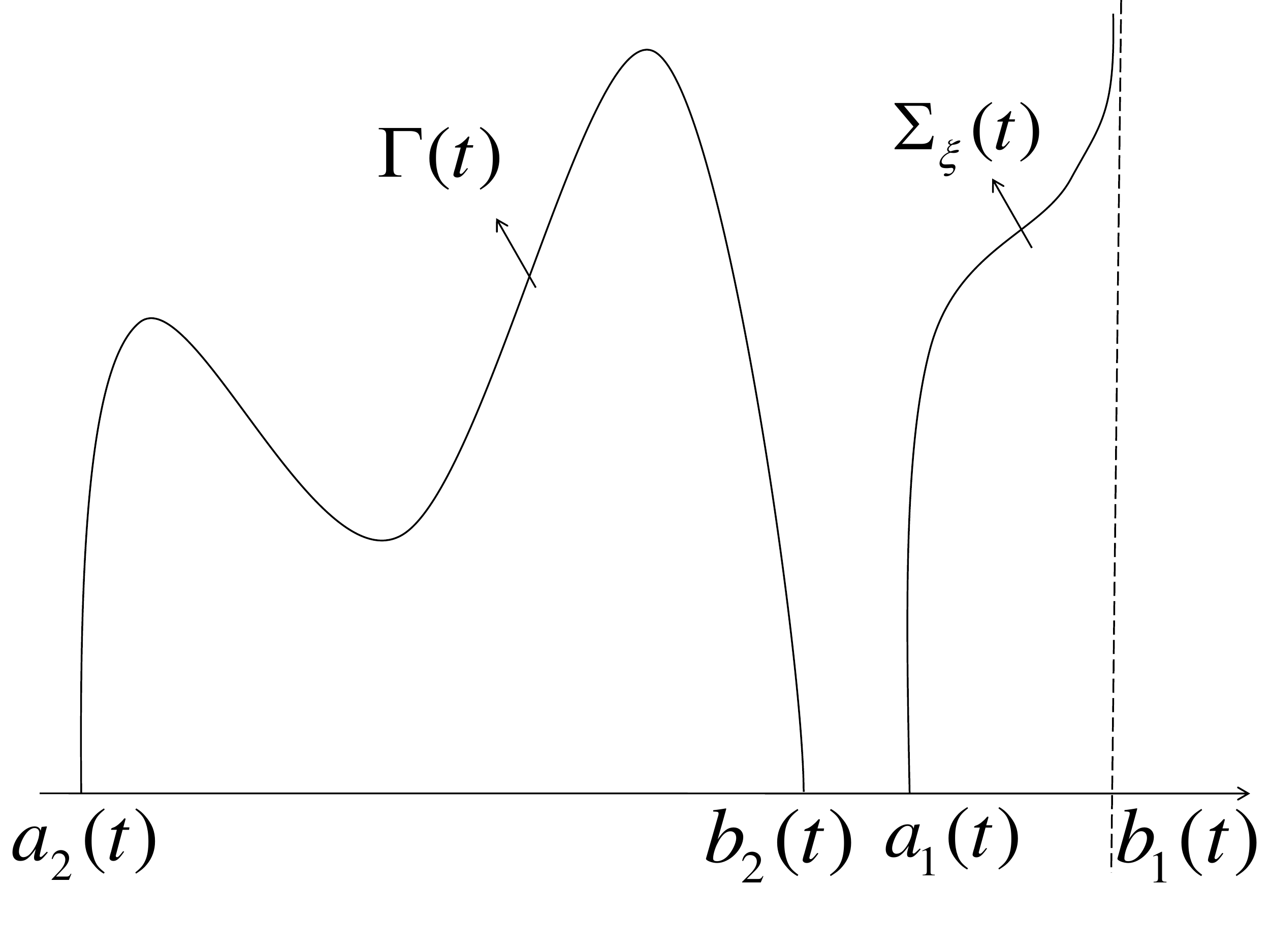}
  \end{center}
  \caption{Case 4}
  \label{fig:61}
 \end{minipage}
 \begin{minipage}{0.49\hsize}
  \begin{center}
   \includegraphics[width=5cm]{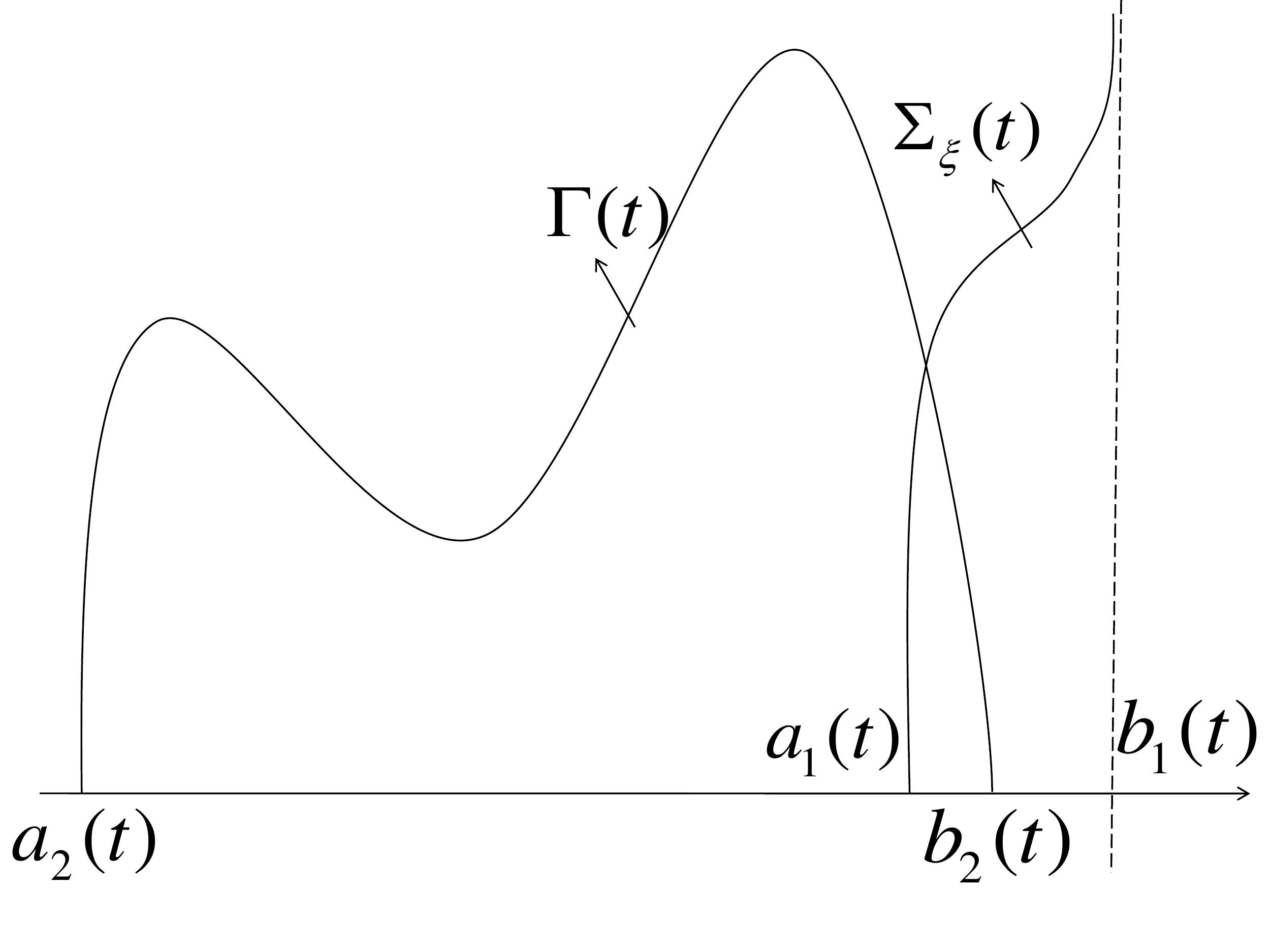}
  \end{center}
  \caption{Case 4}
  \label{fig:62}
 \end{minipage}
\end{figure}

Obviously, $\Sigma_{\xi}(t)$ dosen't intersect $\Gamma(t)$, $t<t_6$. For $\lim\limits_{x\rightarrow b_2(t)}u_x(x,t)=-\infty$ and $\lim\limits_{x\rightarrow a_1(t)}v_x(x-\xi,t)=\infty$, there exists $\epsilon$ such that 
$$
u_x(x,t)-v_x(x-\xi,t)<0,\ a_1(t)\leq x\leq b_2(t),\ t_6<t<t_6+\epsilon.
$$
Seeing $u(a_1(t),t)-v(a_1(t)-\xi,t)>0$ and $u(b_2(t),t)-v(b_2(t)-\xi,t)<0$, $u(x,t)$ intersects $v(x-\xi,t)$ only once in $[a_1(t),b_2(t)]$, $t_6<t<t_6+\epsilon$. Consequently, $\Sigma_{\xi}(t)$ intersects $\Gamma(t)$ only once, $t_6<t<t_6+\epsilon$. So by Theorem \ref{thm:sl} we have $\Sigma_{\xi}(t)$ intersects $\Gamma(t)$ only once, $t>t_6$. 

The other conditions can be proved similarly as the three cases above. We see that the intersection number increases only in Case 3.

Then we can conclude that

1. if $a_1(0)<a_2(0)$, $\Sigma_{\xi}(t)$ does not intersect $\Gamma(t)$.

2. if $a_2(0)<a_1(0)<b_2(0)$, $\Sigma_{\xi}(t)$ intersects $\Gamma(t)$ at most once.

3. if $b_2(0)<a_1(0)$, $\Sigma_{\xi}(t)$ intersects $\Gamma(t)$ at most once.(Only in this case, the intersection number may increase)

We complete the proof.
\end{proof}

\begin{rem}\label{rem:intersection1}
The intersection number between two closed, compact, rotationally symmetric hypersurfaces $\Gamma_1(t)=\{(x,y)\in \mathbb{R}\times\mathbb{R}^n\mid r=u_1(x,t),a_1(t)\leq x\leq b_1(t)\}$, $\Gamma_2(t)=\{(x,y)\in \mathbb{R}\times\mathbb{R}^n\mid r=u_2(x,t),a_2(t)\leq x\leq b_2(t)\}$ is denoted by $\mathcal{Z}(t):=\mathcal{Z}[\Gamma_1(t),\Gamma_2(t)]$. If $\Gamma_i(t)$ evolve by $V=-\kappa+A$ in $\mathbb{R}^{n+1}$, seeing Theorem \ref{thm:sl} and the proof in Lemma \ref{lem:inters}, we can similarly prove 
\\
(a) $\mathcal{Z}(t)$ does not increase when $t$ satisfies $\mathcal{Z}(\Gamma_1(t),\Gamma_2(t))>0$.
\\
(b) If $\mathcal{Z}(t_0)=0$, then $\mathcal{Z}(t)\leq 1$, $t_0<t<T$. 

Observing the proof of Case 3 in Lemma \ref{lem:inters}, it also holds that the intersection number will possibly increase once only for $a_1(0)>b_2(0)$ or $a_2(0)>b_1(0)$ in this remark. The results in this remark can be proved similarly as Lemma \ref{lem:inters}.

Observing the opinion in this remark similarly, since $\mathcal{Z}(0)\leq1$ in Lemma \ref{lem:inters}, there holds $\mathcal{Z}(t)\leq 1$ for $0<t<T$.
\end{rem}

Using the intersection argument, we can prove the following theorem.

\begin{thm}\label{thm:gu} Let $\Gamma(t)$, $t\in [0,T)$, be a family of smooth hypersurfaces evolving by $V=-\kappa+A$ in $\mathbb{R}^{n+1}$. If $\Gamma(0)$ is obtained by rotating the graph of a function around the $x$-axis, then so are the $\Gamma(t)$ for $t\in[0,T).$
\end{thm}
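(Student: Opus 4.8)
The plan is to show that the rotational symmetry is preserved by combining uniqueness of the smooth flow with the intersection-number machinery of Theorem \ref{thm:sl} and Remark \ref{rem:intersection1}. Fix a direction $\omega$ in the hyperplane $\{x=0\}$ and let $R_\omega$ denote the rotation of $\mathbb{R}^{n+1}$ by angle $\theta$ about the $x$-axis (or, more simply, a reflection $S$ in any hyperplane containing the $x$-axis). Since $V=-\kappa+A$ is a geometric evolution law invariant under rigid motions of $\mathbb{R}^{n+1}$, the family $R_\omega\Gamma(t)$ is again a smooth solution of $V=-\kappa+A$ on $[0,T)$, with initial datum $R_\omega\Gamma(0)=\Gamma(0)$ because $\Gamma(0)$ is rotationally symmetric about the $x$-axis. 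By uniqueness of smooth solutions of mean curvature flow with driving force (short-time existence and uniqueness for smooth compact hypersurfaces), $R_\omega\Gamma(t)=\Gamma(t)$ for all $t\in[0,T)$. As this holds for every rotation $R_\omega$ about the $x$-axis, $\Gamma(t)$ is rotationally symmetric, and hence is obtained by rotating a planar curve; writing that curve locally as a horizontal or vertical graph gives the stated conclusion.

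The only point that needs care is that uniqueness is most cleanly available for closed hypersurfaces, whereas here we must also know the symmetric halves do not develop spurious extra structure — i.e., that $\Gamma(t)$ stays embedded and that the profile curve stays a graph over the $x$-axis at least locally. This is exactly where the intersection-number results enter. First, one uses Theorem \ref{thm:sl} applied to $\Gamma(t)$ and a family of shrinking or translating spheres (round solutions of $V=-\kappa+A$) to confirm embeddedness is preserved on $[0,T)$, so the uniqueness argument above is legitimate. Second, to pass from ``rotationally symmetric'' to ``graph of a function'', one observes that rotational symmetry forces $\Gamma(t)$ to be the set $\{r=u(x,t)\}$ on the region where the profile meets each slice $\{x=\text{const}\}$ in a single circle; the intersection-number principle (comparing $\Gamma(t)$ with the vertical hyperplanes $\{x=c\}$, which are stationary) shows this single-circle property is preserved from $t=0$, so no ``bubbling off'' of extra sheets occurs within the smooth regime.

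A slightly different and perhaps cleaner route, matching the style of the rest of Section 4, is to avoid invoking uniqueness of the vector flow directly and instead argue at the level of the level-set/graph PDE: by Theorem \ref{thm:grad} and the a priori estimates of Section 3, the rotationally symmetric profile $u(x,t)$ — which exists and solves \eqref{eq:1horizontal} near any point where $\Gamma(t)$ is a horizontal graph — can be continued, and the intersection-number argument (Theorem \ref{thm:sl}, together with Lemma \ref{lem:inters} and Remark \ref{rem:intersection1}) shows that $\Gamma(t)$ can meet any rotated copy $R_\omega\Gamma(t)$ in at most the trivial way, forcing $R_\omega\Gamma(t)=\Gamma(t)$. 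Either way, the main obstacle is the bookkeeping needed to guarantee that the comparison hypotheses of Theorem \ref{thm:sl} (distinct, order-preserved endpoints) hold along the flow, or alternatively that the smooth flow does not leave the class of embedded hypersurfaces before time $T$; once that is in place the symmetry is immediate. I would present the uniqueness-based argument as the main line and relegate the embeddedness check to a short remark citing Theorem \ref{thm:sl}.
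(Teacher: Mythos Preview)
Your core strategy matches the paper's: first observe that rotational invariance of $V=-\kappa+A$ together with uniqueness of the smooth flow forces $\Gamma(t)$ to remain rotationally symmetric, and then use an intersection-number comparison with the vertical slices $\{x=c\}$ to show the profile stays a graph over the $x$-axis. The paper states exactly this, in one sentence each, citing the case analysis of Lemma~\ref{lem:inters} for the second step.

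There is one concrete error worth fixing. You call the hyperplanes $\{x=c\}$ ``stationary'', but for $A>0$ they are not: a flat hyperplane has $\kappa=0$, so under $V=-\kappa+A$ it translates at speed $A$ along its normal, i.e.\ the comparison family is $\{x=c\pm At\}$. This does not break the argument---it is precisely the situation handled in Lemma~\ref{lem:in}, where the comparison line $y=C+At$ moves---but your write-up should reflect it. The intersection count between the moving hyperplane and $\Gamma(t)$ is still controlled by the Sturmian principle, and since at $t=0$ every such hyperplane meets the graph $r=v_0(x)$ in at most one circle, the same holds for $t>0$ (with the endpoint cases handled as in Lemma~\ref{lem:inters}).

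Beyond that, most of your proposal is unnecessary scaffolding. The embeddedness check via spheres, the worry about ``spurious extra structure'', and the alternate route through Theorem~\ref{thm:grad} are all redundant once you have the two-line argument above: smoothness and embeddedness of $\Gamma(t)$ on $[0,T)$ are hypotheses of the theorem, not things you need to re-derive. The paper's own proof is three sentences; yours should be too.
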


For the proof of Theorem \ref{thm:gu}, we see that $\Gamma(t)$ is also rotationally symmetric because the equation is rotationally invariance. Since $\Gamma(0)$ is obtained by rotating the graph of a function around the $x$-axis, $\Gamma(0)$ can be written into $\Gamma(0)=\{(x,y)\in \mathbb{R}\times\mathbb{R}^n\mid r=v_0(x)\}$ for some function $v_0(x)$. It means that all straight vertical line $x=c$ intersects $\Gamma(0)$ at most once. Using the same argument in Lemma \ref{lem:inters},  all $x=c$ intersects $\Gamma(t)$ at most once. Then $\Gamma(t)$ can be written into $\{(x,y)\in \mathbb{R}\times\mathbb{R}^n\mid r=u(x,t)\}$. We omit the details.

For the following argument, we only consider the results in $\mathbb{R}^2$. In our problem, the curve evolving by $V=-\kappa+A$ maybe intersect itself at $r=0$. To conquer this difficulty, we give the definition of the $\alpha$-domain first used by \cite{AAG}. 

\begin{defn}\label{def:alphad} We say a domain is an $\alpha$-domain if

(1). Let $U\subset \mathbb{R}^{2}$ be an open set of the form
$$
U=\{(x,y)\in\mathbb{R}^2\mid r<u(x)\}.
$$

(2). $I=\{x\in\mathbb{R}\mid u(x)>0 \}$ is a bounded, connected interval. Then there exist $a_1<a_2$ such that $\partial I=\{a_1,a_2\}$.

(3). $u$ is smooth on $I$;

(4). $\partial U$ intersects each cylinder $\partial C_{\rho}$ with $0<\rho\leq\alpha$ twice and these intersections are transverse, where $C_{\rho}=\{(x,y)\in\mathbb{R}^2\mid r<\rho\}$.
\end{defn}

\begin{figure}[htbp]
	\begin{center}
            \includegraphics[height=5cm]{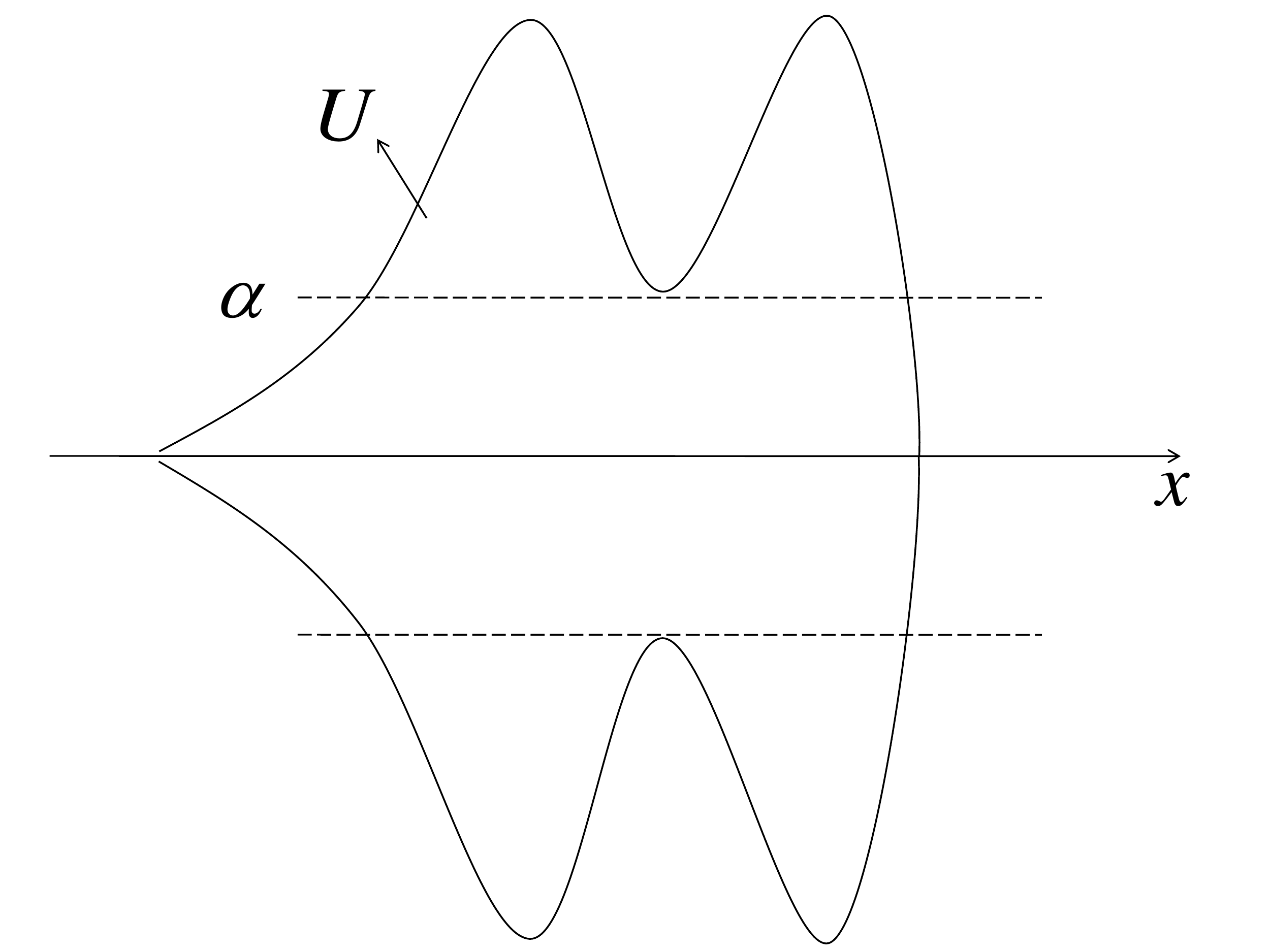}
		\vskip 0pt
		\caption{$\alpha$-domain}
        \label{fig:alphadom}
	\end{center}
\end{figure}
From Figure \ref{fig:alphadom}, we observe that the boundary $\partial U$ of an $\alpha$-domain $U$ does not intersect itself at $y=0$. The condition (3) implies $\partial U$ is a smooth curve, except possibly at its endpoints $(a_1,0),(a_2,0)$. The condition (4) implies that there exist $\delta_1,\delta_2>0$ such that
$$
u(a_1+\delta_1)=u(a_2-\delta_2)=\alpha,
$$
and
$$
u^{\prime}(x)=\left\{
\begin{array}{lcl}
>0,\ x\in(a_1,a_1+\delta_1],\\
<0,\ x\in[a_2-\delta_2,a_2).
\end{array}
\right.
$$
Therefore, the inverse of $u|_{[a_1,a_1+\delta_1]}$ and $u|_{[a_2-\delta_2,a_2]}$ exist, denoted by $v_1$, $v_2:[0,\alpha]\rightarrow\mathbb{R}$. By the implicity theorem, they are smooth in $(0,\alpha]$. Moreover, $v_1^{\prime}(r)>0$, $v_2^{\prime}(r)<0$, $(0<r\leq\alpha)$ and
$$
\partial U\cap C_{\alpha}=\{(x,y)\in\mathbb{R}^2\mid0\leq r\leq\alpha,\ x=v_i(r),\ i=1,2\}.
$$
The two components of $\partial U\cap C_{\alpha}$ are called the left and right caps of $\partial U$.

\begin{lem}\label{lem:alphad2} Let $U$ be an $\alpha$-domain. Then there exists $t_U>0$ such that $D(t)$ denoted the open evolution with $D(0)=U$ is an $(\alpha+At)$-domain, $0<t<t_U$.
\end{lem}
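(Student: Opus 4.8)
\textbf{Proof proposal for Lemma \ref{lem:alphad2}.}

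The plan is to combine the interior gradient estimates of Section 3 with the intersection-number arguments of Section 4, together with the continuity-in-time properties of open evolutions (Theorem \ref{thm:conti}) and the barrier provided by the vertical graphs $\Sigma_\xi(t)$ of Theorem \ref{thm:grad}. First I would show that for small $t>0$ the boundary $\partial D(t)$ is smooth away from its endpoints. Since $U$ is an $\alpha$-domain, on the region $\{r>\alpha/2\}$ the boundary is the graph $r=u(x)$ of a smooth function with bounded data; away from the caps, standard short-time existence for the graph equation (\ref{eq:graph}) with $n=1$ gives a smooth solution, and near the caps the two caps are vertical graphs $x=v_i(r)$ solving (\ref{eq:1vertical+}) or (\ref{eq:1vertical-}), to which short-time parabolic theory again applies. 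By uniqueness of viscosity solutions (the theory in \cite{G}) this smooth flow coincides with $\partial D(t)$ for $t$ small, so $\partial D(t)$ is a smooth embedded curve except possibly at its two endpoints on $\{y=0\}$; this gives conditions (1) and (3) of Definition \ref{def:alphad} with a connected interval $I(t)$, hence condition (2).

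The heart of the matter is condition (4): that $\partial D(t)$ meets each cylinder $\partial C_\rho$ with $0<\rho\le \alpha+At$ exactly twice and transversally. For this I would use the vertical-graph barriers. Fix $\rho_0\le\alpha$. At $t=0$ the left cap of $\partial U$ is the vertical graph $x=v_1(r)$, $0\le r\le\alpha$, with $v_1'(r)>0$; choosing the function $w_0$ of Theorem \ref{thm:grad} to agree suitably with $v_1$ near $r=\rho_0$, and letting $\Sigma_\xi(t)$ be the corresponding evolved vertical graph (which solves (\ref{eq:1vertical+}) and has $w_r(0,t)=0$, so it stays a vertical graph with positive slope and $w_r>0$ for $r>0$ by the strong maximum principle applied to (\ref{eq:deriveeq})), Lemma \ref{lem:inters} shows $\Sigma_\xi(t)$ meets $\partial D(t)$ at most once. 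Since $\partial D(t)$ starts above the barrier on one side and below on the other near $r=\rho_0$, and the barrier is a vertical graph, this pins the left part of $\partial D(t)$ between two translated barriers $\Sigma_{\xi_-}(t)$ and $\Sigma_{\xi_+}(t)$ and forces $\partial D(t)\cap C_{\alpha+At}$ on the left to be a vertical graph $x=v_1(r,t)$ with $v_{1,r}>0$; symmetrically on the right with (\ref{eq:1vertical-}) and $v_{2,r}<0$. The outer constant $A$ in $\alpha+At$ appears precisely because the barriers move with speed governed by the $A\sqrt{1+w_r^2}$ term, exactly as $r=u(x,t)<M+At$ in the proof of Theorem \ref{thm:grad}; so the caps remain genuine caps out to radius $\alpha+At$. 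Positivity of the derivatives $v_{i,r}$ gives transversality of the intersections with $\partial C_\rho$.

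Finally I would pass from "for each fixed $\rho_0$ there is a time interval" to a uniform $t_U>0$: the estimate (\ref{eq:derivativeblew}) and the gradient bounds of Corollary \ref{cor:es} (applied on the region away from $r=0$) provide uniform control on the slopes $v_{i,r}$ and on how fast $v_i(r,t)$ can move, uniformly for $\rho\in(0,\alpha]$, so one obtains a single $t_U$ for which conditions (1)--(4) hold with $\alpha$ replaced by $\alpha+At$ on $(0,t_U)$. The continuity statements in Theorem \ref{thm:conti} ensure $D(t)$ is genuinely open and does not degenerate in this interval. The main obstacle I anticipate is the cap analysis: one must verify that no new intersections of $\partial D(t)$ with small cylinders $\partial C_\rho$ are created near the endpoints $(a_i,0)$ — equivalently, that the flow does not instantaneously develop a self-intersection on the axis $\{y=0\}$ — and this is exactly where the one-sided use of the vertical barriers $\Sigma_\xi(t)$ together with Lemma \ref{lem:inters} (the at-most-once intersection property) is essential, rather than any naive comparison of $u$-graphs which would fail because the caps are not horizontal graphs.
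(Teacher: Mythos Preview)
Your approach diverges from the paper's and carries a genuine gap. The paper does \emph{not} use the barriers $\Sigma_\xi(t)$ of Lemma~\ref{lem:inters} for condition (4); it uses Lemma~\ref{lem:in}, which you have overlooked entirely. Lemma~\ref{lem:in} says that if one extends the graph $y=u(x,t)$ by the two vertical half-lines $\{x=a(t),\,y<0\}$ and $\{x=b(t),\,y<0\}$ to form $\gamma_1(t)$, then the intersection number between $\gamma_1(t)$ and the \emph{moving horizontal line} $\gamma_2(t)=\{y=C+At\}$ is nonincreasing. Since $U$ is an $\alpha$-domain, for any $0<\rho<\alpha+At_0$ the line $y=\rho-At_0$ lies at height below $\alpha$ and so meets $\gamma_1(0)$ exactly twice; hence $y=\rho$ meets $\gamma_1(t_0)$, and therefore $y=u(x,t_0)$, at most twice. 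This is where the constant $A$ in $\alpha+At$ comes from: it is the vertical speed of the comparison line, not a property of the $\Sigma_\xi$ barriers. The ``at least twice'' direction, which you do not address, is supplied by a small ball $B_\epsilon(P)\subset U\setminus\overline{C_\alpha}$ whose open evolution $B_{\epsilon(t)}(P)$ stays inside $D(t)$ and outside $\overline{C_{\alpha+At}}$ for small $t$ (Theorem~\ref{thm:order} and Theorem~\ref{thm:conti}).

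The specific gap in your argument is the sentence ``this pins the left part of $\partial D(t)$ between two translated barriers $\Sigma_{\xi_-}(t)$ and $\Sigma_{\xi_+}(t)$ and forces $\partial D(t)\cap C_{\alpha+At}$ on the left to be a vertical graph $x=v_1(r,t)$ with $v_{1,r}>0$.'' Being trapped between two monotone vertical graphs does not by itself force a curve to be a monotone vertical graph; a wiggly curve can sit between two straight ones. The at-most-once intersection property of $\Sigma_\xi$ with $\Gamma$ (Lemma~\ref{lem:inters}) gives information about one barrier at a time and does not convert into the at-most-twice property of $\partial C_\rho$ with $\Gamma$ that you need. Your attempt to extract the growth rate $At$ from the $A\sqrt{1+w_r^2}$ term in the barrier equation is also not a proof. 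The paper's route via Lemma~\ref{lem:in} bypasses all of this: the horizontal line $y=C+At$ is itself a solution of the horizontal graph equation, so Sturm's theorem applies directly and the counting is immediate.
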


For proving the Lemma \ref{lem:alphad2}, we need the following lemma. 
\begin{lem}\label{lem:in} Let $(u,a,b)$ be the solution of
\begin{equation}\label{eq:eq1}
u_t=\frac{u_{xx}}{1+u_x^2}+A\sqrt{1+u_x^2}, \ x\in(a(t),b(t)), \ 0<t< T,
\end{equation}
\begin{equation}\label{eq:eq2}
u(a(t),t)=0,\ u(b(t),t)=0,\ 0\leq t< T,
\end{equation}
\begin{equation}\label{eq:eq3}
u_x(a(t),t)=+\infty,\ u_x(b(t),t)=-\infty,\ 0\leq t< T,
\end{equation}
\begin{equation}\label{eq:eq4}
u(x,0)=u_0(x),\ a(0)\leq x\leq b(0),
\end{equation}
where $u_0\in C[a(0),b(0)]\cap C^1(a(0),b(0))$.

We denote $\gamma_1(t)$ consisting of the following three parts, $\gamma_{11}(t)=\{(x,y)\in \mathbb{R}^2\mid x=a(t), y<0\}$, $\gamma_{12}(t)=\{(x,y)\in \mathbb{R}^2\mid x=b(t), y<0\}$ and $\gamma_{13}(t)=\{(x,y)\in \mathbb{R}^2\mid y=u(x,t), a(t)\leq x\leq b(t)\}$. For all $C\in \mathbb{R}$,  denote $\gamma_2(t)=\{(x,y)\in \mathbb{R}^2\mid y=C+At\}$. 

Then the intersection number $\mathcal{Z}[\gamma_1(t),\gamma_2(t)]$ is not increasing in $t\in [0,T)$.
\end{lem}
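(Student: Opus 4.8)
\emph{Proof proposal.} The key observation is that the moving horizontal line $\gamma_2(t)$ is itself a solution of $V=-\kappa+A$: a straight line has zero curvature, so its normal motion is $V=A$, and for a horizontal line this is translation upward at speed $A$; equivalently $\gamma_2(t)$ is the graph of the spatially constant function $\ell(t):=C+At$, which solves the graph equation (\ref{eq:eq1}) since $\ell_t=A=\frac{\ell_{xx}}{1+\ell_x^2}+A\sqrt{1+\ell_x^2}$. The plan is to compare $u$ with $\ell$ through $w(x,t):=u(x,t)-\ell(t)$, reduce the count of intersections on the graph part $\gamma_{13}$ to a Sturmian statement for $w$, and dispose of the two vertical rays by hand.

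First I would record that, because $u$ and $\ell$ solve the same quasilinear graph equation, the difference $w$ solves the linear parabolic equation
\[
w_t=\frac{w_{xx}}{1+w_x^2}+A\bigl(\sqrt{1+w_x^2}-1\bigr)=a(x,t)\,w_{xx}+b(x,t)\,w_x,\qquad a=\frac{1}{1+w_x^2}>0,\quad b=\frac{A\,w_x}{\sqrt{1+w_x^2}+1},
\]
using $\sqrt{1+p^2}-1=p^2/(\sqrt{1+p^2}+1)$; here $a$ is bounded below by a positive constant and $b$ is bounded (by $A$) on compact subsets of the open interval, the equation degenerating only where $u_x\to\pm\infty$, i.e.\ at the free boundaries $x=a(t),b(t)$. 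This is exactly the setting in which the moving-boundary Sturmian theorem (Theorem D of \cite{A1}, already invoked for Theorem \ref{thm:sl}) applies, \emph{provided} $w$ does not vanish at the moving endpoints; and there $w(a(t),t)=w(b(t),t)=-\ell(t)$.

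Next I would split $\mathcal{Z}[\gamma_1(t),\gamma_2(t)]=Z_{13}(t)+Z_{\mathrm{ray}}(t)$, where $Z_{13}(t)$ is the number of zeros of $w(\cdot,t)$ on $[a(t),b(t)]$ (these are exactly the points of $\gamma_{13}(t)\cap\gamma_2(t)$) and $Z_{\mathrm{ray}}(t)$ counts $\gamma_2(t)\cap(\gamma_{11}(t)\cup\gamma_{12}(t))$. Since each ray $\gamma_{1i}$ is a downward vertical half-line issuing from $(a(t),0)$ resp.\ $(b(t),0)$, one has $Z_{\mathrm{ray}}(t)=2$ when $\ell(t)<0$ and $Z_{\mathrm{ray}}(t)=0$ when $\ell(t)\ge0$; moreover these ray intersections are disjoint from $\gamma_{13}$. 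As $\ell(t)=C+At$ is increasing, $\{t:\ell(t)<0\}=[0,t_*)$ and $\{t:\ell(t)>0\}=(t_*,T)$ for some $t_*\in[0,T]$. On $[0,t_*)$ we have $u>0>\ell$ on $(a(t),b(t))$, so $Z_{13}\equiv0$ and $\mathcal{Z}\equiv2$. On $(t_*,T)$ we have $Z_{\mathrm{ray}}\equiv0$ and $w(a(t),t)=w(b(t),t)=-\ell(t)<0\neq0$, so Theorem D of \cite{A1} shows that $Z_{13}(t)$ (the number of interior zeros of $w(\cdot,t)$, which here equals $\mathcal{Z}[\gamma_1(t),\gamma_2(t)]$) is nonincreasing there. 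It remains to bridge the crossing time $t_*$, where $\ell=0$ and $w$ vanishes at both endpoints so the Sturmian theorem is inconclusive. Here I would argue directly: at $t_*$ the set $\{u(\cdot,t_*)=0\}\cap[a(t_*),b(t_*)]$ is exactly $\{a(t_*),b(t_*)\}$ (interior values are positive), near which $u$ is strictly monotone because $u_x(a(t),t)=+\infty$, $u_x(b(t),t)=-\infty$; hence, by continuity of $u$, for $t$ slightly above $t_*$ the small level set $\{u(\cdot,t)=\ell(t)\}$ lies in small neighborhoods of the endpoints and meets each in at most one point, so $\mathcal{Z}(t)\le 2$, while $\mathcal{Z}(s)=2$ for all $s<t_*$. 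Combining the three regimes gives that $\mathcal{Z}[\gamma_1(t),\gamma_2(t)]$ is nonincreasing on all of $[0,T)$.

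I expect the main obstacle to be the rigorous invocation of the moving-boundary Sturmian theorem in the presence of the gradient blow-up $u_x\to\pm\infty$ at $x=a(t),b(t)$: one must verify that the hypotheses of Theorem D of \cite{A1} (or the variant used in \cite{AAG}) genuinely tolerate this boundary degeneracy, and keep careful track of the sign of $\ell(t)=C+At$ so that $w$ stays nonzero on the lateral boundary of the moving domain — which is precisely what forces the ad hoc treatment at the single time $t_*$ where $\ell$ vanishes. The remaining ingredients (the linear equation for $w$, the ray count, and the monotonicity of $u$ near its endpoints) are routine.
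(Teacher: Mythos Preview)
Your approach is essentially the same as the paper's: both recognize that the line $\ell(t)=C+At$ solves the graph equation, split the intersection count into the graph contribution plus the two vertical rays, apply Angenent's Sturmian theorem (Theorem D of \cite{A1}) to the difference $w=u-\ell$, and treat separately the regime where $\ell$ changes sign. The paper organizes this locally in time (three cases according to the sign of $C+At_1$) rather than globally, but the substance is identical.

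The one point where the paper is sharper than your sketch is precisely the obstacle you flag: the degeneracy of the coefficient $a=1/(1+u_x^2)$ at the free boundaries. Rather than invoke Theorem~D on the moving interval $[a(t),b(t)]$ itself, the paper fixes $t_1$, sets $x_1=a(t_1)$, $x_2=b(t_1)$, and uses the boundary conditions $u_x(a(t),t)=+\infty$, $u_x(b(t),t)=-\infty$ to find $\delta,\epsilon>0$ with $u_x>0$ on $(a(t),x_1+\delta)$, $u_x<0$ on $(x_2-\delta,b(t))$, and $u(x_1+\delta,t),\,u(x_2-\delta,t)>C+At$ for $|t-t_1|<\epsilon$. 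All zeros of $w$ then lie in the \emph{fixed} interval $[x_1+\delta,x_2-\delta]$, where the equation is uniformly parabolic and Theorem~D applies cleanly, and the count near the endpoints is handled by the monotonicity of $u$ there. This is exactly the patch your final paragraph anticipates; carrying it out removes the only real gap in your argument.
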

\begin{proof} It is sufficient to show for $t_1\in(0,T)$, there exists $\epsilon>0$ such that $\mathcal{Z}[\gamma_1(t),\gamma_2(t)]$ is not increasing on $(t_1-\epsilon,t_1+\epsilon)$. For convenience, we denote $a(t_1)=x_1$ and $b(t_1)=x_2$. 

Next we will prove this result by three cases separately.

First, if $C+At_1\leq0$. Since $u_x(x_1,t_1)=+\infty$ ,$u_x(x_2,t_1)=-\infty$ and $u(x_1,t_1)=u(x_2,t_1)=0$, there exist $\epsilon$, $\delta>0$ such that
\begin{equation}\label{eq:481interlemma}
u(x_1+\delta,t)>C+At,\ u(x_2-\delta,t)>C+At,\ t_1-\epsilon\leq t< t_1+\epsilon,
\end{equation}
\begin{equation}\label{eq:482interlemma}
u_x(x,t)>0,\ x\in(a(t),x_1+\delta),\ u_x(x,t)<0,\ x\in(x_2-\delta,b(t)),\ t_1-\epsilon\leq t< t_1+\epsilon,
\end{equation}
and
$$
(x_1+\delta,x_2-\delta)\subset(a(t),b(t)),\ t_1-\epsilon\leq t< t_1+\epsilon.
$$

\textbf{Case 1.} $C+At_1<0$.

Let $\mathcal{Z}[\gamma_1(t),\gamma_2(t)]=h(t)+\mathcal{Z}[\gamma_{13}(t),\gamma_2(t)]$, where $h(t)$ is denoted as the intersection number between $\gamma_{2}(t)$ and half lines $\gamma_{11}(t)$, $\gamma_{12}(t)$.

In this condition, there exists $\epsilon$ such that $C+A(t_1+\epsilon)<0$. Then there holds 
$h(t)\equiv2$, $t_1-\epsilon\leq t< t_1+\epsilon$. (\ref{eq:481interlemma}) and (\ref{eq:482interlemma}) imply $\mathcal{Z}[\gamma_{13}(t),\gamma_2(t)]=\mathcal{Z}_{[x_1+\delta,x_2-\delta]}[\gamma_{13}(t),\gamma_2(t)]$, for $t\in(t_1-\epsilon,t_1+\epsilon)$. Where $\mathcal{Z}_{I}[\Gamma_{1}(t),\Gamma_2(t)]$ denotes the intersection number between $\Gamma_1(t)$ and $\Gamma_2(t)$ in set $I$. By (\ref{eq:481interlemma}) again and Theorem D in \cite{A1}, $\mathcal{Z}_{[x_1+\delta,x_2-\delta]}[\gamma_{13}(t),\gamma_2(t)]$ does not increase for $t_1-\epsilon\leq t< t_1+\epsilon$.

Therefore $\mathcal{Z}[\gamma_1(t),\gamma_2(t)]$ is non-increasing for $t\in(t_1-\epsilon,t_1+\epsilon)$. 

\textbf{Case 2.} $C+At_1=0$.

For $t_1\leq t< t_1+\epsilon$, obviously, $h(t)=0$. And seeing $u(a(t),t)=u(b(t),t)=0$, (\ref{eq:481interlemma}) and (\ref{eq:482interlemma}), $C+At$ intersects $u(x,t)$ exactly twice in $[a(t),x_1+\delta)\cup(x_2-\delta,b(t)]$. Then $\mathcal{Z}_{[a(t),x_1+\delta)\cup(x_2-\delta,b(t)]}[\gamma_{13}(t),\gamma_2(t)]=2$, $t_1\leq t< t_1+\epsilon$. Therefore,
$$
h(t)+\mathcal{Z}_{[a(t),x_1+\delta)\cup(x_2-\delta,b(t)]}[\gamma_{13}(t),\gamma_2(t)]=2,\ t_1\leq t< t_1+\epsilon.
$$

For $t_1-\epsilon< t\leq t_1$, obviously, $C+At<0$, then there holds $h(t)\equiv2$. (\ref{eq:481interlemma}) and (\ref{eq:482interlemma}) imply that  $\mathcal{Z}_{[a(t),x_1+\delta)\cup(x_2-\delta,b(t)]}[\gamma_{13}(t),\gamma_2(t)]=0$, $t_1-\epsilon< t\leq t_1$. Then we have
$$
h(t)+\mathcal{Z}_{[a(t),x_1+\delta)\cup(x_2-\delta,b(t)]}[\gamma_{13}(t),\gamma_2(t)]=2,\ t_1-\epsilon< t< t_1+\epsilon.
$$

On the other hand, by (\ref{eq:481interlemma}) and Theorem D in \cite{A1}, there holds $\mathcal{Z}_{[x_1+\delta, x_2-\delta]}[\gamma_{13}(t),\gamma_2(t)]$ is not increasing, $t_1-\epsilon<t< t_1+\epsilon$.

Therefore $\mathcal{Z}[\gamma_1(t),\gamma_2(t)]$ is non-increasing for $t\in(t_1-\epsilon,t_1+\epsilon)$. 

\begin{figure}[htbp]
	\begin{center}
            \includegraphics[height=5cm]{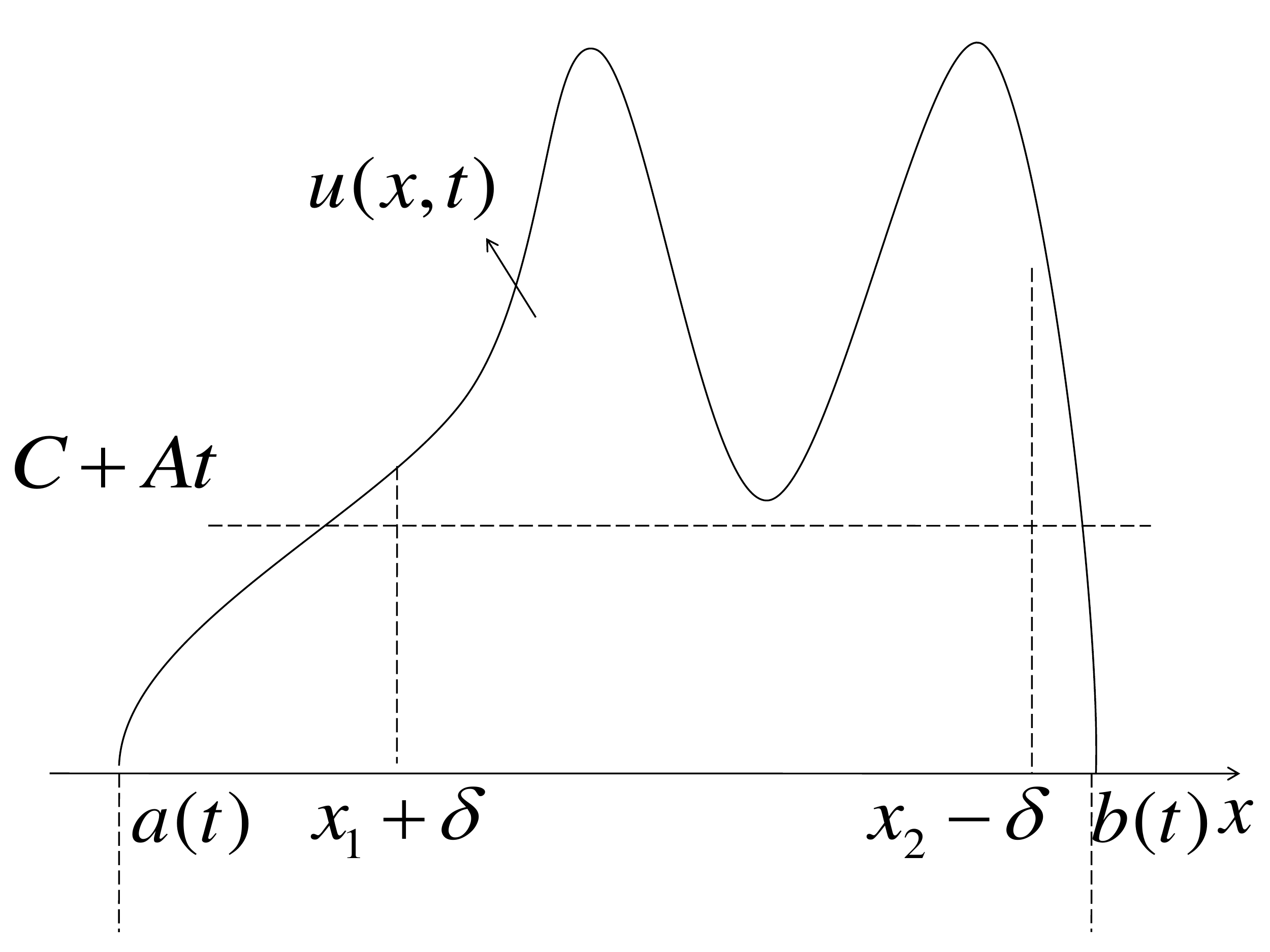}
		\vskip 0pt
		\caption{Proof of the case 3 in Lemma \ref{lem:in}}
        \label{fig:lemalphadom2}
	\end{center}
\end{figure}

\textbf{Case 3.} $C+At_1>0$.

In this case, there exists $\epsilon$ such that $C+A(t_1-\epsilon)>0$. Then there hold
 $$h(t)\equiv0$$
 and 
$$C+At>u(a(t),t)=u(b(t),t)=0,$$
$t\in(t_1-\epsilon,t_1+\epsilon)$. So by Theorem D in \cite{A1}, $\mathcal{Z}[\gamma_{13}(t),\gamma_2(t)]$ is non-increasing and finite for $t\in(t_1-\epsilon,t_1+\epsilon)$. Consequently, $\mathcal{Z}[\gamma_1(t),\gamma_2(t)]$ is finite and non-increasing in $t\in(t_1-\epsilon,t_1+\epsilon)$.

\end{proof}

\begin{proof}[Proof of Lemma \ref{lem:alphad2}]  Since $ U$ is an $\alpha$-domain, using Theorem \ref{thm:partialUmeancurvature}, $\partial D(t)=\{(x,y)\in\mathbb{R}^2\mid |y|=u(x,t), a(t)\leq x\leq b(t)\}$, where $(u,a,b)$ satisfies (\ref{eq:eq1}), (\ref{eq:eq2}), (\ref{eq:eq3}). Moreover, there exists a maximal time $T_U>0$ such that $\partial D(t)$ is smooth, $0<t<T_U$.

Since $U$ is not contained in the cylinder $\overline{C_{\alpha}}$, there exists a small ball $B_{\epsilon}(P)\subset U\setminus\overline{C_{\alpha}}$. By (1) in Theorem \ref{thm:order}, $D(t)$ contains the ball $B_{\epsilon(t)}(P)$ for $0<t<\delta_1$. Where $\epsilon(t)$ satisfies
\begin{equation}\label{eq:ball2}
\epsilon^{\prime}(t)=A-\frac{1}{\epsilon(t)},\ 0<t<\delta,
\end{equation}
with $\epsilon(0)=\epsilon$. Since $B_{\epsilon}(P)\cap\overline{C_{\alpha}}=\phi$, by (1b) in Theorem \ref{thm:conti}, there exists $t_1>0$, such that $B_{\epsilon(t)}(P)\cap \overline{C_{\alpha+At}}=\phi$, $0<t<t_1$.

For $0<\rho<\alpha+At_0$, $0<t_0\leq t_{U}^{\alpha}$, where $t_{U}^{\alpha}=\min\{\delta_1,t_1\}$, $y=\rho-At_0$ intersects $\gamma_1(0)$ exactly twice($\gamma_1(t)$ is constructed in Lemma \ref{lem:in}). Lemma \ref{lem:in} implies that $ y=\rho$ intersects $\gamma_1(t_0)$ at most twice. Consequently, $y=\rho$ intersects $y=u(x,t_0)$ at most twice, for $0<t_0< \min\{t_{U}^{\alpha},T_U\}$. On the other hand, there holds $B_{\epsilon(t_0)}(P)\subset D(t_0)\setminus\overline{C_{\alpha+At_0}}$, $0<t_0< \min\{t_{U}^{\alpha},T_U\}$, then $y=\rho$ intersects $y=u(x,t_0)$ at least twice, $0<t_0< \min\{t_{U}^{\alpha},T_U\}$. Therefore we have $\partial C_{\rho}$ intersects $\partial D(t_0)$ exactly twice, $0<t_0<\min\{t_{U}^{\alpha},T_U\}$. 

Choosing $t_U=\min\{t_U^{\alpha},T_U\}$, $D(t)$ is an $(\alpha+At)$-domain, $0<t<t_U$.  The proof is completed.
\begin{figure}[htbp]
 \begin{minipage}{0.49\hsize}
  \begin{center}
   \includegraphics[width=7cm]{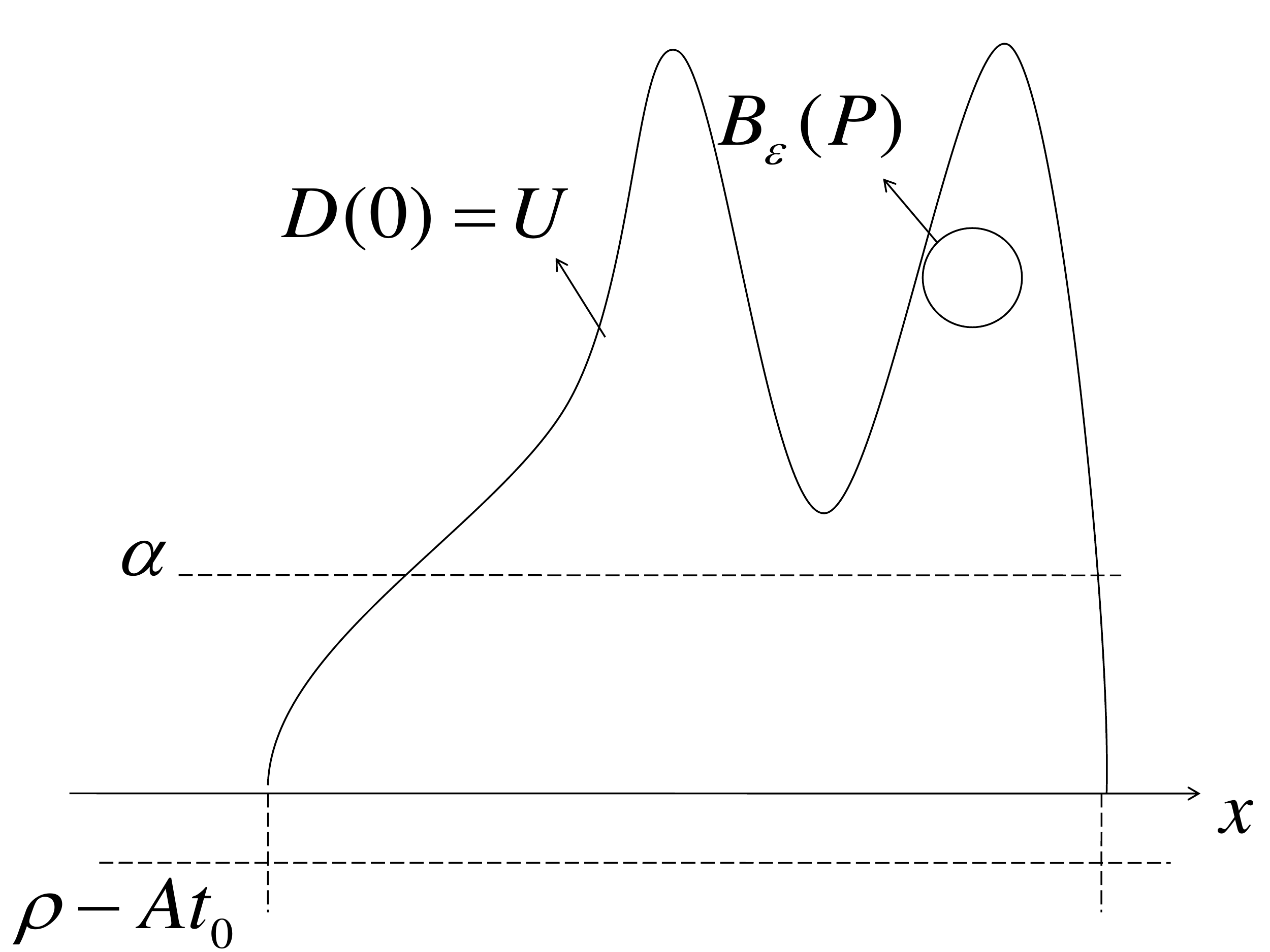}
  \end{center}
  \caption{Proof of Lemma \ref{lem:alphad2}}
  \label{fig:4711}
 \end{minipage}
 \begin{minipage}{0.49\hsize}
  \begin{center}
   \includegraphics[width=7cm]{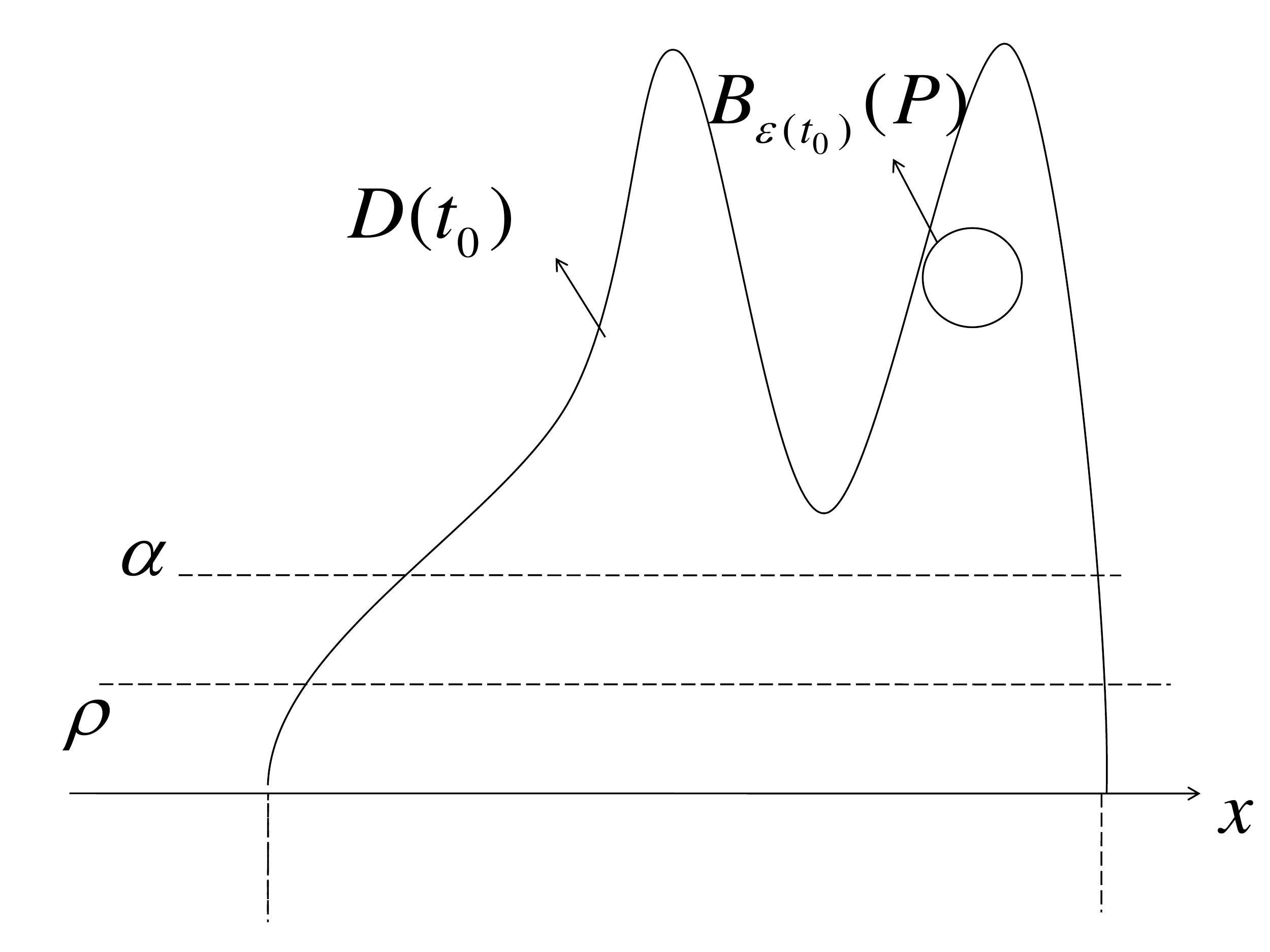}
  \end{center}
  \caption{Proof of Lemma \ref{lem:alphad2}}
  \label{fig:4712}
 \end{minipage}
\end{figure}
\end{proof}

\begin{prop}\label{pro:sin} For $t_U^{\alpha}$ and $T_U$ given in the proof Lemma \ref{lem:alphad2}, $t_U^{\alpha}\leq T_U$.
\end{prop}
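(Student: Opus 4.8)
The plan is to argue by contradiction: suppose $T_U<t_U^{\alpha}$ and show that then $\partial D(t)$ stays smooth slightly beyond $T_U$, contradicting the maximality of $T_U$. Under this assumption $T_U<\delta_1$ and $T_U<t_1$, since $t_U^{\alpha}=\min\{\delta_1,t_1\}$; hence the shrinking ball radius $\epsilon(t)$ of (\ref{eq:ball2}) stays positive on $[0,T_U]$ and $B_{\epsilon(t)}(P)\cap\overline{C_{\alpha+At}}=\emptyset$ there. Moreover the argument in the proof of Lemma \ref{lem:alphad2} is valid on $[0,\min\{t_U^{\alpha},T_U\})=[0,T_U)$, so $D(t)$ is an $(\alpha+At)$-domain for every $t\in(0,T_U)$: in particular $\partial D(t)=\{(x,y)\mid|y|=u(x,t),\ a(t)\le x\le b(t)\}$ is a smooth embedded closed curve evolving by $V=-\kappa+A$, $(u,a,b)$ solves (\ref{eq:eq1})--(\ref{eq:eq3}), one has $u(\cdot,t)\ge\alpha$ on the middle interval $\{x\mid u(x,t)\ge\alpha\}$ and $u(\cdot,t)>0$ on all of $(a(t),b(t))$, and the two caps $\partial D(t)\cap C_{\alpha}$ are monotone graphs $x=v_i(r,t)$ over $r\in[0,\alpha]$ with $v_{1,r}>0$, $v_{2,r}<0$ and Neumann condition $v_{i,r}(0,t)=0$.

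First I would record uniform geometric bounds on $[T_U/2,T_U)$. The order-preserving property (Theorem \ref{thm:order}(1)), which needs no regularity of $\partial D(t)$, compared against the shrinking ball from below and a large expanding circle from above, gives $B_{\epsilon_0}(P)\subset D(t)\subset B_{R_0}(0)$ with $\epsilon_0:=\min_{[0,T_U]}\epsilon(t)>0$; thus $-R_0\le a(t)<b(t)\le R_0$, $b(t)-a(t)\ge 2\epsilon_0$, and comparison with the translating line $y=M+At$ gives $u\le M+AT_U=:C$ where $M=\max u_0$. So the evolving domain is uniformly trapped and does not collapse.

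Next I would run the a priori estimates in two overlapping pieces. On $\{u\ge\alpha/2\}$, Theorem \ref{thm:grad} gives $|u_x(x,t)|\le\sigma(t,u(x,t))\le e^{A_{\alpha/2,T_U}/t}\le C_1$ for $t\ge T_U/2$ (using that $\sigma(t,\cdot)$ is nonincreasing), after which Corollary \ref{cor:es} and \cite{LSU} (Remark \ref{rem:hes}) bound all derivatives of $u$; hence $u(\cdot,t)$ and the endpoints of $\{u(\cdot,t)\ge\alpha\}$ converge smoothly as $t\to T_U^-$. Near the moving endpoints of $(a(t),b(t))$ the horizontal-graph bound degenerates ($u_x\to\pm\infty$), so there I would use the vertical graph $x=v_i(r,t)$, which in the plane ($n=1$) satisfies $v_t=\frac{v_{rr}}{1+v_r^2}\pm A\sqrt{1+v_r^2}$ — an equation of type (\ref{eq:graph}) with no singular coefficient at $r=0$. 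Extending $v_i$ evenly across $r=0$ (allowed because of the Neumann condition and the invariance of the equation under $r\mapsto-r$), Corollary \ref{cor:es} applied to the extension together with $|v_i|\le R_0$ yields $|v_{i,r}|\le C_2$ on $[0,\alpha/2]\times[T_U/2,T_U)$, and \cite{LSU} then bounds all further derivatives, so the caps converge smoothly as well. Gluing the limits and invoking continuity in time (Theorem \ref{thm:conti}) to identify them with $\partial D(T_U)$, I conclude that $\partial D(T_U)$ is a smooth curve bounding an $(\alpha+AT_U)$-domain $D(T_U)$ — transversality of $\partial D(T_U)$ to each $\partial C_\rho$, $\rho<\alpha+AT_U$, being preserved in the limit by the strong maximum principle applied to $u_x$ and to $v_{i,r}$. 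Finally, applying Lemma \ref{lem:alphad2} to the $(\alpha+AT_U)$-domain $D(T_U)$ and using the semigroup identity $U(s)D(T_U)=D(T_U+s)$ (Theorem \ref{thm:semi}) shows $D(t)$ is an $(\alpha+At)$-domain, hence $\partial D(t)$ is smooth, for $t$ in a right-neighbourhood of $T_U$; combined with smoothness on $(0,T_U]$ this contradicts the maximality of $T_U$. Therefore $t_U^{\alpha}\le T_U$.

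The step I expect to be the main obstacle is making the a priori estimates uniform up to $t=T_U$ in the cap region, since Theorem \ref{thm:grad} only controls $u_x$ away from the $x$-axis. This is exactly where two-dimensionality is used: for $n=1$ the $\frac{n-1}{r}v_r$ term in the vertical graph equation disappears, so $v_i$ satisfies a uniformly parabolic equation on a full neighbourhood of $r=0$ and the even reflection makes the interior estimate of Corollary \ref{cor:es} applicable right up to the axis, which is what lets the caps be controlled through the time $T_U$.
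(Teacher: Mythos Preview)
Your proposal is correct and follows essentially the same route as the paper: argue by contradiction, split $\partial D(t)$ into the two caps (treated as vertical graphs $x=v_i(r,t)$, where the even reflection across $r=0$ lets Corollary~\ref{cor:es} apply) and the middle part (horizontal graph), obtain uniform curvature bounds up to $t=T_U$, and contradict the maximality of $T_U$. The only notable variation is that for the middle piece you invoke Theorem~\ref{thm:grad} (so the gradient is bounded wherever $u\ge\alpha/2$), whereas the paper uses Corollary~\ref{cor:es} on a fixed $x$-interval after first checking, via Lemma~\ref{lem:sing1}, that the limiting endpoints satisfy $a(T_U)<v_1(\rho/2,T_U)$ and $b(T_U)>v_2(\rho/2,T_U)$; your choice of tool makes that extra step unnecessary.
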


To prove the previous proposition we need the following lemma.

\begin{lem}\label{lem:sing1} Assume that $D(t)=\{(x,y)\mid |y|<u(x,t), a(t)\leq x\leq b(t)\}$ is a $\rho$-domain, $0<t<T$. Let  $w_1<w_2$ such that
$$
C_\rho\cap\partial D(t)=\{(x,y)\mid x=w_1(y,t),\ x=w_2(y,t)\}.
$$
 Then
$$\lim\limits_{t\rightarrow T}w_1(y,t)=w_1(y,T)\ \ \ \ \ \textrm{and} \ \ \ \ \lim\limits_{t\rightarrow T}w_2(y,t)=w_2(y,T)$$
exist and these convergences are uniform for $|y|\leq\frac{\rho}{2}$. Moreover, $a(T)=:v_1(0,T)<v_1(r,T)$ and $b(T)=:v_2(0,T)>v_2(r,T)$, $0<r<\frac{\rho}{2}$,
where $v_1(r,t)=w_1(y,t)$ and $v_2(r,t)=w_2(y,t)$.
\end{lem}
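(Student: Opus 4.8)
The plan is to work inside the strip $C_\rho$, where the $\rho$-domain hypothesis gives $\partial D(t)\cap C_\rho$ as two disjoint caps, each a vertical graph: $x=v_1(r,t)=w_1(y,t)$ with $v_1(\cdot,t)$ strictly increasing on $(0,\rho]$, and $x=v_2(r,t)=w_2(y,t)$ with $v_2(\cdot,t)$ strictly decreasing on $(0,\rho]$, where $r=|y|$. The free boundary conditions $u_x(a(t),t)=+\infty$, $u_x(b(t),t)=-\infty$ translate into the Neumann conditions $v_{1,r}(0,t)=v_{2,r}(0,t)=0$. Since $n=1$, the vertical graph equations \eqref{eq:1vertical+}, \eqref{eq:1vertical-} carry no singular $(n-1)/r$ term, so each $v_i$ is a classical solution of the graph equation \eqref{eq:graph} on $(0,\rho)\times(0,T)$. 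Using $v_{i,r}(0,t)=0$ I would reflect each $v_i$ evenly across $r=0$ to a function $\tilde v_i$ on $(-\rho,\rho)\times(0,T)$; because the odd-order $r$-derivatives of $v_i$ vanish at $r=0$ (differentiate \eqref{eq:graph} repeatedly, using $v_{i,r}(0,\cdot)\equiv0$), $\tilde v_i$ is a smooth solution of \eqref{eq:graph} on the full strip. This makes the whole problem purely interior.

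Second, I would obtain a priori estimates uniform up to $t=T$. By Theorem \ref{thm:order}(3), $D(t)$ is contained in the closed evolution of any ball containing $D(0)$, and a round ball evolves under $V=-\kappa+A$ through round balls whose radius solves $R'=A-1/R$ and hence stays bounded on every finite interval; thus, for $T<\infty$ (the case in which the lemma is applied, via Proposition \ref{pro:sin}), $D(t)\subset B_R$ for a fixed $R$, so $|\tilde v_i|\le R$ on $(-\rho,\rho)\times(0,T)$. Applying the interior gradient estimate of Corollary \ref{cor:es} on balls compactly contained in $(-\rho,\rho)$, followed by the parabolic interior estimates of Remark \ref{rem:hes}(1), and crucially Remark \ref{rem:hes}(2) which makes all these constants independent of the right endpoint in time, I obtain, for every small $\epsilon>0$, bounds on $\tilde v_i$ and all its space--time derivatives on $[-\rho/2,\rho/2]\times[\epsilon,T)$ that are uniform in $t$.

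Third, these estimates yield the conclusions. From $|\partial_t\tilde v_i|\le C$, the map $t\mapsto\tilde v_i(r,t)$ is uniformly Lipschitz, so $\lim_{t\to T}\tilde v_i(r,t)$ exists, uniformly in $r\in[-\rho/2,\rho/2]$; setting $w_i(y,T):=\tilde v_i(|y|,T)$, $a(T):=v_1(0,T)$, $b(T):=v_2(0,T)$ gives the first assertion. The uniform higher-order bounds moreover give convergence of $v_i$ together with all its derivatives as $t\to T$, so $v_i(\cdot,T)$ is smooth, $v_i$ extends to a classical solution of \eqref{eq:graph} on $[-\rho/2,\rho/2]\times(\epsilon,T]$, and $v_{1,r}(\cdot,t)\to v_{1,r}(\cdot,T)\ge0$, $v_{2,r}(\cdot,t)\to v_{2,r}(\cdot,T)\le0$. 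To upgrade to strict inequalities, differentiate \eqref{eq:graph} in $r$: $p:=v_{1,r}$ solves a linear, uniformly parabolic equation $p_t=a(r,t)p_{rr}+b(r,t)p_r$ with no zeroth-order term and bounded coefficients, and by the preceding sentence this holds classically up to and including $t=T$. We have $p\ge0$ and, since $D(t)$ is a genuine $\rho$-domain for $t<T$, $p>0$ on $(0,\rho/2)\times(\epsilon,T)$. If $p(r_0,T)=0$ for some $r_0\in(0,\rho/2)$, then $p$ attains its minimum $0$ at $(r_0,T)$, which is a parabolic-interior point of a cylinder $[r_0-\eta,r_0+\eta]\times(T-\eta,T]$; the strong maximum principle then forces $p\equiv0$ there, contradicting $p>0$ for $t<T$. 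Hence $v_1(r,T)>v_1(0,T)$ for $0<r<\rho/2$, and symmetrically $v_2(r,T)<v_2(0,T)$.

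The main obstacle is pushing the a priori estimates all the way up to the singular time $T$: one must ensure that the interior gradient estimate and the subsequent parabolic regularity produce constants that do not depend on how close $t$ is to $T$ (this is precisely Remark \ref{rem:hes}(2)), and one must set up the even reflection across $r=0$ carefully so that no boundary analysis is required at the $x$-axis, where $u_x=\pm\infty$. Once the estimates, and hence the equation for $v_i$, extend up to $t=T$, both the uniform convergence of the caps and, via the strong maximum principle applied with $t=T$ as an interior time, the strict monotonicity of the limiting caps follow with no further work.
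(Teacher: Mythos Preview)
Your proof is correct and follows essentially the same route as the paper. Note that your even reflection $\tilde v_i$ of $v_i(r,t)=w_i(y,t)$ across $r=0$ is literally the function $w_i(y,t)$ itself, since $w_i$ is already defined and even on $-\rho<y<\rho$; the paper simply works with $w_i$ directly and applies Corollary~\ref{cor:es} and Remark~\ref{rem:hes} on that strip, without the intermediate reflection step. For the strict monotonicity at $t=T$, the paper phrases the argument via the parabolic boundary data $p(0,t)=0$, $p(\rho/2,t)>0$, $p(r,0)>0$ on the rectangle $[0,\rho/2]\times[0,T]$, whereas you invoke the strong maximum principle at an interior minimum at time $T$; both arguments are the same in substance.
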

\begin{proof} $w_1(y,t)$ and $w_2(y,t)$ satisfy the equation (\ref{eq:graph}), respectively for "$\mp$". We only prove for $w_1(y,t)$. Since $w_1$ is uniformly bounded, Corollary \ref{cor:es} and Remark \ref{rem:hes} imply that derivatives $\frac{\partial ^j}{\partial y^j} w_1$, $j=1,2$, are uniformly bounded for $0\leq|y|\leq\frac{\rho}{2}$, $\frac{T}{2}\leq t<T$. Consequently, $\frac{\partial w_{1}}{\partial t}$ is bounded for $0\leq|y|\leq\frac{\rho}{2}$, $\frac{T}{2}\leq t<T$. So there exists $w_1(y,T)$ such that $w_1(y,t)$ converges to $w_1(y,T)$ uniformly for $0\leq|y|\leq\frac{\rho}{2}$, $\frac{T}{2}\leq t<T$.

Note there hold 
$$
\frac{\partial v_1}{\partial r}(\frac{\rho}{2},t)>0,\ \frac{\partial v_1}{\partial r}(0,t)=0,\ 0<t<T
$$ 
and 
$$
\frac{\partial v_1}{\partial r}(r,0)>0,\ 0< r<\frac{\rho}{2}.
$$
 Since $p=\frac{\partial v_1}{\partial r}$ satisfies (\ref{eq:deriveeq}), maximum principle implies
 $$
\frac{\partial v_1}{\partial r}>0,\ 0<r<\frac{\rho}{2},\ 0<t\leq T.
$$ 
Therefore $v_1(0,T)<v_1(r,T)$, for $0<r<\frac{\rho}{2}$.
\end{proof}

\begin{proof}[Proof of Proposition \ref{pro:sin}.] If $T_U<t_U^{\alpha}$. By Lemma \ref{lem:alphad2}, there exists $\rho>0$ such that $D(t)$ is a $\rho$-domain, for $0<t<T_U$. 

We divide $\partial D(t)$ into two parts: $\partial D(t)=(\partial D(t)\cap\{r< \rho/2\})\cup(\partial D(t)\cap\{r\geq \rho/2\})$.
\\
{\bf Step 1.} $\partial D(t)\cap\{r< \rho/2\}$

 Since $\partial D(t)$ is a $\rho$-domain, there exist $w_1<w_2$ such that $\partial D(t)\cap\{r<\rho\}=\{(x,y)\mid x=w_1(y,t), |y|<\rho\}\cup\{(x,y)\mid x=w_2(y,t), |y|<\rho\}$. By the same argument as in Lemma \ref{lem:sing1}, $\frac{\partial ^j}{\partial y^j} w_i$, $j=1,2$, $i=1,2$, are uniformly bounded for $0\leq|y|\leq\frac{\rho}{2}$, $\frac{T_U}{2}\leq t<T_U$. Therefore, the mean curvature of $\partial D(t)\cap\{r< \rho/2\}$ is bounded for $\frac{T_U}{2}\leq t<T_U$. 
\\
{\bf Step 2.} $\partial D(t)\cap\{r\geq\rho/2\}$

Recalling $\partial D(t)=\{(x,y)\mid |y|=u(x,t),a(t)\leq x\leq b(t)\}$, by Lemma \ref{lem:sing1}, there hold $a(T_U)<v_1(\rho/2,T_U)$ and $b(T_U)>v_2(\rho/2,T_U)$. Then for any $\epsilon$ small enough, $t$ is close to $T_U$ such that
\begin{equation}\label{eq:subset1}
(v_1(\rho/2,t),v_2(\rho/2,t))\subset (a(T_U)+\epsilon,b(T_U)-\epsilon).
\end{equation}
 Corollary \ref{cor:es} and Remark \ref{rem:hes} imply that $u_x$ and $u_{xx}$ are uniformly bounded for $x\in(a(T_U)+\epsilon,b(T_U)-\epsilon)$, $t$ close to $T_U$. (\ref{eq:subset1}) implies that $u_x$ and $u_{xx}$ are uniformly bounded for $x\in(v_1(\rho/2,t),v_2(\rho/2,t))$, $t$ close to $T_U$. The curvature of $\partial D(t)\cap\{r\geq\rho/2\}$ is bounded for $t$ close to $T_U$.

Consequently, the curvature of $\partial D(t)$ is uniformly bounded as $t\uparrow T_U$. It contradicts to $\partial D(t)$ becoming singular at $T_U$.
\end{proof}

\begin{rem}\label{rem:time} In Lemma \ref{lem:alphad2}, $0<t<\min\{t_{U}^{\alpha},T_U\}$ can be replaced by $0<t<t_{U}^{\alpha}$. Seeing the choice of $t_{U}^{\alpha}$, if $U\subset W$, $t_{U}^{\alpha}\leq t_{W}^{\alpha}$.
\end{rem}

{\bf Intersection number principle} Lemma \ref{lem:inters} and Lemma \ref{lem:alphad2} show the possible intersection number between two curves evolving by $V=-\kappa+A$. Here we want to introduce a more general result about the intersection number. Consider the following problem which we call (Q):
\begin{equation}
\left\{
\begin{array}{lcl}
\dis{u_t=\frac{u_{xx}}{1+u_x^2}+A\sqrt{1+u_x^2}},\ x\in(a(t),b(t)),\ 0<t< T,\\
u(a(t),t)=0,\ u(b(t),t)=0,\ 0\leq t< T,\\
u_x(a(t),t)=\tan\theta_-(t),\ u_x(b(t),t)=-\tan\theta_+(t),\ 0\leq t< T,\\
u(x,0)=u_0(x),\ a(0)\leq x\leq b(0),
\end{array}
\right.\tag{Q}
\end{equation}
where $u_0\in C[a(0),b(0)]\cap C^1(a(0),b(0))$ and $\theta_{\pm}(t)$ are smooth functions with values in $[0,\pi/2]$. Let
$$
\gamma_1(t):=\left\{
\begin{array}{lcl}
\{(x,y)\mid y=\tan\theta_-(t)(x-a(t)),y<0\},\ \theta_-(t)<\pi/2\\
\{(x,y)\mid x=a(t),y<0\},\theta_-(t)=\pi/2,
\end{array}
\right.
$$
$$
\gamma_2(t):=\left\{
\begin{array}{lcl}
\{(x,y)\mid y=-\tan\theta_+(t)(x-b(t)),y<0\},\ \theta_+(t)<\pi/2\\
\{(x,y)\mid x=b(t),y<0\},\theta_+(t)=\pi/2,
\end{array}
\right.
$$
and
$$
\gamma_3(t):=\{(x,y)\mid y=u(x,t),a(t)\leq x\leq b(t)\}.
$$
The extension curve of $u(\cdot,t)$ is given by 
$$
\gamma(t):=\gamma_1(t)\cup\gamma_2(t)\cup\gamma_3(t).
$$
\begin{prop}\label{pro:intersection}
Let $u^{1}(x,t)$, $a^{1}(t)<x<b^{1}(t)$ be solution of (Q) for $\theta_{\pm}^1(t)\in[0,\pi/2)$, and $u^2(x,t)$, $a^{2}(t)<x<b^{2}(t)$ be solution of (Q) for $\theta_{\pm}^2(t)=\pi/2$, for $0\leq t<T$. Let $\gamma_i(t)$ be the extension curve of $u^i(x,t)$, respectively. Then $\mathcal{Z}[\gamma_1(t),\gamma_2(t)]$ is non-increasing in $t\in[0,T)$ and is finite for each $t\in[0,T)$. Moreover, $\mathcal{Z}[\gamma_1(t),\gamma_2(t)]$ will drop when $\gamma_1(t)$ intersects $\gamma_2(t)$ tangentially.
\end{prop}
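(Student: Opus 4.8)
The argument should follow the pattern of Theorem \ref{thm:sl} and Lemma \ref{lem:in}: localise in time, split the intersection set according to its position relative to the axis $\{y=0\}$, use Angenent's Sturmian principle (Theorem D in \cite{A1}) on the part where both curves are graphs over $x$, and dispose of the remaining finitely many intersections by an explicit local analysis near the end points. First I would record the geometry. Since $\theta^1_\pm(t)\in[0,\pi/2)$, the curve $\gamma_1(t)$ (the extension of $u^1$) is the graph $y=U^1(x,t)$ of a $C^1$ function $U^1(\cdot,t):\mathbb{R}\to\mathbb{R}$ that equals $u^1(\cdot,t)$ on $[a^1(t),b^1(t)]$ and is affine with finite slopes $\tan\theta^1_-(t)\ge 0$, $-\tan\theta^1_+(t)\le 0$ outside; in particular $U^1>0$ on $(a^1(t),b^1(t))$, $U^1<0$ outside $[a^1(t),b^1(t)]$, and $\gamma_1(t)$ never has a vertical tangent. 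Since $\theta^2_\pm(t)=\pi/2$, the curve $\gamma_2(t)$ lies in $\{y\ge0\}$ together with the two vertical half-lines $\{x=a^2(t),\,y<0\}$, $\{x=b^2(t),\,y<0\}$; near its left corner $(a^2(t),0)$ it is the $C^1$ graph $x=W^2(y,t)$ where $W^2(y,t)\equiv a^2(t)$ for $y\le0$ and $W^2(\cdot,t)$ is the left inverse branch of $u^2(\cdot,t)$ for $y\ge0$, so that $\partial_yW^2(0,t)=1/u^2_x(a^2(t),t)=0$, and similarly near $(b^2(t),0)$. Hence every intersection of $\gamma_1(t)$ and $\gamma_2(t)$ is of exactly one of three types: (i) a zero of $v(\cdot,t):=u^1(\cdot,t)-u^2(\cdot,t)$ in the open overlap interval $(\max\{a^1(t),a^2(t)\},\min\{b^1(t),b^2(t)\})$, lying in $\{y>0\}$; (ii) a transversal crossing in $\{y<0\}$ of one of the two slanted rays of $\gamma_1(t)$ with one of the two vertical rays of $\gamma_2(t)$, of which there are at most two, present precisely when $a^2<a^1$, $b^2<a^1$, $a^2>b^1$, or $b^2>b^1$; (iii) a point on $\{y=0\}$, which occurs only when an end point of $\gamma_1(t)$ coincides with an end point of $\gamma_2(t)$. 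Finiteness then follows: unless $\gamma_1\equiv\gamma_2$ (impossible here, the tangent directions at the end points differ), $v$ is not identically zero and solves a linear uniformly parabolic equation on the overlap, so it has finitely many zeros for each $t>0$ by Theorem D in \cite{A1}, and types (ii), (iii) contribute only finitely many points.

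For monotonicity and the drop property, fix $t_1\in(0,T)$; it suffices to find $\epsilon>0$ with $\mathcal{Z}[\gamma_1,\gamma_2]$ non-increasing on $(t_1-\epsilon,t_1+\epsilon)$ and strictly decreasing at any tangency. Suppose first that no end point of $\gamma_1$ meets an end point of $\gamma_2$ at $t_1$. Then near $t_1$ there are no type (iii) intersections; the number of type (ii) crossings is locally constant, since no two of the four end-point functions cross near $t_1$; and the number of type (i) intersections is the number of zeros of $v(\cdot,t)$ on the moving open overlap interval, whose moving end points are never zeros of $v$ (there one of $u^1,u^2$ vanishes while the other is the strictly positive value of an interior point of the other curve) so that Theorem D in \cite{A1} applies with these moving lateral boundaries. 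Summing, $\mathcal{Z}$ is non-increasing near $t_1$, with a strict drop at any tangency, which — since $\gamma_1$ has no vertical tangent — is necessarily a tangency of the two graphs and is caught by Theorem D.

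Now suppose that at $t_1$ some end point of $\gamma_1$ equals some end point of $\gamma_2$; by symmetry take $a^1(t_1)=a^2(t_1)=:\bar a$ (the cases $b^1=b^2$, $a^1=b^2$, $b^1=a^2$ being identical in form). Near the corner $(\bar a,0)$, $\gamma_1$ is the graph $y=U^1(x,t)$ and $\gamma_2$ the graph $x=W^2(y,t)$, so the intersections there are exactly the zeros $x\approx\bar a$ of $G(x,t):=W^2(U^1(x,t),t)-x$. One has $G(\bar a,t_1)=a^2(t_1)-\bar a=0$ and
$$\partial_xG(\bar a,t_1)=\partial_yW^2(0,t_1)\,\partial_xU^1(\bar a,t_1)-1=0\cdot\tan\theta^1_-(t_1)-1=-1\neq0,$$
so by the implicit function theorem there is \emph{exactly one} intersection near $(\bar a,0)$ for all $t$ close to $t_1$; this single intersection simply migrates from $\{y>0\}$ through the corner into $\{y<0\}$ as $a^1$ and $a^2$ pass through each other. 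Separating this corner off by a vertical line $x=\bar a+\eta$ chosen with $v(\bar a+\eta,t_1)\neq0$ (possible since the zeros of $v(\cdot,t_1)$ are isolated), the remaining intersections are counted by Theorem D in \cite{A1} on $(\bar a+\eta,\min\{b^1(t),b^2(t)\})$ plus a locally constant type (ii) contribution, and in the presence of a second coincident corner at the right end one adds a second such collar. Hence $\mathcal{Z}$ is again non-increasing near $t_1$ and drops strictly at any (necessarily interior, graph–graph) tangency.

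\textbf{Main obstacle.} The crucial point — and the only place where both hypotheses $\theta^2_\pm=\pi/2$ and $\theta^1_\pm<\pi/2$ are used — is the transversality $\partial_xG(\bar a,t_1)=-1$ at a coincident corner: the vertical tangent of $\gamma_2$ forces $\partial_yW^2(0,t_1)=0$, while the finite slope of $\gamma_1$ keeps the product $\partial_yW^2(0,t_1)\tan\theta^1_\pm(t_1)$ equal to $0$, so the two curves cross transversally and no new intersection is created as the end points pass through each other. This is precisely the mechanism absent in Case 3 of Lemma \ref{lem:inters}, where the intersection number may increase. The rest is routine bookkeeping: checking that the core/collar decomposition genuinely partitions the intersection set for all $t$ near $t_1$ (reduce to choosing the separating abscissae off the discrete zero set of $v(\cdot,t_1)$ and use continuity), that the intersections with $x$ in a collar all lie near the axis and so are governed by $G$, and that the displayed computation has the same outcome in each of the four coincidence configurations.
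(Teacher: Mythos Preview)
Your approach is correct and follows the same pattern the paper indicates (it omits the proof, referring to Lemma \ref{lem:in} and Proposition 2.4 in \cite{GMSW}): localise in time, apply Angenent's Theorem D on the overlap interval where both curves are horizontal graphs, and track the finitely many remaining intersections near the end points by hand. Your implicit-function computation $\partial_xG(\bar a,t_1)=-1$ at a coincident corner is a tidy replacement for the case-by-case bookkeeping carried out in Lemma \ref{lem:in}, and it isolates precisely why the asymmetric hypothesis $\theta^1_\pm<\pi/2=\theta^2_\pm$ forces a single transversal crossing to persist through an end-point coincidence---in contrast to Case 3 of Lemma \ref{lem:inters}, where both curves are vertical at the corner and a new intersection may be created.

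One small point worth recording when you write this up: in the boundary case $\theta^1_-(t_1)=0$ the left ray of $\gamma_1$ is empty by the paper's definition, so $\gamma_1$ is not literally the graph of a function $U^1$ over all of $\mathbb{R}$ on that side; your $G$ is then only defined for $x\ge a^1(t)$. The transversality conclusion still holds (the crossing is still unique, or absent when $a^1>a^2$ and there is no ray to meet the vertical line), but the sentence ``$U^1(\cdot,t):\mathbb{R}\to\mathbb{R}$'' should be qualified.
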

For the proof of this proposition, it is similar as the proof of Lemma \ref{lem:in} above or Proposition 2.4 in \cite{GMSW}. Here we omit it.
\begin{rem}\label{rem:1matanointersection}
(1). Proposition 2.4 in \cite{GMSW} only give the results under $\theta_{\pm}^i\in(0,\pi/2)$, $i=1,2$.  

(2). For $\theta_{\pm}^i=\pi/2$, $i=1,2$, the results in Proposition \ref{pro:intersection} are not true. Indeed, this condition is same as in Remark \ref{rem:intersection1}
\\
(a). If $\mathcal{Z}(u^1(\cdot,0),u^2(\cdot,0))>0$, $\mathcal{Z}(u^1(\cdot,t),u^2(\cdot,t))$ will not increase for $0<t<T$ provided that $\mathcal{Z}(u^1(\cdot,t),u^2(\cdot,t))>0$, $0<t<T$.
\\
(b). If $\mathcal{Z}(u^1(\cdot,0),u^2(\cdot,0))=0$, $\mathcal{Z}(u^1(\cdot,t),u^2(\cdot,t))\leq 1$, $0<t<T$.
\end{rem}


\section{Proof of Theorem \ref{thm:exist} and Theorem \ref{thm:fattening1}}

Denote $U=\{(x,y)\in \mathbb{R}^2\mid |y|<u_0(x),-b_0\leq x\leq b_0\}$, where $u_0$ is given by (\ref{eq:ineven}).  By assumption of $u_0$ in Section 1, we know that $U\cap\{x\geq0\}$ is an $\alpha$-domain with smooth boundary, for some $\alpha>0$. 

We choose vector field $X\in C^1(\mathbb{R}^{2}\setminus\{(0,0)\}\rightarrow\mathbb{R}^{2})$ such that

(i) At any $P\in \partial U$ not on the $x$-axis has $\langle X,\textbf{n}(P)\rangle<0$, $\textbf{n}$ is inward unit normal vector at $P$.

(ii) Near the $(0,0)$, we set $X((x,y))=(0,-y/|y|)$ and set $X=(-1,0)$ near $(b,0)$, $X=(1,0)$ near $(-b,0)$. 
\\
We note that $X$ has no definition at $(0,0)$.

Since $X\neq0 $ on $\partial U\setminus \{(0,0)\}$, there exists a neighbourhood $V\supset\partial U$ such that $|X|\geq \delta>0$ for some $\delta>0$ in $V\setminus \{(0,0)\}$. 

\begin{prop}\label{pro:sigma2} For $\rho$ small enough, there exists a smooth curve $\Sigma\subset V\setminus\{(0,0)\}$ with

(i) $X(P)\notin T_P\Sigma$ at all $P\in\Sigma$,i.e., $\Sigma$ is transverse to the vector field $X$;

(ii) $\Sigma=\partial U$ in $\{(x,y)\mid|y|\geq2\rho\}$;

(iii) $\Sigma\cap\{(x,y)\mid|y|\leq\rho\}$ consists of discs $\Delta_{\pm c}=\{(\pm c,y)\mid|y|\leq\rho\}$ and pipe $B_d=\{(x,y)\mid-d \leq x\leq d,|y|=\rho\}$.
\end{prop}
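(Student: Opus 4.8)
The plan is to produce $\Sigma$ by an explicit surgery on $\partial U$: leave $\partial U$ untouched outside a thin horizontal slab $\{|y|<2\rho\}$ about the $x$-axis, and replace it inside that slab.

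Fix $\rho>0$ small. For $\rho$ small the set $\partial U\cap\{|y|\le 2\rho\}$ consists of exactly four smooth arcs — the two ``caps'' $\gamma_\pm$ through the origin and the two arcs through $(\pm b_0,0)$ — and we may take $\rho$ small enough that all four of them lie inside $V$ and inside the neighbourhoods on which $X$ has the prescribed forms $(0,-y/|y|)$ near $(0,0)$ and $(\mp1,0)$ near $(\pm b_0,0)$. On $\{|y|\ge 2\rho\}$ put $\Sigma=\partial U$: there $X$ is nowhere tangent to $\partial U$ by the defining transversality of $X$ along $\partial U$, so conclusions (i) and (ii) of the Proposition hold on this part. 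On $\{|y|\le\rho\}$ simply declare $\Sigma$ to be $\Delta_{+c}\cup\Delta_{-c}\cup B_d$ with $0<d<c$ chosen as follows: $d$ small enough that the horizontal pipe $B_d$ lies in the ``origin'' neighbourhood, where $X$ is vertical and hence transverse to $B_d$; and $c$ close enough to $b_0$ that each vertical wall $\Delta_{\pm c}=\{x=\pm c\}\times[-\rho,\rho]$ lies in the ``$(\pm b_0,0)$'' neighbourhood, where $X$ is horizontal and hence transverse to $\Delta_{\pm c}$. This gives conclusion (iii) and the required pieces of the curve.

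It remains to join, inside the annular slab $\{\rho\le|y|\le 2\rho\}$, the eight endpoints of $\Delta_{\pm c}\cup B_d$ on $\{|y|=\rho\}$ to the eight endpoints of the four arcs on $\{|y|=2\rho\}$ by eight short, pairwise disjoint, smooth arcs agreeing to infinite order with the pieces they connect at both ends; this is standard, using cut-off functions in the $y$-variable. Near $(\pm b_0,0)$ the arcs of $\partial U$ are nearly-vertical graphs $x=\beta(|y|)$, so one interpolates in the $x$-variable from $\beta(2\rho)$ to $\pm c$, and every nearly-vertical curve is transverse to the nearly-horizontal field $X=(\mp1,0)$. Near the origin one must connect the cap $\gamma_+$, whose downward unit tangent at height $y$ is close to $(-1/u_0',-1)$ — just to the left of straight down — to the horizontal pipe $B_d$; I would build the interpolant so that its unit tangent rotates monotonically from that direction to the leftward horizontal, remaining strictly inside the open quarter-turn between them, which never contains the vertical direction $(0,-y/|y|)$ of $X$; symmetrically, on $\gamma_-$ the tangent rotates monotonically to the rightward horizontal. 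Thus every interpolant is transverse to $X$; tracing through the pieces shows they assemble into a single smooth embedded circle, lying within distance $O(\rho)$ of $\partial U$ and away from $(0,0)$ since $c,d>0$, so $\Sigma\subset V\setminus\{(0,0)\}$ and (i)--(iii) hold.

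The main obstacle is this last step near the origin. Since $X$ is exactly vertical on a whole punctured neighbourhood of $(0,0)$ while the caps $\gamma_\pm$ are very nearly vertical there — their transversality to $X$ degenerating like $1/u_0'\to0$ — the interpolants must be bent through a full quarter-turn while keeping their tangent strictly on one side of the vertical direction. Verifying that this can be done relies on the geometric facts that the caps lean toward the $y$-axis and that the pipe is horizontal; combined with the bookkeeping needed to make the eight interpolants fit together into one smooth embedded curve, this is the delicate point.
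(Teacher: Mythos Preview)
Your surgery-and-interpolation approach is correct and is essentially the same strategy the paper uses: leave $\partial U$ unchanged away from a thin slab about the $x$-axis and rebuild it inside that slab so that near the origin it becomes a horizontal pipe and near $\pm b_0$ it becomes a pair of vertical walls.

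Where the two differ is in how the ``delicate point'' you flag at the end is handled. You argue directly about tangent directions, rotating monotonically through a quarter-turn while staying on one side of the vertical; this is correct but, as you note, requires some care. The paper sidesteps this entirely by choosing parametrisations adapted to the vector field. Near the origin, where $X=(0,-y/|y|)$ is vertical, it writes the replacement curve as a graph $|y|=u_j(x)$ over the $x$-axis: it sets $u_j(x)=\rho$ for $|x|\le d$, $u_j(x)=u_0(x)$ for $|x|\ge\delta_j$ (where $u_0(\delta_j)=2\rho$), and fills the gap smoothly. Any smooth graph over $x$ has finite slope, hence is automatically transverse to the vertical field; no tangent bookkeeping is needed. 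Symmetrically, near $(\pm b_0,0)$ where $X=(\mp1,0)$ is horizontal, the paper writes the replacement as a graph $x=\pm w_j(|y|)$ over $y$, constant equal to $\pm c$ for $|y|\le\rho$ and equal to the original cap for $|y|\ge 2\rho$; any graph over $y$ is transverse to a horizontal field. The explicit values $\rho=\alpha/2^{j+1}$, $d=\delta_{j+2}$, $c=\gamma_{j+2}$ (where $u_0(\delta_j)=u_0(\gamma_j)=\alpha/2^j$ with $\delta_j<\gamma_j$) then make the pieces match. So the paper's device of ``graph over the direction orthogonal to $X$'' is what turns your delicate step into a triviality.
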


\begin{figure}[htbp]
	\begin{center}
            \includegraphics[height=5cm]{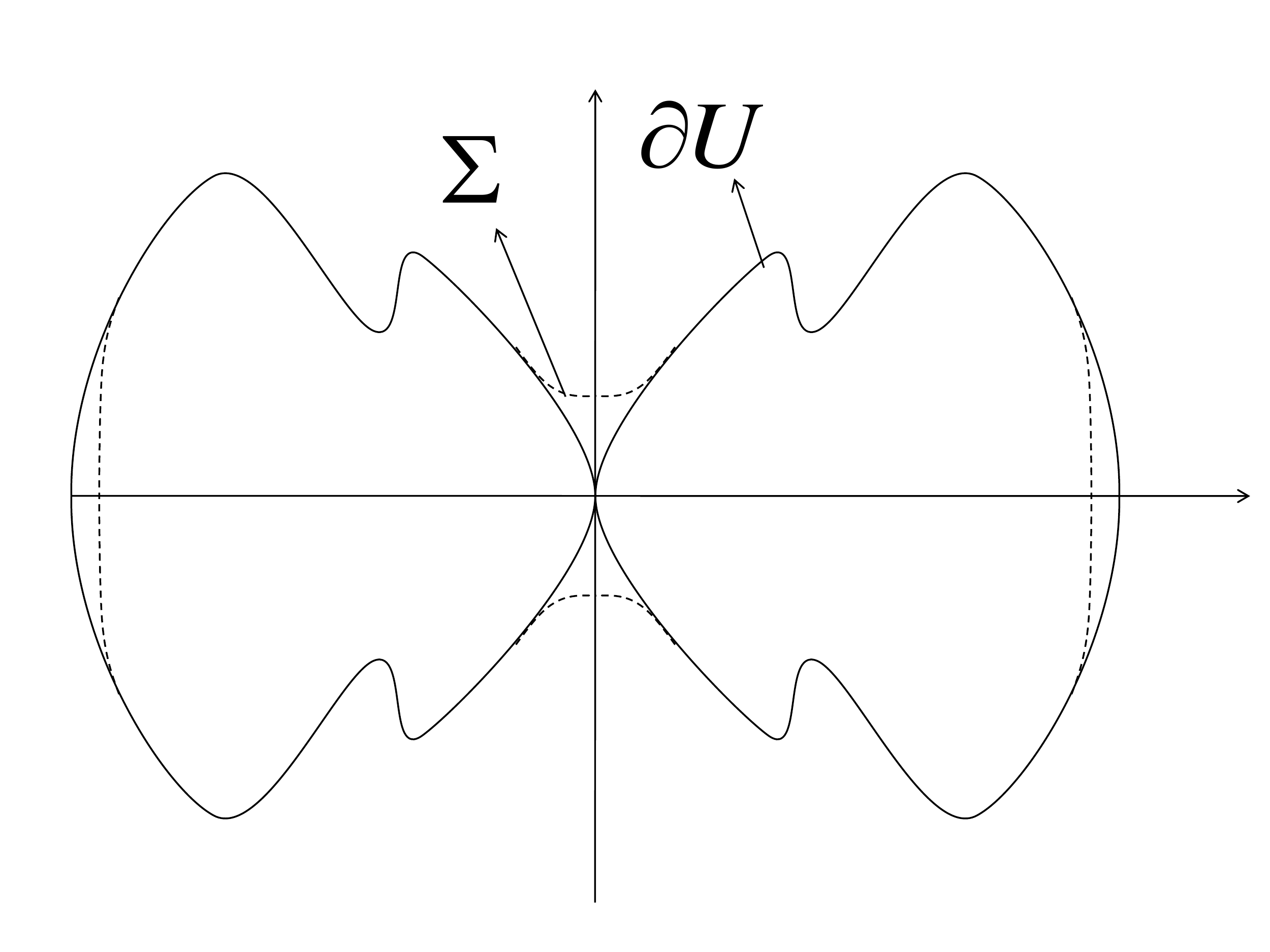}
		\vskip 0pt
		\caption{Proof of Proposition \ref{pro:sigma2}}
        \label{fig:sigma2}
	\end{center}
\end{figure}
\begin{proof} Because $U\cap\{x\geq0\}$ is an $\alpha$-domain, there exist $\delta_j$, $\gamma_j$ and $0<\delta_j<\gamma_j$ such that
$$
u_0(\delta_j)=u_0(\gamma_j)=u_0(-\delta_j)=u_0(-\gamma_j)=\frac{\alpha}{2^j}
$$
and
$$\partial U\cap C_{\alpha}=\{(x,y)\mid x=\pm v(y), |y|<\alpha\}\cup\{(x,y)\mid x=\pm w(y),|y|<\alpha\},$$
where $v,w\in C^{\infty}((-\alpha,\alpha))$ and $0<v(y)<w(y)$ for $|y|<\alpha$.

We let $w_j\in C^{\infty}((-\alpha/2^{j-1},\alpha/2^{j-1}))$ be defined as following
$$
w_j(y)=\left\{
\begin{array}{lcl}
\gamma_{j+2},\ 0\leq\dis{|y|<\frac{\alpha}{2^{j+1}}}\\
w(y),\ \dis{\frac{\alpha}{2^{j}}<|y|<\frac{\alpha}{2^{j-1}}},
\end{array}
\right..
$$

And $u_j\in C^{\infty}((-\delta_{j-1},\delta_{j-1}))$ is defined as following
$$
u_j(x)=\left\{
\begin{array}{lcl}
\dis{\frac{\alpha}{2^{j+1}}},\ x\in[0,\delta_{j+2}]\\
u_0(x),\  x\in[\delta_j,\delta_{j-1})
\end{array}
\right..
$$

Let $\Sigma_j$ consist of three parts: $\{(x,y)\mid |y|=u_j(x),\ x\in(-\delta_j,\delta_j)\}$, $\{(x,y)\mid x=\pm w_j(y), |y|<\alpha/2^j\}$ and $\partial U\cap \{|y|\geq \alpha/2^{j}\}$. It is easy to see that for $j$ sufficient large, $\Sigma_j\subset V\setminus \{(0,0)\}$ satisfies (i), (ii), (iii) for $c=\gamma_{j+2}$, $\rho=\alpha/2^{j+1}$ and $d=\delta_{j+2}$.
\end{proof}

Denote $\sigma(P,t):\Sigma\times(-\delta,\delta)\rightarrow V$($V$ is given at the begining of this section and $\Sigma$ is given by Proposition \ref{pro:sigma2}) the flow generated by vector field $X$ in $\mathbb{R}^{2}$. Precisely, $\sigma(P,t)$ is defined as following:
$$
\left\{
\begin{array}{lcl}
\dis{\frac{d\sigma(P,t)}{dt}=X(\sigma(P,t))},\ P\in \Sigma,\\
\sigma(P,0)=P,\ \ \ \ \ P\in \Sigma.
\end{array}
\right.
$$

Seeing (i) in Proposition \ref{pro:sigma2}, for any $C^{1}$ function $u:\Sigma\rightarrow\mathbb{R}$, ``the image of $u$ under $\sigma$''---$\{\sigma(P,u(P))\mid P\in \Sigma\}$ is a $C^1$ curve. Conversely, for any curve $\Gamma\subset V$ being $C^1$ close to $\Sigma$, there exists a unique $C^1$ function $u:\Sigma\rightarrow\mathbb{R}$ such that $\Gamma=\{\sigma(P,u(P))\mid P\in \Sigma\}$. In other words, the map $\sigma(\cdot,t)$ defines a new coordinate from $\Sigma$ to $V$. Therefore, if $\Gamma(t)\subset V$$(0<t<T)$ is a smooth family of smooth curves and $C^1$ close to $\Sigma$, there exists a unique function $u \in C^\infty(\Sigma\times(0,t))$ such that $\Gamma(t)=\{\sigma(P,u(P,t))\mid P\in\Sigma\}$. Let $z$ be the local coordinate on an open subset of $\Sigma$. If $\Gamma(t)$ evolves by $V=-\kappa+A$, in this coordinate $u$ satisfies the following equation
\begin{equation}\label{eq:para1}
\frac{\partial u}{\partial t}=a(z,u,u_z)\frac{\partial^2u}{\partial z^2}+b(z,u,u_z).
\end{equation}

Here $a$, $b$ are smooth functions of their arguments \cite{A2}(Section 3). $a$ is always positive so that (\ref{eq:para1}) is a parabolic equation. 

For example, $\sigma(\cdot,t)$ is the flow defined as above.  We can easily deduce that 
$$
\sigma(P,t)=\left\{
\begin{array}{lcl}
(x,\rho-t),\ P\in B_d,\\
(-c+t,y),\ P\in \Delta_{-c},\\
(c-t,y),\ P\in \Delta_{c},
\end{array}
\right.
$$
where we choose the local coordinates:
\\
(1). on $B_d$, $(x,\rho y)$ for $|y|=1$;
\\
(2). on $\Delta_{\pm c}$, $(\pm c,y)$. 

Since $y=\pm1$ on $B_d$, $u$ only depends on $x$. Therefore on $B_d$, $a(x,u,u_x)=1/(1+u_x^2)$ and $b(x,u,u_x)=-A\sqrt{1+u_x^2}$. Then $u$ satisfies
\begin{equation}\label{eq:2dimhorieq}
u_t=\frac{u_{xx}}{1+u_x^2}-A\sqrt{1+u_x^2}.
\end{equation}
On $\Delta_{\pm c}$, $u$ only depends on $y$. Then on $\Delta_{\pm c}$, $a(y,u,u_y)=1/(1+u_y^2)$, $b(y,u,u_y)=-A\sqrt{1+u_y^2}$. Therefore $u$ satisfies  (\ref{eq:graph}) for ``$-$'' and $n=1$.

\begin{rem}
In $\mathbb{R}^{n+1}$, $b$ obtained above may not be smooth.
For example,
$$
u_t=\frac{u_{xx}}{1+u_x^2}+\frac{n-1}{\rho-u}-A\sqrt{1+u_x^2},
$$
on $\{(x,y)\in\mathbb{R}\times\mathbb{R}^n\mid |y|=\rho, -d<x<d\}$. In this case, $b=\frac{n-1}{\rho-u}-A\sqrt{1+u_x^2}$. It is easy to see when $u=\rho$, $b$ is not smooth. This is the most different between 2-dimension and higher dimension.
\end{rem}

\begin{lem}\label{lem:max} For $v(x,t)$ being smooth function on $V\times(0,T)$, where $V$ is a compact set, we denote $m(t)$ as $$
m(t)=\max\{v(x,t)\mid x\in V\}.
$$
Then there exists $P_t\in V$ such that $v(P_t,t)=m(t)$ and $m^{\prime}(t)=v_t(P_t,t)$ for $t>0$.
\end{lem}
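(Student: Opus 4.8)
The plan is to prove this by the standard argument often called \emph{Hamilton's trick}. First I would establish that $m$ is locally Lipschitz on $(0,T)$. Fix a compact subinterval $[t_1,t_2]\subset(0,T)$; since $v\in C^\infty$ and $V$ is compact, $L:=\sup\{|v_t(x,s)|\mid x\in V,\ s\in[t_1,t_2]\}<\infty$, and the mean value theorem gives $|v(x,t)-v(x,s)|\le L|t-s|$ for every $x\in V$ and all $t,s\in[t_1,t_2]$. Taking the supremum over $x\in V$ on both sides and using $|m(t)-m(s)|\le\sup_{x\in V}|v(x,t)-v(x,s)|$, we get $|m(t)-m(s)|\le L|t-s|$. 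Hence $m$ is differentiable at almost every $t\in(0,T)$.

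Next I would fix a point $t\in(0,T)$ at which $m$ is differentiable and choose a maximizer $P_t\in V$ with $v(P_t,t)=m(t)$ (it exists because $v(\cdot,t)$ is continuous and $V$ is compact). By the definition of $m$, the function $s\mapsto m(s)-v(P_t,s)$ is nonnegative on $(0,T)$ and vanishes at $s=t$; that is, $v(P_t,\cdot)$ touches $m$ from below at $s=t$. Dividing $m(t+h)-m(t)\ge v(P_t,t+h)-v(P_t,t)$ by $h>0$ and letting $h\downarrow 0$ yields $m'(t)\ge v_t(P_t,t)$, while the same inequality with $h<0$ yields $m'(t)\le v_t(P_t,t)$; therefore $m'(t)=v_t(P_t,t)$.

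To upgrade this to every $t\in(0,T)$ rather than almost every $t$, I would record the one-sided refinement. For each $t$ the set $M(t)=\{P\in V\mid v(P,t)=m(t)\}$ is nonempty and compact, and using the uniform continuity of $v_t$ on a compact neighbourhood of $\{t\}\times V$ together with the compactness of $M(t)$ one shows that the right and left Dini derivatives of $m$ exist and satisfy $D^+m(t)=\max_{P\in M(t)}v_t(P,t)$ and $D^-m(t)=\min_{P\in M(t)}v_t(P,t)$. In particular, whenever $m'(t)$ exists these two quantities agree and equal $v_t(P_t,t)$ for any maximizer $P_t$, which is exactly the assertion of the lemma and all that is used later (e.g.\ for $h(t)=\max_x u(x,t)$ in Theorems \ref{thm:threecondition} and \ref{thm:asym}).

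The only delicate point is the passage from the almost-everywhere equality to the pointwise statement: the clean identity $m'(t)=v_t(P_t,t)$ is immediate wherever $m$ is differentiable, and the Dini-derivative computation in the last step is what handles the remaining exceptional points; the rest is the elementary Lipschitz estimate for $m$ and the touching-from-below comparison with the smooth function $v(P_t,\cdot)$.
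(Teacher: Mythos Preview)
Your argument is the standard \emph{Hamilton's trick} proof, and it is correct; the paper itself does not supply a proof but simply cites Mantegazza's lecture notes \cite{M}, where exactly this argument appears. One small comment: your third paragraph is phrased as an ``upgrade to every $t$'', but what the Dini-derivative computation actually shows is that $D^+m(t)=\max_{P\in M(t)}v_t(P,t)$ and $D^-m(t)=\min_{P\in M(t)}v_t(P,t)$ at \emph{every} $t$, while $m'(t)$ itself need not exist where these differ; since the only use of the lemma (in Proposition~\ref{pro:uniq2}) is to feed the inequality $m'(t)\le Cm(t)$ into a Gr\"onwall argument for the Lipschitz function $m$, the almost-everywhere statement you established in the second paragraph already suffices.
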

It is a well known result. For example, the result can be found in \cite{M}.

\begin{prop}\label{pro:uniq2} Let $\Gamma_1$, $\Gamma_2$ be two families of curves with $\sigma^{-1}(\Gamma_j)$ the graph of $u_j(\cdot,t)$ for certain $u_j\in C(\Sigma\times[0,T))$. Assume $u_j$ are smooth on $\Sigma\times(0,T)$ and smooth on $\Sigma\setminus(\Delta_{\pm c}\cup B_d)\times[0,T)$. If $\Gamma_1(0)=\Gamma_2(0)$, there holds $\Gamma_1(t)=\Gamma_2(t)$, $0\leq t<T$.

\end{prop}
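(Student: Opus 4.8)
The plan is to prove uniqueness via a comparison/maximum-principle argument for the parabolic equation \eqref{eq:para1} satisfied by $u_1$ and $u_2$ in the $\sigma$-coordinates, being careful at the singular set $\Delta_{\pm c}\cup B_d$ where the coefficient $b$ (and the regularity of $u_j$) is only known to be smooth up to $t=0$ after passing through the singular caps. First I would set $m(t)=\max_{P\in\Sigma}(u_1(P,t)-u_2(P,t))$. By the assumptions $u_j\in C(\Sigma\times[0,T))$, $\Sigma$ compact, and $u_1(\cdot,0)=u_2(\cdot,0)$ (which is what $\Gamma_1(0)=\Gamma_2(0)$ translates into), we have $m(0)=0$, so it suffices to show $m(t)\le 0$ for all $t\in[0,T)$; the symmetric argument with roles reversed then gives equality. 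Apply Lemma \ref{lem:max} to $v=u_1-u_2$ to get $P_t\in\Sigma$ with $v(P_t,t)=m(t)$ and $m'(t)=v_t(P_t,t)$ for $t>0$.

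The key step is the standard computation: at an interior maximum point $P_t$ of $v(\cdot,t)$ lying in a region where both $u_j$ are smooth, one has $v_z(P_t,t)=0$ and $v_{zz}(P_t,t)\le 0$. Subtracting the two equations \eqref{eq:para1} and using that $a(z,u,u_z)$, $b(z,u,u_z)$ are smooth (hence locally Lipschitz) in their arguments, one writes
$$
v_t(P_t,t)=a(z,u_1,u_{1z})u_{1zz}-a(z,u_2,u_{2z})u_{2zz}+b(z,u_1,u_{1z})-b(z,u_2,u_{2z}),
$$
and, inserting $u_{2zz}\le u_{1zz}$ (from $v_{zz}\le0$), $u_{1z}=u_{2z}$ (from $v_z=0$), and $a>0$, this is bounded above by $C\,|u_1-u_2|=C\,m(t)$ for some constant $C$ depending on the $C^2$-bounds of $u_1,u_2$ on compact subsets away from the final time. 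Hence $m'(t)\le C\,m(t)$ wherever $m$ is differentiable and positive, and Gronwall together with $m(0)=0$ forces $m(t)\le0$. For the localization one restricts to $[0,T-\eta]$ for arbitrary $\eta>0$, where uniform $C^2$ bounds hold by smoothness on $\Sigma\times(0,T)$; the behaviour as $t\to0^+$ is controlled by continuity up to $t=0$ and $m(0)=0$.

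The main obstacle is the possibility that the maximizing point $P_t$ sits on the singular caps $\Delta_{\pm c}\cup B_d$, where $u_j$ is only asserted smooth for $t\in[0,T)$ on $\Sigma\setminus(\Delta_{\pm c}\cup B_d)$ — i.e. one must check the argument still closes on $\Delta_{\pm c}\cup B_d$. But there $u_j$ are smooth on $\Sigma\times(0,T)$ by hypothesis, and on $B_d$ they satisfy \eqref{eq:2dimhorieq} while on $\Delta_{\pm c}$ they satisfy \eqref{eq:graph} with ``$-$'', $n=1$; these are again uniformly parabolic equations of the form \eqref{eq:para1}, so the same interior-maximum computation applies verbatim, the only subtlety being matching across the (finitely many) junction points between caps, pipe, and the rest of $\Sigma$, where one uses the $C^1$-matching built into the $\sigma$-coordinate construction to see that $v$ is $C^1$ across the junctions and a one-sided second-derivative inequality suffices. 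An alternative, cleaner route that sidesteps the junction bookkeeping is to invoke the geometric comparison principle for the level-set flow (Theorem \ref{thm:order}) directly: both $\Gamma_1(t)$ and $\Gamma_2(t)$ are $C^1$ curves close to $\Sigma$ agreeing at $t=0$, so a strict inner/outer perturbation of $\Gamma_1(0)$ compared with $\Gamma_2(0)$ combined with continuity in time and the order-preserving property pins $\Gamma_1(t)=\Gamma_2(t)$; I would present the PDE argument as primary and remark on this geometric shortcut.
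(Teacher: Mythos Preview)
Your overall strategy matches the paper's, but there is a genuine gap in how you control the Gronwall constant $C$ near $t=0$. You claim that uniform $C^2$ bounds on $u_j$ over $\Sigma\times[0,T-\eta]$ follow from smoothness on $\Sigma\times(0,T)$; this is false, since smoothness on an open interval gives no control at the endpoint. The hypotheses only grant smoothness up to $t=0$ on $\Sigma\setminus(\Delta_{\pm c}\cup B_d)$; on the caps and pipe the $u_j$ are merely continuous at $t=0$, so their second derivatives---and hence your Lipschitz constant $C$, which multiplies $|u_{2zz}|$---may blow up as $t\to 0^+$. Gronwall with a blowing-up $C(t)$ and $m(0)=0$ yields nothing: $m(t)\le m(s)\exp\bigl(\int_s^t C\bigr)$ is an indeterminate $0\cdot\infty$ as $s\to 0$. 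Your sentence ``the behaviour as $t\to0^+$ is controlled by continuity up to $t=0$ and $m(0)=0$'' is precisely the step that does not close.

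The paper resolves this by a case split on the location of $P_t$, and the structural point you miss is that the explicit equations on $B_d$ (equation \eqref{eq:2dimhorieq}) and on $\Delta_{\pm c}$ (equation \eqref{eq:graph} with ``$-$'', $n=1$) have coefficients depending only on the first derivative, \emph{not on $u$ itself}. Hence the linearized equation for $v=u_1-u_2$ there has \emph{no zero-order term}, and at a maximum one gets $m'(t)\le 0$ outright---no bound on $u_{jzz}$ is needed. Only on $\Sigma\setminus(\Delta_{\pm c}\cup B_d)$ does a zero-order coefficient $c_3(z,t)$ appear, and there the $u_j$ are smooth up to $t=0$ by hypothesis, so $|c_3|\le M$ with $M$ independent of $t$. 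Combining the cases gives $m'(t)\le M\,m(t)$ uniformly, and Gronwall finishes. Your junction worries are unnecessary (for $t>0$ the $u_j$ are smooth on all of $\Sigma$), and your alternative route via Theorem~\ref{thm:order} is essentially circular: that theorem concerns level-set evolutions, whereas the purpose of this proposition is to identify two given classical evolutions so as to \emph{conclude} $\partial D(t)=\partial E(t)$.
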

\begin{proof} Consider $v(P,t)=u_1(P,t)-u_2(P,t)$. From our assumptions, we have $v\in C(\Sigma\times[0,T))$ and that $v$ is smooth on $\Sigma\setminus(\Delta_{\pm c}\cup B_d)\times[0,T)$, as well as on $\Sigma\times(0,T)$. Moreover $v(P,0)\equiv0$. Define $m(t)=\max\{v(P,t)\mid P\in \Sigma\}$ and for each $0\leq t<T$ with $m(t)>0$. We want to show that $m^{\prime}(t)\leq Cm(t)$ for some constant $C$. Choose $P_t$ as in Lemma \ref{lem:max} such that $m(t)=v(P_t,t)$ and $m^{\prime}(t)=v_t(P_t,t)$.

\textbf{Case 1.} $P_t\in B_d$, since $u_j$ satisfy the equation (\ref{eq:2dimhorieq}), $v$ satisfies a parabolic equation
$$
v_t=a_1(x,t)v_{xx}+b_1(x,t)v_x,
$$
where $a_1(x,t)$ and $b_1(x,t)$ is smooth, and $a_1(x,t)>0$. Since $v$ attains its maximum at $P_t$, $v_x(P_t,t)=0$ and $v_{xx}(P_t,t)\leq0$. Then $v_t(P_t,t)\leq0$. Considering Lemma \ref{lem:max}, $m^{\prime}(t)\leq0$.

\textbf{Case 2.} $P_t\in \Delta_{\pm c}$. We only consider $P_t\in \Delta_{-c}$. Then in the $y$-coordinates of $\Delta_{-c}$, $u_j$ satisfy the full graph equation, which is (\ref{eq:graph}) for ``$-$'' and $n=1$. Therefore $v=u_1-u_2$ satisfies a parabolic equation 
$$
v_t=a_2(y,t)v_{yy}+b_2(y,t)v_{y}.
$$
Seeing $v_y(P_t,t)=0$ and $v_{yy}(P_t,t)\leq 0$, $m^{\prime}(t)\leq0$.

\textbf{Case 3.} $P_t\in \Sigma\setminus(\Delta_{\pm c}\cup B_d)$. Then we can choose coordinate $z$ on some neighbourhood of $P_t$ on $\Sigma$ and $u_j$ satisfy (\ref{eq:para}). We may write this equation as $u_t=F(z,t,u, u_z, u_{zz})$. Then $v=u_1-u_2$ satisfies
$$
v_t=a_3(z,t)v_{zz}+b_3(z,t)v_{z}+c_3(z,t)v,
$$
where
$$
c_3(z,t)=\int_{0}^1F_u(z,t,u^{\theta}, u_z^{\theta},u_{zz}^{\theta})d\theta,
$$
where $u^{\theta}=(1-\theta)u_2+\theta u_1$.

By the assumption, outside of the disks $\Delta_{\pm c}$ and the pipe $B_d$, $u_i$ are smooth up to $t=0$, so the coefficient $c(z,t)$ is bounded, $0<t<T$, saying by $|c(z,t)|\leq M<\infty$. The constant $M$ may depend on the choice of local coordinate $z$. Noting $\Sigma$ is compact, by easy covering argument, we can choose $M$ independent of the choice of local coordinate. Since $v_z(P_t,t)=0$, $v_{zz}(P_t,t)\leq0$,
$$
v_t(P_t,t)\leq c(P_t,t)v(P_t,t)\leq Mv(P_t,t).
$$
Consequently, $m^{\prime}(t)\leq Mm(t)$.

Combining these three cases, we have $m^{\prime}(t)\leq C m(t)$, for some constant $C>0$. Considering $m(0)=0$, $m(t)\leq 0$. Conversely, we can prove $M(t)=\min\{v(P,t)\mid P\in \Sigma\}\geq0$. Therefore $u_1\equiv u_2$. We complete the proof.
\end{proof}

\begin{lem}\label{lem:closeas}  Then there exist $E_j$ closed such that $E_j^{\circ}$ are $\alpha/2^j$-domains and $E_j\downarrow \overline{U}$. Where $U$ is given at the beginning of the section and $E^{\circ}$ denotes the interior of the set $E$.
\end{lem}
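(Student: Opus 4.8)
The plan is to obtain each $E_j$ as the closed region under a smooth even profile $u_j$ that agrees with $u_0$ away from the origin and is flattened up to a small positive floor $c_j$ near the origin; this removes the pinch of $\overline{U}$ at $(0,0)$ while keeping $u_j\ge u_0$, so that $E_j^{\circ}$ becomes a bona fide $\alpha/2^j$-domain with tips at $(\pm b_0,0)$, and $E_j\downarrow\overline{U}$ then follows from $u_j\downarrow u_0$. First I would record, from the fact that $U\cap\{x\ge0\}$ is an $\alpha$-domain (Definition~\ref{def:alphad} and the discussion following it), that $u_0$ is smooth and strictly increasing on $(0,\delta_1]$ with $u_0(\delta_1)=\alpha$, smooth and strictly decreasing on $[b_0-\delta_2,b_0)$ with $u_0(b_0-\delta_2)=\alpha$, and $u_0\ge\alpha$ on $[\delta_1,b_0-\delta_2]$ (this last fact being forced by clause~(4) of the definition applied with $\rho=\alpha$), with the mirror statements for $x<0$ by evenness. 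In particular each level $\rho\in(0,\alpha]$ is attained by $u_0$ at a unique point $x(\rho)\in(0,\delta_1]$, with $x(\rho)\downarrow0$ as $\rho\downarrow0$, and $\overline{U}=\{(x,y):-b_0\le x\le b_0,\ |y|\le u_0(x)\}$.

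Next, fix $c_j:=3\alpha/2^{\,j+1}$, so that $\alpha/2^j<c_j<\alpha/2^{\,j-1}$, $2c_{j+1}=c_j$ and $c_j\downarrow0$, and choose $e_j\in\bigl(x(c_j),\,x(\min\{2c_j,\alpha\})\bigr)$ (a nonempty open interval, since $c_j<\min\{2c_j,\alpha\}$); as $x(\min\{2c_{j+1},\alpha\})=x(c_j)$ this automatically forces $e_{j+1}<x(c_j)<e_j$, hence $e_j\downarrow0$. For each $j$ I would pick a smooth even function $u_j$ on $(-b_0,b_0)$ with: $u_j\equiv u_0$ on $\{e_j\le|x|<b_0\}$; $u_j\equiv c_j$ on a neighbourhood of $0$; and $\max\{c_j,u_0(x)\}\le u_j(x)\le u_0(e_j)$ for $|x|\le e_j$. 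Such a $u_j$ exists by an elementary smoothing: on $[0,e_j]$ the function $\max\{c_j,u_0\}$ equals $c_j$ on $[0,x(c_j)]$, equals $u_0$ on $[x(c_j),e_j]$, and has a single convex corner at the interior point $x(c_j)$, which one rounds off from above by an arbitrarily small correction supported in a small neighbourhood of $x(c_j)$ contained in $(0,e_j)$; reflecting the result evenly and gluing it to $u_0$ on $\{|x|\ge e_j\}$ produces $u_j$ (the glue is $C^{\infty}$ since $u_j$ already equals $u_0$ on a left neighbourhood of $e_j$, where $u_0>c_j$). Put $E_j:=\{(x,y):-b_0\le x\le b_0,\ |y|\le u_j(x)\}$; this is closed, and since $u_j\ge c_j>0$ on $[-e_j,e_j]$ and $u_j=u_0>0$ elsewhere on $(-b_0,b_0)$ while $u_j(\pm b_0)=0$, its interior is $E_j^{\circ}=\{(x,y):-b_0<x<b_0,\ |y|<u_j(x)\}$.

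Finally I would verify the two assertions. For $E_j\downarrow\overline{U}$: since $2c_{j+1}=c_j$ one has $u_{j+1}\le u_j$ on all of $(-b_0,b_0)$ — both equal $u_0$ on $\{|x|\ge e_j\}$; on $\{e_{j+1}\le|x|<e_j\}$ one has $u_{j+1}=u_0\le u_j$; and on $\{|x|<e_{j+1}\}$ one has $u_{j+1}\le u_0(e_{j+1})<c_j\le u_j$ (using $e_{j+1}<x(c_j)$) — hence $E_{j+1}\subset E_j$; and because $u_j(x)=u_0(x)$ for all large $j$ whenever $x\neq0$ while $u_j(0)=c_j\downarrow0=u_0(0)$, we get $\bigcap_jE_j=\{(x,y):-b_0\le x\le b_0,\ |y|\le\inf_ju_j(x)\}=\overline{U}$. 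For $E_j^{\circ}$ being an $\alpha/2^j$-domain I check the four clauses of Definition~\ref{def:alphad}: $E_j^{\circ}=\{(x,y):r<u_j(x)\}$; $I_j=\{u_j>0\}=(-b_0,b_0)$ is a bounded connected interval with $\partial I_j=\{-b_0,b_0\}$; $u_j$ is smooth on $I_j$; and for $0<\rho\le\alpha/2^j$ one has $u_j\ge c_j>\alpha/2^j\ge\rho$ on $[-e_j,e_j]$ and $u_j=u_0\ge u_0(e_j)>c_j>\alpha/2^j$ on $[e_j,b_0-\delta_2]$ (and symmetrically on $x<0$), so that $u_j(\cdot)=\rho$ has exactly the two solutions, one in $(b_0-\delta_2,b_0)$ and one in $(-b_0,-b_0+\delta_2)$, where $u_j=u_0$ is strictly monotone; hence $\partial E_j^{\circ}$ meets $\partial C_{\rho}$ transversally in exactly the two caps over those intervals, just as $\partial U$ meets $\partial C_{\rho}$ near the tips of an $\alpha$-domain.

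The only point requiring care is the construction of the profiles $u_j$: the demands that $u_j$ lie above $u_0$, be smooth and even, have minimum strictly above $\alpha/2^j$, glue $C^{\infty}$ to $u_0$, and decrease in $j$ conflict near the point $x(c_j)$ where $u_0$ first reaches the floor $c_j$, and one cannot write $u_j=u_0+(\text{smooth bump})$ near the origin because $u_0$ has infinite slope there. The remedy — letting $u_j$ detach from $u_0$ only above level $c_j$, i.e.\ smoothing the convex corner of $\max\{c_j,u_0\}$ from above, with the scales $\alpha/2^j<c_j=2c_{j+1}$ and $e_j\in(x(c_j),x(\min\{2c_j,\alpha\}))$ fixed as above — is exactly the step that must be executed with attention; everything else is a routine unwinding of Definition~\ref{def:alphad}.
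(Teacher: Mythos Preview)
Your proof is correct and follows essentially the same approach as the paper: construct each $E_j$ as the subgraph of a smooth even profile that equals $u_0$ away from the origin and is flattened to a constant floor near the origin, then check Definition~\ref{def:alphad} and the monotone convergence. Your execution is in fact more careful than the paper's sketch --- you set the floor at $c_j=3\alpha/2^{\,j+1}>\alpha/2^j$ (which cleanly guarantees transversality of $\partial E_j^{\circ}$ with $\partial C_{\rho}$ for all $0<\rho\le\alpha/2^j$) and you verify the monotonicity $u_{j+1}\le u_j$ explicitly, whereas the paper puts the floor exactly at $\alpha/2^j$ and simply asserts the required properties.
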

\begin{proof} Since $U\cap \{x\geq0\}$ is an $\alpha$-domain, there exists unique $\delta_0>0$ such that 
$$
u_0(\delta_0)=\alpha
$$ 
and
 $$
u_0^{\prime}(x)>0,\ 0<x<\delta_0.
$$
For all $j\geq1$, there exists unique $\delta_j$, $0<\delta_j<\delta_0$ such that 
$$
u_0(\delta_j)=\alpha/2^j.
$$
 We can contruct $v_j\in C^{\infty}((-b_0,b_0))$ being even such that 
$$
v_j(x)=\left\{
\begin{array}{lcl}
\alpha/2^j,\ x\in (-\delta_j/2,\delta_j/2),\\
u_0(x), x\in [-b_0,-\delta_j]\cup[\delta_j,b_0],
\end{array}
\right.
$$
$v_j(x)\geq u_0$, $x\in[-b_0,b_0]$ and $v^{\prime}_j(x)>0$, $x\in(\delta_j/2,\delta_j)$. It is easy to see $v_j\downarrow u_0$ uniformly in $[-b_0,b_0]$.

Let $E_j=\{(x,y)\mid|y|\leq v_j(x),\ -b_0\leq x\leq b_0\}$. Since $v_j\downarrow u_0$ uniformly in $[-b_0,b_0]$, $E_j\downarrow \overline{U}$. It is easy to check $E_j^{\circ}$ are $\alpha/2^j$-domain.
\end{proof}

\begin{lem}\label{lem:closebou}  Let the same assumption in Theorem \ref{thm:exist} be given. Then there exists $t_1>0$ such that, for all $t_2$ satisfying $0<t_2<t_1$, the second fundamental forms and derivatives of $\partial E_j(t)$ are uniformly bounded for $t_2\leq t\leq t_1$,  where $E_j(t)$ denote the closed evolution of $V=-\kappa+A$ with $E_j(0)=E_j$.
\end{lem}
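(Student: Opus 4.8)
The plan is to cut $\partial E_j(t)$ into three overlapping arcs --- a \emph{neck} near the old singular point $(0,0)$, two \emph{caps} near the cusps $(\pm b_0,0)$, and the \emph{bulk} joining them --- and to bound on each arc the curvature and all its arclength derivatives by constants independent of $j$, for $t$ in a fixed interval $[t_2,t_1]$. The only delicate arc is the neck, and this is where the hypothesis of Theorem \ref{thm:exist} enters.

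\textbf{The neck and a uniform existence time.} Let $W_0=\{(x,y)\mid |y|<u_0(x),\ 0<x<b_0\}$ be the region enclosed by $\Lambda_0$. Since $v_j\ge u_0$, we have $\overline{W_0}\subset E_j$ for every $j$. The flow $\Lambda(t)$ solving (*) is non-fattening (appendix), so the closed evolution of $\overline{W_0}$ is $\overline{W(t)}$ with $W(t)=\{|y|<v(x,t),\ a_*(t)<x<b_*(t)\}$; by the order-preserving property, Theorem \ref{thm:order}(2), $\overline{W(t)}\subset E_j(t)$ for all $j$ and all $t$ where both are defined. Writing, via the $\alpha$-domain structure (cf. Lemma \ref{lem:alphad2}), $\partial E_j(t)=\{|y|=u_j(x,t),\ a_j(t)\le x\le b_j(t)\}$ with $(u_j,a_j,b_j)$ the solution of the free boundary problem with datum $v_j$, this inclusion gives $u_j(x,t)\ge v(x,t)>0$ for $a_*(t)<x<b_*(t)$. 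By hypothesis $a_*(t)<0<b_*(t)$ for $0<t<\delta$, so on a fixed neighbourhood $\{|x|\le\ell\}$ of the origin (with $\ell$ possibly shrinking as $t_2\downarrow0$) we get $u_j\ge c_0>0$ for $t_2\le t\le t_1$, uniformly in $j$, where $t_1<\delta$ is fixed; an upper bound $u_j\le M$ (uniform in $j$, $t\le t_1$) follows from comparison with an expanding ball enclosing all the $E_j$. In particular no interior pinching of $\partial E_j(t)$ can occur for $t<\delta$; together with the a priori estimates below and Proposition \ref{pro:sin} (singularities of such $\alpha$-domain flows are curvature blow-ups), a standard continuation argument produces a single $t_1>0$ with $\partial E_j(t)$ smooth on $(0,t_1]$ for all $j$. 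On the neck, $\partial E_j(t)$ is the graph $y=\pm u_j(x,t)$ over $x$ solving (\ref{eq:graph}) with the ``$+$'' sign and $c_0\le u_j\le M$, so Corollary \ref{cor:es} and Remark \ref{rem:hes} bound $|\partial_x^k u_j|$ on $\{|x|\le\ell/2\}\times[t_2,t_1]$ by constants depending only on $M$, $c_0$, $\ell$ and $t_2$.

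\textbf{Bulk and caps.} By the construction in Lemma \ref{lem:closeas}, $v_j\equiv u_0$ on $\{\delta_j\le|x|\le b_0\}$ with $\delta_j\downarrow0$; hence for all large $j$, $\partial E_j$ coincides on a fixed set $\{\ell/4\le|x|\le b_0\}$ with the fixed smooth curve $\partial U$. On the bulk $\{\ell/4\le|x|\le b_0-y_0'\}$ this is a graph over $x$ with the $j$-independent smooth datum $u_0$, so the interior estimates of Section 3 again give $j$-independent $C^k$ bounds on $u_j$ for $t\in[t_2,t_1]$. Near each cusp $(\pm b_0,0)$, $\partial E_j=\partial U$ is the vertical graph $x=w_j(y,t)$ with $j$-independent smooth datum $w_0$ ($w_0'(0)=0$); since all $\partial E_j(t)$ stay in a fixed bounded set, $w_j$ is uniformly bounded, the equation for $w_j$ is uniformly parabolic for short time, and interior parabolic estimates yield $j$-independent $C^k$ bounds on $w_j$ for $|y|\le y_0/2$, $t\in[t_2,t_1]$. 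Finally, for $t_1$ small the three arcs overlap as subsets of $\partial E_j(t)$ (neck meets bulk on $\{\delta_j\le|x|\le\ell/2\}$; bulk meets caps where $0<u_0(x)<y_0/2$), so these graph estimates assemble into a uniform bound on the second fundamental form of $\partial E_j(t)$ and all its arclength derivatives for $t_2\le t\le t_1$.

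\textbf{Main obstacle.} The substantive point is the neck step: realizing $\Lambda(t)$ as a sub-solution so that $\overline{W(t)}\subset E_j(t)$, extracting from it a lower bound on $u_j$ near $x=0$ that is both positive and independent of $j$ --- precisely where $a_*(t)<0$ is needed --- and then using it to exclude pinching and fix a uniform smooth time $t_1$. Once this is in place, the curvature bounds are routine applications of Corollary \ref{cor:es} and Remark \ref{rem:hes}, and the only remaining care is the patching of the three representations.
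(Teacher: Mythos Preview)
Your argument is correct and follows the same overall strategy as the paper: use the hypothesis $a_*(t)<0$ to trap the region near the origin above a $j$-independent positive height via comparison with the free evolution $\Lambda(t)$, then cover $\partial E_j(t)$ by overlapping graph charts and apply interior parabolic estimates on each. The differences are only in bookkeeping. You split into three arcs (neck, bulk, caps) and invoke Corollary~\ref{cor:es} uniformly; the paper splits into just two pieces ($|x|\le p$ and $|x|\ge p$), uses four auxiliary balls $B_\epsilon(P),B_\epsilon(Q),B_\epsilon(P'),B_\epsilon(Q')$ together with an intersection-number count to pin down where the cap representation begins, and on the horizontal part appeals to the intersection-based gradient bound Theorem~\ref{thm:grad} rather than Corollary~\ref{cor:es}. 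Your route is slightly more direct (no auxiliary balls needed), while the paper's has the small advantage that Theorem~\ref{thm:grad} gives a gradient bound directly from the pointwise lower bound on $u_j$, so no separate ``bulk'' chart is required. Either way the substantive step --- and the only place $a_*(t)<0$ is used --- is exactly the one you flag as the main obstacle.
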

\begin{proof} 
Let $E_j(t)=\{(x,y)\mid|y|\leq v_j(x,t)\}$.

{\bf Step 1.} For all $t_2$ satisfying $0<t_2<\delta$($\delta$ given by Theorem \ref{thm:exist}), there exists a constant $c>0$ such that 
$$
v_j(0,t)>c,\ t_2/2<t<\delta.
$$

Let $U^{+}(t)$ denote the open evolution with $U^+(0)=U\cap\{x\geq0\}$. Using Theorem \ref{thm:partialUmeancurvature}, $U^{+}(t)$ is the domain surrounded by $\Lambda(t)$($\Lambda(t)$ is defined in Section 1).
By (3) in Theorem \ref{thm:order} and $U\cap\{x\geq0\}\subset E_j$, there holds $U^{+}(t)\subset E_j(t)$. By our assumption that $a_*(t)<0$, for $0<t\leq\delta$, there holds $(0,0)\in U^{+}(t)\subset E_j(t)$, $0<t<\delta$. For all $t_2$ satisfying $0<t_2<\delta$, there exists $c>0$ such that $v_j(0,t)> c$, $t_2/2\leq t\leq \delta$.

{\bf Step 2.} Construction of four auxiliary balls.

Since $U\cap\{x\geq0\}$ is an $\alpha$-domain, there exist $\beta_2>\beta_1>0$ such that $u_0(\pm\beta_1)=u_0(\pm\beta_2)=\alpha$ and $u_0^{\prime}(x)<0$ for $x>\beta_2$, $u_0^{\prime}(x)>0$ for $0<x<\beta_1$. There exist $p>\beta_1$ and $0<q<\beta_2$ such that $\dis{u_0(\pm q)=u_0(\pm p)=\frac{\alpha}{2}}$. we consider the points
$$Q=(-p,0),\ \ \ \ \ \ \ \ \ \ \ \ \ \ \ \ \ \ P=(p,0),$$
$$Q^{\prime}=(-p,\alpha),\ \ \ \ \ \ \ \ \ \ \ \ \ \ \ \ \ P^{\prime}=(p,\alpha).$$

\begin{figure}[htbp]
	\begin{center}
            \includegraphics[height=8cm]{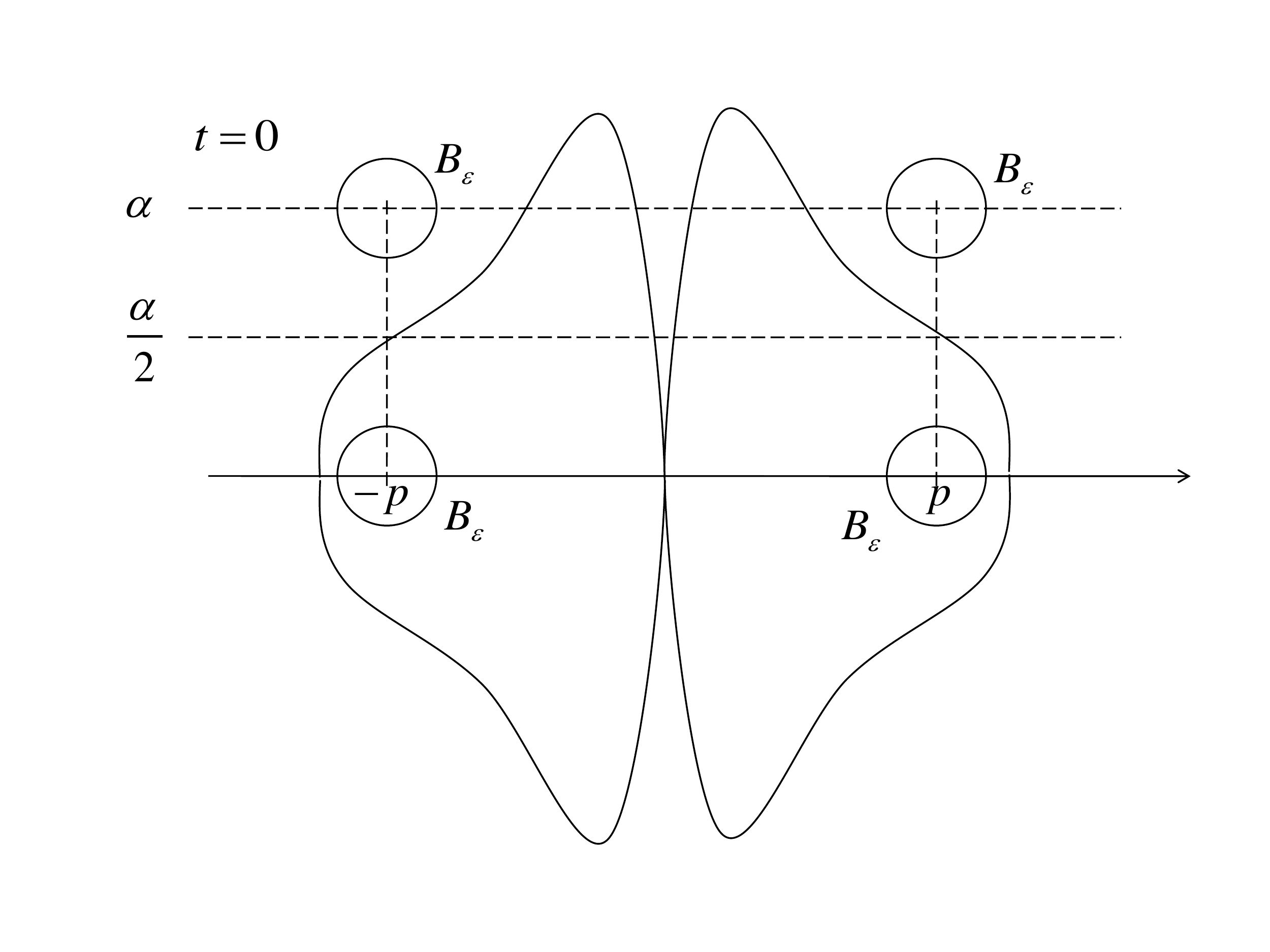}
		\vskip 0pt
		\caption{Proof of Lemma \ref{lem:closebou}}
        \label{fig:uniformb}
	\end{center}
\end{figure}

Since $P\in U$ and $P^{\prime}\in \overline{U}^{c}$, there exists $\epsilon$ such that $\overline{B_{\epsilon}(P)}\subset U$ and $\overline{B_{\epsilon}(P^{\prime})}\subset\overline{U}^c$. Consequently, $\overline{B_{\epsilon}(P)}\cup \overline{B_{\epsilon}(Q)}\subset E^{\circ}$ and $\overline{B_{\epsilon}(P^{\prime})}\cup \overline{B_{\epsilon}(Q^{\prime})}\subset E^{c}$. Then for $j$ large enough, $\overline{B_{\epsilon}(P)}\cup \overline{B_{\epsilon}(Q)}\subset E_j^{\circ}$ and $\overline{B_{\epsilon}(P^{\prime})}\cup \overline{B_{\epsilon}(Q^{\prime})}\subset E_j^{c}$. By (4) in Theorem \ref{thm:order},
\begin{equation}\label{eq:q1}
\overline{B_{\epsilon(t)}(P)}\cup \overline{B_{\epsilon(t)}(Q)}\subset E_j(t)^{\circ},
\end{equation}
$0<t<\delta_2$. By (1b) in Theorem \ref{thm:conti}, there exists $\delta_3>0$ such that
\begin{equation}\label{eq:q2}
\overline{B_{\epsilon(t)}(P^{\prime})}\cup \overline{B_{\epsilon(t)}(Q^{\prime})}\subset E_j(t)^c,
\end{equation}
for $0< t<\delta_3$. Where $\epsilon(t)$ is the solution of (\ref{eq:ball2}) with $\epsilon(0)=\epsilon$, $0<t<\delta_1$. Choose $\delta_2$ independent on $j$ such that $\epsilon(t)>\epsilon/2$, $0<t<\delta_2$. 
\begin{figure}[htbp]
	\begin{center}
            \includegraphics[height=7cm]{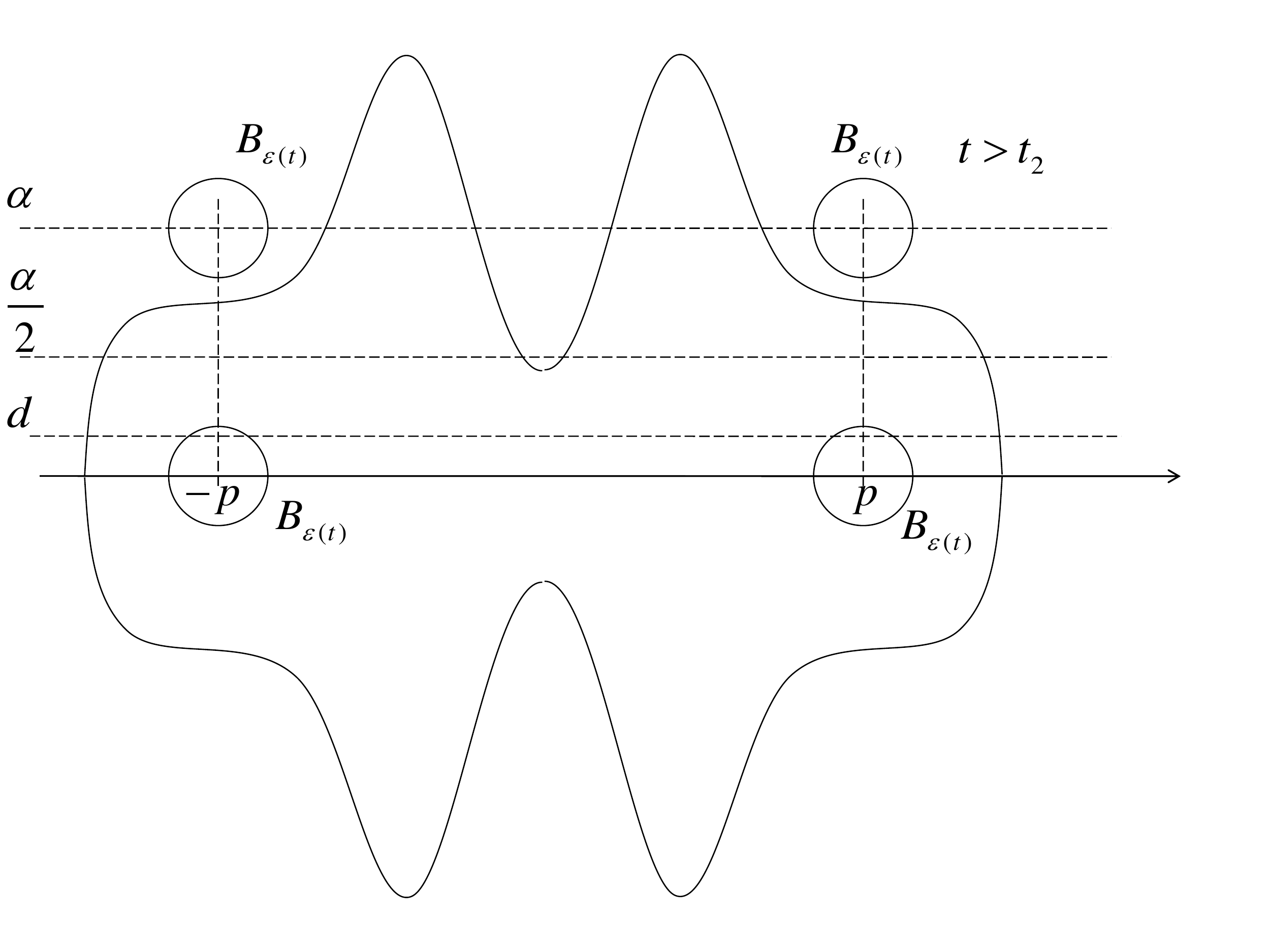}
		\vskip 0pt
		\caption{Proof of Lemma \ref{lem:closebou}}
        \label{fig:uniformb2}
	\end{center}
\end{figure}

{\bf Step 3.} Divide $\partial E_j(t)$ into two parts by auxiliary balls. 

Since for all $\rho<\alpha$, $C_{\rho}$ intersects $\partial E_j$ at most forth, by Proposition \ref{pro:intersection}, there exists $t_0>0$ such that $C_{\rho}$ intersects $\partial E_j(t)$ at most forth, $0<t<t_0$. By continuity, we can deduce that there exists $\delta_4$ such that for all $\rho<\alpha$, the equation $v_j(x,t)=\rho$ has most one root for $x>p$, for all $t<\delta_4$. By symmetry, it is so for $x<-p$. 

Choosing $t_1=\min\{t_0,\delta_2,\delta_3,\delta_4\}$, Step 1 implies that $E_j(t)^{\circ}$ are all $c$-domains, $t_2/2<t<t_1$.  Let $d<\min\{c,\epsilon/4\}$. By (\ref{eq:q1}) in Step 2, we have $v_j(x,t)>d$, for $t_2/2<t<t_1$, $|x-p|<\sqrt{\epsilon^2(t)-d^2}$ or $|x+p|<\sqrt{\epsilon^2(t)-d^2}$. Seeing $\epsilon(t)>\epsilon/2$, there holds
$$
v_j(x,t)\geq d,\ \text{in}\ \Omega=(-p-\frac{\sqrt{3}}{4}\epsilon,p+\frac{\sqrt{3}}{4}\epsilon)\times(t_2/2,t_1).  
$$

For $x\leq-p$, by (\ref{eq:q2}) in Step 2,
$$
v_j(x,t)<\alpha/2-\epsilon(t)<\alpha/2-\epsilon/2,\ x\leq-p,\ 0\leq t<t_1.
$$
This is also true for $x\geq p$.

{\bf Step 4.} The derivatives and second fundamental forms of $\partial E_j(t)$ are bounded in $\Omega^{\prime}=[-p,p]\times(t_2,t_1)$.

Since $v_j(x,t)\geq d$ in $\Omega=(-p-\frac{\sqrt{3}}{4}\epsilon,p+\frac{\sqrt{3}}{4}\epsilon)\times(t_2/2,t_1)$, Theorem \ref{thm:grad} implies that $v_{jx}$ are uniformly bounded in $\Omega$. By Remark \ref{rem:hes}, $v_{jxx}$ are uniformly bounded in $\Omega^{\prime}$. 

{\bf Step 5.} The derivatives and second fundamental forms of $\partial E_j(t)$ are bounded for $x\leq -p$ and $x\geq p$, $t_2<t<t_1$.

We only consider for $x\leq-p$. For $0<t<t_1$ the part of $\partial E_j(t)$ on $x\leq-p$ can be represent by $x=w_j(y,t)$, for $|y|<\alpha/2$, $t\in (0,t_1)$. And $w_j$ satisfy the equation (\ref{eq:graph}) in the condition ``$-$'' and $n=1$. Then Corollary \ref{cor:es} and Remark \ref{rem:hes} imply that all $\frac{\partial ^k}{\partial y^k}w_j(y,t)$, $k=1,2$, are uniformly bounded for $|y|\leq\alpha/2-\epsilon/2$, $t_2<t<t_1$ for any $t_2>0$. Then the derivatives and second fundamental forms of $\partial E_j(t)$ are uniformly bounded when $x\leq-p$, $t_2<t<t_1$. 

The proof of this lemma is completed.
\end{proof}

\begin{lem}\label{lem:opensy} There exist $U_j$ being open and $U_j\cap\{x\geq0\}$ being an $\alpha$-domain such that $U_j\uparrow U$.
\end{lem}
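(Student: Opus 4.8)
The plan is to realise each $U_j$ as the open region lying strictly below a smooth even profile that is squeezed between $0$ and $u_0$ and increases pointwise to $u_0$, chosen so that the $\alpha$--domain shape of $u_0$ is left intact; then $U_j\uparrow U$ is automatic and $U_j\cap\{x\ge 0\}$ inherits the $\alpha$--domain structure. Recall from the hypothesis that $U\cap\{x\ge 0\}$ is an $\alpha$--domain; as spelled out after Definition \ref{def:alphad}, this furnishes $0<\delta_1<b_0-\delta_2<b_0$ such that $u_0$ is strictly increasing from $0$ to $\alpha$ on $[0,\delta_1]$ with $u_0'>0$ there, $u_0>\alpha$ on the open core $(\delta_1,b_0-\delta_2)$ with $u_0(\delta_1)=u_0(b_0-\delta_2)=\alpha$, and $u_0$ is strictly decreasing from $\alpha$ to $0$ on $[b_0-\delta_2,b_0]$ with $u_0'<0$ there; by evenness the mirror statement holds on $[-b_0,0]$.

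I would then fix a nonnegative even $\gamma\in C_c^\infty(\mathbb{R})$ whose support is a compact subset of the open core $(\delta_1,b_0-\delta_2)\cup(-(b_0-\delta_2),-\delta_1)$ and which satisfies $\gamma<u_0-\alpha$ on $\textrm{supp}\,\gamma$ (possible since $u_0-\alpha>0$ there), put $u_0^{(j)}:=u_0-\tfrac1j\gamma$, and set
\[
U_j:=\{(x,y)\in\mathbb{R}^2\mid |y|<u_0^{(j)}(x),\ -b_0\le x\le b_0\}.
\]
Three checks remain. (i) $u_0^{(j)}\in C([-b_0,b_0])\cap C^\infty((-b_0,b_0))$ is even, $0\le u_0^{(j)}\le u_0$, $u_0^{(j)}\le u_0^{(j+1)}$, and $\sup_j u_0^{(j)}=u_0$; hence $U_j$ is open (region under a continuous graph), $U_j\subset U_{j+1}$, and $\bigcup_j U_j=\{|y|<\sup_j u_0^{(j)}(x)\}=\{|y|<u_0(x)\}=U$, i.e. $U_j\uparrow U$. (ii) $U_j\cap\{x\ge 0\}=\{(x,y)\mid |y|<u_0^{(j)}(x),\ 0\le x\le b_0\}$ has the form required in Definition \ref{def:alphad}(1); its positivity set is the bounded connected interval $(0,b_0)$ (since $u_0^{(j)}(0)=u_0^{(j)}(b_0)=0$ and $u_0^{(j)}>0$ in between, because $\gamma$ vanishes near $0$ and $b_0$), and the profile is smooth there. (iii) Since $u_0^{(j)}=u_0$ on the caps $[0,\delta_1]\cup[b_0-\delta_2,b_0]$ and $u_0^{(j)}>\alpha$ on $(\delta_1,b_0-\delta_2)$ (as $\tfrac1j\gamma\le\gamma<u_0-\alpha$ there), for every $\rho\in(0,\alpha]$ the equation $u_0^{(j)}(x)=\rho$ has exactly one root in each cap, both transverse (because $u_0'>0$ on $(0,\delta_1]$ and $u_0'<0$ on $[b_0-\delta_2,b_0)$) and no other root; this is precisely Definition \ref{def:alphad}(4) with the same $\alpha$. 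Hence $U_j\cap\{x\ge 0\}$ is an $\alpha$--domain for every $j$.

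I do not expect a genuine obstacle here: the statement is a soft construction, parallel to the construction of the family $v_j$ in Lemma \ref{lem:closeas} (and to the bookkeeping in Lemma \ref{lem:closebou}). The only point that needs attention is that the perturbation $\tfrac1j\gamma$ be small enough to keep $u_0^{(j)}$ nonnegative, to preserve strict monotonicity on the caps, and to keep $u_0^{(j)}>\alpha$ on the core, and all three are secured at once by the single smallness requirement $\gamma<u_0-\alpha$ on $\textrm{supp}\,\gamma$ imposed above. (Should a later argument also want $\overline{U_j}\subset U$, it suffices to take $\gamma$ strictly positive on all of $(0,b_0)$ with $\tfrac1j\gamma$ small in $C^1$ and decaying fast at $0,\pm b_0,\pm\delta_1,\pm(b_0-\delta_2)$; the checks above go through verbatim.)
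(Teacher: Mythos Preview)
Your construction is correct and proves the lemma as stated. All three checks are valid: the profile $u_0^{(j)}=u_0-\tfrac1j\gamma$ is even, increases to $u_0$, coincides with $u_0$ on both caps $[0,\delta_1]\cup[b_0-\delta_2,b_0]$, and stays strictly above $\alpha$ on the core, so $U_j\cap\{x\ge0\}$ is an $\alpha$--domain with the \emph{same} $\alpha$ and $U_j\uparrow U$.

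Your route is, however, different from the paper's. The paper perturbs $u_0$ \emph{near the origin}: it picks $\delta_j\downarrow 0$ with $u_0(\delta_j)=\alpha/2^j$, keeps $u_j=u_0$ on $[\delta_j,b_0]$, and replaces $u_0$ on $[0,\delta_j]$ by a smooth interpolant with $u_j(0)=0$, $u_j'>0$ on $(0,\delta_j)$, and $u_j\le u_0$. You, by contrast, leave both caps untouched and perturb only in the core where $u_0>\alpha$. Your approach is cleaner in two respects: the monotonicity $u_0^{(j)}\le u_0^{(j+1)}$ is automatic (for the paper's interpolants this has to be arranged by hand), and no interpolation is needed. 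The paper's choice buys something else: its $u_j$ lies in $C^\infty((-b_0,b_0))$, including at $x=0$, whereas your $u_0^{(j)}$ inherits the cusp of $u_0$ at the origin. This extra smoothness is not required by the statement of the lemma (the $\alpha$--domain definition only asks for smoothness on the open positivity interval), but it is the kind of thing that can be convenient when one later writes $\partial U_j(t)$ as a graph over the auxiliary hypersurface $\Sigma$ in the uniqueness argument. For the lemma itself, either construction suffices.
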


\begin{proof}
Since $U\cap\{x>0\}$ being $\alpha$-domain, for $j\geq1$, there exist $\delta_j$ satisfying $0<\delta_j<\delta_0$ such that $u_0(\delta_j)=\alpha/2^j$, where $\delta_0$ satisfies $u_0(\pm \delta_0)=\alpha$ and $u_0^{\prime}(x)>0$ for $0<x<\delta_0$. We set $u_j\in C^{\infty}((-b_0,b_0))$ and even satisfying
$$
u_j(x)=\left\{
\begin{array}{lcl}
0,\ x=0,\\
u_0(x),\ x\in[-b_0,-\delta_j]\cup[\delta_j,b_0],
\end{array}
\right.
$$
and $u_j(x)\leq u_0$ for $x\in[-b_0,b_0]$, $u_j^{\prime}(x)>0$ for $x\in(0,\delta_j)$.

Let $U_j=\{(x,y)\mid |y|<u_j(x)\}$. Obviously $u_j\uparrow u_0$, then $U_j\uparrow U$. It is easy to check $U_j\cap\{x>0\}$ are $\alpha$-domain.
\end{proof}
\begin{lem}\label{lem:openbou} Let the same assumption in Theorem \ref{thm:exist} be given. Then there exists $t_1>0$ such that for all $t_2$ satisfying $0<t_2<t_1$, the second fundamental forms and derivatives of $\partial U_j(t)$ is uniform bounded, $t_2<t<t_1$, where $U_j(t)$ is the open evolution of $V=-\kappa+A$ with $U_j(0)=U_j$.
\end{lem}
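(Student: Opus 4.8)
The plan is to follow the proof of Lemma~\ref{lem:closebou} line by line, with the closed evolutions $E_j(t)$ replaced by the open evolutions $U_j(t)$, the monotonicity $E_j\downarrow\overline U$ replaced by $U_j\uparrow U$ from Lemma~\ref{lem:opensy}, and every use of the order preserving property (Theorem~\ref{thm:order}) and of monotone convergence (Theorem~\ref{thm:mon}) applied with the inclusions reversed. As in the closed case one first records, using that $U_j\cap\{x>0\}$ is an $\alpha$-domain together with Lemma~\ref{lem:alphad2} and Proposition~\ref{pro:sin}, that once the neck opens the boundary of the open evolution is of the form $\partial U_j(t)=\{(x,y)\mid|y|=v_j(x,t),\ a_j(t)\le x\le b_j(t)\}$ with $(v_j,a_j,b_j)$ smooth on a maximal interval $(0,T_{U_j})$; one then proves the stated bounds on $(t_2,t_1)$ with $t_1\le T_{U_j}$ chosen below.

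Step~1 (the neck stays open, uniformly in $j$): I would show there is a $c>0$, independent of $j$, with $v_j(0,t)>c$ for $t_2/2<t<\delta$, $\delta$ as in Theorem~\ref{thm:exist}. This is the only genuinely new point compared with Lemma~\ref{lem:closebou}: there $\overline U\subset E_j$ forces $(0,0)\in U^{+}(t)\subset E_j(t)$ immediately, whereas here $U_j$ is strictly smaller than $U$ near the origin, so $U^{+}(t)$ cannot be inserted directly. Instead I would insert the open evolution $(U_j\cap\{x>0\})^{+}(t)$ of the $\alpha$-domain $U_j\cap\{x>0\}$: by Theorem~\ref{thm:order} it lies in $U_j(t)$, and by Lemma~\ref{lem:alphad2} it equals $\{|y|<v_j^{+}(x,t),\ a_j^{+}(t)<x<b_j^{+}(t)\}$, where $(v_j^{+},a_j^{+},b_j^{+})$ solves the free boundary problem (\ref{eq:eq1})--(\ref{eq:eq4}) with datum $u_j|_{[0,b_0]}$. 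If $a_j^{+}(t)<0$ then $(0,0)\in(U_j\cap\{x>0\})^{+}(t)$, hence $v_j(0,t)\ge v_j^{+}(0,t)>0$, and taking the minimum over the compact interval $[t_2/2,\delta]$ finishes the step, provided this can be done uniformly in $j$. The hard part is precisely $a_j^{+}(t)<0$ on a fixed time interval: for $j$ large it follows from continuous dependence on initial data, since $u_j|_{[0,b_0]}\to u_0$ and hence $a_j^{+}(t)\to a_{*}(t)<0$ and $v_j^{+}(0,t)\to v(0,t)>0$ uniformly on compacts; for the remaining finitely many $j$ one uses the monotonicity $U_1\subset\cdots\subset U_j$ to get $v_j(0,t)\ge v_1(0,t)$, reducing to $v_1(0,t)>0$, which one arranges by choosing $u_j$ in Lemma~\ref{lem:opensy} with small enough curvature at the origin so that the pinch of $\partial U_j$ at $(0,0)$ opens at once under $V=-\kappa+A$. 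Justifying the precise behaviour of this degenerate pinch is the point I expect to require the most care.

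Steps~2--5 (curvature bounds away from the axis): these go through as in Lemma~\ref{lem:closebou}. Choose $p>\beta_1>0$ and $\epsilon>0$ so small that $\overline{B_\epsilon(P)}\cup\overline{B_\epsilon(Q)}\subset U_1\subset U_j$ for every $j$, while $\overline{B_\epsilon(P')}\cup\overline{B_\epsilon(Q')}\subset\overline U^{\,c}\subset U_j^{\,c}$ for every $j$, with $P=(p,0)$, $Q=(-p,0)$, $P'=(p,\alpha)$, $Q'=(-p,\alpha)$; by Theorem~\ref{thm:order} and Theorem~\ref{thm:conti} (for the outside balls one routes through $U_j(t)\subset M(t)\overline U$ and the upper semicontinuity of $M(t)\overline U$ in $t$) the evolved balls $B_{\epsilon(t)}(P),B_{\epsilon(t)}(Q)$, $\epsilon(t)$ solving (\ref{eq:ball2}) with $\epsilon(0)=\epsilon$, lie in $U_j(t)$ and $B_{\epsilon(t)}(P'),B_{\epsilon(t)}(Q')$ lie in $U_j(t)^{c}$ for $0<t<\min\{\delta_2,\delta_3\}$, uniformly in $j$. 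Proposition~\ref{pro:intersection} (as in Step~3 of Lemma~\ref{lem:closebou}) gives $t_0>0$ with $C_\rho\cap\partial U_j(t)$ consisting of at most four points for $\rho<\alpha$, $t<t_0$, and by continuity a $\delta_4>0$ so that $v_j(\cdot,t)=\rho$ has at most one root for $x>p$ and one for $x<-p$, $t<\delta_4$; combined with Step~1, this yields that $U_j(t)^{\circ}$ is a $c$-domain, that $v_j(x,t)\ge d$ on a fixed neighbourhood of $[-p,p]$ for a suitable $d<\min\{c,\epsilon/4\}$, and that $v_j(x,t)<\alpha/2-\epsilon/2$ for $|x|\ge p$, $t_2/2<t<t_1$. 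Then Theorem~\ref{thm:grad} with Remark~\ref{rem:hes} bounds $v_{jx},v_{jxx}$ on $[-p,p]\times(t_2,t_1)$ uniformly in $j$, and on $\{x\le-p\}$ (and symmetrically on $\{x\ge p\}$) one writes $\partial U_j(t)$ as $x=w_j(y,t)$, a solution of (\ref{eq:graph}) in the case ``$-$'' with $n=1$, so that Corollary~\ref{cor:es} and Remark~\ref{rem:hes} bound $\partial_y^k w_j$, $k=1,2$, uniformly. Taking $t_1=\min\{t_0,\delta_2,\delta_3,\delta_4\}$ gives the claim.
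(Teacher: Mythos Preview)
Your Steps 2--5 are correct and mirror the paper's Lemma \ref{lem:closebou} essentially verbatim. The divergence is in Step 1, and there the paper's argument is much shorter than yours.

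The paper never looks at $(U_j\cap\{x>0\})^{+}(t)$. It argues: since $U\cap\{x>0\}\subset U$, Theorem \ref{thm:order}(1) gives $U^{+}(t)\subset U(t)$, and since $a_*(t)<0$ on $(0,\delta)$ this yields $(0,0)\in U(t)$ for $0<t<\delta$. Then apply monotone convergence, Theorem \ref{thm:mon}(1): from $U_j\uparrow U$ one gets $U_j(t)\uparrow U(t)$, hence for $j$ large $(0,0)\in U_j(t)$ on $[t_2/2,\delta]$. That is the entire Step 1; the rest is ``similar as in Lemma \ref{lem:closebou}''.

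Your route has real gaps. The continuous-dependence claim $a_j^{+}(t)\to a_*(t)$ concerns a free boundary problem whose initial datum $u_j|_{[0,b_0]}$ satisfies $u_j'(0)=0$, not $+\infty$, so this is not problem (*) and no stability result of this kind is available in the paper. Your fallback for small $j$ --- ``choose $u_j$ with small enough curvature at the origin so the pinch of $\partial U_j$ opens at once'' --- is precisely the type of degenerate-singularity question Theorems \ref{thm:exist} and \ref{thm:fattening1} are set up to resolve; invoking it for $U_j$ is essentially circular (and ``curvature at the origin'' is ill-defined at the cusp of $\partial U_j$). Monotone convergence of the open evolutions sidesteps all of this: you never need to analyse what any individual $U_j$ does at the origin, only that their union is $U(t)$, inside which $U^{+}(t)$ already sits for free.
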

\begin{proof} Let $U(t)$ and $U^{+}(t)$ be the open evolution with $U(0)=U$ and $U^{+}(0)=U\cap\{x>0\}$. Seeing appendix, $\Lambda(t)=\partial U^{+}(t)$($\Lambda(t)$ is given in Section 1). Since $a_*(t)<0$, for $0<t<\delta$, $(0,0)\in\partial U^{+}(t)$. 

By (1) in Theorem \ref{thm:order} and $U\cap\{x>0\}\subset U$, we have $U^{+}(t)\subset U(t)$. Consequently, $(0,0)\in U(t)$, $0<t<\delta$. Then for $j$ large enough, $(0,0)\in U_j(t)$. The following parts can be proved similar as in Lemma \ref{lem:closebou}.
\end{proof}

\begin{proof}[Proof of Theorem \ref{thm:exist}] Seeing Lemma \ref{lem:closebou} and \ref{lem:openbou},  $\partial U(t)$, $\partial E(t)$ are smooth curves and homeomorphic to the curve $\Sigma$ given by Proposition \ref{pro:sigma2}. Consequently, $\partial U(t)$, $\partial E(t)$ satisfy the assumption of Proposition \ref{pro:uniq2}, $0\leq t<T_1$, for some $T_1$ satisfying $0<T_1<t_1$. Where $t_1$ is given by Lemma \ref{lem:closebou} and \ref{lem:openbou}. Then there holds $\partial U(t)=\partial E(t)$, $0<t<T_1$. If we let $\Gamma(t)=\partial E(t)\cap\{x\geq0\}$, $\Gamma(t)$ will be the unique solution of (\ref{eq:cur}), (\ref{eq:Neum1}) and (\ref{eq:initial1}). The proof of Theorem \ref{thm:exist} is completed.
\end{proof}
\begin{rem}
Indeed, $\partial E(t)$ and $\partial U(t)$ are smooth on $\Sigma\setminus B_d\times[0,T_1)$ and $\Sigma\times(0,T_1)$. If we remove the assumption that $\Gamma_0$ is smooth at end point $(-b_0,0)$ and $(b_0,0)$, $\partial E(t)$ and $\partial U(t)$ will be smooth on $\Sigma\setminus( \Delta_{\pm c}\cup B_d)\times[0,T_1)$ and $\Sigma\times(0,T_1)$. Therefore the result is also true even if removing smoothness at end points.
\end{rem}
\begin{proof}[Proof of Theorem \ref{thm:fattening1}] It is sufficient to show that there is a ball $B$ such that $B\subset E(t)\setminus U(t)$, for some $t$. 

{\bf Closed evolution $E(t)$.} Since $E_j^{\circ}$(given by Lemma \ref{lem:closeas}) are $\alpha/2^j$-domain with smooth boundary, by Lemma \ref{lem:alphad2}, there exists a positive time $t_1$, $t_1<\delta$($\delta$ is given in Theorem \ref{thm:fattening1}) such that $E_j(t)^{\circ}$ are $(At+\alpha/2^j)$-domain for $0<t<t_1$. Combining $E_j(t)\downarrow E(t)$, we have $E(t)^{\circ}$ is an $At$-domain, $0<t<t_1$. Therefore $E(t)^{\circ}$ is an $At_1/2$-domain, $t_1/2<t<t_1$.

{\bf Open evolution $U(t)$.} Denote $U^{\pm}(t)$ being the open evolutions with $U^{\pm}(0)=U\cap\{\pm x\geq 0\}$. Seeing appendix $\partial U^{+}(t)=\Lambda(t)$, $\partial U^{-}(t)=\{(-x,y)\mid(x,y)\in \Lambda(t)\}$, where $\Lambda(t)$ is given in Section 1. Thus the left end point of $U^+(t)$ and the right end point of $U^-(t)$ are ($a_*(t)$,0) and ($-a_*(t)$,0), respectively. By the assumption in this theorem $a_*(t)\geq 0$, $0\leq t<\delta$, it means that $-a_*(t)\leq a_*(t)$, $0\leq t<\delta$. Therefore, $U^+(t)\cap U^-(t)=\emptyset$, $0\leq t<\delta$. From Lemma \ref{lem:sep}, the inner evolution $U(t)$ satisfies $U(t)=U^{+}(t)\cup U^{-}(t)$, for $0\leq t<\delta$.

By (2a) in Theorem \ref{thm:conti}(the boundary of open evolution evolves continuously) and $a(t)\geq0$, there exists $\dis{\delta_1<\frac{At_1}{4}}$ such that 
$$
\dis{B_{\delta_1}((0,\frac{At_1}{4}))}\cap U(t)=\emptyset,\ \dis{\frac{t_1}{2}<t<t_1}
$$
and
$$
B_{\delta_1}((0,\frac{At_1}{4}))\subset E(t),\ \dis{\frac{t_1}{2}<t<t_1}.
$$
 Where $\dis{B_{\delta_1}((0,\frac{At_1}{4}))}$ is a ball centered at $(0,At_1/4)$ with radius $\delta_1$. Then $\dis{B_{\delta_1}((0,\frac{At_1}{4}))}\subset \Gamma(t)=E(t)\setminus U(t)$, for $\dis{\frac{t_1}{2}<t<t_1}$.
\end{proof}

\section{Formation of sigularity}
In this section, we want to identify the singular formation of $\Gamma(t)$, when $\Gamma(t)$ becomes singular at $t=T<\infty$. 
Where
\begin{equation}\label{eq:singularT}
T=\sup\{t>0\mid \Gamma(s)\ \text{are}\ \text{smooth},0<s<t\}
\end{equation}
and $\Gamma(t)$ is given by Theorem \ref{thm:exist}. For convenience, we still consider $\Gamma(t)$ extended evenly. By Theorem \ref{thm:openevolutionmeancurvature} and Theorem \ref{thm:gu}, $\Gamma(t)=\{(x,y)\in\mathbb{R}^2\mid|y|=u(x,t),-b(t)\leq x\leq b(t)\}$ and $(u,b)$ is the solution of the following free boundary problem
$$
\left\{
\begin{array}{lcl}
\dis{u_t=\frac{u_{xx}}{1+u_x^2}+A\sqrt{1+u_x^2}}, \ x\in(-b(t),b(t)), \ 0<t< T,\\
u(-b(t),t)=0$, $u(b(t),t)=0$,\ $0\leq t< T,\\
u_x(-b(t),t)=\infty,\ u_x(b(t),t)=-\infty,\ 0\leq t< T,\\
u(x,0)=u_0(x),\ -b_0\leq x\leq b_0.
\end{array}
\right.
$$

Noting the choice of initial curve, $\Gamma(t)$ is not convex for $t$ near $0$. Therefore, $\Gamma(t)$ possible intersects itself at $y$-axis. Therefore, it is necessary to study the local minima of $u(\cdot,t)$.

As showed in section 4, the numbers of local maxima and local minima are a finite nonincreasing function of time. It follows that, after a while, the numbers of local maxima and local minima are constants. After discarding an initial section of the solution, we may even assume that $x\mapsto u(x,t)$ has $m$ local minima and $m+1$ local maxima. Let these minima and maxima be located at $\{\xi_j(t)\}_{1\leq j\leq m}$ and $\{\eta_j(t)\}_{0\leq j\leq m}$, respectively. And order the $\xi_j(t)$ and $\eta_j(t)$ so that
\begin{equation}\label{eq:ordermami}
-b(t)<\eta_0(t)<\xi_1(t)<\eta_1(t)<\cdots<\xi_m(t)<\eta_m(t)<b(t).
\end{equation}
Since the number of critical points of $u(\cdot,t)$ drops whenever $u(\cdot,t)$ has degenerate critical point, the minima and maxima of $u(\cdot,t)$ are all nondegenerate. By the implicit function theorem the $\xi_j(t)$ and $\eta_j(t)$ are therefore smooth functions of time.

\begin{lem}\label{lem:convergeendmimax}
The limits
$$
\lim\limits_{t\rightarrow T}b(t)=b(T)
$$
and
$$
\lim\limits_{t\rightarrow T}\xi_j(t)=\xi_j(T),\ \lim\limits_{t\rightarrow T}\eta_j(t)=\eta_j(T)
$$
exist.
\end{lem}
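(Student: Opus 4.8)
The plan is to show that each of $b(t)$, $\xi_j(t)$, $\eta_j(t)$ either has bounded speed as $t\uparrow T$ or is confined (for $t$ near $T$) to a region carrying uniform parabolic estimates, and then to upgrade this to genuine convergence. Since we are in the regime $T<\infty$, note first that $b(t)$ stays bounded: an expanding circle $\partial B_{R(t)}(0)$ with $R(0)$ large and $R'=A-1/R$ (cf. (\ref{eq:ball2})) remains finite on $[0,T]$ and is a barrier, so by the order preserving property (Theorem \ref{thm:order}) $\Gamma(t)\subset B_{R(t)}(0)$ and $0\le b(t)\le R(T)$. By Theorem \ref{thm:grad} applied to the even extension of $\Gamma(t)$ there is also a uniform interior gradient bound $|u_x(x,t)|\le C$ on $t_0\le t<T$, $-b(t)<x<b(t)$, with $C$ depending only on $u_0$ and $T$. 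Finally, by uniqueness (Proposition \ref{pro:uniq2}) and the evenness of the extended initial data, $u(\cdot,t)$ is even in $x$; the central critical point is then pinned at $x=0$ and the others come in symmetric pairs, so it suffices to follow the critical points with $x\ge 0$.

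Next I would handle the two endpoints $\pm b(t)$. Near $x=b(t)$ the curve is a vertical graph $x=v(r,t)$, $0\le r\le\rho_0$ for a fixed small $\rho_0$, with $v(0,t)=b(t)$ and the Neumann condition $v_r(0,t)=0$ coming from $u_x(b(t),t)=-\infty$; here $v$ solves equation (\ref{eq:graph}) with $n=1$, as in Lemma \ref{lem:sing1}, and the cap structure persists up to $T$ as in Proposition \ref{pro:sin}. Reflecting evenly across $r=0$ (the equation is even in $r$, and $v_r(0,t)=0$ forces $v_{rrr}(0,t)=0$, so the extension is $C^3$) and invoking Corollary \ref{cor:es} together with Remark \ref{rem:hes}, the derivatives $v_r,v_{rr}$ are uniformly bounded for $|r|\le\rho_0/2$, $t_0<t<T$, uniformly up to $T$. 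Hence $|b'(t)|=|v_t(0,t)|\le C$, so $b(t)$ is uniformly continuous on $[t_0,T)$ and $\lim_{t\to T}b(t)$ exists; moreover the cap $v(\cdot,t)$ converges uniformly on $[0,\rho_0/2]$. The same argument applies to $-b(t)$.

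For the interior critical points the key point is a monotonicity at the minima. Differentiating along $t\mapsto(\xi_j(t),t)$ and using $u_x(\xi_j(t),t)=0$ and $u_{xx}(\xi_j(t),t)>0$ (the minima are nondegenerate), one gets $\frac{d}{dt}u(\xi_j(t),t)=u_t(\xi_j(t),t)=u_{xx}(\xi_j(t),t)+A>A>0$, so $\mu_j(t):=u(\xi_j(t),t)$ is strictly increasing; being bounded above by $h(t)\le M+AT$ it has a limit, and $\mu_j(t)\ge c_j:=\mu_j(t_0)>0$. By the ordering (\ref{eq:ordermami}) this gives $u(\cdot,t)\ge c_0:=\min_j c_j>0$ on $[\eta_0(t),\eta_m(t)]$. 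Combining this with the cap estimates — on each cap $|u_x|=1/|v_r|$ is bounded below by a positive constant, so a zero of $u_x$ cannot lie in the cap region — one sees that all $\xi_j(t),\eta_j(t)$ lie, for $t$ near $T$, in a fixed compact subinterval $[\underline x,\overline x]\Subset(-b(t),b(t))$, on which $u$ solves (\ref{eq:graph}) smoothly and is bounded. Corollary \ref{cor:es} and Remark \ref{rem:hes} then give uniform bounds on $u_x,u_{xx},u_{xxx}$ on a slightly smaller region; hence $u_t$ is bounded there, and $u(\cdot,t)$ converges, together with its first two spatial derivatives, uniformly on $[\underline x+\epsilon,\overline x-\epsilon]$ to some $u(\cdot,T)$ as $t\uparrow T$.

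It remains to deduce that each $\xi_j(t),\eta_j(t)$ actually converges rather than merely oscillates. I would use that $u_x(\cdot,t)\to u_x(\cdot,T)$ uniformly on compacts while, for every $t<T$, the zeros $\eta_0(t)<\xi_1(t)<\cdots<\eta_m(t)$ of $u_x(\cdot,t)$ occur in this fixed order, with $u_x$ changing sign across each. If some $\eta_j(t)$ had two distinct limit points $p<p'$ along sequences $t_k,t_k'\to T$, a fixed $q\in(p,p')$ would lie, for large $k$, strictly between $\eta_j(t_k)$ and its right neighbour, so $u_x(q,t_k)<0$, while for large $k'$ it would lie strictly between $\xi_j(t_{k'})$ and $\eta_j(t_{k'})$, so $u_x(q,t_{k'})>0$, contradicting $u_x(q,t)\to u_x(q,T)$; the only escape — a critical point being absorbed by a coalescing pair — is excluded by tracking the neighbouring critical points, i.e. by observing that the $\liminf$ and $\limsup$ of the ordered family of critical points interleave and, by the pointwise convergence of $u_x$, must coincide. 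I expect this last step, namely excluding oscillation of the interior critical points without an a priori uniform lower bound on $|u_{xx}|$ at them (which can genuinely fail at the singular time), to be the main obstacle; should such a uniform nondegeneracy bound be available — for instance from the constancy of the critical point count via the Sturmian theory — the convergence of $\xi_j(t),\eta_j(t)$ follows immediately from the $C^1$ convergence of $u(\cdot,t)$ and the implicit function theorem.
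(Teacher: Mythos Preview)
Your treatment of $b(t)$ is correct but heavier than needed. The paper simply observes that at the right cap $b'(t)=w_{rr}(0,t)+A\le A$ (since $w_{rr}(0,t)\le 0$ at the endpoint), so $b(t)-At$ is nonincreasing and bounded, hence convergent. No interior estimates on the cap are required, and you do not need to justify persistence of a fixed-height cap up to~$T$.

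For the interior critical points your approach diverges from the paper's and carries a real gap, exactly the one you flag. Your monotonicity observation $\frac{d}{dt}u(\xi_j(t),t)=u_{xx}(\xi_j(t),t)+A>A$ is correct in this two-dimensional setting and is a genuinely useful remark (in fact it already forces $m=0$ near $T$, anticipating Corollary~\ref{cor:2dsingular}); it also legitimately yields $u\ge c_0>0$ on $[\eta_0(t),\eta_m(t)]$ and hence $C^2$ convergence of $u(\cdot,t)$ on a fixed compact interval. But the last step---deducing convergence of each $\xi_j(t),\eta_j(t)$ from $C^1$ (even $C^2$) convergence of $u$---does not close as written. If $\eta_j(t)$ oscillates between $p<p'$, continuity of $\eta_j$ forces every $q\in[p,p']$ to be a limit point, so $u_x(\cdot,T)\equiv 0$ on $[p,p']$; your sign argument cannot rule this out because the neighbouring $\xi_{j+1}(t)$ may oscillate in tandem, so you cannot guarantee $q$ lies between $\eta_j(t_k)$ and $\xi_{j+1}(t_k)$. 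The interleaving of $\liminf$/$\limsup$ you invoke gives only $\liminf\xi_{j+1}\ge\liminf\eta_j$ and $\limsup\xi_{j+1}\ge\limsup\eta_j$, which is not enough. A uniform lower bound on $|u_{xx}|$ at the critical points is not available at the singular time, and Sturmian constancy of the zero count for $u_x$ does not by itself supply one.

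The paper avoids this entirely by the Chen--Matano reflection trick: assuming $\limsup\xi_j>\liminf\xi_j$, pick $x_0$ strictly between them with $x_0\neq 0$, and reflect $\Gamma(t)$ about the line $x=x_0$ to obtain $\widetilde\Gamma(t)$, another solution of $V=-\kappa+A$. There is a sequence $t_m\uparrow T$ with $\xi_j(t_m)=x_0$, hence $u_x(x_0,t_m)=0$, so $\Gamma(t_m)$ and $\widetilde\Gamma(t_m)$ are tangent at $x_0$. Since $x_0\neq 0$ the endpoints of $\Gamma$ and $\widetilde\Gamma$ have a fixed order for $t$ near $T$, so Theorem~\ref{thm:sl} applies: each tangency strictly drops a finite intersection number, which cannot happen infinitely often. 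This argument needs no regularity of $u(\cdot,T)$ at all and handles $\eta_j$ identically. I would recommend replacing your final paragraph with this reflection argument.
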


\begin{proof}
We prove this lemma by the method from \cite{AAG}, first developed by \cite{CM}. But in our proof, there is a little  difference, since the intersection number between two flows evolving by $V=-\kappa+A$ may increase. Therefore, the method in \cite{AAG} should be modified.

First, we prove $\lim\limits_{t\rightarrow T}b(t)$ exists. By the vertical equation
$$
w_t=\frac{w_{rr}}{1+w_r^2}+A\sqrt{1+w_r^2},
$$
we can derive $b^{\prime}(t)=w_{rr}+A\leq A$ because of $w_{rr}(0)\leq0$. Then $b(t)-At$ is non-increasing. It is easy to see $b(t)-At$ is bounded for $t<T$. Therefore $\lim\limits_{t\rightarrow T}(b(t)-At)$ exists. Consequently,  $\lim\limits_{t\rightarrow T}b(t)$ exists.

Next, we prove $\lim\limits_{t\rightarrow T}\xi_j(t)$ exists. We assume
$$\limsup\limits_{t\rightarrow T}\xi_j(t)>\liminf\limits_{t\rightarrow T}\xi_j(t).$$ We can choose $x_0\in(\liminf\limits_{t\rightarrow T}\xi_j(t),\limsup\limits_{t\rightarrow T}\xi_j(t))$ and $x_0\neq 0$. Without loss of generality, we assume $-b(T)<x_0<0<b(T)$. Since $\xi_j(t)$ is continuous in $t$, there exists a sequence $t_m\rightarrow T$ such that
$$
\xi_j(t_m)=x_0 \ \textrm{and}\ u_x(x_0,t_m)=0.
$$
We let $\widetilde{\Gamma}(t)$ be the reflection from $\Gamma(t)$ about $x=x_0$. Consequently, $\widetilde{a}(t):=2x_0-a(t)$ and $\widetilde{b}(t):=2x_0-b(t)$ are the end points of $\widetilde{\Gamma}(t)$. Obviously, $\widetilde{\Gamma}(t)$ evolves by $V=-\kappa+A$ and $\widetilde{a}(T)<-b(T)<x_0<\widetilde{b}(T)<b(T)$. For $t$ being sufficiently close to $T$, $\widetilde{a}(t)<a(t)<x_0<\widetilde{b}(t)<b(t)$, i.e., the order of $\widetilde{a}(t)$, $\widetilde{b}(t)$, $a(t)$, $b(t)$ dose not change. Using Theorem \ref{thm:sl}, since $\widetilde{\Gamma}(t_m)$ intersects $\Gamma(t_m)$ at $x_0$ tangentially, the intersection number between $\widetilde{\Gamma}(t)$ and $\Gamma(t)$ will drop infinite times, for $t$ close to $T$. But Theorem \ref{thm:sl} shows that the intersection number between $\Gamma(t)$ and $\widetilde{\Gamma}(t)$ is finite(The choice of $x_0$ implies $\Gamma(t)$ is not identity to $\widetilde{\Gamma}(t)$). This yields a contradiction.
\end{proof}

\begin{lem}\label{lem:singlepoint pinch}
If $\xi_{j}(T)<\eta_j(T)$, then for any compact interval $[c,d]\subset(\xi_j(T),\eta_j(T))$, there exists $t_1$ and $\delta>0$ such that $u(x,t)\geq\delta$ for $x\in[c,d]$, $t\in[t_1,T)$.
(Similarly for $\eta_{j-1}(T)<\xi_j(T)$, $-b(T)<\eta_0(T)$, $\eta_m(T)<b(T)$).
\end{lem}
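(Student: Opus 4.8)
The plan is to argue by contradiction, reducing the lemma to the claim that the limit $u(\cdot,T):=\lim_{t\to T}u(\cdot,t)$ exists, is continuous, and is \emph{strictly} positive on the whole open interval $(\xi_j(T),\eta_j(T))$. Granting this, one takes $\delta:=\tfrac12\min_{[c,d]}u(\cdot,T)>0$ and, by the uniform convergence established below, finds $t_1<T$ with $u(\cdot,t)\ge\delta$ on $[c,d]$ for $t\in[t_1,T)$, which is the assertion.

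For the preliminaries, note that since $b(t)\to b(T)$ by Lemma \ref{lem:convergeendmimax} and $-b(T)\le\xi_j(T)<\eta_j(T)\le b(T)$, every compact $[p,q]\subset(\xi_j(T),\eta_j(T))$ is contained in $(-b(t),b(t))$ once $t$ is close to $T$; on such a neighbourhood $u$ is a smooth solution of the graph equation with $0<u\le M<\infty$ (the bound $M<\infty$ holds because $T<\infty$, e.g.\ by comparison with an expanding disk). Corollary \ref{cor:es} and Remark \ref{rem:hes}, whose constants are independent of the final time, then give $|u_x|,|u_{xx}|\le C$ on $[p,q]\times[t_0,T)$; hence $u_t$ is bounded there and $u(\cdot,t)$ converges uniformly on $[p,q]$ to a continuous limit $u(\cdot,T)$ as $t\to T$. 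Second, after discarding an initial time interval (as in the paragraph preceding the lemma) all critical points of $u(\cdot,t)$ are nondegenerate and, since $\eta_j(t)$ is the critical point immediately following $\xi_j(t)$ in the ordering (\ref{eq:ordermami}), one has $u_x(\cdot,t)\ge 0$ on $[\xi_j(t),\eta_j(t)]$. Because $\xi_j(t)\to\xi_j(T)$ and $\eta_j(t)\to\eta_j(T)$, letting $t\to T$ shows $u(\cdot,T)$ is nondecreasing on $(\xi_j(T),\eta_j(T))$.

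Now suppose, for contradiction, that $u(x_*,T)=0$ for some $x_*\in(\xi_j(T),\eta_j(T))$. Monotonicity and $u\ge0$ force $u(\cdot,T)\equiv0$ on $(\xi_j(T),x_*]$, so we may fix compact intervals $[c',d']\subset[c'-\epsilon,d'+\epsilon]\subset(\xi_j(T),x_*)$ with $c'<d'$. On $[c'-\epsilon,d'+\epsilon]$ we have $u(\cdot,t)\to0$ uniformly while $|u_{xx}|\le C$; the elementary estimate ``if $u_x(x_0,t)=\alpha>0$ then $u_x\ge\alpha/2$ on an interval of length $\alpha/(2C)$, hence $u\ge\alpha^2/(4C)$ somewhere'' (and its mirror image for $\alpha<0$) then yields $\sup_{[c',d']}|u_x(\cdot,t)|\le 2\sqrt{C\sup_{[c'-\epsilon,d'+\epsilon]}u(\cdot,t)}\to0$ as $t\to T$. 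Finally consider $I(t)=\int_{c'}^{d'}u(x,t)\,dx$: for $t<T$ near $T$ one has $I(t)>0$ since $u(\cdot,t)>0$ on $(-b(t),b(t))\supset[c',d']$, whereas $I(t)\to\int_{c'}^{d'}u(\cdot,T)\,dx=0$; on the other hand $I'(t)=\int_{c'}^{d'}\bigl(\frac{u_{xx}}{1+u_x^2}+A\sqrt{1+u_x^2}\bigr)\,dx=\arctan u_x(d',t)-\arctan u_x(c',t)+A\int_{c'}^{d'}\sqrt{1+u_x^2}\,dx$, which tends to $A(d'-c')>0$ by the uniform vanishing of $u_x$. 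Hence $I'(t)\ge\tfrac12A(d'-c')>0$ for $t$ in some $[t_2,T)$, so $\lim_{t\to T}I(t)\ge I(t_2)>0$, contradicting $I(t)\to0$.

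The crux of the argument is the middle step: converting the uniform decay $u(\cdot,t)\to0$ into uniform decay of the \emph{slope} $u_x$, so that the boundary terms $\arctan u_x(c',t)$, $\arctan u_x(d',t)$ become negligible. Without this control one only gets $I'(t)\ge A(d'-c')-\pi$, which is useless on a short interval, so the interior Hessian bound from Section 3 — and the fact that it persists up to $t=T$ — is indispensable here. A secondary point needing care is the reduction to ``$u(\cdot,T)>0$'', i.e.\ that $u(\cdot,t)$ is monotone between consecutive nondegenerate critical points and that this monotonicity survives the passage to the limit. The remaining cases ($\eta_{j-1}(T)<\xi_j(T)$, $-b(T)<\eta_0(T)$, $\eta_m(T)<b(T)$) are handled verbatim, interchanging increasing with decreasing monotonicity or replacing a critical point by an endpoint of $(-b(t),b(t))$, the interval $[c',d']$ being kept compactly inside so that the gradient bounds still apply.
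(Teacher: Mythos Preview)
Your argument is correct and takes a genuinely different route from the paper. The paper proves the lemma \emph{directly}: it writes $\theta=\arctan u_x$, derives the parabolic equation $\theta_t=\cos^2\theta\,\theta_{xx}+A\sin\theta\,\theta_x$, and constructs by hand the subsolution $\varphi(x,t)=\epsilon e^{-ct}\sin(\lambda(x-a))$ (with $c>A\lambda\pi+\lambda^2$) on any $[a,b]\subset(\xi_j(T),\eta_j(T))$; the maximum principle then gives $u_x\ge\varphi$, and integrating in $x$ yields the explicit lower bound $u\ge\epsilon\lambda^{-1}e^{-ct}\bigl(1-\cos(\lambda(x-a))\bigr)$. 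Your approach instead argues by contradiction: assuming $u(\cdot,T)$ vanishes somewhere, you use the interior Hessian bound of Remark~\ref{rem:hes} to interpolate $u_x\to0$ from $u\to0$, and then exploit the driving force via the integral identity $I'(t)=[\arctan u_x]_{c'}^{d'}+A\int\sqrt{1+u_x^2}\,dx\to A(d'-c')>0$ to contradict $I(t)\to0$. What this buys you is that no subsolution needs to be guessed or verified; the argument runs entirely on the Section~3 estimates and a clean ``area grows'' computation. What the paper's approach buys is robustness: its subsolution argument gives a quantitative, explicit lower bound and---more importantly---goes through verbatim when $A=0$ (one simply takes $c>\lambda^2$), so it is the argument inherited from the pure mean curvature flow of~\cite{AAG}. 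Your proof, by contrast, collapses when $A=0$, since then $I'(t)=[\arctan u_x]_{c'}^{d'}\to0$ yields no contradiction; within the standing hypothesis $A>0$ of this paper, however, it is a valid and pleasantly conceptual alternative.
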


\begin{proof}
Let $[a,b]\subset(\xi_j(T),\eta_j(T))$ be any compact interval, then there exists $t_1<T$ such that $[a,b]\subset(\xi_{j}(t),\eta_j(t))$ and $u_x(x,t)>0$, $x\in[a,b]$, $t\in(t_1,T)$. Letting $\theta=\arctan u_x$, $\theta$ satisfies
$$
\theta_t=\cos^2\theta\theta_{xx}+A\sin\theta\theta_x.
$$
Since $u_x>0$, $x\in[a,b]$, $t\in(t_1,T)$, there holds
$$
\theta_t-\cos^2\theta\theta_{xx}-A\sin\theta\theta_x=0.
$$

On the other hand, we let $\varphi(x,t)=\epsilon e^{-ct}\sin(\lambda(x-a))$, where $\lambda=\pi/(b-a)$, $c>A\lambda\pi+\lambda^2$, $0<\epsilon<\pi$. Since $\varphi_{xx}\leq0$, $x\in[a,b]$ and
seeing
$$
\left|-A\lambda\frac{\sin(\epsilon e^{-ct}\sin(\lambda(x-a)))}{\sin(\lambda(x-a))}\cos(\lambda(x-a))\right|\leq A\lambda\pi,
$$
there holds
\begin{eqnarray*}
\varphi_t&-&\cos^2\varphi\varphi_{xx}-A\sin\varphi\varphi_x\leq\varphi_t-\varphi_{xx}-A\sin\varphi\varphi_x\\
&=&
\epsilon e^{-ct}\sin(\lambda(x-a))\left(-c+\lambda^2-A\lambda\frac{\sin(\epsilon e^{-ct}\sin(\lambda(x-a)))}{\sin(\lambda(x-a))}\cos(\lambda(x-a))\right)\\&\leq&\epsilon e^{-ct}\sin(\lambda(x-a))(-A\lambda\pi-\lambda^2+\lambda^2+A\lambda\pi)=0,
\end{eqnarray*}
for $x\in[a,b],\ t\in(t_1,T).$
Since $u_x(x,t_1)$ is bounded from below for some positive constant in $[a,b]$, we can choose $\epsilon>0$ small enough such that $\varphi(x,t_1)\leq\theta(x,t_1)$. Seeing 
$$
\varphi(a,t)=0<\theta(a,t),\ \varphi(b,t)=0<\theta(b,t),\ t\in(t_1,T).
$$
By maximum principle, 
$$
\theta(x,t)\geq\varphi(x,t),\ a<x<b,\ t_1<t<T.
$$
Consequently,
$$
u_x\geq\arctan u_x=\theta\geq\epsilon e^{-ct}\sin(\lambda(x-a)),\ x\in[a,b],\ t\in(t_1,T).
$$
$$
u\geq\epsilon\frac{e^{-ct}}{\lambda}(1-\cos(\lambda(x-a))),\ x\in[a,b],\ t\in(t_1,T).
$$
Then for all $[c,d]\subset(a,b)$, $u$ is uniformly bounded from below for $x\in[c,d],\ t\in[t_1,T)$.
\end{proof}

\begin{lem}\label{lem:limitsurface}
$\lim\limits_{t\rightarrow T}u(x,t)=u(x,T)$ exists, and $u(x,t)$ converges uniformly to $u(x,T)$, for $x\in \mathbb{R}$, as $t\rightarrow T$. The function $u$ is smooth at $(x,t)\in \mathbb{R}\times(0,T]$ provided that $u(x,t)>0$. We interpret that $u(x,t)=0$ outside $(a(t),b(t))$.
\end{lem}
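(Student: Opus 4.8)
The plan is to split $\mathbb{R}$ into the region where the limiting profile $u(\cdot,T)$ stays strictly positive and the region where it degenerates, and to obtain uniform convergence on each piece separately. First I would fix the singular time $T<\infty$ and recall from Lemma \ref{lem:convergeendmimax} that $b(t)\to b(T)$ and that each critical point $\xi_j(t),\eta_j(t)$ converges as $t\to T$; in particular the combinatorial picture (the ordering \eqref{eq:ordermami}) stabilizes. Outside $[-b(T),b(T)]$ we have $u(x,t)=0$ eventually by the convergence of $b(t)$, so convergence is trivial there. On the open set $G=\{x\in(-b(T),b(T))\mid u(x,T)>0\}$ — more precisely, on each compact subinterval $[c,d]$ of a maximal arc on which $u(\cdot,T)$ does not vanish, e.g. a subinterval of $(\xi_j(T),\eta_j(T))$ or $(\eta_{j-1}(T),\xi_j(T))$ or the two boundary arcs — Lemma \ref{lem:singlepoint pinch} gives a uniform lower bound $u\geq\delta>0$ for $t\in[t_1,T)$. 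With such a positive lower bound in hand on a slightly larger interval, Theorem \ref{thm:es} (via Remark \ref{rem:es} and Corollary \ref{cor:es}), together with the higher-order estimates of Remark \ref{rem:hes}, yields uniform bounds on $u$, $u_x$, $u_{xx}$ and all higher derivatives on $[c,d]\times[t_2,T)$. Then $u_t$ is bounded there by the equation \eqref{eq:1graph}, so $u(\cdot,t)$ is Cauchy in $t$ uniformly on $[c,d]$, giving a smooth limit $u(\cdot,T)$ on $G$; smoothness at any $(x_0,T)$ with $u(x_0,T)>0$ follows by applying the interior Schauder estimates in a space-time neighbourhood of $(x_0,T)$ contained in the region where $u$ is bounded below.

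The remaining issue is uniform convergence \emph{across} the degenerate points, i.e. near the finitely many $x_*$ where $u(x_*,T)=0$ but $x_*\in(-b(T),b(T))$ (pinching points), and near the endpoints $\pm b(T)$. Here I would exploit monotonicity: on a one-sided neighbourhood of a pinching point where $u(\cdot,t)$ is monotone for $t$ close to $T$ (which holds since the critical points have converged and are nondegenerate), the family $u(\cdot,t)$ is uniformly small near $x_*$ once one controls the value at one nearby point $c$ with $u(c,T)>0$ plus the monotonicity; combined with the already-established convergence at $c$, an $\varepsilon/3$ argument gives $|u(x,t)-u(x,T)|<\varepsilon$ for all $x$ in a fixed neighbourhood of $x_*$ and all $t$ near $T$. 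A clean way to phrase this: for $x$ between the pinch point and $c$, $0\le u(x,t)\le u(c,t)$, and $u(c,t)\to u(c,T)$ which can be made as small as we like by choosing $c$ close enough to $x_*$ after first noting $u(x_*,T)=0$; then absorb the error. The same monotone-comparison handles the endpoints $\pm b(t)$, using $u(\pm b(t),t)=0$ and monotonicity of $u(\cdot,t)$ there together with $b(t)\to b(T)$. Finally one patches the finitely many overlapping neighbourhoods (compact good intervals plus small neighbourhoods of the finitely many bad points) using a Lebesgue-number/finite-cover argument over $[-b(T)-1,b(T)+1]$, which is enough since $u\equiv 0$ outside.

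The main obstacle is precisely the uniformity near the pinching points and the moving endpoints: away from those points everything is a direct consequence of the a priori estimates already proved (Theorem \ref{thm:es}, Corollary \ref{cor:es}, Remarks \ref{rem:es}–\ref{rem:hes}) plus Lemma \ref{lem:singlepoint pinch}, and the existence of the pointwise limit with interior smoothness is then automatic. Getting the convergence to be uniform requires knowing that $u(\cdot,t)$ cannot develop a small ``bump'' near a degenerate point that fails to settle down; this is controlled by the fact — guaranteed by Lemma \ref{lem:convergeendmimax} and the nondegeneracy of the limiting critical points — that the monotonicity structure of $u(\cdot,t)$ on a fixed neighbourhood of each bad point is eventually frozen, so that the one-sided monotone comparison argument above applies with constants independent of $t$. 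Once that is set up, the proof is a finite-cover assembly.
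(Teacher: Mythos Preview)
Your outline for the good regions matches the paper: Lemma \ref{lem:singlepoint pinch} gives a uniform positive lower bound on each compact subinterval between consecutive $\xi_j(T)$'s, then interior gradient and higher-order estimates bound $u_t$, hence $u(\cdot,t)$ is uniformly Cauchy in $t$ and has a smooth limit there. The divergence is in how you pass to \emph{global} uniform convergence. The paper does not argue via monotonicity near the bad points; it proves equicontinuity of the whole family $\{u(\cdot,t)\}_{T/2<t<T}$ directly from Theorem \ref{thm:grad}, the intersection-number estimate $|u_x(x,t)|\le\sigma(t,u(x,t))$: if $|u(x_1,t)-u(x_2,t)|\ge\varepsilon$ then one of the two values is $\ge\varepsilon/2$, and on the arc where $u\ge\varepsilon/2$ one has $|u_x|\le\sigma(T/2,\varepsilon/2)$, forcing $|x_1-x_2|\ge\varepsilon/(2\sigma)$. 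Equicontinuity plus pointwise convergence on the dense complement of $\{\pm b(T),\xi_j(T)\}$ then gives uniform convergence everywhere, with no case analysis at the bad points.

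Your monotone-comparison step has a circularity as written. You bound $0\le u(x,t)\le u(c,t)\to u(c,T)$ for $x$ between a ``pinching point'' $x_*$ and a nearby $c$, and then want $u(c,T)$ small by taking $c\to x_*$; but this presupposes that $u(\cdot,T)$ is continuous at $x_*$ with value $0$, which is exactly what remains to be shown --- nothing you have established rules out a positive one-sided limit $\lim_{c\to x_*^+}u(c,T)>0$. To close this you again need a gradient control of the Theorem \ref{thm:grad} type (Corollary \ref{cor:es} alone is not enough here, since it requires an a priori lower bound on $u$ which you do not have near $x_*$), and at that point you are essentially reproducing the paper's equicontinuity argument. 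Incidentally, in the specific $2$D setting of this paper one can check from the equation that $\tfrac{d}{dt}u(\xi_j(t),t)=u_{xx}(\xi_j(t),t)+A\ge A>0$, so interior local-minimum values are strictly increasing and no interior pinching ever occurs; the only genuine bad points are $\pm b(T)$, where your monotonicity argument does work cleanly. But you do not invoke this observation, and your general pinching clause needs the missing equicontinuity input to be valid.
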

\begin{proof}
By Lemma \ref{lem:singlepoint pinch}, for all $[c,d]\subset(\xi_{j-1}(T),\xi_{j}(T))$, $u(x,t)\geq \delta$, $x\in[c,d]$, $t\in[t_1,T)$. By Theorem \ref{thm:grad}, $u_x$ is uniformly bounded on $[c,d]\times[t_1,T)$, which implies $\frac{\partial^i}{\partial x^i}u(x,t)$, $i=1,2$ are bounded on any compact subinterval of $(c,d)$. On the other hand, from equation, $u_t(x,t)$ is uniformly bounded on such interval, so that $u(\cdot,t)$ converges uniformly on any such interval.

The same idea can be applied to the conditions in the intervals $(-b(T),\xi_1(T))$ and $(\xi_m(T),b(T))$. Siince outside of $[-b(T),b(T)]$,$u(x,T)$ is considered to be $0$, the result is true.

Except at $-b(T)$, $b(T)$ and $\xi_j(T)$s, $u(x,t)$ converges pointwise for every $x$ not equaling $-b(T)$, $b(T)$, $\xi_j(T)$, as $t\rightarrow T$. The convergence is uniform on any interval that does not contain any of the points.

Next we want to prove the functions $u(\cdot,t)$ are equicontinuous for $T/2<t<T$.

Assuming $x_1<x_2$, if $x_1$, $x_2$ are both not in the interval $(-b(T),b(T))$, the conclusion is obvious. Assume $x_1\in(-b(T),b(T))$.

Suppose that $|u(x_1,t)-u(x_2,t)|\geq\epsilon$. Then either $u(x_1,t)\geq\epsilon/2$ or $u(x_2,t)\geq\epsilon/2$ or both; we assume the first one. From Theorem \ref{thm:grad}, $|u_x|<\sigma(\epsilon/2,T/2)$ whenever $u(x,t)\geq\epsilon/2$, $T/2<t<T$. Thus, if $u(x,t)\geq\epsilon/2$ on $(x_1,x_2)$, 
$$
x_2-x_1\geq\frac{|u(x_1,t)-u(x_2,t)|}{\sigma(\epsilon/2,T/2)}\geq\frac{\epsilon}{\sigma(\epsilon/2,T/2)}.
$$
If $u(x,t)<\epsilon/2$ some where in the interval $(x_1,x_2)$, then there is a smallest $x_3$ satisfying $x_1<x_3$ at which $u(x_3,t)=\epsilon/2$. On the interval $(x_1,x_3)$, $u(x,t)\geq\epsilon/2$. Then
$$
x_2-x_1\geq x_3-x_1\geq\frac{u(x_1,t)-u(x_3,t)}{\sigma(\epsilon/2,T/2)}\geq\frac{\epsilon}{2\sigma(\epsilon/2,T/2)}.
$$
So for every $\epsilon>0$, choose $\delta=\epsilon/(2\sigma(\epsilon/2,T/2))$ so that  
$$
|u(x_1,t)-u(x_2,t)|<\epsilon,
$$
$|x_1-x_2|<\delta$, for $T/2<t<T$.

Thus $u(x,t)$ is equicontinuous. Noting that $u(x,t)$ converges to $u(x,T)$ in $\mathbb{R}\setminus\{\xi_i(T),-b(T),b(T)\}$ and $\mathbb{R}\setminus\{\xi_i(T),-b(T),b(T)\}$ is dense in $\mathbb{R}$, the proof is completed.
\end{proof}

\begin{lem}\label{lem:seprate}
Suppose that $u(\eta_0(T),T)>0$, then $-b(T)<\eta_0(T)$.
\end{lem}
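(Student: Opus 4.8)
The plan is to argue by contradiction, showing that $\eta_0(T)=-b(T)$ would force the value of the leftmost maximum to vanish, against the hypothesis. First I would record what the limits in Lemma \ref{lem:convergeendmimax} give together with the ordering (\ref{eq:ordermami}): since $-b(t)<\eta_0(t)$ for every $t<T$ and both sides converge as $t\to T$, we automatically obtain the (non-strict) inequality $-b(T)\le\eta_0(T)$. So the whole statement reduces to excluding the boundary case $\eta_0(T)=-b(T)$.

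Assume $\eta_0(T)=-b(T)$. The key input is Lemma \ref{lem:limitsurface}, which says that $u(\cdot,t)$ converges to $u(\cdot,T)$ \emph{uniformly} on $\mathbb{R}$ as $t\to T$. Each $u(\cdot,t)$ with $t<T$ is continuous on all of $\mathbb{R}$ (it is continuous on $[-b(t),b(t)]$, equals $0$ at the two endpoints by the boundary condition in (\ref{eq:eq2})-type, and is set equal to $0$ outside), so the uniform limit $u(\cdot,T)$ is continuous on $\mathbb{R}$ as well. Now fix any $x<-b(T)$; since $-b(t)\to-b(T)$, we have $x<-b(t)$, i.e. $x$ lies to the left of the left endpoint, for all $t$ sufficiently close to $T$, hence $u(x,t)=0$ for such $t$, and letting $t\to T$ gives $u(x,T)=0$. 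Continuity of $u(\cdot,T)$ then yields $u(-b(T),T)=\lim_{x\uparrow -b(T)}u(x,T)=0$.

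Combining these observations, under the assumption $\eta_0(T)=-b(T)$ we get $u(\eta_0(T),T)=u(-b(T),T)=0$, which contradicts the hypothesis $u(\eta_0(T),T)>0$. Hence $\eta_0(T)\neq-b(T)$, and together with $-b(T)\le\eta_0(T)$ from the first step this gives $-b(T)<\eta_0(T)$, as claimed.

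I do not expect a serious obstacle once Lemma \ref{lem:limitsurface} is available; the only points that need care are (i) legitimately passing the strict ordering (\ref{eq:ordermami}) to the non-strict limit, and (ii) justifying that the limiting profile $u(\cdot,T)$ is genuinely continuous, with value $0$ at $x=-b(T)$ — this is exactly where uniform (rather than merely pointwise) convergence is used, so that the limit of the continuous, compactly supported profiles is again continuous. An alternative proof, in the spirit of Lemma \ref{lem:convergeendmimax}, would reflect $\Gamma(t)$ across the vertical line $x=\eta_0(t)$ and play tangential intersections off against the finiteness/monotonicity of the intersection number (Theorem \ref{thm:sl}); but the direct argument above is shorter and avoids bookkeeping on the order of the four endpoints.
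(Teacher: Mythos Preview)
Your proof is correct and takes a genuinely different route from the paper's. The paper does not argue by contradiction: it represents the left cap of $\Gamma(t)$ as a vertical graph $x=w(y,t)$ for $|y|<\delta:=\inf_{0\le t\le T}u(\eta_0(t),t)>0$, applies the interior gradient estimate (Corollary~\ref{cor:es}, Remark~\ref{rem:hes}) to obtain uniform $C^2$ bounds on $w$ for $|y|\le\delta/2$, $T/2\le t<T$, and then uses the maximum principle on the derivative equation (as in Lemma~\ref{lem:sing1}) to get strict monotonicity $v_r(r,T)>0$ of the radial profile; this yields $-b(T)=v(0,T)<v(\delta/2,T)<\eta_0(T)$ directly. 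Your argument instead treats Lemma~\ref{lem:limitsurface} as a black box: uniform convergence of the continuous (zero-extended) profiles $u(\cdot,t)$ forces $u(\cdot,T)$ to be continuous, and since $u(x,T)=0$ for $x<-b(T)$ one gets $u(-b(T),T)=0$, contradicting $u(\eta_0(T),T)>0$ if $\eta_0(T)=-b(T)$. Your route is shorter and avoids the PDE estimates on the cap; the paper's route pays for itself later, since the smoothness of the limit cap it establishes is exactly what is invoked in the proof of Theorem~\ref{thm:formationofsingular} (Case~1) when arguing that $\Delta_1$, $\Delta_2$ are uniformly smooth as $t\uparrow T$.
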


\begin{proof}
Since $u(\eta_0(T),T)>0$, there exists $\delta>0$ such that $\delta=\inf\limits_{0\leq t\leq T} u(\eta_0(t),t)$. We consider
$$
x=v(|y|,t),
$$
being the inverse function of $|y|=u(x,t)$ for $x\in(a(t),\eta_0(t))$ and let $w(y,t)=v(|y|,t)$. $w(y,t)$ satisfies the equation (\ref{eq:graph}) for the condition "$-$" and ``$n=1$'', $|y|<\delta$, $0<t<T$. Clearly $w$ is uniformly bounded, so Corollary \ref{cor:es} and Remark \ref{rem:hes} imply that $\frac{\partial^k w}{\partial y^k}(y,t)$, $k=1,2$ are bounded for $|y|\leq\delta/2$, $T/2\leq t<T$. So the limit function $w(y,T)$ obtained by Lemma \ref{lem:limitsurface} is smooth for $|y|\leq\delta/2$.

As the proof of Lemma \ref{lem:sing1}, using maximum principle, $v_r(r,T)>0$, $0<t<\delta/2$. Then $-b(T)=v(0,T)<v(\delta/2,T)<\eta_0(T)$.
\end{proof}

Lemma \ref{lem:singlepoint pinch} and Lemma \ref{lem:seprate} imply that ``width'' and ``height'' become zero at same time. Therefore, $\Gamma(t)$ can not pinch at $y$-axis before shrinking. We prove the detail in the following theorem and corollary. 

\begin{thm}\label{thm:formationofsingular}(Formation of singular)

1. If $m=0$, $u(\eta_0(T),T)=0$ and $b(T)=0$. This implies that $\Gamma(t)$ shrinks to the origin $O$, as $t\rightarrow T$.

2. If $m\geq1$, there is $j$ such that $u(\xi_j(T),T)=0$, $1\leq j\leq m$.
\end{thm}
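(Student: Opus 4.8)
The plan is to argue by contradiction, using the monotonicity of $b(t)-At$ together with the non-degeneracy of the critical points and the separation lemmas (Lemma~\ref{lem:singlepoint pinch}, Lemma~\ref{lem:seprate}). First consider part~1, so $m=0$: the graph $|y|=u(x,t)$ has a single maximum at $\eta_0(t)$ and no interior minimum, so $\Gamma(t)$ is a smooth rotationally symmetric curve that is a single ``cap''. Since $T<\infty$ is the maximal smooth time, $\Gamma(t)$ must become singular as $t\to T$. By Lemma~\ref{lem:convergeendmimax} the limits $b(T)$, $\eta_0(T)$ exist, and by Lemma~\ref{lem:limitsurface} the limit profile $u(\cdot,T)$ exists and is smooth wherever it is positive. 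The only way a single convex cap can become singular is that its height goes to zero, i.e.\ $u(\eta_0(T),T)=0$. Indeed, if $u(\eta_0(T),T)=:\delta>0$, then Lemma~\ref{lem:seprate} (and its reflected version for $\eta_m=\eta_0$, $b(T)$) gives $-b(T)<\eta_0(T)<b(T)$, hence $u(\cdot,T)$ is a smooth positive profile on a nondegenerate interval; combined with the uniform gradient bound of Theorem~\ref{thm:grad} and the interior estimates of Corollary~\ref{cor:es}, Remark~\ref{rem:hes}, the curvature of $\Gamma(t)$ stays bounded as $t\uparrow T$, contradicting the definition of $T$. So $u(\eta_0(T),T)=0$, which forces $-b(T)=\eta_0(T)=b(T)$, i.e.\ $b(T)=0$; thus $\Gamma(t)$ collapses to the single point $O$.

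For part~2, suppose $m\geq 1$ but $u(\xi_j(T),T)>0$ for every $j\in\{1,\dots,m\}$. The idea is then that nothing can make $\Gamma(t)$ singular. Since all $m$ local minima stay bounded away from $0$ in the limit, Lemma~\ref{lem:singlepoint pinch} applied on each interval $[\eta_{j-1}(T),\xi_j(T)]$ and $[\xi_j(T),\eta_j(T)]$ shows that on every compact subinterval of $(-b(T),b(T))$ not containing an endpoint, $u$ is bounded below by a positive constant for $t$ near $T$; equivalently, the profile $u(\cdot,T)$ is strictly positive on all of $(-b(T),b(T))$. Near the two endpoints $x=\pm b(T)$ we instead write $\Gamma(t)$ as a vertical graph $x=w(y,t)$ and invoke Lemma~\ref{lem:seprate}'s argument: $v_r(r,T)>0$ forces $\pm b(T)$ to be isolated from the neighboring extrema, and Corollary~\ref{cor:es}, Remark~\ref{rem:hes} give uniform bounds on $w$ and its $y$-derivatives there. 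Patching the interior-graph estimates (Theorem~\ref{thm:grad} plus Remark~\ref{rem:hes}) with the endpoint vertical-graph estimates exactly as in the proof of Proposition~\ref{pro:sin}, one concludes that the curvature of $\Gamma(t)$ and all its derivatives stay uniformly bounded as $t\uparrow T$. Hence $\Gamma(T)$ is a smooth curve and the flow extends past $T$ by short-time existence, contradicting maximality of $T$. Therefore some $\xi_j(T)$ must have $u(\xi_j(T),T)=0$.

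The main obstacle is the patching near the endpoints: at $x=\pm b(T)$ the curve is vertical ($u_x\to-\infty$), so the horizontal-graph estimate of Theorem~\ref{thm:grad} degenerates, and one must switch coordinates to a vertical graph and verify that the hypotheses of Corollary~\ref{cor:es} (in particular a uniform $C^0$ bound on $w$ on a fixed cylinder $\{|y|\le\rho\}$ for $t$ near $T$) genuinely hold. This is precisely where Lemma~\ref{lem:seprate} / Lemma~\ref{lem:sing1} do the work: they guarantee that the ``cap'' at each endpoint does not itself degenerate, so there is a fixed $\rho>0$ with $\Gamma(t)\cap\{|y|<\rho\}$ a graph over the $y$-axis for all $t<T$. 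I would also need to make sure the finitely many nondegenerate extrema $\xi_j(T),\eta_j(T)$ are genuinely distinct in the limit (so the intervals in Lemma~\ref{lem:singlepoint pinch} are nonempty) — this is where one uses that the number of critical points has stabilized and each is nondegenerate, so no two of them collide at $t=T$. Once these points are handled, the rest is the same bounded-curvature-implies-extendability argument used for Proposition~\ref{pro:sin}.
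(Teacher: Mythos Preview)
Your proof follows essentially the same strategy as the paper's: argue by contradiction, invoke Lemma~\ref{lem:seprate} (and its mirror) to separate the endpoints from the extrema, then split $\Gamma(t)$ into two end-caps (handled as vertical graphs via Corollary~\ref{cor:es}, Remark~\ref{rem:hes}) and a middle piece bounded away from the axis (handled via Theorem~\ref{thm:grad}), concluding that the curvature remains bounded as $t\uparrow T$. Your part~2 is in fact written out more carefully than the paper's, which simply says ``similarly as in the condition $m=0$''.

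There is, however, one genuine gap in your part~1. After establishing $u(\eta_0(T),T)=0$ you assert that this ``forces $-b(T)=\eta_0(T)=b(T)$, i.e.\ $b(T)=0$''. This implication is not immediate: from the ordering~(\ref{eq:ordermami}) one only gets $-b(T)\le\eta_0(T)\le b(T)$ in the limit, and knowing the maximal height vanishes does not by itself collapse the $x$-interval. The paper treats this as a separate case: assume $b(T)>0$, so without loss of generality $-b(T)<\eta_0(T)$; then Lemma~\ref{lem:singlepoint pinch} applied to the interval $(-b(T),\eta_0(T))$ gives $u(\eta_0(t),t)\ge\delta>0$ for $t$ near $T$, contradicting $u(\eta_0(T),T)=0$. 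You should insert this step.

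Two smaller remarks. First, calling the $m=0$ profile a ``single convex cap'' is misleading---$m=0$ only means one local maximum and no local minima, not convexity; fortunately your argument never uses convexity. Second, your worry about the limit extrema $\xi_j(T),\eta_j(T)$ colliding is unnecessary here: in part~2, from $u(\xi_j(T),T)>0$ and the monotonicity of $u(\cdot,t)$ between consecutive extrema one gets directly $u(\eta_j(t),t)\ge u(\xi_j(t),t)\ge\delta$ (and similarly $u(\eta_0(T),T)\ge u(\xi_1(T),T)>0$, which is what you need to invoke Lemma~\ref{lem:seprate}), so the argument goes through whether or not adjacent extrema merge at $t=T$.
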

\begin{proof}
1. First, we prove for $m=0$, i.e., $u(x,t)$ only has one maximum without local minimum. We prove this by contradiction.

{\bf Case 1.} If $u(\eta_0(T),T)>0$, from Lemma \ref{lem:seprate}, $-b(T)<\eta_0(T)<b(T)$. $\Gamma(t)$ can be divided into three parts $\Delta_1(t)$, $\Delta_2(t)$ and $\Delta_3(t)$, for $t$ being very close to $T$, where $\Delta_1(t)$ and $\Delta_2(t)$ are the left and right caps of $\Gamma(t)$, $\Delta_3(t)$ is the middle part of $\Gamma(t)$ away form $x$-axis.
It is easy to show the derivatives and second fundamental formations of $\Delta_1$, $\Delta_2$ and $\Delta_3$ are uniformly smooth for $t\rightarrow T$(We can similarly prove as Lemma \ref{lem:sing1}), which contradicts to $\Gamma(t)$ becoming singular at $T$.

{\bf Case 2.} If $b(T)> 0$, there holds $-b(T)<\eta_0(T)$ or $\eta_0(T)<b(T)$, assuming $-b(T)<\eta_0(T)$. By Lemma \ref{lem:singlepoint pinch}, for every $[c,d]\subset(-b(T),\eta_0(T))$, $u(x,t)\geq \delta>0$ in $[c,d]\times[t_1,T)$. Then $u(\eta_0(t),t)\geq\delta$, $t_1\leq t<T$. Consequently, $u(\eta_0(T),T)\geq\delta$. By the same argument as in Case 1, we get a contradiction. Here we complete the proof under the condition $m=0$.

2. For $m\geq1$, if $u(\xi_j(T),T)>0$, for any $1\leq j\leq m$, we can divide $\Gamma(t)$ into three parts as above for $t$ being  close to $T$. Then we can get contradiction similarly as in the condition $m=0$. So there is $j$ such that $u(\xi_j(T),T)=0$.
\end{proof}

\begin{cor}\label{cor:2dsingular}
There is $t_1$ satisfying $0<t_1<T$ such that $u(x,t)$ loses all its local minima for $t\in[t_1,T)$. Moreover, $\Gamma(t)$ shrinks to a point, as $t\rightarrow T$.
\end{cor}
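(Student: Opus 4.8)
The plan is to establish the two assertions in turn. The first one — that $u(\cdot,t)$ loses all of its local minima as $t\uparrow T$ — is the substantive part; the second will then drop out of part 1 of Theorem \ref{thm:formationofsingular}. The guiding idea is that the driving force $A>0$ makes every local minimum value strictly increasing, so an interior "neck" can never close off; hence any singularity at a finite time $T$ must be of the "shrinking end" type.

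Recall that, by the discussion preceding Lemma \ref{lem:convergeendmimax}, the number $m$ of local minima of $u(\cdot,t)$ is constant on some interval $[t_0,T)$ and on this interval each minimum point $\xi_j(t)$ is a smooth, nondegenerate critical point. I would argue $m=0$ by contradiction: assume $m\geq1$ and fix $j\in\{1,\dots,m\}$. Since $\Gamma(t)$ is smooth for $t<T$ we have $u(\xi_j(t),t)>0$, so $u$ is smooth near $(\xi_j(t),t)$ and we may set $\mu_j(t)=u(\xi_j(t),t)$. Differentiating and using $u_x(\xi_j(t),t)=0$, $u_{xx}(\xi_j(t),t)\geq0$ (local minimum) and the equation $u_t=\frac{u_{xx}}{1+u_x^2}+A\sqrt{1+u_x^2}$, I get
$$
\mu_j'(t)=u_t(\xi_j(t),t)=u_{xx}(\xi_j(t),t)+A\geq A>0,\qquad t_0\leq t<T,
$$
so that $\lim\limits_{t\to T}\mu_j(t)\geq\mu_j(t_0)+A(T-t_0)>0$. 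On the other hand, Lemma \ref{lem:convergeendmimax} gives $\xi_j(t)\to\xi_j(T)$ and Lemma \ref{lem:limitsurface} gives uniform convergence of $u(\cdot,t)$ to the continuous function $u(\cdot,T)$, whence $\lim\limits_{t\to T}\mu_j(t)=u(\xi_j(T),T)$. Thus $u(\xi_j(T),T)>0$ for every $j$, contradicting part 2 of Theorem \ref{thm:formationofsingular}. Therefore $m=0$, i.e. $u(\cdot,t)$ has no local minimum for $t\in[t_1,T)$ with $t_1:=t_0$.

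For the second assertion, once $m=0$ is known, part 1 of Theorem \ref{thm:formationofsingular} applies and yields $b(T)=0$ and $u(\eta_0(T),T)=0$, where $\eta_0(t)$ is the unique local maximum point of $u(\cdot,t)$. Then $\max_{x}u(x,t)=u(\eta_0(t),t)\to u(\eta_0(T),T)=0$ and $b(t)\to0$ as $t\to T$, so $\Gamma(t)=\{(x,y)\mid|y|=u(x,t),\ -b(t)\leq x\leq b(t)\}$ lies in the rectangle $[-b(t),b(t)]\times[-\max_{x}u(x,t),\,\max_{x}u(x,t)]$, which collapses to the origin. Hence $d_H(\Gamma(t),\{(0,0)\})\to0$, i.e. $\Gamma(t)$ shrinks to a point.

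The computation at local minima is elementary, and the contradiction with Theorem \ref{thm:formationofsingular} is immediate once it is in place; so I do not expect a genuine obstacle here. The one point that needs care is the passage to the limit $t\to T$ inside $\mu_j(t)=u(\xi_j(t),t)$ — moving the interior argument $\xi_j(t)$ and the time $t$ to $T$ simultaneously — and this is exactly what Lemmas \ref{lem:convergeendmimax} (convergence of the critical points) and \ref{lem:limitsurface} (uniform convergence and equicontinuity of $u(\cdot,t)$) are for. I expect the main effort to be this bookkeeping rather than any real difficulty.
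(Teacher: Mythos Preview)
Your proof is correct and takes a genuinely different route from the paper's. The paper argues via the intersection-number machinery of Lemma~\ref{lem:in}: for any $t_2>0$ and any height $\rho<\min\{At_2,h(t)\}$, the horizontal line $y=\rho$ meets $y=u(x,t)$ exactly twice when $t>t_2$; but if some local minimum persisted, Theorem~\ref{thm:formationofsingular} would force $u(\xi_j(t_0),t_0)$ below $At_2$ at some late time $t_0$, and then the line $y=u(\xi_j(t_0),t_0)$ would meet the graph at least three times, a contradiction. Your argument bypasses this entirely by computing directly at a local minimum that $\mu_j'(t)=u_{xx}(\xi_j(t),t)+A\geq A$, so every minimum value is bounded below by $\mu_j(t_0)+A(t-t_0)>0$ up to $t=T$, which immediately contradicts part~2 of Theorem~\ref{thm:formationofsingular}. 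The passage to the limit via Lemmas~\ref{lem:convergeendmimax} and~\ref{lem:limitsurface} is handled correctly. Your approach is more elementary and makes the role of the driving force $A>0$ completely transparent (the argument would fail for $A=0$, as it should); the paper's approach, on the other hand, stays within its preferred intersection-number framework and does not need the explicit monotonicity of $\mu_j$.
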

\begin{proof} Denote $h(t)=\max\limits_{a(t)<x<b(t)}u(x,t)$. By Lemma \ref{lem:in}, we can deduce that, for $t$ satisfying $t_2<t<T$ given, when $\rho<\min\{At_2,h(t)\}$, $y=\rho$ intersects $y=u(x,t)$ only twice.

If $u(x,t)$ does not lose its all local minima, the number of minima will not change denoted by $m\geq 1$. From Theorem \ref{thm:formationofsingular}, there exists $j$, $1\leq j\leq m$ such that $u(\xi_j(T),T)=0$. So we can choose $t_0$ satisfying $t_2<t_0<T$, there exists $\xi_j(t_0)$ such that $u(\xi_j(t_0),t_0)<At_2$. Obviously, $u(\xi_j(t_0),t_0)< h(t_0)$, then $u(\xi_j(t_0),t_0)<\min\{At_2,h(t_0)\}$. Consequently, $y=\rho=u(\xi_j(t_0),t_0)$ intersects $y=u(x,t_0)$ three times. Contradiction.

Therefore, there is $t_1$ such that $u(x,t)$ will lose its all local minima for $t\in[t_1,T)$. Seeing the proof in Theorem \ref{thm:formationofsingular} for $m=0$, $u(\eta_0(T),T)=0$ and $b(T)=0$. It means that $\Gamma(t)$ shrinks to a point, as $t\rightarrow T$.
\end{proof}

\begin{rem}
We note that all the proof in this section, we do not use the condition that $u(\cdot,t)$ is even. Therefore, the argument in this section can be used in any $x$-axisymmetric curve.
\end{rem}
\section{Asymptotic behaviors }
In this section, we will prove Theorem \ref{thm:threecondition} and Theorem \ref{thm:asym}. For convenience, we still extend $\Gamma(t)$ by even, still denoted by $\Gamma(t)$ and let
$$
h(t)=\max\limits_{-b(t)\leq x\leq b(t)}u(x,t),\ \ l(t)=2b(t).
$$
Denote $U(t)$ being the open set surrounded by $\Gamma(t)$. 

All the proofs in this section are proved by intersection number principle introduced in section 4. For the proof of asymptotic behavior, there have so far been many methods. The intersection argument in proving asymptotic behavior is developed by Professor Hiroshi Matano. Saying roughly, if two functions $u(x,t)$ and $v(x)$ are satisfying the same parabolic equation, moreover, $u(x,t)$ intersects $v(x)$ at some fixed point tangentially, for any large $t$. Then there holds $u(x,t)\equiv v(x)$. Another important method in studying asymptotic behavior is by using Lyapunov function to prove $u(x,t)$ is independent on $t$. By the intersection argument, we can prove $u(x,t)$ is independent on $t$ without Lyapunov function.

The following lemma says that $l(t_0)$ being large enough deduce $h(t_0)$ being large. We prove it by using Proposition \ref{pro:intersection}. Although the proof of  Lemma \ref{lem:expanding} is similar as in \cite{GMSW}, for the reader's convenience, we still give the proof for detail.
\begin{lem}\label{lem:expanding}
For any $\tau\in(0,T)$ and $M\in(0,A\tau/2)$, there exists $l_{M,\tau}>0$ such that, when $l(t_0)>l_{M,\tau}$ for some $t_0\in[\tau,T)$, it holds $h(t_0)>M$.
\end{lem}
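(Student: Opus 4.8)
The plan is to argue by contradiction with the intersection--number machinery of Section~4. Suppose that for some $t_0\in[\tau,T)$ one has $l(t_0)>l_{M,\tau}$ (the threshold to be fixed along the way) but $h(t_0)\le M$; then the profile $u(\cdot,t_0)$ is supported on the long interval $[-b(t_0),b(t_0)]$, is positive in its interior, and is bounded by $M$. The idea is to place underneath $\Gamma$ a comparison curve --- a low, very wide ``table'' --- which at the base time lies strictly below and strictly inside $\Gamma$, but whose maximum is dragged above the level $M$ by the driving term $A$ while its support stays trapped in a fixed interval; two such profiles are then forced to cross at least twice, contradicting the intersection bound.

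For the comparison curve, fix $\varepsilon\in(0,\,A-2M/\tau)$, which is possible exactly because $M<A\tau/2$. Let $u_\ast$ be the solution of the free boundary problem $(\ast)$ whose initial datum at $t=\tau/2$ is a smooth even ``table'' profile, equal to a small constant $\eta_0$ on a long interval $[-L,L]$ and descending to $0$ with vertical tangents just outside it, with $\eta_0$ small and $L$ a fixed length of order $A\tau$; by the interior gradient estimate (Theorem~\ref{thm:grad}) one can choose $\eta_0$, $L$ so that $u_\ast(\cdot,\tau/2)<u(\cdot,\tau/2)$ throughout the support of $u_\ast$, which is a compact subinterval of $(-b(\tau/2),b(\tau/2))$. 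Two properties of $u_\ast$ drive the argument. First, the function $\psi(x,t):=\eta_0+(A-\varepsilon)(t-\tau/2)-\tfrac{\varepsilon}{2}x^2$ is an exact subsolution of the equation in $(\ast)$ on its positivity set, so by comparison $\max u_\ast(\cdot,t)\ge\psi(0,t)=\eta_0+(A-\varepsilon)(t-\tau/2)$; since $t_0\ge\tau$ and $(A-\varepsilon)\tau/2>M$, this gives $\max u_\ast(\cdot,t_0)>M$. Second, because $b_\ast(t)-At$ is nonincreasing (the computation in the proof of Lemma~\ref{lem:convergeendmimax}) and the caps of a low table have curvature at least $A$, so that the endpoints move inward, the support of $u_\ast(\cdot,t)$ stays contained in a fixed interval $[-L',L']$ for all $t\ge\tau/2$. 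Set $l_{M,\tau}:=2L'+1$.

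Now run the intersection count. At $t=\tau/2$ the profiles $u_\ast(\cdot,\tau/2)$ and $u(\cdot,\tau/2)$ do not meet, so $\mathcal Z[\Gamma_\ast(\tau/2),\Gamma(\tau/2)]=0$; since both curves have vertical tangents at their endpoints, Remark~\ref{rem:intersection1}(b) gives $\mathcal Z[\Gamma_\ast(t),\Gamma(t)]\le1$ for $\tau/2<t<T$. On the other hand, at $t_0$ the support $[-L',L']$ of $u_\ast(\cdot,t_0)$ lies strictly inside $(-b(t_0),b(t_0))$ because $b(t_0)>L'$; at the point where $u_\ast(\cdot,t_0)$ attains its maximum we have $u_\ast>M\ge h(t_0)\ge u(\cdot,t_0)$, while near $\pm L'$ we have $u_\ast=0<u(\cdot,t_0)$. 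Hence $u_\ast(\cdot,t_0)-u(\cdot,t_0)$ changes sign at least twice, i.e.\ $\mathcal Z[\Gamma_\ast(t_0),\Gamma(t_0)]\ge2$, a contradiction. Thus $h(t_0)>M$. The delicate point --- and the main obstacle --- is the construction of $u_\ast$: one must calibrate $\eta_0$, $L$ (and, if needed, the base time) so that simultaneously the table sits strictly below and inside $\Gamma$ at the base time, its support never escapes the fixed interval $[-L',L']$ over the possibly long span $[\tau/2,t_0]$, and the forcing lifts its maximum above $M$; the inequality $M<A\tau/2$ together with the smoothing estimates of Section~3 are precisely what make this calibration work, and this part follows \cite{GMSW}.
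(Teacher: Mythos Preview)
Your overall idea---build an auxiliary evolving profile that starts beneath $\Gamma$, let the driving force push its maximum above $M$, and derive a contradiction via the intersection count---is precisely the mechanism behind the paper's proof. The implementation, however, has a real gap.

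The problem is the placement of the table at time $\tau/2$. You need its support to lie inside $(-b(\tau/2),b(\tau/2))$, but the hypothesis of the lemma controls only $l(t_0)$; nothing bounds $b(\tau/2)$ from below, and Theorem~\ref{thm:grad} (a gradient estimate) is of no help here. If $b(\tau/2)$ happens to be small, no table of half-width $L\sim A\tau$ can be slipped under $u(\cdot,\tau/2)$. Shrinking $L$ to fit does not rescue the argument either: a sufficiently short, low table either collapses before its maximum reaches $M$, or else spreads so that its support at time $t_0$ is not contained in any interval $[-L',L']$ independent of $t_0$---in either case $l_{M,\tau}=2L'+1$ fails to depend only on $M,\tau$. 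Your justification that the support stays in $[-L',L']$ is correspondingly incomplete: ``$b_\ast(t)-At$ nonincreasing'' only gives the \emph{growing} bound $b_\ast(t)\le b_\ast(\tau/2)+A(t-\tau/2)$, and initial cap curvature $\ge A$ does not persist under the flow.

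The paper sidesteps both difficulties by replacing the vertical-contact table with the arc $W(x,t)=\sqrt{R(t)^2-x^2}-R_0$ of an expanding circle ($R'=A-1/R$, $R(0)=R_0\ge 2/A$), whose contact angles are strictly below $\pi/2$. This puts the comparison in the regime of Proposition~\ref{pro:intersection} rather than Remark~\ref{rem:1matanointersection}: as $s\downarrow 0$ the extended curve $\gamma_2(s)$ degenerates to the $x$-axis, so $\mathcal Z[\gamma_1(t_0-s),\gamma_2(\tau_1-s)]=2$ for $s$ near $\tau_1$ with \emph{no assumption whatsoever} on $u$ at the earlier time. One then takes $l_{M,\tau}=2\sqrt{M^2+2R_0M}$, the width of the arc at the instant $\tau_1\le\tau$ when its height first equals $M$; the hypothesis $l(t_0)>l_{M,\tau}$ forces the two intersections to sit on the vertical half-lines below $\pm b(t_0)$, hence $u(\cdot,t_0)>W(\cdot,\tau_1)$ on the arc's support and $h(t_0)>M$. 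Using an oblique-contact comparison that emanates from the axis is exactly what eliminates the need to fit anything under $u$ at an earlier, uncontrolled time.
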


\begin{proof}
For given $\tau\in(0,T)$ and $M\in(0,A\tau/2)$, we choose $R_0$ such that
$$
R_0\geq\frac{2}{A}.
$$
Let $R(t)$ be the solution of (\ref{eq:ball2}) with $R(0)=R_0$. Since $R_0>1/A$, $R(t)$ is increased in $t$. Therefore $R^{\prime}(t)\geq A-1/R_0\geq A/2$. Integrating the inequality, there holds
$$
R(\tau)\geq R_0+A\tau/2\geq R_0+M.
$$
So there exists $\tau_1\in(0,\tau]$ such that
$$
R(\tau_1)=R_0+M.
$$

Now we let
$$
W(x,t):=\sqrt{R(t)^2-x^2}-R_0,\ \ x\in[\sigma_-(t),\sigma_+(t)],\ t\in(0,\tau_1],
$$
where $\sigma_{-}(t)=-\sqrt{R(t)^2-R^2_0}$ and $\sigma_+(t)=\sqrt{R(t)^2-R^2_0}$. And we denote
$$
\theta_{\pm}(t)= \arctan\frac{\sqrt{R(t)^2-R^2_0}}{R_0}.
$$
Obviously, $\pi/2>\theta_{\pm}(t)>0$.

We choose $l_{M,\tau}:=\sigma_+(\tau_1)-\sigma_-(\tau_1)=2\sqrt{R(\tau_1)^2-R^2_0}=2\sqrt{M^2+2R_0M}.$ We let $\gamma_1(t)$ and $\gamma_2(t)$ be the extension of $u(x,t)$ and $W(x,t)$ as in Proposition \ref{pro:intersection}. Obviously, $(W(x,t),\sigma_{\pm}(t))$ is the solution of (Q) with $\theta_{\pm}(t)$(Proposition \ref{pro:intersection}), so by Proposition \ref{pro:intersection}, we can deduce
$$
\mathcal{Z}(\gamma_1(t_0),\gamma_2(\tau_1))\leq \mathcal{Z}(\gamma_1(t_0-s),\gamma_2(\tau_1-s)),\ \textrm{for}\ s\in[0,\tau_1).
$$
Since the extended curve $\gamma_2(\tau_1-s)$ converges to the $x$-axis, as $s\rightarrow\tau_1$, the right-hand side of the above inequality equals 2 for $s$ sufficiently close to $\tau_1$. Consequently,
$$
Z[\gamma_1(t_0),\gamma_2(\tau_1)]\leq2.
$$

Assuming $l(t_0)>l_{M,\tau}$, for some $t_0\in[\tau,T)$, then $\sigma_{\pm}(\tau_1)$ satisfy
$$
-b(t_0)<\sigma_1(\tau_-)<\sigma_+(\tau_1)<b(t_0).
$$
Hence $\gamma_1(t_0)$ intersects $\gamma_2(\tau_1)$ twice below the $x$-axis. So $u(x,t_0)>W(x,\tau_1)$ on the interval $[\sigma_-(\tau_1),\sigma_+(\tau_1)]$. Consequently, $h(t_0)>M$.
\end{proof}

The following corollary gives that as long as $l(t)$ is unbounded, $\Gamma(t)$ will be expanding. 

\begin{cor}\label{cor:expanding2}
Assume $T=\infty$ and there exists a sequence $s_m\rightarrow\infty$ such that $l(s_m)\rightarrow\infty$, as $m\rightarrow\infty$. Then
$l(t)\rightarrow\infty$ and $h(t)\rightarrow\infty$, as $t\rightarrow\infty$.
\end{cor}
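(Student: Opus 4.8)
The plan is to upgrade the hypothesis ``$l(s_m)\to\infty$'' into the much stronger statement that some fixed disk of radius larger than $1/A$ eventually lies inside $U(\cdot)$, and then to let that disk expand by comparison with an exact expanding circle.

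\textbf{Step 1 (a fixed disk inside $U$).} Fix $M=2/A$. Since $T=\infty$, I can choose $\tau$ so large that $M<A\tau/2$ (e.g. $\tau>4/A^2$), so that Lemma \ref{lem:expanding} applies with this $M$ and $\tau$. For $m$ large enough we have $s_m\geq\tau$ and $l(s_m)>l_{M,\tau}$, and then the argument proving Lemma \ref{lem:expanding} gives not merely $h(s_m)>M$ but in fact
$$
u(x,s_m)>W(x,\tau_1)=\sqrt{R(\tau_1)^2-x^2}-R_0,\qquad x\in[\sigma_-(\tau_1),\sigma_+(\tau_1)],
$$
with $R(\tau_1)=R_0+M$ and $R_0\geq 2/A$. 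Hence $U(s_m)$ contains the lens $\{(x,y)\mid |y|<W(x,\tau_1)\}$, which is the intersection of the two disks of radius $R_0+M$ centred at $(0,\pm R_0)$. A one-line inequality check shows this lens contains the disk $B_M((0,0))$: if $x^2+y^2<M^2$ then, since $|y|<M$,
$$
x^2+y^2+2R_0|y|<M^2+2R_0M=R(\tau_1)^2-R_0^2,
$$
which is exactly the condition $|y|<W(x,\tau_1)$. Fix one such index and call it $s$; thus $B_{2/A}((0,0))\subset U(s)$.

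\textbf{Step 2 (expansion by comparison).} Let $\rho(t)$ solve $\rho'=A-1/\rho$, $\rho(0)=2/A$. Since $\rho(0)>1/A$, $\rho$ is increasing, exists for all $t>0$, and $\rho(t)\to\infty$ (in fact $\rho(t)/t\to A$). The family of circles $\partial B_{\rho(t)}((0,0))$ is an exact solution of $V=-\kappa+A$, so $B_{\rho(t)}((0,0))$ is the open evolution with initial datum $B_{2/A}((0,0))$ (no fattening for a smooth embedded circle). By the semigroup property (Theorem \ref{thm:semi}) and the order-preserving property (Theorem \ref{thm:order}(1)), $B_{\rho(t-s)}((0,0))\subset U(t)$ for $t\geq s$. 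Reading off the horizontal and vertical extents of this disk gives $b(t)\geq\rho(t-s)$ and $u(0,t)\geq\rho(t-s)$, hence $l(t)=2b(t)\geq 2\rho(t-s)\to\infty$ and $h(t)\geq\rho(t-s)\to\infty$ as $t\to\infty$, which is the assertion.

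\textbf{Main obstacle.} The only delicate point is Step 1: Lemma \ref{lem:expanding} as stated yields only $h(s_m)>M$, i.e. that $U(s_m)$ is ``tall'' somewhere, and this by itself does not force a round disk of definite size to sit inside $U(s_m)$ (a thin, tall spike would satisfy it). The remedy is to extract from the \emph{proof} of Lemma \ref{lem:expanding} that $u(\cdot,s_m)$ actually lies above the circular arc $W(\cdot,\tau_1)$, turning ``tall'' into ``contains a genuine lens'', and to arrange the parameters so that this lens contains a disk of radius $>1/A$ — which is precisely why $M$ is taken to be $2/A$ and $\tau$ is taken large (legitimate because $T=\infty$). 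Everything after that is soft: an exact circle solution together with monotonicity and the semigroup property of the evolution operators.
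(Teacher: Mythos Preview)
Your argument is correct and is essentially the paper's own proof. The paper also extracts from the proof of Lemma~\ref{lem:expanding} that $u(\cdot,s_{m_0})$ lies above a circular arc $\sqrt{(C+R_0)^2-x^2}-R_0$ with $C>1/A$, checks that this arc dominates $\sqrt{C^2-x^2}$ (your lens-contains-disk computation), and then compares with the expanding circle $C(t)$ solving $C'=A-1/C$; the only cosmetic difference is that the paper phrases the comparison via the intersection number principle (Remark~\ref{rem:intersection1}(b)) rather than via the order-preserving property of open evolutions, which is equivalent here.
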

\begin{proof}
We can use the same argument as in Lemma \ref{lem:expanding}, there exist $C>1/A$ and $m_0$ such that $u(x,s_{m_0})>\sqrt{(C+R_0)^2-x^2}-R_0$. Obviously, $\sqrt{(C+R_0)^2-x^2}-R_0>\sqrt{C^2-x^2}$, $-C\leq x\leq C$. Therefore $u(x,s_{m_0})\geq \sqrt{C^2-x^2}$, $-C\leq x\leq C$. By (b) in Remark \ref{rem:intersection1}, $u(x,s_{m_0}+t)\geq\sqrt{C(t)^2-x^2}$, $-C(t)\leq x\leq C(t)$, where $C(t)$ is the solution of (\ref{eq:ball2}) with $C(0)=C$. Seeing the choice of $C$, we can deduce $C(t)\rightarrow\infty$, as $t\rightarrow\infty$. Then $h(t+s_{m_0})>C(t)\rightarrow\infty$ and $l(t+s_{m_0})>2C(t)\rightarrow\infty$, as $t\rightarrow\infty$.
\end{proof}

The following lemma gives that as long as $h(t)$ is unbounded, $\Gamma(t)$ will be expanding. 

\begin{lem}\label{lem:1expanding}
Assume $T=\infty$ and there exists a sequence $s_m\rightarrow\infty$ such that $h(s_m)\rightarrow\infty$, as $m\rightarrow\infty$. Then
$l(t)\rightarrow\infty$ and $h(t)\rightarrow\infty$, as $t\rightarrow\infty$.
\end{lem}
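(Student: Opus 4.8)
The plan is to deduce everything from Corollary~\ref{cor:expanding2}: it suffices to produce times $\widetilde s_m\to\infty$ with $l(\widetilde s_m)\to\infty$, since the corollary then gives both $l(t)\to\infty$ and $h(t)\to\infty$. If $l$ is already unbounded as $t\to\infty$ there is nothing to do, so assume $l(t)\le L_0$ for all $t$; because $\Gamma(t)$ is symmetric about both coordinate axes this forces $-a(t)=b(t)=l(t)/2\le L_0/2$, so $\Gamma(t)\subset\{|x|\le L_0/2\}$ for every $t$, and I will contradict this.

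First I would extract a long, nearly vertical piece of $\Gamma(s_m)$. By the discussion in Section~6 the number of critical points of $u(\cdot,t)$ is eventually a fixed finite $N$; as $u(\cdot,s_m)$ climbs from $0$ to $h(s_m)$ and back, some increasing monotone arc of $u(\cdot,s_m)$ therefore has oscillation at least $L_m:=h(s_m)/N\to\infty$. On this arc $\Gamma(s_m)$ is the graph $x=\omega_m(y,s_m)$ over a $y$-interval $J_m$ of length $\ge L_m$ with values in $[-L_0/2,L_0/2]$, and, being the left wall of $U(t)$, as long as it stays a graph it solves the vertical graph equation~\eqref{eq:1vertical-} with $n=1$, i.e. $\omega_t=\omega_{yy}/(1+\omega_y^2)-A\sqrt{1+\omega_y^2}$; by Corollary~\ref{cor:es} and Remark~\ref{rem:hes} it remains smooth with $\omega_y,\omega_{yy}$ bounded uniformly in $t$. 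Over the middle portion $[y_0-\ell_m,y_0+\ell_m]$ of $J_m$ (here $y_0$ is the midpoint and $\ell_m:=L_m/10$) I would bound $\omega$ from the right by the left arc $\Phi(y,t)=c_m-\sqrt{\rho(t)^2-(y-y_0)^2}$ of an expanding disk, where $\rho$ solves~\eqref{eq:ball2} with $\rho(s_m)=2\ell_m$ and $c_m:=L_0/2+2\ell_m+1$ is a constant; then $\Phi$ also solves~\eqref{eq:1vertical-}, satisfies $\Phi(\cdot,s_m)\ge L_0/2+1>\omega_m(\cdot,s_m)$, and on the lateral boundary $y=y_0\pm\ell_m$ satisfies $\Phi=c_m-\sqrt{\rho(t)^2-\ell_m^2}\ge-L_0/2\ge a(t)\ge\omega$ as long as $\rho(t)\le\rho_m^{*}:=\sqrt{\ell_m^2+(L_0+2\ell_m+1)^2}$, which holds up to a time $s_m+\tau_m$ with $\tau_m$ of order $\ell_m/A\to\infty$ (as $\rho(s_m)=2\ell_m>1/A$ for $m$ large). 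The comparison principle for~\eqref{eq:1vertical-} --- equivalently, the intersection--number bound of Remark~\ref{rem:intersection1} for $\Gamma(t)$ and the expanding circle $\partial B_{\rho(t)}((c_m,y_0))$, which is disjoint from $\overline{U(s_m)}$ at $t=s_m$ --- then yields $\omega\le\Phi$ on $[y_0-\ell_m,y_0+\ell_m]\times[s_m,s_m+\tau_m]$, so $a(s_m+\tau_m)\le\omega(y_0,s_m+\tau_m)\le\Phi(y_0,s_m+\tau_m)=c_m-\rho_m^{*}$, which tends to $-\infty$ since $\rho_m^{*}\sim\sqrt5\,\ell_m$ while $c_m\sim 2\ell_m$. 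Hence $l(s_m+\tau_m)=-2a(s_m+\tau_m)\to\infty$, contradicting $l\le L_0$; so $l$ is unbounded and Corollary~\ref{cor:expanding2} finishes the proof.

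The main obstacle is making the barrier step rigorous, specifically guaranteeing that the chosen branch of $\Gamma(t)$ persists as a nearly vertical graph $x=\omega(y,t)$ over $[y_0-\ell_m,y_0+\ell_m]$ for all $t\in[s_m,s_m+\tau_m]$. No new critical points of $u(\cdot,t)$ appear and the oscillation of the arc is non-increasing (a local maximum value evolves at rate $\le A$, a local minimum value at rate $\ge A$), but one must still exclude that the minimum at the lower end of $J_m$ rises into the window before time $s_m+\tau_m$ --- this is exactly where $L_m\to\infty$ is used, the window sitting a fixed fraction of $L_m$ above that minimum while $\tau_m=O(L_m)$; in the worst case one takes the increasing arc issued from the curve's endpoint, where the lower value is pinned at $0$. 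One must likewise set up the circle so that it never creates a new intersection with $\Gamma(t)$, which is the role of Remark~\ref{rem:intersection1}. Granting these points, the remainder is the routine ODE computation with~\eqref{eq:ball2}.
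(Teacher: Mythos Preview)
Your approach is genuinely different from the paper's, but the barrier step has a real gap that your own paragraph of caveats does not close. The comparison $\omega\le\Phi$ on $[y_0-\ell_m,y_0+\ell_m]\times[s_m,s_m+\tau_m]$ requires $\omega(\cdot,t)$ to exist on that window for every $t$ in the interval; equivalently, the monotone arc you selected at $t=s_m$ must continue to cover heights $[y_0-\ell_m,y_0+\ell_m]$. You correctly note that a local minimum value satisfies $u_t=u_{xx}+A\ge A$, but there is \emph{no upper bound} on this rate: $u_{xx}(\xi_j(t),t)$ can be arbitrarily large, so $u(\xi_j(t),t)$ can rise through the window long before time $s_m+\tau_m$. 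Your fallback --- take the first increasing arc from $(-b(t),0)$ to $\eta_0(t)$, so the lower value is pinned at $0$ --- does not help, because nothing forces that particular arc to have oscillation of order $h(s_m)$; the global maximum need not be $u(\eta_0,\cdot)$. The appeal to Remark~\ref{rem:intersection1} is also unavailable: that remark is for curves symmetric about the $x$-axis, while your disk is centered at $(c_m,y_0)$ with $y_0>0$, and in any case part~(b) only yields $\mathcal{Z}\le1$, which does not give one-sidedness.

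The paper sidesteps the persistence issue entirely by working only near the $x$-axis, where the vertical-graph representation $x=w(y,t)$ is guaranteed for all large $t$ by Lemma~\ref{lem:alphad2}. It first shows $b(t)\to\nu$ via an intersection argument with a stationary circle of radius $1/A$, then uses the interior estimates (Corollary~\ref{cor:es}, Remark~\ref{rem:hes}) to extract a subsequential $C^{2,1}$ limit $w_1(y,t)$ of $w(y,t+t_m)$ on $|y|\le 1/A$. This limit touches the stationary arc $w_2(y)=\nu-1/A+\sqrt{1/A^2-y^2}$ tangentially at $y=0$ for every $t$, so the Sturm argument forces $w_1\equiv w_2$; but $w_2$ has vertical tangent at $y=1/A$, contradicting the uniform gradient bound on $w$. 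No control of interior monotone arcs over long time intervals is needed.
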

\begin{proof}
If $l(t)$ is unbounded, by Corollary \ref{cor:expanding2}, $h(t)\rightarrow\infty$ and $l(t)\rightarrow\infty$, $t\rightarrow\infty$. The result is true. Next we prove $l(t)$ is unbounded by contradiction.

Assume $l(t)$ is bounded.

{\bf Step 1.} we are going to prove that $\lim\limits_{t\rightarrow\infty}b(t)$ exists. If $\liminf\limits_{t\rightarrow\infty}b(t)<\limsup\limits_{t\rightarrow\infty}b(t)$, we can choose $x_0$ such that
$$
\liminf\limits_{t\rightarrow\infty}b(t)<x_0<\limsup\limits_{t\rightarrow\infty}b(t).
$$
We consider the function $u_1(x)=\sqrt{1/A^2-(x+1/A-x_0)^2}$. Obviously, $x_0$ is the right endpoint of $u_1(x)$ and $(u_1(x),x_0-2/A,x_0)$ is the solution of the problem (Q) with $\theta_{\pm}=\pi/2$. So $b(t)-x_0$ changes sign infinite many times as $t$ varying over $[0,\infty)$. There exists a sequence $p_m\rightarrow\infty$ such that $u(x,p_m)$ intersects $u_1(x)$ tangentially at $x_0$. Arguing as in Lemma \ref{lem:inters}, the intersection number between $u(x,t)$ with $u_1(x)$ drops at $b(p_m)=x_0$. Therefore, the intersection number between $u(x,t)$ and $u_1(x)$ drops infinite many times. This yields a contradiction. Then we let $\nu:=\lim\limits_{t\rightarrow\infty}b(t)$.

{\bf Step 2.} We deduce the contradiction.

Since $h(s_m)\rightarrow\infty$, Lemma \ref{lem:alphad2} implies that for $t_1=4/A^2$, there holds $y=\rho$ intersects $y=u(x,t)$ only twice, $\rho<At_1$, $t>t_1$. Here we choose $\rho_0=2/A$. Then there exists $w(y,t)>0$ such that 
$$
C_{\rho_0}\cap\Gamma(t)=\{(x,y)\mid x=w(y,t)\ \text{or}\ x=-w(y,t)\}.
$$
$w(y,t)$ satisfies (\ref{eq:graph}) in the condition "$+$" and $n=1$, for $\{y\mid |y|<\rho_0\}\times(t_1,\infty)$. Since $w(0,t)=b(t)$ is bounded for $t>0$, by Corollary \ref{cor:es} and Remark \ref{rem:hes}, $\frac{\partial^kw}{\partial y^k}(y,t)$, $k=1,2,3$ are uniformly bounded for $|y|\leq\rho_0/2$, $t>t_1+\epsilon^2$.  From equation, $\frac{\partial^kw}{\partial t^k}w(y,t)$, $k=1,2$ are also bounded for $|y|<\rho_0/2$, $t>t_1+\epsilon^2$. So there exists $w_1(y,t)$, for any sequence satisfying $t_m\rightarrow \infty$ such that $w(\cdot,\cdot+t_m)$ converges to $w_1$ in $C^{2,1}([-\rho_0/2,\rho_0/2]\times[t_1+\epsilon^2,\infty))$ locally in time, as $m\rightarrow\infty$. Hence $w_1(y,t)$ also satisfies (\ref{eq:graph}) with the condition ``$+$'' and $n=1$. Moreover, $w_1(0,t)=\nu$ and $\frac{\partial}{\partial y}w_1(0,t)=0$, $t>t_1+\epsilon^2$.

Next, we consider the function $w_2(y)=\nu-1/A+\sqrt{1/A^2-y^2}$. $w_2(y)$ satisfies (\ref{eq:graph}) with the condition ``$+$'' and $n=1$. Moreover, $w_2(0)=\nu$ and $\frac{\partial}{\partial y}w_2(0)=0$.
So $w_1(y,t)$ intersects $w_2(y)$ at $y=0$ tangentially for all $t>t_1+\epsilon^2$. By the same argument as in Lemma \ref{lem:inters}, there holds $w_1(y,t)\equiv w_2(y)$, $|y|\leq \rho_0/2$. Noting $\frac{\partial w_2}{\partial y}(1/A)=\infty$, $\frac{\partial w_1}{\partial y}(1/A,t)=\infty$. But $w_y(1/A,t)$ is bounded, as $t\rightarrow\infty$, by gradient interior estimate. This is a contradiction.(Indeed, $w(y,t)$ has definition for $y\in(-2/A,2/A)$, but the limit function $w_2(y)$ has definition only in $[-1/A,1/A]$.)

We complete the proof.
\end{proof}

\begin{proof}[Proof of Theorem \ref{thm:threecondition}]
For $T<\infty$, seeing Corollary \ref{cor:2dsingular}, we get the conclusion.

For $T=\infty$, $h(t)$ is bounded or unbounded, as $t\rightarrow\infty$.

(1). $h(t)$ is unbounded. Lemma \ref{lem:1expanding} yields that $l(t)\rightarrow\infty$, $h(t)\rightarrow\infty$, as $t\rightarrow\infty$.

(2). $h(t)$ is bounded. 

By Corollary \ref{cor:expanding2}, $l(t)$ is also bounded. Next we want to prove $h(t)$ and $l(t)$ are bounded from below.

{\bf Step 1.} We prove if there exists a sequence $s_m\rightarrow\infty$, as $m\rightarrow\infty$ such that $h(s_m)\rightarrow0$, as $m\rightarrow\infty$, then $l(s_m)\rightarrow0$, as $m\rightarrow\infty$.

By Lemma \ref{lem:expanding} with $M=h(s_m)$,
$$
l(s_m)\leq l_{M,\tau}=2\sqrt{M^2+2R_0M}=2\sqrt{h(s_m)^2+2R_0h(s_m)}
$$
Then we have $l(s_m)\rightarrow0$.

{\bf Step 2.} $h(t)$ is bounded from below.

If there exists another sequence $t_m\rightarrow\infty$ such that $h(t_m)\rightarrow 0$, as $t_m\rightarrow\infty$, by Step 1, $l(t_m)\rightarrow 0$, as $t_m\rightarrow\infty$. Then there exists $t_{m_0}$ and $r<1/A$ such that $U(t_{m_0})\subset B_r((0,0))$, recalling $U(t)$ being the domain surrounded by $\Gamma(t)$. Then by comparison principle, we have $U(t+t_{m_0})\subset B_{r(t)}((0,0))$, where $r(t)$ is the solution of (\ref{eq:ball2}) with $r(0)=r$. Obviously, $B_{r(t)}((0,0))$ shrinks to origin in finite time. Then it is also for $U(t)$. This contradicts to $T=\infty$. 

Hence $h(t)$ is bounded from blew. 

{\bf Step 3.} Prove the result by contradiction. Assume there exists a sequence $s_m\rightarrow\infty$ such that $l(s_m)\rightarrow0$. 

Since $h(t)$ is bounded from below, by Lemma \ref{lem:alphad2}, there exist $\rho_0$ and $t_1>0$ such that for all $\rho<\rho_0$, $y=\rho$ intersects $y=u(x,t)$ only twice for $t>t_1$. Then we let $w(y,t)>0$ such that 
$$
C_{\rho_0}\cap\Gamma(t)=\{(x,y)\mid x=w(y,t)\ \text{or}\ x=-w(y,t)\}.
$$
 Arguing as the proof of Lemma \ref{lem:1expanding}, $\nu=\lim\limits_{t\rightarrow\infty}b(t)=0$, by $l(s_m)\rightarrow0$. And $w(\cdot,t)\rightarrow w_1$ in $C^{2,1}([0,\rho_0/2]\times[t_1+\epsilon^2,\infty))$ locally in time, as $m\rightarrow\infty$ and $w_1(y)=-1/A+\sqrt{1/A^2-y^2}\leq0$. But seeing $w(y,t)>0$, there holds $w_1(y)\geq0$ for $|y|<\rho_0/2$. Consequently, $w_1(y)\equiv0$, for $|y|<\rho_0/2$. Contradiction.

Therefore $h(t)$ and $l(t)$ are bounded from below.
\end{proof}

The conclusion of the Shrinking case in Theorem \ref{thm:asym} is obvious. We only need prove the case expanding and bounded.

\begin{proof}[Proof of the Expanding case in Theorem \ref{thm:asym}]
In this case, since $h(t)$ and $l(t)$ tend to infinity, using the same argument as in the proof of Corollary \ref{cor:expanding2}, there exist $t_0$ and $C>1/A$ such that $B_C((0,0))\subset U(t_0)$. By comparison principle, $B_{C(t)}((0,0))\subset U(t_0+t)$. Therefore, $B_{C(t-t_0)}((0,0))\subset U(t)$, $t\geq t_0$, where $C(t)$ satisfies (\ref{eq:ball2}) with $C(0)=C$.

On the other hand, seeing $U(0)$ being bounded, there exists $R>1/A$ such that $U(0)\subset B_R((0,0))$. Then $U(t)\subset B_{R(t)}((0,0))$, where $R(t)$ also satisfies  (\ref{eq:ball2}) with $R(0)=R$.

Denoting $R_1(t)=C(t-t_0)$ and $R_2(t)=R(t)$, $B_{R_1(t)}((0,0))\subset U(t)\subset B_{R_2(t)}((0,0))$, $t>t_0$. By the theory of ordinary equation, we can easily deduce that $\lim\limits_{t\rightarrow\infty}R_1(t)/t=\lim\limits_{t\rightarrow\infty}R_2(t)/t=A$. We complete the proof.
\end{proof}

\begin{proof}[Proof of the Bounded case in Theorem \ref{thm:asym}]

Since $h(t)$ is bounded from below, by Lemma \ref{lem:alphad2}, as before, there exist $t_1$, $\rho_0$ such that
$$
C_{\rho_0}\cap\Gamma(t)=\{(x,y)\mid x=w(y,t)\ \text{or}\ x=-w(y,t)\},\ t>t_1.
$$

{\bf Step 1.} Asymptotic behavior of $C_{\rho_0/2}\cap\Gamma(t)$. 

Arguing as the proof of Lemma \ref{lem:1expanding}, there exist $\nu$ and $w_2(y)$ such that $\nu=\lim\limits_{t\rightarrow\infty}b(t)$ and $w(\cdot,t)\rightarrow w_2$ in $C^{2,1}([-\rho/2,\rho_0/2])$, as $t\rightarrow\infty$, where $w_2(y)=\nu-1/A+\sqrt{1/A^2-y^2}$. 

{\bf Step 2.} Asymptotic behavior of $\{(x,y)\mid|y|\geq\rho_0/2\}\cap\Gamma(t)$.

Noting that $w_2(\rho_0/4)<\nu=\lim\limits_{t\rightarrow\infty}b(t)$, then for all $\epsilon>0$, there is $t_2$ such that $(-w_2(\rho_0/4)-\epsilon,w_2(\rho_0/4)+\epsilon)\subset(-b(t),b(t))$, $t>t_2$. We consider $u(x,t)$ in the following set $[-w_2(\rho_0/4),w_2(\rho_0/4)]\times(t_2,\infty)$. Because $u(x,t)$ satisfies (\ref{eq:graph}) under the condition $n=1$ and "$+$", $\frac{\partial ^k }{\partial x^k}u$, $k=1,2,3$, are uniformly bounded in $(-w_2(\rho_0/4)-\epsilon/2,w_2(\rho_0/4)+\epsilon/2)\times(t_2+\epsilon^2,\infty)$. Therefore $\frac{\partial^i}{\partial t^i}u$, $i=1,2$, $\frac{\partial ^k }{\partial x^k}u$, $k=1,2,3$, are uniformly bounded in $[-w_2(\rho_0/4),w_2(\rho_0/4)]\times(t_2+\epsilon^2,\infty)$

Next we want to show $\lim\limits_{t\rightarrow\infty}u(0,t)$ exists. If $\limsup\limits_{t\rightarrow\infty}u(0,t)>\liminf\limits_{t\rightarrow\infty}u(0,t)$, we can choose $y_0$ such that $\limsup\limits_{t\rightarrow\infty}u(0,t)>y_0>\liminf\limits_{t\rightarrow\infty}u(0,t)$. We consider the function $u_2(x)=y_0-1/A+\sqrt{1/A^2-x^2}$. By the same argument in the proof of Lemma \ref{lem:1expanding}, we get the contradiction. Denote $\mu:=\lim\limits_{t\rightarrow\infty}u(0,t)$. We can show, as in the proof of Lemma \ref{lem:1expanding}, $u(\cdot,t)\rightarrow u_3$ in $C^{2,1}([-w_2(\rho_0/4),w_2(\rho_0/4)])$, as $t\rightarrow\infty$, where $u_3(x)=\mu-1/A+\sqrt{1/A^2-x^2}$. 

{\bf Step 3.} Identify $\nu$ and $\mu$.

Since the graph of $y=u_2(x)$ and $x=w_2(y)$ are identical with each other, for $\rho/4<y<\rho/2$, then $\nu=\mu=1/A$. Consequently, $u(x,t)$ converges to $\varphi(x)=\sqrt{1/A^2-x^2}$, $x\in \mathbb{R}$, as $t\rightarrow\infty$. Here we consider $u(x,t)$ and $\varphi(x)$ as 0 outside the domains of definition.(Indeed, seeing the proof, $\lim\limits_{t\rightarrow \infty}d_H(\Gamma(t),\partial B_{1/A}((0,0))=0$). We complete the proof.
\end{proof}
\section{Appendix }
In this section, we want to prove there exists unique smooth family of smooth hypersurfaces $\Gamma(t)$ satisfying
\begin{equation}\label{eq:hcur}
V=-\kappa+A,\ \text{on}\ \Gamma(t)\subset \mathbb{R}^{n+1},
\end{equation}
where $\Gamma(0)=\partial U$ with $U$ is an $\alpha$-domain.

Seeing $\partial U$ is not necessary smooth, we also use the level set method and prove the interface evolution is not fattening.

\begin{defn}\label{def:alphad} We say a domain being an $\alpha$-domain in $\mathbb{R}^{n+1}$ if

(1) Let $U\subset \mathbb{R}^{n+1}$ be an open set of the form
$$
U=\{(x,y)\in\mathbb{R}\times\mathbb{R}^n\mid r<u(x)\}.
$$

(2) $I=\{x\in\mathbb{R}\mid u(x)>0 \}$ is a bounded, connected interval.

(3) $u$ is smooth on $I$;

(4) $\partial U$ intersects each cylinder $\partial C_{\rho}$ with $0<\rho\leq\alpha$ twice and these intersections are transverse, where $C_{\rho}=\{(x,y)\in\mathbb{R}\times\mathbb{R}^n\mid r<\rho\}$.
\end{defn}

For $U$ being $\alpha$-domain, we choose smooth vector field $X:\mathbb{R}^{n+1}\rightarrow\mathbb{R}^{n+1}$ such that
\\
(i) At any point $P\in\partial U$ not on the $x$-axis has $\langle X(P), \textbf{n}(P)\rangle<0$, $\textbf{n}$ is inward unit normal vector at $P$.
\\
(ii) Near the two end points of $\partial U$, $X$ is constant vector with $X\equiv\pm e_0=(\pm1,0,\cdots,0).$

Since $X\neq0$ on the compact $\partial U$, there is an open neighbourhood $V\supset\partial U$ on which $|X|\geq\delta>0$ for some $\delta>0$.

\begin{figure}[htbp]
	\begin{center}
            \includegraphics[height=4cm]{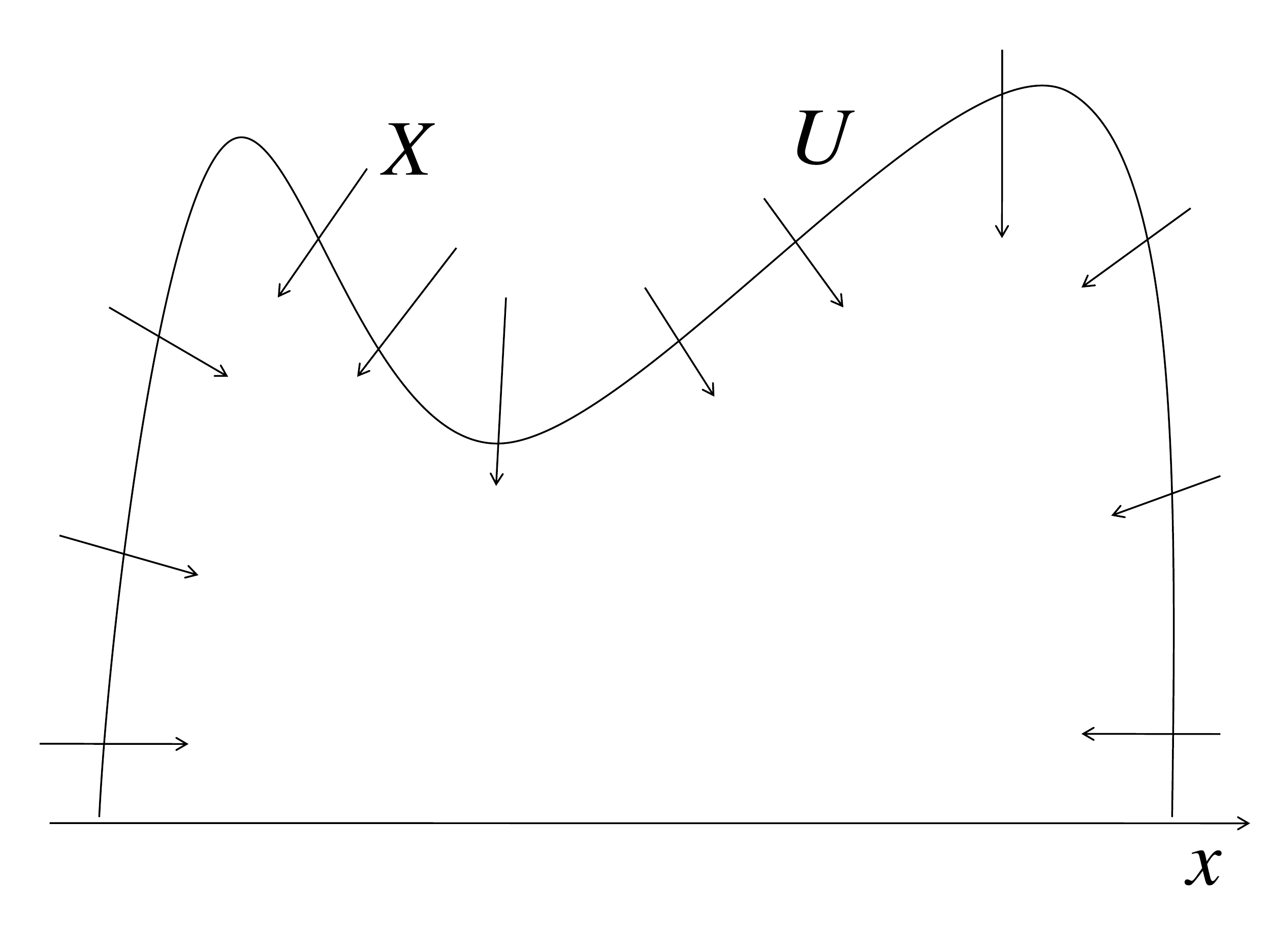}
		\vskip 0pt
		\caption{Vector field $X$}
        \label{fig:vectorX}
	\end{center}
\end{figure}

\begin{prop}\label{pro:sigma1}For small enough $\rho>0$ there exists a smooth hypersurface $\Sigma\subset V$ with\\
(i) $X(P)\notin T_P\Sigma$ at all $P\in\Sigma$, i.e., $\Sigma$ is transverse to the vector field $X$.\\
(ii) $\Sigma=\partial U$ in $\{(x,y)\in\mathbb{R}\times\mathbb{R}^{n}\mid |y|\geq2\rho\}$.\\
(iii) $\Sigma\cap\{(x,y)\in\mathbb{R}\times\mathbb{R}^{n}\mid|y|\leq\rho\}$ consists of two flat disks $\Delta_a=\{(a,y)\in\mathbb{R}\times\mathbb{R}^{n}\mid|y|\leq\rho\}$ and $\Delta_b=\{(b,y)\in\mathbb{R}\times\mathbb{R}^{n}\mid|y|\leq\rho\}$ for some $a<b$.
\end{prop}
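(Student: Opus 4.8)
The plan is to imitate the construction in the proof of Proposition~\ref{pro:sigma2}, which is in fact easier here: a general $\alpha$-domain in $\mathbb{R}^{n+1}$ meets the $x$-axis only at its two endpoints $(a_1,0)$ and $(a_2,0)$, so there is no middle region and no pipe — only two end caps, each of which will be replaced by a flat disk orthogonal to the $x$-axis. Since $U=\{r<u(x)\}$ is rotationally symmetric, I would carry out everything in the profile half-plane $\{(x,\tau)\mid\tau\geq0\}$ (with $\tau=r=|y|$): build a smooth profile curve from $(a,0)$ to $(b,0)$ and let $\Sigma$ be its surface of revolution about the $x$-axis. Smoothness of $\Sigma$ on the axis is then automatic, because near its ends the profile curve is a segment on which $x$ is constant, which meets $\{\tau=0\}$ orthogonally, so the corresponding rotated disk is smooth.

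First I would record the geometry near the caps, exactly as recalled after Definition~\ref{def:alphad}: condition~(4) gives $\delta_1,\delta_2>0$ with $u$ strictly increasing on $[a_1,a_1+\delta_1]$ and strictly decreasing on $[a_2-\delta_2,a_2]$, $u(a_1+\delta_1)=u(a_2-\delta_2)=\alpha$, and smooth inverses $v_1,v_2:(0,\alpha]\to\mathbb{R}$ with $v_1'>0>v_2'$; since $\partial U$ meets $\partial C_{2\rho}$ in exactly two spheres for $0<2\rho\leq\alpha$, the set $\{x\mid u(x)\geq2\rho\}$ is the single interval $[v_1(2\rho),v_2(2\rho)]$ with $u'(v_1(2\rho))>0>u'(v_2(2\rho))$, so $v_1,v_2$ extend smoothly up to $2\rho$. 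Then fix $r_0>0$ with $X\equiv e_0$ on $B_{2r_0}((a_1,0))$, $X\equiv-e_0$ on $B_{2r_0}((a_2,0))$, and $B_{2r_0}((a_j,0))\subset V$, and choose $\rho>0$ so small that $2\rho<\min\{\alpha,r_0\}$, $v_1(2\rho)<a_1+r_0$, $v_2(2\rho)>a_2-r_0$, and — since the modification below stays within $O(\rho)$ of $\partial U$ — the whole construction lies in $V$. Put $a:=v_1(\rho)$, $b:=v_2(\rho)$, take a cutoff $\chi\in C^\infty(\mathbb{R})$ with $\chi\equiv0$ on $(-\infty,\rho]$, $\chi\equiv1$ on $[2\rho,\infty)$, $0\leq\chi\leq1$, and set
$$
g_1(\tau):=\chi(\tau)v_1(\tau)+(1-\chi(\tau))a,\qquad g_2(\tau):=\chi(\tau)v_2(\tau)+(1-\chi(\tau))b,\qquad \tau\in[0,\alpha].
$$
Finally, let $\Sigma$ be the surface of revolution about the $x$-axis of the profile curve
$$
P:=\{(g_1(\tau),\tau)\mid 0\leq\tau\leq2\rho\}\ \cup\ \{(x,u(x))\mid v_1(2\rho)\leq x\leq v_2(2\rho)\}\ \cup\ \{(g_2(\tau),\tau)\mid 0\leq\tau\leq2\rho\}.
$$

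For the verification I would argue as follows. Each $g_i$ is smooth, equals the constant $a$ (resp.\ $b$) on $[0,\rho]$ and equals $v_i$ on $[2\rho,\alpha]$, so the three arcs of $P$ match to infinite order at $(v_1(2\rho),2\rho)$ and $(v_2(2\rho),2\rho)$; hence $P$, and thus $\Sigma$, is a smooth closed hypersurface (smooth on the axis by the remark above), and $P$ agrees with the profile of $\partial U$ for $\tau\geq2\rho$, which is exactly~(ii). Since $u\geq2\rho$ on the middle arc, $P\cap\{\tau\leq\rho\}$ is the two segments on which $x\equiv a$ and $x\equiv b$, so $\Sigma\cap\{|y|\leq\rho\}=\Delta_a\cup\Delta_b$, which is~(iii). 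For~(i): on $\{|y|\geq2\rho\}$ we have $\Sigma=\partial U$ and the points lie off the $x$-axis, so $\langle X,\textbf{n}\rangle<0$ forces transversality; on $\Delta_a$ and $\Delta_b$ the vector $X=\pm e_0$ is normal to the disk, so $X\notin T_P\Sigma$; and a point of $\Sigma$ with $\rho\leq|y|\leq2\rho$ has $x=g_1(\tau)\in(a_1,a_1+r_0)$ (or the analogue near $a_2$) and $|y|=\tau\leq2\rho<r_0$, hence lies in $B_{2r_0}((a_1,0))$ where $X=e_0$, while the interpolating profile is a graph $x=g_1(\tau)$ over $\tau$, so its tangent — and therefore $T_P\Sigma$ — never contains $e_0$, giving transversality to $X$ there as well. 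I do not expect a genuine obstacle: the only point requiring care is the bookkeeping in the choice of $\rho$, ensuring the two replaced caps and the interpolation annuli land inside the neighborhoods on which $X=\pm e_0$ and inside $V$, together with the observation that an interpolating profile written as a graph over $\tau$ is automatically transverse to $\pm e_0$. Everything else is the standard device of capping off a surface of revolution by a flat disk, parallel to Proposition~\ref{pro:sigma2}.
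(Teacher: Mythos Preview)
Your proof is correct and follows essentially the same approach the paper intends: the paper merely says this proposition ``can be proved as in Proposition~\ref{pro:sigma2}'', i.e.\ replace the two end caps of $\partial U$ by flat vertical disks and smoothly interpolate in the annulus $\rho\leq|y|\leq2\rho$, and that is exactly what you do. The only cosmetic difference is that the paper's proof of Proposition~\ref{pro:sigma2} uses a dyadic family $\alpha/2^j$ and picks $j$ large, whereas you use a single cutoff $\chi$ to interpolate between $v_i$ and the constant $v_i(\rho)$; your version is if anything a bit cleaner, and your transversality check on the interpolation annulus (the rotated graph $x=g_i(\tau)$ never has $e_0$ in its tangent space) is the right observation.
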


Seeing Figure \ref{fig:sigma1}, this proposition can be proved as in Proposition \ref{pro:sigma2}.

\begin{figure}[htbp]
	\begin{center}
            \includegraphics[height=4cm]{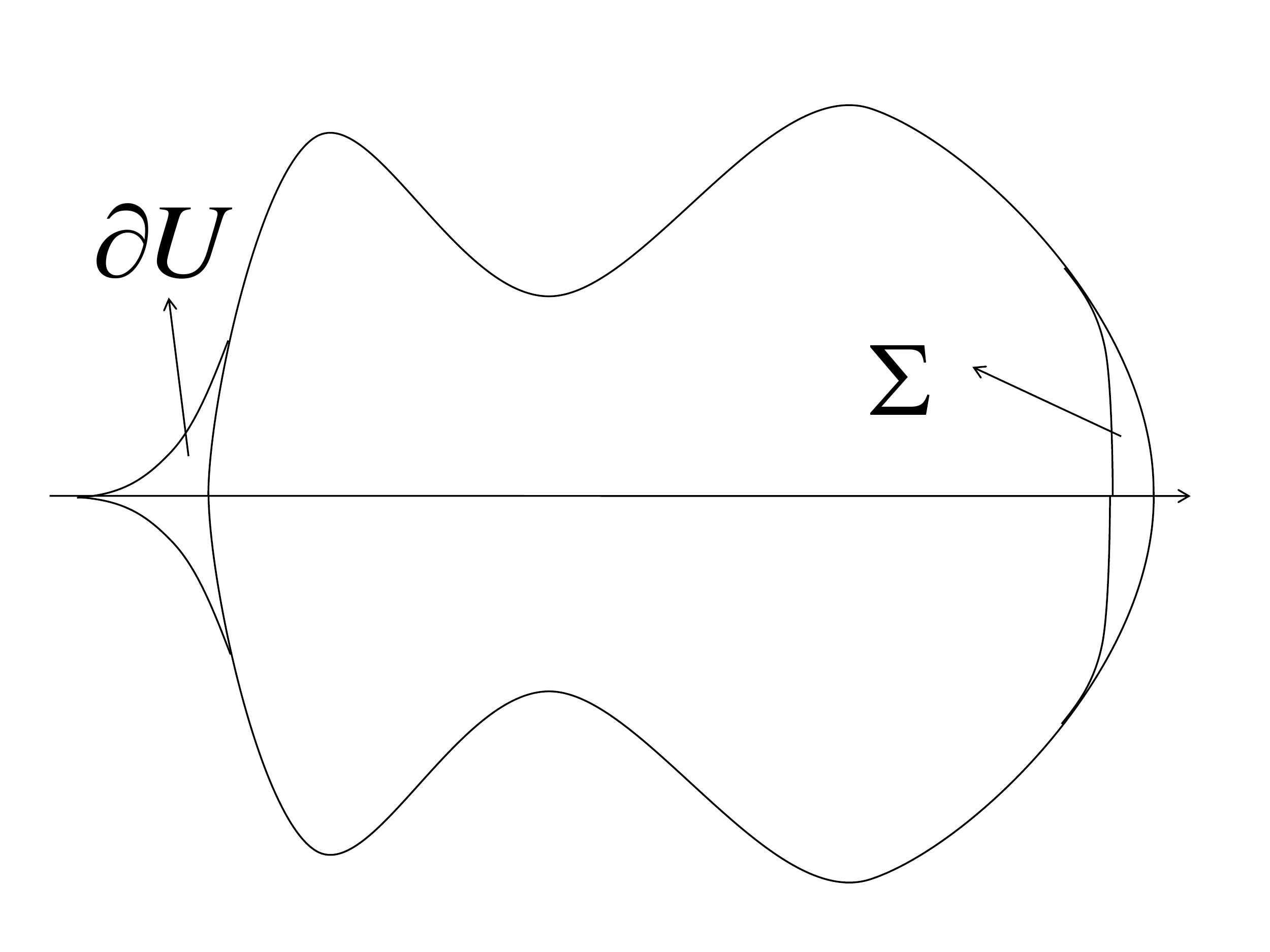}
		\vskip 0pt
		\caption{Proof of Proposition \ref{pro:sigma1}}
        \label{fig:sigma1}
	\end{center}
\end{figure}

Let $\phi^{t}:\mathbb{R}^{n+1}\rightarrow\mathbb{R}^{n+1}(t\in\mathbb{R})$, $t\in(-\delta,\delta)$ be the flow generated by vector field $X$ on $\mathbb{R}^{n+1}$ determined by
$$
\left\{
\begin{array}{lcl}
\dis{\frac{d\phi^t(P)}{dt}=X(\phi^t)},\ P\in \Sigma,\\
\phi^0(P)=P,\ \ \ \ \ P\in \Sigma.
\end{array}
\right.
$$

We denote $\sigma(P,s):=\phi^{s}(P)$. As in Section 5, suppose $\Gamma(t)\subset V$$(0<t<T)$ are smooth hypersurfaces with $\sigma^{-1}(\Gamma(t))$ being the graph $u(\cdot,t)$ for $u:\Sigma\times[0,T)\rightarrow\mathbb{R}$. Let $z_1,z_2,\cdots,z_n$ be local coordinates on an open subset of $\Sigma$. If $\Gamma(t)$ evolving by $V=-\kappa+A$, then in these coordinates $u$ satisfies the following parabolic equation
\begin{equation}\label{eq:para}
\frac{\partial u}{\partial t}=a_{ij}(z,u,\nabla u)\frac{\partial^2u}{\partial z_i\partial z_j}+b(z,u,\nabla u).
\end{equation}

\begin{figure}[htbp]
	\begin{center}
            \includegraphics[height=4cm]{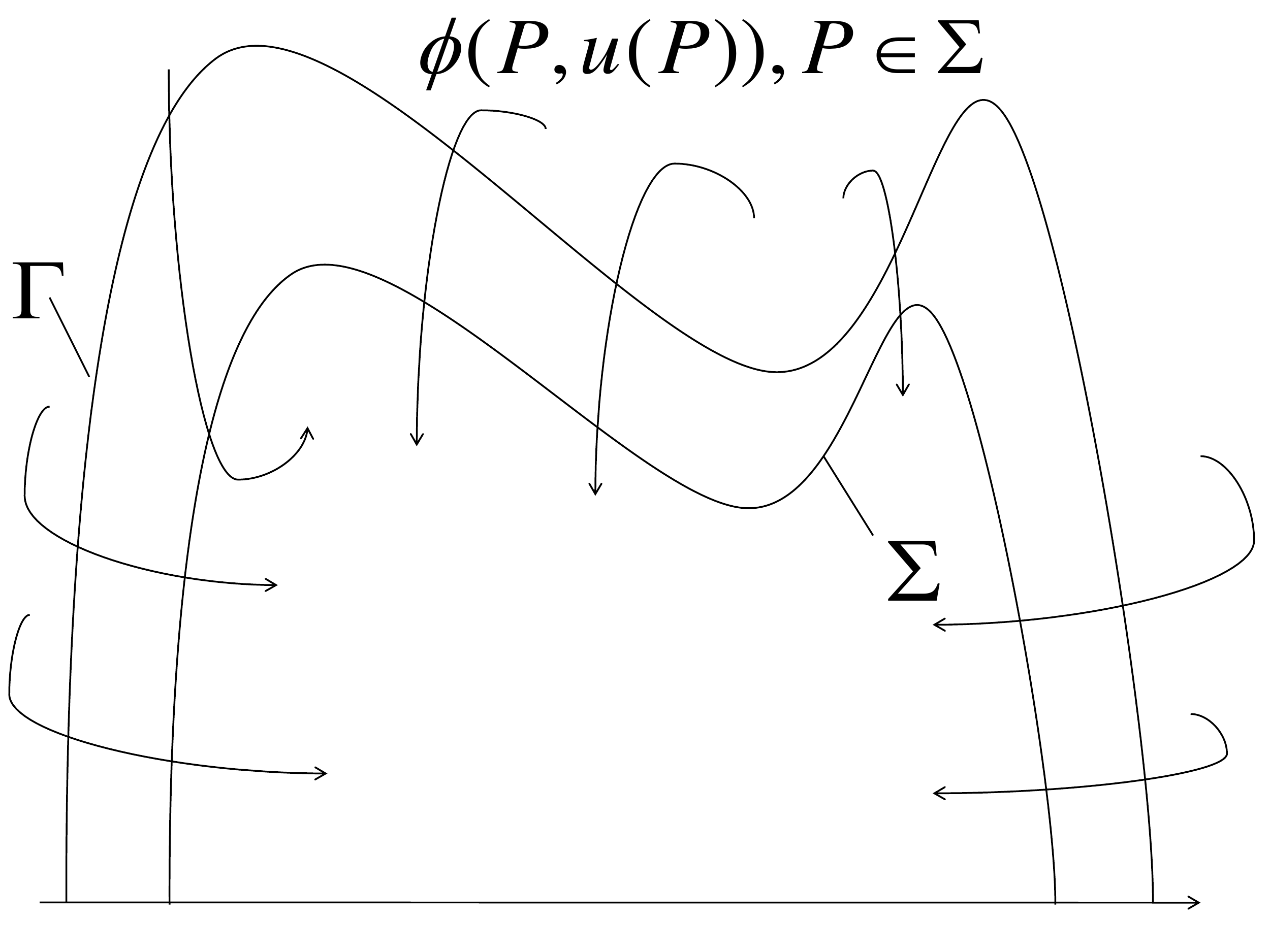}
		\vskip 0pt
		\caption{The transportation from $\Sigma$ to $\Gamma$}
        \label{fig:charact}
	\end{center}
\end{figure}
For example, on $\Delta_a$, by calculation, $\sigma(y_1,,y_2,\cdots,y_n,s)=(a-s,y_1,y_2,\cdots,y_n)$. Then $u$ satisfies the "$-$" condition of (\ref{eq:graph}).
\begin{prop}\label{pro:uniq} For $n\geq1$, let $\Gamma_1(t)$, $\Gamma_2(t)(0\leq t<T)$ be two families of hypersurface smooth and $\sigma^{-1}(\Gamma_j(t))$ be the graph of $u_j(\cdot,t)$ for certain $u_j\in C(\Sigma\times[0,T))$. Assume that the $u_j$ are smooth on $\Sigma\times(0,T)$ as well as on $\Sigma\setminus(\Delta_a\cup\Delta_b)\times[0,T)$. Then if the $\Gamma_j(t)$ evolve by $V=-\kappa+A$ and if $\Gamma_1(0)=\Gamma_2(0)$, then there holds $\Gamma_1(t)=\Gamma_2(t)$ for $0<t<T$.
\end{prop}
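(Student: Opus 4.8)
The plan is to repeat, in $n+1$ dimensions, the maximum‑principle argument of Proposition \ref{pro:uniq2}; the only structural change is that near the $x$‑axis $\Sigma$ now consists of the two flat disks $\Delta_a,\Delta_b$ (no cylindrical pipe), and this actually simplifies matters. Set $v(P,t)=u_1(P,t)-u_2(P,t)$. By hypothesis $v\in C(\Sigma\times[0,T))$, $v$ is smooth on $\Sigma\times(0,T)$ and smooth up to $t=0$ on $(\Sigma\setminus(\Delta_a\cup\Delta_b))\times[0,T)$, and $v(\cdot,0)\equiv0$ because $\Gamma_1(0)=\Gamma_2(0)$. Since $\Sigma$ is compact, Lemma \ref{lem:max} applies to $m(t):=\max_{P\in\Sigma}v(P,t)$: it is locally Lipschitz on $(0,T)$, extends continuously to $t=0$ with $m(0)=0$, and for a.e.\ $t$ there is $P_t\in\Sigma$ with $m(t)=v(P_t,t)$ and $m'(t)=v_t(P_t,t)$. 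I want to show $m'(t)\le C\,m(t)$ at every $t$ with $m(t)>0$, for some constant $C$ independent of $t$; a Gronwall / sliding‑interval argument then forces $m\le0$, and the same argument applied to $-v$ gives $\min_\Sigma v\ge0$, whence $u_1\equiv u_2$ and $\Gamma_1\equiv\Gamma_2$.

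To bound $m'(t)$ I distinguish where $P_t$ sits. If $P_t$ lies on a flat disk, say $\Delta_a$, then in the coordinates $(y_1,\dots,y_n)$ on $\Delta_a$ one has $\sigma(y,s)=(a-s,y)$, so both $u_1(\cdot,t)$ and $u_2(\cdot,t)$ solve the full graph equation (\ref{eq:graph}) for the sign ``$-$'' in $n$ variables; subtracting, $v$ satisfies a linear parabolic equation $v_t=a_{ij}(y,t)v_{y_iy_j}+b_i(y,t)v_{y_i}$ with \emph{no} zeroth‑order term, because the right‑hand side of (\ref{eq:graph}) does not depend on $u$ itself. At an interior maximum $\nabla v(P_t,t)=0$ and $D^2v(P_t,t)\le0$, so $v_t(P_t,t)\le0$ and $m'(t)\le0$. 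If instead $P_t\in\Sigma\setminus(\Delta_a\cup\Delta_b)$, pick local coordinates $z=(z_1,\dots,z_n)$ near $P_t$; there $u_j$ solve (\ref{eq:para}), which I write $u_t=F(z,u,\nabla u,\nabla^2u)$, and $v=u_1-u_2$ satisfies $v_t=\widetilde a_{ij}(z,t)v_{z_iz_j}+\widetilde b_i(z,t)v_{z_i}+\widetilde c(z,t)v$ with $\widetilde c(z,t)=\int_0^1F_u(z,u^\theta,\nabla u^\theta,\nabla^2u^\theta)\,d\theta$, $u^\theta=(1-\theta)u_2+\theta u_1$. Because $u_1,u_2$ are smooth up to $t=0$ away from the disks and $\Sigma$ is compact, a finite covering of $\Sigma$ by coordinate charts (the disk charts, on which $\widetilde c\equiv0$, together with finitely many charts covering the compact set $\Sigma\cap\{|y|\ge\rho\}$) yields a bound $|\widetilde c|\le M$ with $M$ independent of the chart and of $t\in[0,T)$. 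Hence $v_t(P_t,t)\le\widetilde c(P_t,t)v(P_t,t)\le M\,m(t)$. Combining the two cases gives $m'(t)\le C\,m(t)$ with $C=M$ whenever $m(t)>0$.

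The last step is routine: $m$ is continuous with $m(0)=0$, so if $m(t_1)>0$ for some $t_1$, let $s=\sup\{t<t_1:m(t)\le0\}$; then $m(s)=0$, $m>0$ on $(s,t_1]$, and there $t\mapsto e^{-Ct}m(t)$ is absolutely continuous with derivative $\le0$ a.e., forcing $e^{-Ct_1}m(t_1)\le e^{-Cs}m(s)=0$, a contradiction. Thus $\max_\Sigma v\equiv m\le0$; applying the identical reasoning to $u_2-u_1$ gives $\min_\Sigma v\ge0$, so $v\equiv0$ on $\Sigma\times[0,T)$ and $\Gamma_1(t)=\Gamma_2(t)$ for all $t\in[0,T)$.

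The point I expect to need the most care is exactly the one flagged in the remark preceding this proposition: in $\mathbb{R}^{n+1}$ the term $b$ in (\ref{eq:para}) fails to be smooth on the kind of cylindrical pipe that was admissible when $n=1$, which is precisely why $\Sigma$ is taken to meet the axis region only in the two flat disks $\Delta_a,\Delta_b$ of Proposition \ref{pro:sigma1}. One must check that with this choice $b$ — hence $F$ and $F_u$ — is genuinely smooth on all of $\Sigma\setminus(\Delta_a\cup\Delta_b)$ up to $t=0$, so that the constant $M$ above is finite; on the disks themselves this is not an issue because there the motion is governed by the $u$‑independent equation (\ref{eq:graph}), producing no zeroth‑order term at all, so the sign of $v_t$ at a maximum is immediate.
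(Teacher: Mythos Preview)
Your proposal is correct and follows essentially the same approach the paper intends: the paper omits the proof entirely, saying only that it is ``similar as in Proposition~\ref{pro:uniq2}'', and your argument is precisely the adaptation of that proof to the $(n+1)$-dimensional setting, with the pipe case dropped (since $\Sigma$ now meets the axis region only in the flat disks $\Delta_a,\Delta_b$) and the disk case handled via the $u$-independent graph equation (\ref{eq:graph}). Your added care in spelling out the Gronwall step and in noting why no zeroth-order term appears on the disks is appropriate and matches what the paper leaves implicit.
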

We use the same method in \cite{AAG}. 
The proof is similar as in Proposition \ref{pro:uniq2}. Here we omit it.

\begin{thm}\label{thm:partialUmeancurvature}
If $U$ is an $\alpha$-domain with smooth boundary, let $D(t)$ and $E(t)$ be the open and closed evolutions of $V=-\kappa+A$ with $D(0)=U$ and $E(0)=\overline{U}$. Then there exists $T>0$ such that $\partial D(t)$ and $\partial E(t)$ are smooth hypersurfaces for $0<t\leq T$ and $\partial D(t)=\partial E(t)$. Moreover, denoting $\Sigma(t)=\partial D(t)=\partial E(t)$, $\Sigma(t)$ can be written into $\Sigma(t)=\{(x,y)\in\mathbb{R}\times\mathbb{R}^n\mid |y|=u(x,t), a(t)\leq x\leq b(t)\}$ and $(u,a,b)$ is the solution of (Q) with $\theta_{\pm}=\pi/2$.
\end{thm}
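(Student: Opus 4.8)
The plan is to combine the transport-coordinate machinery of Section 8 (Propositions 8.2–8.5) with the intersection-number technology of Section 4, mirroring the two-dimensional argument used for Theorem 1.1 but carried out in $\mathbb{R}^{n+1}$. First I would fix a vector field $X$ and a transverse hypersurface $\Sigma$ as in Proposition 8.3, so that any smooth family $\Gamma(t)\subset V$ that is $C^1$-close to $\Sigma$ is the $\sigma$-graph of a function $u(\cdot,t)$ solving the quasilinear parabolic equation \eqref{eq:para}, which near the caps $\Delta_a,\Delta_b$ reduces to \eqref{eq:graph} with the ``$-$'' sign. Because the initial data $\partial U$ is only Lipschitz at its two endpoints, one approximates $U$ from inside and outside: choose $\alpha$-domains $U_j\uparrow U$ (open) and $E_j\downarrow\overline U$ (closed) with smooth boundaries, exactly as in Lemmas 5.7 and 5.9 extended to $n+1$ dimensions. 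By Lemma 4.10 each $\partial D_j(t)$ and $\partial E_j(t)$ is, for a short time, the boundary of an $(\alpha_j+At)$-domain, hence of the form $\{|y|=u_j(x,t)\}$ solving (Q) with $\theta_\pm=\pi/2$, and Proposition 4.8 (via Lemma 4.5) guarantees the smooth life-span $t_{U_j}^\alpha\le T_{U_j}$ and that these times are monotone under inclusion, so a common positive $T$ survives in the limit.

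Next I would establish the uniform curvature bounds that let one pass to the limit $j\to\infty$ and glue the pieces. Splitting $\partial D_j(t)$ into the cap region $\{r<\rho/2\}$ and the body $\{r\ge\rho/2\}$ as in the proof of Lemma 5.6: on the body one applies the interior gradient estimate Theorem 3.3 and Corollary 3.5 together with Remark 3.6 to the horizontal graph $r=u_j(x,t)$, using that $u_j$ stays bounded below there (this is where the hypothesis that $U$ is an $\alpha$-domain, hence strictly positive on a fixed subinterval, and the ball-barrier argument with \eqref{eq:ball2} enter); on the caps one writes $x=w_{j,i}(y,t)$ and applies the same estimates to the vertical graph, which solves \eqref{eq:graph} with the ``$-$'' sign. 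These give uniform $C^{2,1}$ bounds on compact subsets away from the initial time, hence on the $\Sigma$-coordinate functions $u_j(\cdot,t)$ away from the caps up to $t=0$ by the approximation construction. Monotone convergence (Theorem 2.5) then yields $\partial D(t)=\lim\partial D_j(t)$ and $\partial E(t)=\lim\partial E_j(t)$ as smooth hypersurfaces, each the $\sigma$-graph of a limit function $u$ with the regularity hypothesized in Proposition 8.5.

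Finally I would invoke Proposition 8.5 (the $(n+1)$-dimensional analogue of Proposition 5.4): since $\partial D(t)$ and $\partial E(t)$ are both $\sigma$-graphs of continuous functions that are smooth on $\Sigma\times(0,T)$ and on $(\Sigma\setminus(\Delta_a\cup\Delta_b))\times[0,T)$, and since $\partial D(0)=\partial U=\partial E(0)$, the maximum-principle-on-$\Sigma$ argument forces $\partial D(t)\equiv\partial E(t)$ on $[0,T)$. Writing $\Sigma(t)$ for this common hypersurface, Theorem 4.7 (rotational symmetry is preserved) together with Lemma 4.9-type arguments shows every vertical line $x=c$ meets $\Sigma(t)$ at most twice, so $\Sigma(t)=\{|y|=u(x,t),\ a(t)\le x\le b(t)\}$, and the boundary conditions $u_x(a(t),t)=+\infty$, $u_x(b(t),t)=-\infty$ follow from the transversality in condition (4) of the $\alpha$-domain definition together with the cap analysis; thus $(u,a,b)$ solves (Q) with $\theta_\pm=\pi/2$, and uniqueness of that free boundary problem is exactly the statement $\partial D(t)=\partial E(t)$ just proved. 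The main obstacle is the uniform-in-$j$ curvature estimate near the caps at times approaching $0$: one must rule out loss of regularity of $\partial D_j(t)$ before the common time $T$ and show the bound does not degenerate as the approximating domains pinch toward the singular endpoints of $\partial U$, which is handled precisely by the barrier balls of \eqref{eq:ball2} controlling the cap from below combined with the interior estimates of Section 3 — this is the step that genuinely uses $A>0$ only mildly and instead leans on the $\alpha$-domain transversality.
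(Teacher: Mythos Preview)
Your overall strategy---approximate $U$ from inside and outside by $\alpha$-domains with smooth boundary, obtain uniform curvature bounds on $\partial D_j(t)$ and $\partial E_j(t)$ by splitting into cap and body regions and applying the interior estimates of Section~3 together with the $\alpha$-domain preservation Lemma~\ref{lem:alphad2}, pass to the limit via monotone convergence, and finally invoke the uniqueness Proposition~\ref{pro:uniq}---is exactly the scheme the paper sketches, and the concluding appeal to Theorem~\ref{thm:openevolutionmeancurvature} to identify $(u,a,b)$ as a solution of (Q) is also what the paper does.

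One small misreading: you write that ``the initial data $\partial U$ is only Lipschitz at its two endpoints,'' but the hypothesis of Theorem~\ref{thm:partialUmeancurvature} is that $U$ is an $\alpha$-domain \emph{with smooth boundary}. This does not break your argument---the approximation step simply becomes easier, since the approximants $U_j,E_j$ can be taken as small smooth normal perturbations of $\partial U$, and the uniform short-time bounds follow from continuous dependence of the classical flow on smooth initial data rather than from the more delicate barrier analysis you outline. In particular the ``main obstacle'' you flag (degeneration of the cap estimates as the approximants pinch toward singular endpoints) does not arise here; that obstacle is the genuine issue in Section~5, where the initial curve $\Gamma_0$ really is singular at the origin, but not in this appendix theorem.
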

\begin{proof}
We only give the sketch of the proof. By approximate argument similarly in Lemma 6.2 and Lemma 6.4, $\partial D(t)$ and $\partial E(t)$ are smooth hypersurfaces and can be represented by $\sigma(P,u_j(P))$, for some $u_j$, $j=1,2$. Then we can use Proposition \ref{pro:uniq} to prove $\partial D(t)=\partial E(t)$. Therefore $\Gamma(t)=\partial E(t)$ can be represented by $\Gamma(t)=\{(x,y)\in\mathbb{R}\times\mathbb{R}^n\mid |y|=u(x,t), a(t)\leq x\leq b(t)\}$. Using Theorem \ref{thm:openevolutionmeancurvature}, $(u,a,b)$ is the solution of (Q) with $\theta_{\pm}=\pi/2$.
\end{proof}

{\bf Acknowledgment}

The author expresses his hearty thanks to Professor Hiroshi Matano and Professor Yoshikazu Giga for their stimulating suggestions. The author learned the content about extended intersection number principle from Professor Hiroshi Matano. The author learned the techniques about viscosity solutions and formation of singularity in Section 6 contained in \cite{A2} from Professor Yoshikazu Giga. The author is grateful to the anonymous referee for valuable suggestion to improve the presentation of this paper.

\end{document}